\numberwithin{equation}{subsection}
\theoremstyle{plain}
\newtheorem{lemma}[equation]{Lemma}
\newtheorem{prop}[equation]{Proposition}
\newtheorem{corollary}[equation]{Corollary}
\newtheorem{theorem}[equation]{Theorem}
\newtheorem{thm}{Theorem}[section] 
\theoremstyle{remark}
\newtheorem{remark}[equation]{Remark}
\theoremstyle{definition}
\newtheorem{definition}[equation]{Definition}
\newenvironment{enumalph}
{\begin{enumerate}}
{\end{enumerate}}
\newenvironment{enumroman}
{\begin{enumerate}}
{\end{enumerate}}
\newcommand{\C}{\mathbb C}
\newcommand{\F}{\mathbb F}
\newcommand{\Q}{\mathbb Q}
\newcommand{\R}{\mathbb R}
\newcommand{\Z}{\mathbb Z}
\newcommand{\Qbar}{\overline{\mathbb{Q}}}
\newcommand{\Fbar}{\bar{F}}
\newcommand{\calO}{\mathcal O}
\newcommand{\calA}{\mathcal A}
\newcommand{\quat}[2]{\displaystyle{\biggl(\frac{#1}{#2}\biggr)}}
\DeclareMathOperator{\NS}{NS}
\DeclareMathOperator{\Mat}{Mat}
\DeclareMathOperator{\disc}{disc}
\DeclareMathOperator{\End}{End}
\DeclareMathOperator{\Gal}{Gal}
\DeclareMathOperator{\GL}{GL}
\DeclareMathOperator{\nrd}{nrd}
\DeclareMathOperator{\trd}{trd}
\DeclareMathOperator{\Pic}{Pic}
\DeclareMathOperator{\rk}{rk}
\DeclareMathOperator{\tors}{tors}
\DeclareMathOperator{\red}{red}
\DeclareMathOperator{\Aut}{Aut}
\DeclareMathOperator{\Jac}{Jac}
\DeclareMathOperator{\Hom}{Hom}
\newcommand{\lmfdbmf}[1]{\href{http://www.lmfdb.org/ModularForm/GL2/Q/holomorphic/#1}{\textsf{#1}}}
\newcommand{\defi}{\textsf}
\begin{document}

\title{Rational torsion points on abelian surfaces with quaternionic multiplication}

\author{Jef Laga}
\address{Department of Mathematics, Princeton University, Princeton, NJ 08544, USA}
\email{jeflaga@hotmail.com}

\author{Ciaran Schembri}
\address{Department of Mathematics, Dartmouth College, 6188 Kemeny Hall, Hanover, NH 03755, USA}
\email{ciaran.schembri@dartmouth.edu}

\author{Ari Shnidman}
\address{Einstein Institute of Mathematics, Hebrew University of Jerusalem, Israel} 
\email{ari.shnidman@gmail.com}

\author{John Voight}
\address{Department of Mathematics, Dartmouth College, 6188 Kemeny Hall, Hanover, NH 03755, USA}
\email{jvoight@gmail.com}

\begin{abstract}
Let $A$ be an abelian surface over $\Q$ whose geometric endomorphism ring is a maximal order in a non-split quaternion algebra. Inspired by Mazur's theorem for elliptic curves, we show that the torsion subgroup of $A(\Q)$ is $12$-torsion and has order at most $18$. Under the additional assumption that $A$ is of $\GL_2$-type, we give a complete classification of the possible torsion subgroups of $A(\Q)$.    
\end{abstract}

\maketitle
\bibliographystyle{alpha}

\tableofcontents

\section{Introduction}

\subsection{Motivation}

Let $E$ be an elliptic curve over $\Q$.
In \cite{Mazur77}, Mazur famously showed that if a prime $\ell$ divides the order of the torsion subgroup $E(\Q)_{\tors}$ of $E(\Q)$ then $\ell \leq 7$.
Combining with previous work of Kubert \cite{Kubert76}, Mazur deduced that $\#E(\Q)_{\tors} \leq 16$ and that $E(\Q)_{\mathrm{tors}}$ is isomorphic to one of fifteen finite abelian groups, each of which gives rise to a genus $0$ modular curve with a well known rational parametrization.

It is not known whether there is a uniform bound on the size of the rational torsion subgroup of abelian varieties of fixed dimension $g\geq 2$ over a fixed number field. 
In fact, there is not even a single integer $N$ for which it is known that there is no abelian surface over $\Q$ with a torsion point of order $N$. Indeed, determining rational points on Siegel modular threefolds with level structure seems challenging in general.

\subsection{Results}
In this paper we study the torsion subgroup of abelian surfaces $A$ over $\Q$ whose geometric endomorphism ring is large. 
Namely, we suppose that the geometric endomorphism ring $\End(A_{\overline{\Q}})$ is a maximal order $\calO$ in a division quaternion algebra over $\Q$; we refer to these as $\calO$-$\mathrm{PQM}$ surfaces (``potential quaternionic multiplication'').  
Such abelian surfaces are geometrically simple, so their torsion subgroup cannot be studied using torsion subgroups of elliptic curves. 
On the other hand, they give rise to rational points on certain Shimura curves, much as elliptic curves over $\Q$ give rise to rational points on modular curves. 
Thus $\calO$-$\mathrm{PQM}$ surfaces are a natural place to explore torsion questions in higher dimension.

Our main results show that the torsion behaviour of $\calO$-PQM surfaces is rather constrained.

\begin{thm}\label{thm:QMmazur ptorsion}
Let $A$ be an $\calO$-$\mathrm{PQM}$ abelian surface over $\Q$ with a rational point of order $\ell$, where $\ell$ is a prime number. Then $\ell=2$ or $\ell=3$.
\end{thm}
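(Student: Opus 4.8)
The plan is to run a mod-$\ell$ argument in the spirit of Mazur, using quaternionic multiplication to cut the $4$-dimensional Galois module $A[\ell]$ down to a $2$-dimensional piece, and then to invoke a Shimura-curve analogue of Mazur's theorem. So fix a prime $\ell\geq 5$ and suppose for contradiction that $A(\Q)$ contains a point $P$ of order $\ell$; I want a contradiction.

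\emph{Step 1: linearize via the quaternionic structure.} Let $F/\Q$ be the finite Galois extension over which $\End(A_{\Qbar})=\calO$ is defined, so $\Gal(\Qbar/\Q)$ acts on $\calO$ through $\Gal(F/\Q)\hookrightarrow\Aut(\calO)$. Assume first $\ell\nmid\disc(\calO)$, so $\calO/\ell\calO\cong\M_2(\F_\ell)$. Since $\Gal(\Qbar/F)$ acts on $A[\ell]$ commuting with this matrix ring, Morita equivalence gives a decomposition $A[\ell]\cong V\otimes_{\F_\ell}W$ of $\F_\ell[\Gal(\Qbar/F)]$-modules compatible with the $\calO$-structure, where $V=\F_\ell^{2}$ is the standard $\M_2(\F_\ell)$-module with trivial Galois action and $W=\F_\ell^{2}$ carries a representation $\rho_W\colon\Gal(\Qbar/F)\to\GL(W)$ whose determinant is the mod-$\ell$ cyclotomic character $\chi_\ell$ up to a quadratic twist (this uses the $\calO$-compatible principal polarization, which equips $W$ with a $\Gal(\Qbar/F)$-equivariant pairing valued in $\mu_\ell$). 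Tracking the semilinearity of the full $\Gal(\Qbar/\Q)$-action and using Skolem--Noether (every automorphism of $\M_2(\F_\ell)$ is inner), one checks that each $\sigma$ acts on $V\otimes W$ as $g_\sigma\otimes\psi_\sigma$, well defined up to simultaneous scaling, with $\psi|_{\Gal(\Qbar/F)}=\rho_W$.

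\emph{Step 2: produce a rational quaternionic level-$\ell$ structure.} Since $P$ is $\Gal(\Qbar/\Q)$-fixed it is in particular $\Gal(\Qbar/F)$-fixed, so $0\neq A[\ell]^{\Gal(\Qbar/F)}=V\otimes W^{\Gal(\Qbar/F)}$, forcing $W^{\Gal(\Qbar/F)}\neq 0$: the representation $\rho_W$ is reducible with a trivial subrepresentation and quotient a twist of $\chi_\ell$. Moreover $\calO\cdot P$ is a $\Gal(\Qbar/\Q)$-stable $\calO$-submodule of $A[\ell]$ (both $\calO$ and $\langle P\rangle$ being $\Gal(\Qbar/\Q)$-stable) containing a rational point of order $\ell$; comparing its size with $\rho_W$ shows it is an $\calO$-cyclic $\ell$-subgroup of order $\ell^{2}$ (the degenerate case $\calO\cdot P=A[\ell]$ would make $\rho_W$ trivial on $\Gal(\Qbar/F)$, which is excluded for all but small $\ell$ by the determinant). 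Thus $(A,\calO\cdot P)$ is a $\Q$-rational point on a $\Gal(F/\Q)$-twist of the Shimura curve $X_0^{D}(\ell)$, $D=\disc(\calO)$. The ramified case $\ell\mid\disc(\calO)$ runs in parallel: there $\Gal(\Qbar/F)$ acts on $A[\ell]$ through $\F_{\ell^{2}}^{\times}$, so a rational $\ell$-torsion point again forces strong reducibility and one lands on a twist of a Shimura curve of level $\ell$.

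\emph{Step 3: rule out such points.} It remains to show that for $\ell\geq 5$ no $\calO$-PQM surface carries such a structure. The engine should be a Mazur-style analysis of these level-$\ell$ Shimura curves: via Jacquet--Langlands, $J_0^{D}(\ell)$ is isogenous to the $D$-new part of $J_0(D\ell)$, and one adapts the Eisenstein-ideal and formal-immersion arguments to show that any rational point on the relevant twist reduces, at an auxiliary prime $p$ of good reduction, into a small exceptional locus. Points of that locus are then excluded by combining the congruence on $\mathrm{Frob}_p\bmod\ell$ coming from the reducibility in Step 2 with the structural fact that the reduction $A_{\overline{\F}_p}$ of any $\calO$-PQM surface is isogenous to a square $E^{2}$ of an elliptic curve --- since $B$ embeds in no quartic CM field, $A_{\overline{\F}_p}$ cannot be simple --- which rigidly constrains the characteristic polynomial of $\mathrm{Frob}_p$; the few genuinely small primes ($\ell=5,7$, and small $\ell\mid D$) are cleared by direct point counts modulo small primes of good reduction. \textbf{The main obstacle is Step 3:} making Mazur's machinery work on \emph{cuspless} Shimura curves and on their twists, uniformly in the discriminant $D$ (including $\ell\mid D$), and disposing of the exceptional small primes. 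Steps 1--2 are essentially formal once the $\calO$-compatible polarization and the semilinearity of the Galois action are set up.
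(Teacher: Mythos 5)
Your Steps~1 and~2 are essentially sound and closely parallel the first two moves of the paper's Section~7: a determinant comparison ruling out $A[\ell]\simeq\calO/\ell\calO$ as $\Gal_{\Q}$-modules for $\ell\geq 5$, and the observation that the $\calO$-submodule generated by a rational $\ell$-torsion point has order $\ell^{2}$. But Step~3 is a genuine gap, and you flag it yourself: a Mazur-style uniform boundedness theorem for rational points on (Atkin--Lehner quotients and Galois twists of) the Shimura curves $X_0^{D}(\ell)$, uniformly in the discriminant $D$, is not currently available and is not something one can simply ``adapt'' from $X_0(\ell)$. With $D>1$ these curves have no cusps, so there is no cuspidal divisor group to seed the Eisenstein-ideal construction and no natural rational base point for the formal-immersion criterion; and since the endomorphisms of a PQM surface are typically defined only over a biquadratic or larger extension, what you actually obtain is a rational point on a quotient or twist of $X_0^{D}(\ell)$, for which there is no general analogue of Mazur's argument. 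As written, the proposal names the hard step but does not close it.

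The paper's proof deliberately avoids Shimura-curve arithmetic entirely (this is pointed out in the introduction). Its route is: first prove the theorem in the $\GL_2$-type case (Theorem~\ref{thm:gl2type classification}), using modularity together with local analysis of N\'eron models and component groups, and then prove a ``reduction to $\GL_2$-type'' statement (Theorem~\ref{theorem: reduction to GL2-type}): if $\ell\geq 5$ and $A[\ell](\Q)\neq 0$, then $A$ must be of $\GL_2$-type. The mechanism is elementary semilinear algebra over $\calO$. In the unramified case, when $A$ is not of $\GL_2$-type the unique primitive polarization picks out a $\Gal_{\Q}$-stable imaginary quadratic subring $S\subset\calO$ (the distinguished quadratic subring of Definition~\ref{definition: distinguished quadratic subring}); the ``torus trick'' (Lemma~\ref{lemma: reduction galois action generating torus}) then shows that a rational point of order $\ell$ together with $S$ forces the Galois action on $\calO\otimes\F_\ell$, and hence on $\calO$ (by Minkowski's lemma), to trivialize over a quadratic field, contradicting the non-$\GL_2$-type hypothesis. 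In the ramified case $\ell\mid\disc(B)$, Ohta's theorem gives that the norm of the $\Gal_L$-action on $A[J]$ is the mod-$\ell$ cyclotomic character, so a rational $\ell$-torsion point forces $\Q(\zeta_\ell)$ inside the endomorphism field $L$; but $\Gal(L/\Q)$ is dihedral of order dividing $24$, whose cyclic quotients have order at most $2$, giving $\ell\leq 3$ directly. In short, where you propose global modular-curve machinery, the paper substitutes the polarization-determined quadratic subring plus local reduction arguments, which is both more elementary and actually complete.
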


\begin{thm}\label{thm:optimalbound}
Each $\calO$-$\mathrm{PQM}$ abelian surface $A$ over $\Q$ has $\#A(\Q)_{\tors} \leq 18$.
\end{thm}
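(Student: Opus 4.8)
The plan is to bootstrap from Theorem~\ref{thm:QMmazur ptorsion}, which restricts the primes dividing $\#A(\Q)_{\tors}$ to $\{2,3\}$, so that $\#A(\Q)_{\tors}=2^a3^b$. Here $A(\Q)[2]\subseteq A[2]\cong(\Z/2\Z)^4$ shows the $2$-primary part has rank at most $4$, but a priori neither $a$ nor $b$ is bounded, so one must control both the ranks and the exponents of the $2$- and $3$-primary parts. Since $2^a3^b\le 18$ fails exactly when $b\ge 3$, or $b=2$ and $a\ge 2$, or $b=1$ and $a\ge 3$, or $b=0$ and $a\ge 5$, it suffices to exclude these four ranges.

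The first main input is the Weil pairing of a $\Q$-rational polarization. An $\calO$-PQM surface over $\Q$ admits a polarization $\lambda$ over $\Q$, and one may take $\deg\lambda$ prime to $3$ (treating $3\mid\disc\calO$ separately if necessary); then $A[3]\times A[3]\to\mu_3$ is alternating, nondegenerate and $\Gal_\Q$-equivariant, and since $\mu_3(\Q)=\{1\}$ the subgroup $A(\Q)[3]$ is isotropic. Hence $\#A(\Q)[3]\le 3^2=9$, i.e.\ $A(\Q)$ has $3$-rank at most $2$; in particular $A(\Q)[3]\cong(\Z/3\Z)^3$ is already excluded. (At $\ell=2$ the analogous argument yields nothing, since $\mu_2\subset\Q$.)

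What remains is a concrete finite list of non-existence statements: (a) $A(\Q)$ contains no point of order $9$ — this settles the case $b\ge 3$ (given the $3$-rank bound) and the part of the case $b=2$ with $A(\Q)[3]\cong\Z/9\Z$; (b) $A(\Q)$ contains no subgroup isomorphic to $\Z/4\Z\times(\Z/3\Z)^2$ or $(\Z/2\Z)^2\times(\Z/3\Z)^2$ — this settles the rest of $b=2$; (c) $A(\Q)$ contains no subgroup isomorphic to $\Z/8\Z\times\Z/3\Z$, $\Z/2\Z\times\Z/4\Z\times\Z/3\Z$, or $(\Z/2\Z)^3\times\Z/3\Z$ — this settles $b=1$; and (d) $A(\Q)$ contains no subgroup isomorphic to any of the six abelian $2$-groups of order $32$ of rank at most $4$ — this settles $b=0$. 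In each of these situations the torsion (in (a), via the $\calO$-submodule it generates over $\Qbar$) endows $A$ with a level structure, so $(A,\text{structure})$ gives a rational point on a Shimura curve carrying $\Gamma_1$-type level of bounded size — more precisely, since the quaternionic multiplication is only potentially defined over $\Q$, on a quadratic twist or an Atkin--Lehner quotient of such a curve. Arguing as in the proof of Theorem~\ref{thm:QMmazur ptorsion}, one then shows that no such rational point arises from an $\calO$-PQM surface: compute the genera of the relevant curves uniformly in the quaternion discriminant $D$; invoke Faltings/Mordell for those of genus $\ge 2$; and for the low-genus cases combine a Mazur-type study of the mod-$\ell$ Galois image with an analysis of the finitely many CM points. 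Together with the reduction and the Weil-pairing bound, this yields $\#A(\Q)_{\tors}\le 18$.

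The hard part will be this last step, especially items (b)--(d) and the exponent bound (a): the moduli curves that occur are Atkin--Lehner twists and quotients, not the classical curves $X_0^D(N)$, so their equations and invariants must be worked out, the argument must be run uniformly over all admissible discriminants $D$, and one must carefully discard the CM and other sporadic rational points, which do not give geometrically simple abelian surfaces with quaternionic multiplication.
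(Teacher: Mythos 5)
The reduction to bounding $2^a 3^b$ and the enumeration of the torsion subgroups to exclude are correct, but the two substantive steps in your proposal both have gaps, and neither reflects the paper's actual method.

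First, the Weil-pairing bound on the $3$-rank is not available in general. You would need a $\Q$-rational polarization of degree coprime to $3$. If $A$ is \emph{not} of $\GL_2$-type there is a \emph{unique} $\Q^{\times}$-polarization (Corollary~\ref{cor:quadsubring via polarization}), and its degree is forced by \eqref{equation: degree polarization vs norm positive involution}: if the endomorphism field has Galois group $D_3$ or $D_6$, the distinguished imaginary quadratic subfield is $\Q(\sqrt{-3})$, so $\deg L \equiv 3\,\disc(B)\pmod{\Q^{\times 2}}$, which is divisible by $3$ whenever $3\nmid\disc(B)$. In that case the pairing $A[3]\times A[3]\to\mu_3$ induced by $L$ is degenerate and the isotropy argument collapses. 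Your parenthetical ``(treating $3\mid\disc\calO$ separately if necessary)'' points in the wrong direction: the problem arises precisely when $3\nmid\disc(B)$. The paper obtains the $3$-rank bound instead by reducing mod $2$ and classifying the possible isogeny classes over $\F_2$ via Honda--Tate (Proposition~\ref{prop:naivebound}), which is unconditional.

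Second, the claim that the remaining exclusions follow by ``arguing as in the proof of Theorem~\ref{thm:QMmazur ptorsion}'' with genus computations, Faltings, and CM-point analysis on Shimura curves with $\Gamma_1$-type level is a genuine gap, not a sketch. Theorem~\ref{thm:QMmazur ptorsion} is not proved that way (it is proved by the ``reduction to $\GL_2$-type'' argument using semi-linear algebra in $\calO$ and the torus trick), and the paper states explicitly in the introduction and in the future-directions section that it does \emph{not} use the arithmetic of Shimura curves and that a systematic analysis of those curves is left for future work. Beyond the attribution issue, the approach as stated cannot close the argument: Faltings gives finiteness of rational points but is ineffective, so it does not by itself show the absence of non-CM points; and since $\disc(B)$ ranges over infinitely many values one would need a \emph{uniform} statement across infinitely many curves, which is exactly what the paper's local method at $p\in\{2,3,5,7\}$ achieves and what ``compute genera, invoke Mordell'' does not. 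In the paper, the groups of order $36$, $24$, $9$, and $8$ are eliminated one by one using reductions to $\F_2,\F_3,\F_5,\F_7$, the structure theory of the endomorphism field (Proposition~\ref{prop: D3,D6 case has little 2-torsion, D2,D4 case has little 3-torsion}), Lorenzini's component-group theorem, and the already-proved $\GL_2$-type classification; you would need to supply arguments of comparable strength to replace each of Propositions~\ref{prop:no48}, \ref{prop:no36}, \ref{prop:no Z/9}, \ref{prop:no8}, \ref{prop: no 24a}, \ref{prop: no 24b}.
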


The fact that the rational torsion on $\calO$-PQM surfaces is uniformly bounded is not new nor is it difficult to prove. Indeed, since $\calO$-$\mathrm{PQM}$ surfaces have everywhere potentially good reduction (Lemma \ref{lemma:PQMreduction}), local methods quickly show that  $\ell \mid \#A(\Q)_{\tors}$ implies $\ell \leq 19$ 
and that $\#A(\Q)_{\tors} \leq 72$ \cite[Theorem 1.4]{CX08}.  The goal of this paper is instead to prove results which are as precise as possible.

Theorems \ref{thm:QMmazur ptorsion} and \ref{thm:optimalbound} are optimal since it is known that each of the seven groups 
\begin{equation}\label{eq:LS examples}
\begin{gathered}
\{1\}, \,  \Z/2\Z, \, \Z/3\Z, \, (\Z/2\Z)^2\\
\Z/6\Z, \, (\Z/3\Z)^2,\, \Z/2\Z \times (\Z/3\Z)^2
\end{gathered}
\end{equation}
is isomorphic to $A(\Q)_{\tors}$ for some $\calO$-$\mathrm{PQM}$ surface $A/\Q$, with the largest group having order $18$. Indeed, each of these groups arises as $A(\Q)_{\mathrm{tors}}$ for infinitely many $\Qbar$-isomorphism classes of such surfaces by \cite[Theorem 1.1]{LagaShnidman}. 

Our methods give the following more precise constraints on the group structure of $A(\Q)_{\tors}$.

\begin{thm}\label{thm:QMmazur_groups}
Let $A$ be an $\calO$-$\mathrm{PQM}$ abelian surface over $\Q$. Then $A(\Q)_{\tors}$ is isomorphic either to one of the groups in \eqref{eq:LS examples} or to one of the following groups: 
\begin{equation}\label{eq:unknowns}
\begin{gathered}\Z/4\Z, \Z/2\Z \times \Z/4\Z, (\Z/2\Z)^3, (\Z/2\Z)^2 \times \Z/3\Z,\\ 
\Z/4\Z \times \Z/3\Z, (\Z/2\Z)^2 \times \Z/4\Z, (\Z/4\Z)^2.
  \end{gathered}
  \end{equation}
\end{thm}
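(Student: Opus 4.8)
The plan is to combine the two preceding theorems with three further facts and then enumerate. By Theorem \ref{thm:QMmazur ptorsion} the group $G = A(\Q)_{\tors}$ has order $2^a 3^b$ for some $a,b\ge 0$, and by Theorem \ref{thm:optimalbound} this order is at most $18$. I claim it then suffices to prove:
\begin{enumerate}
\item[(a)] $A$ has no rational point of order $8$;
\item[(b)] $A$ has no rational point of order $9$;
\item[(c)] $A(\Q)[2]\ne A[2]$.
\end{enumerate}
Granting these, write $G=G_2\times G_3$ for the $2$- and $3$-primary parts. Since $\#G_3\in\{1,3,9\}$, fact (b) forces $G_3\in\{1,\Z/3\Z,(\Z/3\Z)^2\}$; fact (a) says $G_2$ has exponent dividing $4$; and fact (c) says $\dim_{\F_2}G_2[2]\le 3$. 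A short inspection of the abelian $2$-groups of order $\le 16$ with exponent dividing $4$ and at most three generators, subject to $\#G_2\cdot\#G_3\le 18$, shows that $G$ is isomorphic to one of the fourteen groups in \eqref{eq:LS examples} and \eqref{eq:unknowns}; here (b) is what removes $\Z/9\Z$, (a) is what removes $\Z/8\Z$, $\Z/16\Z$ and $\Z/2\Z\times\Z/8\Z$, and (c) is what removes $(\Z/2\Z)^4$.

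It therefore remains to prove (a), (b), (c), for which I would reuse the machinery behind Theorems \ref{thm:QMmazur ptorsion} and \ref{thm:optimalbound}. Because $\calO$ is a maximal order, the Tate module $T_\ell A$ is free of rank one over $\calO\otimes\Z_\ell$, so $A[n]\cong\calO/n\calO$ as an $\calO$-module for every $n$, and the $\Gal_\Q$-action on $A[n]$ is the standard $\calO$-semilinear one, twisted by a cocycle valued in $\calO^\times$ together with a homomorphism $\Gal_\Q\to\Aut(\calO)$ whose image lands in the Atkin--Lehner group of $B$ (of order $2^{\omega(D)}$, with $D=\disc B$). A rational point $P$ of order $N$ generates a submodule $\calO\cdot P\subseteq A[N]$ that is both $\calO$-stable and $\Gal_\Q$-stable, hence is a level-$N$ structure in the sense of the relevant Shimura curve, so $A$ gives a rational point on an explicit Atkin--Lehner quotient (or a quadratic twist thereof) of $X_0^D(N)$; similarly, if $A(\Q)[2]=A[2]$ then the twisting data is trivial modulo $2$, which pins down a specific twist of the full level curve $X^D(2)$ on which $A$ gives a rational point. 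One is then reduced to showing that the relevant curves carry no rational point corresponding to an $\calO$-PQM surface: for the finitely many discriminants $D$ admitting the level structures $\Z/8\Z$, $\Z/9\Z$, or $A[2]$ this should be an explicit computation (equations, real and $p$-adic points, Mordell--Weil), while for large $D$ one would appeal to a uniform obstruction --- a persistent real or $p$-adic obstruction as in the proof of Theorem \ref{thm:QMmazur ptorsion}, or a genus estimate forcing all rational points to be CM points, which do not give $\calO$-PQM surfaces.

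I expect the main obstacle to be the uniformity in the quaternion discriminant: since $D$ is unbounded over the class of $\calO$-PQM surfaces, (a)--(c) are assertions about infinitely many Shimura curves and their Atkin--Lehner quotients, and the difficulty is to rule out rational points on all of them simultaneously rather than one curve at a time. A secondary subtlety --- already present in the proof of Theorem \ref{thm:QMmazur ptorsion}, and more delicate here because we must detect \emph{exact} torsion orders rather than just prime order --- is the bookkeeping in passing from ``$A/\Q$ has rational torsion of exact order $N$'' to ``a twist of a Shimura curve has a rational point'': one must identify the correct twist (notably which twist of $X^D(2)$ occurs in (c), where the cocycle is forced to be trivial mod $2$) and ensure that a point of exact order $N$, rather than merely its image in an Atkin--Lehner quotient or its prime-to-$p$ part, is really being recorded.
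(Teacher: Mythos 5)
Your reduction of the theorem to facts (a), (b), (c), together with the enumeration of abelian groups of order $2^a 3^b \le 18$, is exactly what the paper does: it is precisely the content of Theorem~\ref{thm:halftime}, and your counting of the fourteen resulting groups is correct. So the skeleton of the argument is right. What is genuinely different --- and where your proposal would run into serious trouble --- is the proposed method for establishing (a), (b), (c) via rational points on Shimura curves with level structure.

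You correctly anticipate the obstruction yourself: since $\disc(B)$ is unbounded over the class of $\calO$-PQM surfaces, one would have to rule out non-special rational points on infinitely many curves $X_0^D(8)$, $X_0^D(9)$, $X^D(2)$ and their Atkin--Lehner quotients simultaneously, and neither an explicit computation nor a ``persistent local obstruction'' for all $D$ is known. The paper explicitly disavows this route (``our proof \dots does not make direct use of the arithmetic of Shimura curves''). The actual argument sidesteps the uniformity problem completely by working locally: since an $\calO$-PQM surface has potentially good reduction and the reduction map is injective on prime-to-$p$ torsion (Lemmas~\ref{lemma:PQMreduction} and~\ref{lem:reduction is injective}), a rational torsion point of order $N$ coprime to $p$ survives into the good reduction $A_p/\F_{p}$, and Honda--Tate theory over $\F_2$ and $\F_3$ produces a \emph{finite} list of admissible isogeny classes, independent of $D$. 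Concretely, (b) is Proposition~\ref{prop:no Z/9}: a point of order $9$ forces $A_2/\F_2$ into one of two isogeny classes, one of which is ruled out by Proposition~\ref{prop: QM splits mod p} and the other of which has $A_2(\F_2)\simeq(\Z/3\Z)^2$. Fact (a) is Proposition~\ref{prop:no8}: reduction modulo $3$ pins down the isogeny class of $A_3$ and the $\calO/2\calO$-module structure of $A_3[2]$, which is shown to be incompatible with a rational point of order $8$, case-splitting on whether $2\mid\disc(B)$ and on $\Gal(L/\Q)$. Fact (c) is Proposition~\ref{prop: no (Z/2)^4}: full rational $2$-torsion forces $A[2]\simeq\calO/2\calO$ as $\Gal_\Q$-modules, hence a quadratic endomorphism field by Theorem~\ref{theorem: fixed points of O/N}, hence $\GL_2$-type; a quadratic twist then has good reduction outside $\{2\}$, and Corollary~\ref{cor: no gl2type good red outside 2} (a modular forms computation via modularity) rules this out. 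These arguments are uniform in $D$ for free. So your decomposition is correct, but the engine you propose to drive (a)--(c) is not the one used, and on your own account would not obviously terminate.

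One further small inaccuracy: you say the homomorphism $\Gal_\Q\to\Aut(\calO)$ has image in the Atkin--Lehner group of $B$. It actually lands in the full normalizer modulo scalars $N_{B^\times}(\calO)/\Q^\times$, not the further quotient by $\calO^\times/\{\pm1\}$; the image is a dihedral group $D_n$ (Proposition~\ref{proposition: galois group endomorphism field is dihedral if real place}), whereas the Atkin--Lehner group is elementary abelian $2$-torsion. This distinction matters in the paper's fixed-point computations on $\calO/N\calO$.
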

We leave open the question of whether any of the groups of \eqref{eq:unknowns} arise as $A(\Q)_{\mathrm{tors}}$ for some $\calO$-$\mathrm{PQM}$ surface or not.

Theorem \ref{thm:QMmazur_groups} can be interpreted as a non-existence result for non-special  rational points on certain types of Shimura curves with level structure. Since the discriminant of $\End(A_{\Qbar})$ and level are unconstrained, the result covers infinitely many distinct such curves.   
However, as we explain below, our proof of Theorem \ref{thm:QMmazur_groups} does not make direct use of the arithmetic of Shimura curves.

Whereas the theorems above consider general $\calO$-$\mathrm{PQM}$ abelian surfaces, one is sometimes interested in surfaces with additional structure.  For example, recall that $A$ is of \defi{$\GL_2$-type} if the endomorphism ring $\End(A)$ is a quadratic ring.  Modularity results (see Theorem \ref{theorem: modularity theorem}) imply that an abelian variety $A$ of $\GL_2$-type over $\Q$ is a quotient of the modular Jacobian $J_1(N)$ for some $N$. More precisely, the isogeny class of $A$ arises from a cuspidal newform of weight $2$ and level $N$, where $A$ has conductor $N^2$. 
Specializing our methods to this setting, we obtain the following complete classification.

\begin{thm}\label{thm:gl2type classification}
 Let $A$ be an $\calO$-$\mathrm{PQM}$ surface over $\Q$ of $\GL_2$-type. Then $A(\Q)_{\tors}$ is isomorphic to one of  the following groups:
 \[\{1\}, \Z/2\Z, \Z/3\Z, (\Z/2\Z)^2, (\Z/3\Z)^2. \]
Every one of these groups arises as $A(\Q)_{\tors}$ for infinitely many $\overline{\Q}$-isomorphism classes of $\calO$-$\mathrm{PQM}$ surfaces $A$ over $\Q$ of $\GL_2$-type. 
\end{thm}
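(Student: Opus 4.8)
The plan is to combine the general structural constraints from Theorem \ref{thm:QMmazur_groups} with the extra rigidity coming from the $\GL_2$-type hypothesis, and then to supply the examples separately. For the classification part, I would start from the list in \eqref{eq:LS examples} and \eqref{eq:unknowns} and eliminate all groups not on the claimed list. The key new input is that when $A$ is of $\GL_2$-type, $\End(A)$ is an order in a real quadratic (or rational) field, and this forces $A(\Q)_{\tors}$ to be a module over that ring; in particular the $\ell$-torsion that is rational is constrained by the action of $\End(A) \otimes \F_\ell$. For $\ell = 2$ this should rule out $(\Z/2\Z)^3$ and $\Z/4\Z$ and the larger $2$-power groups: a rational point of order $4$, or a rational $2$-torsion subgroup of rank $3$, would produce a Galois-stable subgroup of $A[4]$ or $A[2]$ incompatible with the Hecke/quadratic action together with the Weil pairing and the everywhere-potentially-good-reduction constraint (Lemma \ref{lemma:PQMreduction}). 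Similarly the mixed groups $(\Z/2\Z)^2 \times \Z/3\Z$, $\Z/4\Z \times \Z/3\Z$, etc., are eliminated once the $2$-part and $3$-part are each cut down. I expect that after imposing $\GL_2$-type, the surviving options are exactly $\{1\}, \Z/2\Z, \Z/3\Z, (\Z/2\Z)^2, (\Z/3\Z)^2$, and that $\Z/6\Z$ and $\Z/2\Z \times (\Z/3\Z)^2$ — which do occur for general $\calO$-PQM surfaces — are excluded in the $\GL_2$-type case by a parity or pairing obstruction on simultaneous rational $2$- and $3$-torsion.

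For the realizability half, I would invoke modularity (Theorem \ref{theorem: modularity theorem}): an $\calO$-PQM surface of $\GL_2$-type corresponds to a weight $2$ newform with an appropriate quaternionic inner twist, and its isogeny class is a quotient of $J_1(N)$. To exhibit each of the five groups for infinitely many $\Qbar$-isomorphism classes, I would either (a) cite the families constructed in \cite{LagaShnidman} and check that the sub-family of $\GL_2$-type members still realizes these torsion groups, restricting the construction there to the $\GL_2$-type locus and verifying the torsion is preserved; or (b) produce explicit one-parameter families of such surfaces (e.g.\ via Jacobians of genus $2$ curves with the relevant level structure and a rational isogeny factor) and compute $A(\Q)_{\tors}$ generically along the family, then argue the $\Qbar$-isomorphism class varies. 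The infinitude of $\Qbar$-isomorphism classes follows once the construction has at least one genuine modulus, i.e.\ the $j$-invariant analogue on the relevant Shimura/modular curve is non-constant along the family.

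The main obstacle, I expect, is the elimination of the borderline $2$-power cases — showing no $\calO$-PQM surface of $\GL_2$-type has a rational point of order $4$ or a rational $(\Z/2\Z)^3$. Unlike the general case, here one cannot just use reduction mod small primes; one needs to exploit that the $\ell$-adic Tate module is free of rank $1$ over $\End(A) \otimes \Z_\ell$ (after localizing) and that the quaternionic multiplication over $\Qbar$ forces the mod-$\ell$ representation into a specific form. Reconciling the Hecke action, the $\Gal(\Qbar/\Q)$-action on the quaternion order, and the Weil pairing on $A[2]$ or $A[4]$ is the delicate bookkeeping step. A secondary difficulty is ensuring the claimed realizations genuinely land in the $\GL_2$-type locus (not merely PQM), since imposing $\End(A) = $ quadratic ring over $\Q$ is a codimension condition that could a priori kill the rational torsion; this requires care in choosing the families.
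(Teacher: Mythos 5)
Your proposed strategy has a fundamental circularity problem: you want to ``start from the list in \eqref{eq:LS examples} and \eqref{eq:unknowns} and eliminate all groups not on the claimed list,'' i.e.\ to use Theorem \ref{thm:QMmazur_groups} as the launching point. But in the paper the logical order is the reverse: Theorem \ref{thm:gl2type classification} is proved first, then Theorem \ref{thm:QMmazur ptorsion} (via the ``reduction to $\GL_2$-type'' argument of Theorem \ref{theorem: reduction to GL2-type}) quotes Theorem \ref{thm:gl2type classification}, and Theorem \ref{thm:QMmazur_groups} in turn rests on Theorem \ref{thm:QMmazur ptorsion}. The introduction says this explicitly. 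So you cannot assume the $2^i3^j$ constraint, let alone the full list of fourteen candidate groups, as given; in the $\GL_2$-type case you must re-derive even the statement ``$\ell \mid \#A(\Q)_{\tors} \Rightarrow \ell \in \{2,3\}$'' from scratch. The paper does this (Lemma \ref{lem: honda-tate}, Lemma \ref{lem:goodawayfromell}, Proposition \ref{prop: no 5 or 7}) by a genuinely different mechanism than anything in your sketch: bad reduction forces totally additive reduction (Corollary \ref{corollary:gl2-purelyadditive}), the N\'eron component group injects the prime-to-$p$ torsion (Lemma \ref{lem:prime-to-p-additive}), and the component group order is controlled by the degree of the good reduction field, which is bounded via Proposition \ref{prop: component group killed by good reduction field} and Lemma \ref{lemma: extension good reduction prime to 6}. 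That chain of local arguments is the actual engine of the proof, and it is entirely absent from your proposal.

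Your second issue is that the elimination steps you gesture at do not actually constitute arguments. You suggest that ``a Galois-stable subgroup of $A[4]$ or $A[2]$ incompatible with the Hecke/quadratic action together with the Weil pairing'' should kill $(\Z/2\Z)^3$ and $\Z/4\Z$, and that ``a parity or pairing obstruction'' should kill $\Z/6\Z$ and $\Z/2\Z\times(\Z/3\Z)^2$. But $\End(A)\otimes\F_2$ is a quadratic $\F_2$-algebra and its module structure by itself does not forbid $(\Z/2\Z)^3\subset A(\Q)$ or a rational point of order $4$; indeed the paper leaves open whether $\Z/4\Z$ and $\Z/2\Z\times\Z/4\Z$ occur for general $\calO$-PQM surfaces, so any argument ruling them out must genuinely use $\GL_2$-type, not just the quaternionic module structure. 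The paper's route is quite different: Proposition \ref{prop:gl2 constrains} shows $A(\Q)_{\tors}\subset(\Z/2\Z)^3$ or $\subset(\Z/3\Z)^2$ by the local component-group argument plus a modularity computation at conductors $2^a3^b$ (there are no PQM eigenforms of level $36$, which kills a potential point of order $6$). Then Proposition \ref{prop: GL2 type no (Z/2)^3 torsion} eliminates $(\Z/2\Z)^3$ via a twisting argument that reduces to good reduction outside $\{2\}$ (Corollary \ref{cor: no gl2type good red outside 2}), and this uses Proposition \ref{prop: (Z/2)^3 implies good red quad extension}, which in turn requires the enhanced Galois representation of \S\ref{enhanced representation} and a nontrivial quaternionic computation mod $4$ (Lemma \ref{lemma: mod 4 quaternion calculation (z/2)^3}). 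None of this can plausibly be replaced by the Weil-pairing bookkeeping you describe, and the $\Z/4\Z$ vs.\ $(\Z/3\Z)^2$ dichotomy is really a local Lorenzini-type statement (Proposition \ref{prop:gl2 type lorenzini variant}), not a pairing obstruction. For the realizability half your plan (cite \cite{LagaShnidman} for four of the five groups and construct one more explicit family, then check the modulus is nonconstant) does match what the paper does (\cite[\S9]{LagaShnidman} plus Proposition \ref{prop:Z2Z2}).
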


\begin{remark}The proof shows that if the maximality assumption on $\calO$ is omitted, then a similar classification holds except we do not know whether the group $(\Z/2\Z)^3$ arises or not. 
\end{remark}

Another natural class of abelian surfaces is Jacobians of genus two curves. Recall that for geometrically simple abelian surfaces, being a Jacobian is equivalent to carrying a principal polarization. Thus, the following result gives a near-classification for rational torsion subgroups of genus two Jacobians over $\Q$ in the $\calO$-$\mathrm{PQM}$ locus of the Siegel modular 3-fold  $\mathcal{A}_2$ parameterizing principally polarized abelian surfaces.

\begin{thm}\label{thm:PQM Jacobians}
 Let $J$ be an $\calO$-$\mathrm{PQM}$ surface over $\Q$ which is the Jacobian of a genus two curve over $\Q$. Then $J(\Q)_{\tors}$ is isomorphic to one of the following groups:
  \begin{gather*} \{1\}, \Z/2\Z, \Z/3\Z, (\Z/2\Z)^2,\Z/6\Z, (\Z/3\Z)^2, \\ \Z/4\Z, \Z/2\Z \times \Z/4\Z, (\Z/2\Z)^2 \times \Z/3\Z, \Z/4\Z \times \Z/3\Z, (\Z/4\Z)^2 
  \end{gather*}
   In particular, $\# J(\Q)_{\tors} \leq 16$.
\end{thm}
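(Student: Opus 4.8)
The plan is to bootstrap from Theorem~\ref{thm:QMmazur_groups}. A Jacobian of a genus two curve over $\Q$ carries a principal polarization over $\Q$ and, being PQM, is geometrically simple; hence Theorem~\ref{thm:QMmazur_groups} already tells us that $J(\Q)_{\tors}$ is one of the fourteen groups in \eqref{eq:LS examples} and \eqref{eq:unknowns}. Comparing with the list in the statement, it remains to rule out exactly three of them: $\Z/2\Z\times(\Z/3\Z)^2$, $(\Z/2\Z)^3$, and $(\Z/2\Z)^2\times\Z/4\Z$. The structure we have not yet exploited is the principal polarization $\lambda\colon J\xrightarrow{\sim}J^\vee$ over $\Q$, which supplies a perfect alternating $\Gal_{\Q}$-equivariant Weil pairing $e_n\colon J[n]\times J[n]\to\mu_n$ for each $n$; the idea is to play this pairing against the action of $\calO$ on the torsion.

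First I would eliminate the two groups whose rational $2$-torsion has $\F_2$-dimension $3$, namely $(\Z/2\Z)^3$ and $(\Z/2\Z)^2\times\Z/4\Z$. Here maximality of $\calO$ is essential: $\calO\otimes\Z_2$ is either $\M_2(\Z_2)$ or the maximal order in the quaternion division algebra over $\Q_2$, and in either case $V_2(J)$ is, up to isomorphism, the unique faithful module of $\Q_2$-dimension $4$, so $T_2(J)$ is free of rank one over $\calO\otimes\Z_2$ and $J[2]\cong\calO/2\calO$ as a $\calO/2\calO$-module. Since $\Gal_{\Q}$ acts on $J[2]$ semilinearly over its action on $\calO/2\calO$, which is inner by Skolem--Noether, each Galois element acts on $J[2]$ in the normal form $x\mapsto gxh$, and a short computation shows such a map always has fixed space of even $\F_2$-dimension. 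Hence $\dim_{\F_2}J(\F_p)[2]=\dim_{\F_2}J[2]^{\mathrm{Frob}_p}$ is even at every prime $p$ of good reduction; taking $p\neq 2$ of good reduction (one exists since the set of bad primes is finite) and using injectivity of reduction on prime-to-$p$ torsion gives $\dim_{\F_2}J(\Q)[2]\in\{0,1,2,4\}$. The value $4$ would force $(\Z/2\Z)^4\subseteq J(\Q)_{\tors}$, which is not consistent with Theorem~\ref{thm:QMmazur_groups}; so $\dim_{\F_2}J(\Q)[2]\leq 2$, and both groups are excluded.

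Next I would exclude $\Z/2\Z\times(\Z/3\Z)^2$. The elementary observation is that any $\Gal_{\Q}$-stable subgroup $G\subseteq J[\ell]$ with $\ell$ odd pairs into $\mu_\ell(\Q)=1$, hence is isotropic, so $\dim_{\F_\ell}G\leq 2$, with equality forcing $G$ Lagrangian. Applied with $\ell=3$ to $G=J(\Q)[3]\cong(\Z/3\Z)^2$, this makes $J(\Q)[3]$ a rational Lagrangian with $J[3]/J(\Q)[3]\cong\mu_3\oplus\mu_3$ as $\Gal_{\Q}$-modules. Now one brings in the quaternionic action. Assuming first $3\nmid\disc\calO$, write $J[3]\cong\F_3^2\otimes W$ with $\calO/3\calO\cong\M_2(\F_3)$ acting on the first factor and $W$ a two-dimensional multiplicity space; compatibility of $e_3$ with the Rosati involution of $\lambda$ on $\calO/3\calO$ forces $e_3$ to decompose as a product of a bilinear form on $\F_3^2$ and one on $W$. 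One then classifies which subspaces can simultaneously be a trivial $\Gal_{\Q}$-submodule and Lagrangian, and matches the induced Galois action on the quotient against the prescribed $\mu_3\oplus\mu_3$; this pins down the field of quaternionic multiplication (forcing $\Q(\sqrt{-3})$), and combining this rigidity with the surviving rational $2$-torsion point produces the contradiction. The case $3\mid\disc\calO$, where $\calO\otimes\Z_3$ is a division order, is treated by the analogous but more rigid argument.

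After these exclusions, $J(\Q)_{\tors}$ is one of the eleven groups in the statement, and the bound $\#J(\Q)_{\tors}\leq 16$ is read off from the list. I expect the genuine obstacle to be the exclusion of $\Z/2\Z\times(\Z/3\Z)^2$: unlike the $2$-torsion cases, the Weil pairing alone does not see it (full rational $3$-torsion can legitimately be Lagrangian), so one must orchestrate the symplectic structure, the $\calO$-module structure of $J[3]$, the behaviour at primes dividing $\disc\calO$, and the rational $2$-torsion point all at once. Disposing of the degenerate configurations in the $\F_3^2\otimes W$ analysis, and checking that the conclusion forcing the field of quaternionic multiplication to equal $\Q(\sqrt{-3})$ really is incompatible with the presence of the $2$-torsion point, is where the real work lies.
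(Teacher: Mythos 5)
Your reduction to Theorem~\ref{thm:QMmazur_groups} is correct, and you rightly identify the three groups to eliminate: $\Z/2\Z\times(\Z/3\Z)^2$, $(\Z/2\Z)^3$, and $(\Z/2\Z)^2\times\Z/4\Z$. However, both of your exclusion arguments have genuine gaps.

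For the two groups with $\dim_{\F_2}J[2](\Q)=3$, your key claim -- that the normal form $x\mapsto gxh$ on $J[2]\cong\calO/2\calO$ always has even-dimensional fixed space -- breaks down precisely when $2\mid\disc(B)$. In that case $\calO/2\calO$ is not simple (it is the non-semisimple algebra of \eqref{equation: description OB mod p for p ramified}), so Skolem--Noether does not apply and the automorphism of $\calO/2\calO$ induced by the $\Gal_\Q$-action on $\calO$ need not be inner; it is conjugation by an element of $\calO\cap N_{B^\times}(\calO)$ that may fail to be a unit mod $2$. Indeed, Lemma~\ref{lemma: involution with (z/2)^3 fixed points} exhibits exactly such a conjugation with $(\Z/2\Z)^3$ fixed points, and Theorem~\ref{theorem: fixed points of O/N} shows $(\calO/2\calO)^G$ can be $(\Z/2\Z)^3$ for $G=D_1$ or $D_2$. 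This is why $(\Z/2\Z)^3$ remains in the list \eqref{eq:unknowns} for general $\calO$-PQM surfaces: the $\calO$-module structure alone cannot rule it out. Notice that your $2$-torsion argument never invokes the principal polarization, which is the one piece of extra structure a Jacobian has; the paper's Proposition~\ref{prop:(Z/2)^3 constraints} uses the polarization essentially, via the correspondence of Corollary~\ref{cor:quadsubring via polarization} and the degree relation \eqref{equation: degree polarization vs norm positive involution}, to deduce that $(\Z/2\Z)^3\subset J[2](\Q)$ forces the primitive polarization to have even degree -- hence non-principal.

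For $\Z/2\Z\times(\Z/3\Z)^2$, you sketch a Weil-pairing plus $\F_3^2\otimes W$ decomposition argument but acknowledge you have not completed it; as written it does not constitute a proof. The paper's route (Proposition~\ref{prop:no18}) is much shorter and quite different: Lemma~\ref{lemma:endofield constraint jacobian} (a consequence of \cite[Theorem 3.4]{DR04} on types of polarized QM surfaces) shows the endomorphism field of a principally polarized $\calO$-PQM surface has Galois group $C_2$ or $C_2\times C_2$, while Proposition~\ref{prop: D3,D6 case has little 2-torsion, D2,D4 case has little 3-torsion}(b) shows that if $G\simeq D_2$ or $D_4$ then $A[3](\Q)\subset\Z/3\Z$; so full rational $3$-torsion forces $G\simeq C_2$, i.e.\ $\GL_2$-type, and Theorem~\ref{thm:gl2type classification} then rules it out. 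If you want to salvage your $\F_3^2\otimes W$ approach you would need to make the ramified case $3\mid\disc(B)$ precise and to complete the matching against $\mu_3\oplus\mu_3$, but the endomorphism-field route is cleaner and is what the paper actually does.
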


The first six groups in the list above can be realized as $J(\Q)_{\tors}$; see Table \ref{PQMequationstable}. We do not know whether they can be realized infinitely often by $\calO$-$\mathrm{PQM}$ Jacobians over $\Q$.

\subsection{Methods}\label{subsec: methods}

We first describe the proof of Theorem \ref{thm:gl2type classification}, which is almost entirely local in nature.  Let $A$ be an $\calO$-$\mathrm{PQM}$ surface over $\Q$ of $\GL_2$-type. 
We show that $A$ has totally additive reduction at every prime $p$ of bad reduction, meaning that the identity component of the special fiber of the N\'eron model at $p$ is unipotent.
It is well known that in this case the prime-to-$p$ torsion subgroup of $A(\Q_p)$ embeds in the N\'eron component group of $A$ at $p$, and that this component group is controlled by the smallest field over which $A$ acquires good reduction. 
Our proof of Theorem \ref{thm:gl2type classification} therefore involves an analysis of this field extension, in particular we show that its degree is coprime to $\ell$ for any prime $\ell\geq 5$.
Applying these local arguments requires the existence of suitable primes of bad reduction, and breaks down when $A$ has conductor of the form $2^n$, $3^n$, or $6^4$. We handle these cases seperately by invoking the modularity theorem. 
It turns out there is a single isogeny class whose conductor is of this form, namely the isogeny class of conductor $3^{10}$, corresponding to a Galois orbit of newforms of level $3^5=243$, with LMFDB label \lmfdbmf{243.2.a.d}.  

To prove Theorem \ref{thm:QMmazur ptorsion}, we need to exclude the existence of an $\calO$-$\mathrm{PQM}$ surface $A$ over $\Q$ such that $A[\ell](\Q)$ is nontrivial for some prime $\ell \geq 5$.
By studying the interaction between the Galois action on the torsion points of $A$ and the Galois action on $\End(A_{\Qbar})$, we show that such an $A$ must necessarily be of $\GL_2$-type, so we may conclude using Theorem \ref{thm:gl2type classification}. The methods of this `reduction to $\GL_2$-type' argument are surprisingly elementary. Aside from some calculations in the quaternion order $\calO$, the key observation is that in the non-$\GL_2$-type case, the geometric endomorphism algebra $\End^0(A_{\Qbar})$ contains a (unique) Galois-stable imaginary quadratic subfield, which is naturally determined by the (unique) polarization defined over $\Q$.

To prove Theorems \ref{thm:optimalbound} and \ref{thm:QMmazur_groups}, we must constrain the remaining possibilities for $A(\Q)_{\tors}$, which is a group of order $2^i3^j$ by Theorem \ref{thm:QMmazur ptorsion}. 
Our arguments here are ad hoc, relying on a careful analysis of the reduction of $A$ modulo various primes via Honda--Tate theory (with the aid of the LMFDB \cite{lmfdb}) to constrain the possible torsion groups, reduction types, and Galois action on the endomorphism ring. 
The proof of Theorem \ref{thm:PQM Jacobians} is similar, but using the relationship between endomorphisms, polarizations, and level structures.



\subsection{Previous work}
Rational torsion on $\calO$-$\mathrm{PQM}$ surfaces was  previously considered in the Ph.D.\ thesis of Clark \cite[Chapter 5]{ClarkThesis}, but see the author's \href{http://alpha.math.uga.edu/~pete/papers.html}{caveat emptor}, indicating that the proofs of the main results of that chapter are incomplete.

\subsection{Future directions}
 Our methods use the maximality assumption on $\End(A_{\Qbar})$ in various places.  It would be interesting and desirable to relax this condition, especially since groups of order 12 and 18 can indeed arise in genus two Jacobians with non-maximal PQM; see, for example, \href{https://www.lmfdb.org/Genus2Curve/Q/20736/l/373248/1}{the curve} $y^2 = 24x^5 + 36x^4- 4x^3-12x^2+1$ and \cite[Remark 7.17]{LagaShnidman}. 
It would also be interesting to systematically analyze rational points on (Atkin--Lehner quotients of) Shimura curves with level structure, for example to determine whether the remaining groups \eqref{eq:unknowns} arise or not. We hope to address this in future work.

\subsection{Organization}
Sections \ref{section: quaternionic algebra}-\ref{section:PQM surfaces over localfinite fields} are preliminary, and the remaining sections are devoted to proving the main theorems of the introduction.
As explained in \S\ref{subsec: methods}, we start by proving Theorem \ref{thm:gl2type classification} because the other theorems depend on it.

Those who wish to take the shortest route to Theorem \ref{thm:gl2type classification} (minus eliminating $(\Z/2\Z)^3$) only need to read Sections \ref{subsection: endomorphism field}, \ref{section:PQM surfaces over localfinite fields} and \ref{section: torsion gl2type}.
Eliminating the last group $(\Z/2\Z)^3$ in Proposition \ref{prop: GL2 type no (Z/2)^3 torsion} requires more algebraic preliminaries from Section \ref{section: quaternionic algebra} and \ref{section: galois actions, polarizations and endomorphisms}.

\subsection{Acknowledgements}

We would like to thank Davide Lombardo for interesting discussions related to this project.
Schembri and Voight were supported by a Simons Collaboration Grant (550029, to JV). 
Part of this project was carried out while Laga visited Shnidman at the Hebrew University of Jerusalem. Shnidman was funded by the European Research Council (ERC, CurveArithmetic, 101078157).

\subsection{Notation}
We fix the following notation for the remainder of this paper:
\begin{itemize}
    \item $B$: an indefinite (so $B\otimes \mathbb{R} \simeq \Mat_2(\mathbb{R})$) quaternion algebra over $\Q$ of discriminant $\disc(B)\neq 1$;
    \item $\trd(b)$, $\nrd(b)$ and $\bar{b}$: reduced trace, reduced norm, and canonical involution of an element $b\in B$ respectively; 
    \item $\calO$: a choice of maximal order of $B$; 
    \item $\bar F$: a choice of algebraic closure of a field $F$;
    \item $\Gal_F$: the absolute Galois group of $F$;
    \item $\End(A)$: the endomorphism ring of an abelian variety $A$ defined over $F$;
    \item $\End^0(A)=\End(A) \otimes \Q$: the endomorphism algebra of $A$;
    \item $\NS(A)$: the N\'eron--Severi group of $A$;
    \item $A_{K}$: base change of $A/F$ along a field extension $K/F$;
    \item $\left(\frac{m,n}{F}\right)$: the quaternion algebra over $F$ with basis $\{1,i,j,ij\}$ such that $i^2= m, j^2=n$ and $ij = -ji$;
    \item $D_n:$ the dihedral group of order $2n$.
\end{itemize}
We say an abelian surface $A$ over a field $F$ is an \defi{$\calO$-$\mathrm{PQM}$ surface} if there is an isomorphism $\End(A_{\bar{F}})\simeq \calO$. 
$\calO$-$\mathrm{PQM}$ surfaces over $\Q$ are the central object of interest in this paper, but some of our results apply to abelian surfaces whose geometric endomorphism ring is a possibly non-maximal order in a non-split quaternion algebra. We call such surfaces simply \defi{PQM surfaces}.

We emphasize that this is a restrictive definition of `PQM': we require that $\End(A_{\Fbar})$ does not merely contain such a quaternion order, but is equal to it. In particular, under our terminology, a $\mathrm{PQM}$ surface $A$ is geometrically simple.

Concerning actions: we will use view Galois actions as \emph{right} actions. We will view $\End(A)$ as acting on $A$ on the \emph{left}. 
If a group $G$ acts on a set $X$ on the right, we write $X^G$ for the set of $G$-fixed points.

\section{Quaternionic arithmetic}\label{section: quaternionic algebra}

This section collects some algebraic calculations in the quaternion order $\calO$. 
It can be safely skipped on a first pass; the reader can return back to it when these calculations are used.

\subsection{The normalizer of a maximal order}\label{subsec:normalisers}

We recall the following characterization of the normalizer $N_{B^{\times}}(\calO)$ of $\calO$ in $B^{\times}$ (with respect to the conjugation action).

\begin{lemma}\label{lemma: description normalizer maximal order}
An element of $B^{\times}/\Q^{\times}$ lies in $N_{B^{\times}}(\calO)/\Q^{\times}$ if and only if it can be represented by an element of $\calO$ of reduced norm dividing $\disc(B)$.
\end{lemma}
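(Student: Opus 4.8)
The plan is to prove both directions of the claimed characterization, relying on standard facts about maximal orders in quaternion algebras over $\Q$ (reduced norms, two-sided ideals, and the Eichler/local structure). Throughout I fix $D = \disc(B)$, so $D$ is a squarefree product of an even number of primes, and I use that $\calO$ is maximal.

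\textbf{The easy direction.} Suppose $x \in \calO$ has $\nrd(x) = n \mid D$. Then $n$ is squarefree, and $x$ generates a two-sided $\calO$-ideal $\calO x \calO$ of reduced norm $n$. Since $n \mid D$, this ideal is a product of the ramified primes dividing $n$, each of which is the unique two-sided prime ideal $\frakP_p = \frakp$ above $p$ with $\frakP_p^2 = p\calO$. The key point is that for each ramified $p$, the two-sided prime $\frakP_p$ is \emph{principal}, generated by any element of reduced norm $p$ (locally $\frakP_p$ is the maximal ideal of the unique maximal order of the division algebra $B_p$, generated by a uniformizer of norm $p$), and that $\frakP_p$ normalizes $\calO$: indeed $\frakP_p^{-1}\calO\frakP_p = \calO$ since $\frakP_p$ is the unique maximal two-sided ideal locally at $p$ and $\calO$ is the unique maximal order everywhere. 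Hence $x$, being a generator of $\prod_{p \mid n}\frakP_p$, conjugates $\calO$ to itself, i.e. $x \in N_{B^\times}(\calO)$. (Equivalently, one checks $x\calO = \calO x$ directly, prime by prime, and $x \bar x = n \in \Q^\times$ gives $x^{-1}\calO x = \bar x \calO x / n = \calO$.)

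\textbf{The hard direction.} Conversely, take $x \in N_{B^\times}(\calO)$; after scaling by $\Q^\times$ we may assume $x \in \calO$ and that $\nrd(x)$ is a positive integer not divisible by the square of any prime that we can remove by rescaling — more carefully, we may choose the representative $x \in \calO$ so that $x\calO$ is not contained in $p\calO$ for any prime $p$, i.e. $x$ is ``primitive''. Since $x$ normalizes $\calO$, the left ideal $\calO x$ equals the right ideal $x\calO$, so $I := \calO x \calO = \calO x = x \calO$ is a two-sided $\calO$-ideal, principal on both sides, with $\nrd(I) = \nrd(x)^2$... — here I need to be careful with normalization: the reduced norm of the two-sided ideal $\calO x$ as a $\Z$-ideal is $\nrd(x)^2$ only if I use the lattice-index normalization, whereas the ``reduced norm'' $\nrd(\calO x)$ in the sense that $\nrd(\frakP_p) = p$ satisfies $\nrd(\calO x) = \nrd(x)$. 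With the latter (correct) normalization, $\calO x$ is an integral two-sided $\calO$-ideal, hence factors as $\calO x = \prod_p \frakP_p^{e_p} \cdot \frakm$ where $\frakP_p$ are the two-sided primes and $\frakm$ is... no — the crucial structural input is the classification of two-sided ideals of a maximal order: the group of two-sided fractional $\calO$-ideals is generated by the $\frakP_p$ for $p \mid D$ (with $\frakP_p^2 = p\calO$) together with the principal ideals $q\calO$ for all primes $q$. Since $x$ is primitive, $\calO x$ contains no factor $q\calO$, so $\calO x = \prod_{p \mid D} \frakP_p^{e_p}$ with $e_p \in \{0,1\}$. Taking reduced norms, $\nrd(x) = \prod_{p \mid D, e_p = 1} p$, which divides $D$. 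That is the whole argument; the main obstacle is getting the normalization of $\nrd$ on ideals consistent and citing (or briefly justifying) the two facts that (a) every two-sided fractional ideal of a maximal order is a product of the ramified two-sided primes and rational scalars, and (b) each two-sided ideal in this normalizer situation is principal — both are standard (e.g. Voight, \emph{Quaternion Algebras}, Ch.~18 and the discussion of the Atkin--Lehner / Picard group of two-sided ideals), so the proof is short once these are invoked.

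\textbf{Summary of steps, in order.} First, recall the classification of two-sided $\calO$-ideals and the principality of the ramified two-sided primes $\frakP_p$ ($p \mid D$), fixing the normalization $\nrd(\frakP_p) = p$. Second, prove $(\Leftarrow)$: an element of norm $n \mid D$ generates $\prod_{p \mid n}\frakP_p$, which normalizes $\calO$. Third, prove $(\Rightarrow)$: given $x \in N_{B^\times}(\calO)$, choose a primitive integral representative in $\calO$; observe $\calO x = x\calO$ is a two-sided integral ideal with no rational scalar factor, hence equals $\prod_{p \mid D}\frakP_p^{e_p}$ with $e_p \in \{0,1\}$; conclude $\nrd(x) = \prod_{e_p = 1} p \mid D$. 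I expect the normalization bookkeeping in the third step to be the only place requiring care.
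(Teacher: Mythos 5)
Your proof is correct and arrives at the result by a genuinely different route from the paper. The paper works with local normalizers: it cites that $N_{(B\otimes\Q_p)^\times}(\calO\otimes\Z_p)$ equals $\Q_p^\times(\calO\otimes\Z_p)^\times$ when $p\nmid\disc(B)$ and all of $(B\otimes\Q_p)^\times$ when $p\mid\disc(B)$, and then glues a single global scalar from the local ones using that $\Z$ has class number one. You instead package the problem as a statement about the two-sided principal ideal $\calO x = x\calO$, invoke the classification of two-sided fractional ideals of a maximal order (products of the ramified two-sided primes $\frakP_p$ and rational scalars), and use a primitivity normalization of the representative $x\in\calO$ to eliminate scalar factors. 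Both proofs rest on the same local structure, and both use class number one of $\Z$ — the paper explicitly, you implicitly, when choosing a primitive integral representative. Your version is tidier as a global statement once the two-sided ideal classification is granted; the paper's is shorter and self-contained given the two local citations. Two rough spots to smooth: the parenthetical ``$x^{-1}\calO x=\bar x\calO x/n=\calO$'' is circular as written, since verifying $\bar x\calO x = n\calO$ is exactly the assertion being proved; and in the $(\Leftarrow)$ direction the inference from ``$x$ generates the two-sided ideal $\prod_{p\mid n}\frakP_p$'' to ``$x$ normalizes $\calO$'' should be made explicit by the prime-by-prime observation that $\calO_p x = x\calO_p$ at every $p$ (at $p\mid n$ because $x$ is a local uniformizer, elsewhere because $x\in\calO_p^\times$). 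Also, $\nrd(x)$ may be negative, so divisibility should be read up to sign. None of these is a real gap.
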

\begin{proof}
An element $b\in B$ lies in $N_{B^{\times}}(\calO)$ if and only if it lies in the local normalizer $N_{(B\otimes \Q_p)^{\times}}(\calO\otimes \Z_p)$ for all primes $p$.
If $p$ does not divide $\disc(B)$, then this normalizer group equals $\Q_p^{\times}(\calO\otimes \Z_p)^{\times}$ \cite[(23.2.4)]{VoightQA}.
If $p$ divides $\disc(B)$, this group equals $(B\otimes \Q_p)^{\times}$ ((23.2.8) in op. cit.).
If $b$ has norm dividing $\disc(B)$, then this description shows that $b$ lies in all local normalizer groups. 
Conversely, if $b$ normalizes $\calO$ then this description shows that there exists a finite adele $(\lambda_p)_p$ such that $\lambda_p b \in (\calO\otimes\Z_p)^{\times}$ for all $p\nmid \disc(B)$ and such that $\nrd(\lambda_p b)$ has $p$-adic valuation $\leq 1$ for all $p\mid \disc(B)$.
Since $\Z$ has class number one, there exists $\lambda\in \Q^{\times}$ such that $\lambda\lambda_p^{-1}\in \Z_p^{\times}$ for all $p$ and so $\lambda b \in \calO$ has norm dividing $\disc(B)$, as desired.
\end{proof}

We recall for future reference that the quotient of $N_{B^{\times}}(\calO)/\Q^{\times}$ by the subgroup $\calO^{\times}/\{\pm 1\}$ is by definition the Atkin--Lehner group $W$ of $\calO$, an elementary abelian $2$-group whose elements can be identified with positive divisors of $\disc(B)$.

\subsection{Dihedral actions on \texorpdfstring{$\calO$}{O}}\label{subsection: dihedral actions on calO}
For reasons that will become clear in \S\ref{subsection: endomorphism field}, we are interested in subgroups $G\subset \Aut(\calO)$
isomorphic to $D_n$ for some $n\in \{1,2,3,4,6\}$. 
In this section we describe these subgroups very explicitly.

By the Skolem--Noether theorem, every ring automorphism of $\calO$ is of the form $x\mapsto b^{-1}xb$ for some $b\in B^{\times}$ normalising $\calO$, and $b$ is uniquely determined up to $\Q^{\times}$-multiples. 
Therefore $\Aut(\calO) \simeq N_{B^{\times}}(\calO)/\Q^{\times}$.

If $b\in B^{\times}$, write $[b]$ for its class in $B^{\times}/\Q^{\times}$.
\begin{lemma}
\label{lemma: centraliser of order 2 subgroup of automorphism group}
Every element of $N_{B^{\times}}(\calO)/\Q^{\times}$ of order $2$ is represented by an element $b\in \calO$ such that $b^2=m\neq 1$ is an integer dividing $\disc(B)$.
Moreover $\calO^{\langle b\rangle} = \{x\in \calO \mid b^{-1}xb = x \}$ is isomorphic to an order in $\Q(\sqrt{m})$ containing $\Z[\sqrt{m}]$.
\end{lemma}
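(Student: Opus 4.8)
The plan is to produce a well-chosen representative of the order-two class, square it, and then recognize the fixed subring as the intersection of $\calO$ with a quadratic subfield.

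First I would invoke Lemma~\ref{lemma: description normalizer maximal order} to represent the given class of order $2$ in $N_{B^{\times}}(\calO)/\Q^{\times}$ by an element $b\in\calO$ with $\nrd(b)\mid\disc(B)$. Since the class is nontrivial we have $b\notin\Q$, and since it has order $2$ we have $b^2\in\Q^{\times}$. Writing out the reduced characteristic polynomial $b^2-\trd(b)\,b+\nrd(b)=0$ gives $\trd(b)\,b=b^2+\nrd(b)\in\Q$, so $\trd(b)=0$ (as $b\notin\Q$), and therefore $b^2=-\nrd(b)=:m\in\Z$. Because $B$ is a division algebra and $b\neq 0$ we have $m\neq 0$; and if $m=k^2$ were a perfect square, then $(b-k)(b+k)=0$ would force $b=\pm k\in\Q$, a contradiction, so $m$ is not a perfect square, in particular $m\neq 1$ and $\Q(\sqrt{m})$ is a quadratic field. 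Finally $|m|=|\nrd(b)|$ divides $\disc(B)$, so $m\mid\disc(B)$; this settles the first assertion.

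For the second assertion, I would observe that $\calO^{\langle b\rangle}$ is exactly the centralizer of $b$ in $\calO$, that is, $\calO\cap C_B(b)$. The element $b$ generates the quadratic subfield $\Q(b)\cong\Q(\sqrt{m})$ of $B$, and any commutative subalgebra of the quaternion division algebra $B$ has $\Q$-dimension at most $2$; hence $C_B(b)=\Q(b)$. Therefore $\calO^{\langle b\rangle}=\calO\cap\Q(\sqrt{m})$, which is a finitely generated $\Z$-module containing the $\Q$-basis $\{1,b\}$ of $\Q(\sqrt{m})$ and hence an order in $\Q(\sqrt{m})$; and it contains $\Z[b]=\Z[\sqrt{m}]$ since $b\in\calO$.

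I do not expect a genuine obstacle here: the real content is already packaged in Lemma~\ref{lemma: description normalizer maximal order} and in the standard fact that a quadratic subfield of a quaternion division algebra is its own centralizer. The only points demanding a little care are the sign bookkeeping needed to deduce $m\mid\disc(B)$ from $\nrd(b)\mid\disc(B)$, and ruling out the degenerate cases $m=0$ and $m$ a perfect square.
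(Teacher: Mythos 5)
Your proof is correct and follows essentially the same route as the paper's: choose a representative $b\in\calO$ with $\nrd(b)\mid\disc(B)$ via Lemma~\ref{lemma: description normalizer maximal order}, deduce $\trd(b)=0$ and $b^2=-\nrd(b)=m$, rule out the trivial class, and identify $\calO^{\langle b\rangle}$ with $\calO\cap\Q(b)$. The paper's version is terser (it asserts that $b^2=-\nrd(b)$ is an integer and that the fixed subring is an order in $\Q(b)$ without spelling out the characteristic-polynomial and centralizer computations), but the underlying argument is identical.
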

\begin{proof}
By Lemma \ref{lemma: description normalizer maximal order}, we may choose a representative $b\in N_{B^{\times}}(\calO)$ lying in $\calO$ whose norm $\nrd(b)$ divides $\disc(B)$. 
Since the element has order $2$, $m := b^2 = -\nrd(b)$ is an integer. 
We have $m\neq 1$ since otherwise $b^2=1$ hence $b =\pm 1\in \Q^{\times}$, which is trivial in $N_{B^{\times}}(\calO)/\Q^{\times}$.
This implies $\calO^{\langle b\rangle} = \{x\in B\mid b^{-1}xb = x\}$ is an order in $B^{\langle g\rangle } = \Q(b)$ containing $\Z[b]\simeq\Z[\sqrt{m}]$, as claimed.
\end{proof}

\begin{lemma}\label{lemma: D2 subgroup Aut(O)}
    Let $G\subset N_{B^{\times}}(\calO)/\Q^{\times}$ be a subgroup isomorphic to $D_2 = C_2\times C_2$.
    Then there exist elements $i,j,k\in \calO$ such that $B$ has basis $\{1,i,j,k\}$, such that $i^2 = m$, $j^2 =n$ and $k^2 = t$ all divide $\disc(B)$, such that $ij = -ji$ and $ij \in \Q^{\times} k$, and such that $G  = \{1,[i],[j],[k]\}$.
    Moreover, $t$ equals $-mn$ up to squares.
\end{lemma}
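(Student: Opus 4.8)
The plan is to start from a $D_2$-subgroup $G = \{1, [g_1], [g_2], [g_1 g_2]\}$ of $N_{B^{\times}}(\calO)/\Q^{\times}$ and apply Lemma \ref{lemma: centraliser of order 2 subgroup of automorphism group} to each of the three order-$2$ elements. This produces representatives $i, j, k \in \calO$ with $i^2 = m$, $j^2 = n$, $k^2 = t$, all dividing $\disc(B)$, and with $[k] = [i][j]$, i.e.\ $ij \in \Q^{\times} k$. The first thing to check is that $i$ and $j$ anticommute: since $[i]$ and $[j]$ commute in $B^{\times}/\Q^{\times}$, we have $ij = \lambda\, ji$ for some $\lambda \in \Q^{\times}$; squaring (or using that $B^{\times}/\Q^{\times} \hookrightarrow \PGL_2$ and an element of order $2$ acts by an involution) forces $\lambda^2 = 1$, and $\lambda = 1$ would make $i, j$ generate a commutative subalgebra, contradicting that $[i] \neq [j]$ span a $D_2$ inside $\Aut(\calO)$; hence $\lambda = -1$.

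Once $ij = -ji$ is established, the elements $1, i, j, ij$ are linearly independent over $\Q$ (standard: any $\Q$-linear relation, conjugated by $i$ and by $j$, collapses), so $\{1, i, j, ij\}$ is a $\Q$-basis of $B$, and one computes $(ij)^2 = ij\,ij = -i^2 j^2 = -mn$. Since $[k] = [ij]$, we have $k = c\, ij$ for some $c \in \Q^{\times}$, whence $t = k^2 = c^2 (ij)^2 = -c^2 mn$, so $t \equiv -mn$ modulo squares, which is the final assertion. It remains only to confirm that $i, j, k$ themselves (not just rational multiples) can be taken in $\calO$ with norms dividing $\disc(B)$, which is exactly what Lemma \ref{lemma: centraliser of order 2 subgroup of automorphism group} guarantees for each order-$2$ element separately; we just need to record that these chosen integral representatives still satisfy $[k] = [i][j]$, which holds because the class $[k]$ was fixed in advance and $i, j$ were chosen to represent the other two classes.

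The main obstacle — really the only subtle point — is the anticommutation relation $ij = -ji$ together with ensuring the three integral representatives are mutually compatible: a priori Lemma \ref{lemma: centraliser of order 2 subgroup of automorphism group} gives each of $i, j, k$ only up to $\Q^{\times}$, so one must rescale to get genuine anticommutation $ij = -ji$ rather than merely $ij \in \Q^{\times} ji$, and check this rescaling does not disturb "$i^2 \mid \disc(B)$" etc. In fact rescaling $i \mapsto \lambda i$ with $\lambda \in \Q^{\times}$ changes $i^2$ by $\lambda^2$, so to keep $i^2$ an integer dividing $\disc(B)$ one argues as in the proof of Lemma \ref{lemma: centraliser of order 2 subgroup of automorphism group}: the integral representative of an order-$2$ class with norm dividing the (squarefree) discriminant is essentially unique up to sign, so the anticommutation relation is automatic once $i$ and $j$ are both taken to be such minimal integral representatives. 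Everything else — linear independence, the basis claim, and the computation of $(ij)^2$ — is a routine manipulation using only $i^2 = m$, $j^2 = n$, and $ij = -ji$.
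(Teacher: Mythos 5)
Your proof is correct and follows essentially the same route as the paper: pick integral representatives of the three involutions via Lemma \ref{lemma: centraliser of order 2 subgroup of automorphism group}, obtain $ij = \lambda\, ji$ from commutativity of $G$ inside $N_{B^{\times}}(\calO)/\Q^{\times}$, compare reduced norms to get $\lambda = \pm 1$, rule out $\lambda = 1$, and take norms once more for $t \equiv -mn \pmod{\Q^{\times 2}}$. The worry about rescaling is unfounded and, as you eventually note, dissolves on its own: once $\lambda \in \{\pm 1\}$ and $\lambda = 1$ is excluded, the relation $ij = -ji$ already holds on the nose for the representatives chosen, with no adjustment (and hence no danger to the conditions $i^2 = m \mid \disc(B)$, etc.).
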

\begin{proof}
    By Lemma \ref{lemma: centraliser of order 2 subgroup of automorphism group}, we can pick representatives $i,j,k \in \calO$ of the nontrivial elements of $G$ that each square to an integer dividing $\disc(B)$.
    Since $G$ is commutative, $ji = \lambda ij$ for some $\lambda \in \Q^{\times}$.
    Comparing norms shows that $\lambda=\pm 1$.
    If $\lambda=1$, then $ij = ji$ but this would imply that $B$ is commutative, contradiction.
    Therefore $ij  = -ji$.
    Finally, since $[i][j] = [k]$, $k \in \Q^{\times} ij$. Taking norms, we see that $t$ equals $-mn$ up to squares.
\end{proof}

\begin{lemma}\label{lemma: D4 subgroup Aut(O)}
    Let $G\subset N_{B^{\times}}(\calO)/\Q^{\times}$ be a subgroup isomorphic to $D_4$.
    Then there exists elements $i,j\in \calO$ such that $B$ has basis $\{1,i,j,ij\}$, such that $i^2= -1$, $j^2= m$ divides $\disc(B)$ and $ij=-ji$, and such that $G = \langle [1+i],[j]\rangle$.
    Moreover, $2\mid \disc(B)$.
\end{lemma}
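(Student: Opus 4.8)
The plan is to leverage the classification of finite subgroups of $N_{B^\times}(\calO)/\Q^\times \simeq \Aut(\calO)$ together with the explicit norm-form constraints coming from Lemma \ref{lemma: description normalizer maximal order}. Since $G \simeq D_4$ has a cyclic subgroup $C_4 = \langle \sigma \rangle$ of order $4$, I would first analyze this rotation subgroup. By Lemma \ref{lemma: description normalizer maximal order} we may represent the generator $\sigma$ by an element $b \in \calO$ with $\nrd(b) \mid \disc(B)$; since $\sigma^2 \neq 1$ in $N_{B^\times}(\calO)/\Q^\times$ but $\sigma^2$ has order $2$, the element $b^2$ is central up to scalars, i.e.\ $b^2 \in \Q^\times$, and comparing with Lemma \ref{lemma: centraliser of order 2 subgroup of automorphism group} applied to $[b^2]=[b]^2$ (of order $2$) forces $b^2$ to be a negative integer up to squares. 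The key point is that $\sigma$ has order exactly $4$: if $b^2 = m > 0$ up to squares then $b$ would generate a real quadratic subfield and $[b]$ would have order $2$, not $4$. So $b^2 < 0$ up to squares, and after scaling we may take $b^2 = -d$ with $d \mid \disc(B)$; the order-$4$ condition means $\Q(b) = \Q(\sqrt{-d})$ contains a primitive $4$th root of unity up to the $\Aut$-action, which pins down $i := b/\sqrt{d}$ (formally, one checks $[b]$ has order $4$ iff conjugation by $b$ is a nontrivial automorphism of $\Q(b)$, impossible since that field is commutative — so I need to be more careful here; see below).

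More carefully: conjugation by $b$ acts on $\calO$ with order $4$, so $b \notin Z(B)$ and the automorphism $x \mapsto b^{-1}xb$ restricted to the centralizer $\Q(b)$ is trivial, meaning $[b]$ acting on $\calO$ has order equal to the order of $b$ in $B^\times/\Q^\times$. For this to be $4$ we need $b^2 \in \Q^\times$ with $b^2$ not a square, and the subfield $\Q(b)$ must be such that... — actually the cleanest route: let $\tau \in G$ be a reflection (order $2$, not in $\langle\sigma\rangle$). Then $\tau \sigma \tau = \sigma^{-1}$. Represent $\tau$ by $j \in \calO$ with $j^2 = m \mid \disc(B)$ (Lemma \ref{lemma: centraliser of order 2 subgroup of automorphism group}) and $\sigma$ by $c \in \calO$ with $c^2 = n \in \Q^\times$. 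The relation $jcj^{-1} = \pm c^{-1}$ up to $\Q^\times$ combined with $c^2 = n$ gives $jcj^{-1} \in \Q^\times c$, so in fact $\langle c, j\rangle$ spans a quaternion subalgebra $\left(\frac{n,m}{\Q}\right) = B$ with $cj = -jc$. Setting $i := c/\sqrt{n}$ formally (or: one shows $c$ can be chosen with $c^2 = -1$), the subgroup generated by $[1+i]$ and $[j]$ — note $(1+i)^2 = 2i$ so $[(1+i)]$ has order $4$ precisely when $i^2 = -1$ — recovers $G = \langle [1+i], [j]\rangle$. The remaining claim $2 \mid \disc(B)$ then follows because $i^2 = -1$ forces $\Q(i) = \Q(\sqrt{-1})$ to embed in $B$, and more to the point, $1+i \in \calO$ normalizes $\calO$ with $\nrd(1+i) = 2$, so by Lemma \ref{lemma: description normalizer maximal order} we need $2 \mid \disc(B)$ (since $[1+i] \neq 1$, its norm must be a nontrivial divisor of $\disc(B)$).

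The main obstacle I anticipate is the normalization step: going from an abstract order-$4$ element of $N_{B^\times}(\calO)/\Q^\times$ to a genuine $i \in \calO$ with $i^2 = -1$ (not merely $i^2 = $ negative square-free integer), while simultaneously arranging $1+i \in \calO$. The issue is that $b^2 = -d$ with $d \mid \disc(B)$ and $d$ squarefree need not give $-1$; one must use that $[b]$ has order $4$ in $\Aut(\calO)$ — and since $\Aut(\calO) \simeq N_{B^\times}(\calO)/\Q^\times$ which contains $\calO^\times/\{\pm1\}$ as a subgroup with the Atkin–Lehner quotient $W$ an elementary $2$-group, an order-$4$ element must come from $\calO^\times$, i.e.\ from a unit of the quaternion order. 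Units of reduced norm $1$ in $\calO$ of order $4$ in $\calO^\times/\{\pm 1\}$ satisfy $b^2 = -1$ exactly. This is where maximality and the structure of $\calO^\times$ enter, and it forces $d(B)$ to be divisible by a discriminant of a definite quaternion order containing $\Z[i]$; combined with the Atkin–Lehner element $[j]$ of norm $m$, a short case analysis on which divisors of $\disc(B)$ can occur yields $2 \mid \disc(B)$. I would structure the final write-up as: (1) extract $C_4 = \langle \sigma\rangle$ and show $\sigma \in \calO^\times/\{\pm1\}$, hence pick $i \in \calO^\times$ with $i^2 = -1$; (2) extract a reflection $[j]$ with $j^2 = m \mid \disc(B)$ and $ij = -ji$, giving the basis $\{1,i,j,ij\}$ and $B = \left(\frac{-1,m}{\Q}\right)$; (3) verify $(1+i)^2 = 2i$ so $[1+i]$ has order $4$ and $G = \langle [1+i],[j]\rangle$; (4) since $1+i \in \calO$ normalizes $\calO$ with $\nrd(1+i)=2 \neq 1$, Lemma \ref{lemma: description normalizer maximal order} gives $2 \mid \disc(B)$.
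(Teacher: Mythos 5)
Your proposal contains a genuine error in the key reduction step, centered on a confusion between the order of an element in $\calO^{\times}$ and its order in $N_{B^{\times}}(\calO)/\Q^{\times}$. You claim that the generator $\sigma$ of $C_4 \subset D_4$ "must come from $\calO^{\times}$" and that units of order $4$ in $\calO^{\times}/\{\pm 1\}$ satisfy $b^2 = -1$. Both parts are wrong. If $b \in \calO^{\times}$ has $b^2 = -1$, then $[b]^2 = [b^2] = [-1] = 1$ in $N_{B^{\times}}(\calO)/\Q^{\times}$, so $[b]$ has order $2$, not $4$. In fact $\calO^{\times}/\{\pm 1\}$ contains no elements of order $4$ at all: finite-order elements of $\calO^{\times}$ are roots of unity of order in $\{1,2,3,4,6\}$ (since $B$ is a quaternion algebra over $\Q$), and their images modulo $\{\pm 1\}$ have order in $\{1,2,3\}$. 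For the same reason, your later claim that $\sigma$ can be represented by $c \in \calO$ with $c^2 = n \in \Q^{\times}$ is also untenable: any such $[c]$ squares to $[n] = 1$, so has order at most $2$ and cannot generate the $C_4$.

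The correct statement—which your mid-paragraph discussion gropes toward but your structured write-up contradicts—is that it is $\sigma^2$, the central involution of $D_4$, that lies in $\calO^{\times}/\{\pm 1\}$ (because $\sigma^2$ is a square, hence maps trivially to the elementary $2$-group $W$), and it is represented by a unit $i$ with $i^2 = -1$. One then needs to show that $\sigma$, as a square root of $[i]$ in $N_{B^{\times}}(\calO)/\Q^{\times}$, must equal $[1 \pm i]$: solving $c^2 = \lambda i$ with $\lambda \in \Q^{\times}$ forces $c \in \Q^{\times}(1+i) \cup \Q^{\times}(1-i)$, and $[1-i] = [1+i]^{-1}$. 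Note that $[1+i]$ has $\nrd(1+i) = 2$, so the $C_4$ generator maps to a \emph{nontrivial} element of the Atkin--Lehner group $W$, contrary to your claim; this is in fact how the divisibility $2 \mid \disc(B)$ arises. By contrast, the paper's proof avoids this delicate analysis entirely and simply cites the classification of finite subgroups of quaternion normalizers in Voight's book (\S32.5 and \S32.6), which supplies the existence of $i,j$ with the required properties directly, and then invokes Lemma \ref{lemma: description normalizer maximal order} for the conclusion $2 \mid \disc(B)$ and $m \mid \disc(B)$—the only part of your argument that is fully correct.
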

\begin{proof}
    The fact that such $i,j\in B$ exist follows from \cite[\S32.5 and \S32.6]{VoightQA} (itself based on results of \cite{ChinburgFriedman-finitesubgroupskleinian}).
    By $\Q^{\times}$-scaling $j$ we may assume that $j^2 = m$ is a squarefree integer.
    Since $1+i,j \in N_{B^{\times}}(\calO)$, Lemma \ref{lemma: description normalizer maximal order} shows that $i,j\in \calO$ and $m\mid \disc(B)$ and $\nrd(1+i) = 2\mid \disc(B)$.
\end{proof}

\begin{lemma}\label{lemma: D3,D6 subgroup Aut(O)}
    Let $G\subset N_{B^{\times}}(\calO)/\Q^{\times}$ be a subgroup isomorphic to $D_3$ or $D_6$.
    Then there exist elements $\omega, j \in \calO$ such that $B$ has basis $\{1,\omega,j,\omega j\}$, such that $\omega^3=1$, $j^2= m \mid \disc(B)$ and $\omega j = j\bar{\omega}= j(-1-\omega)$, and such that $G = \langle [1+\omega],[j]\rangle$ if $G \simeq D_3$ and $G = \langle [1-\omega], [j]\rangle$ if $G\simeq D_6$.
    Moroever, if $G \simeq D_6$ then $3\mid \disc(B)$.
\end{lemma}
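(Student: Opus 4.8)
The plan is to mirror the proof of Lemma \ref{lemma: D4 subgroup Aut(O)}: first produce a quaternion presentation of $B$ adapted to $G$, and then feed the resulting generators into Lemma \ref{lemma: description normalizer maximal order} to obtain integrality and the divisibility statements (including $3\mid\disc(B)$ in the $D_6$ case).

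\textbf{Producing the presentation.} Each of $D_3$ and $D_6$ has a unique cyclic subgroup $C$ of order $n$ (with $n=3$ or $n=6$), which is therefore normalized by every reflection in $G$. Fix a generator $[r]$ of $C$, so $[r]$ has order $n$ in $N_{B^{\times}}(\calO)/\Q^{\times}$. The element $[r]$ (if $n=3$) or $[r]^2$ (if $n=6$) has order $3$; examining its minimal polynomial over $\Q$ shows that the quadratic field $\Q(r)$ contains a primitive cube root of unity $\omega$, so $\Q(r)\simeq\Q(\sqrt{-3})$ and $r\in\Q(\omega)$. A direct computation in $\Q(\sqrt{-3})^{\times}/\Q^{\times}$ shows that $[1+\omega]$ and $[\omega]$ are the two nontrivial classes of order $3$, that $[1-\omega]$ is the unique class of order $6$ whose square is $[\omega]$, and that $\nrd(1+\omega)=1$ and $\nrd(1-\omega)=3$. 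Hence, after possibly replacing $\omega$ by $\bar{\omega}$, we have $C=\langle[1+\omega]\rangle$ when $G\simeq D_3$ and $C=\langle[1-\omega]\rangle$ when $G\simeq D_6$. Now pick any reflection $[j]\in G$: conjugation by $j$ preserves $C$ and inverts $[r]$, so it acts on $\Q(\omega)$ through $\omega\mapsto\bar{\omega}$. Consequently $j\notin\Q(\omega)$, $B=\Q(\omega)\oplus\Q(\omega)j$, every element of $\Q(\omega)j$ has reduced trace $0$ (its square lies in $\Q$), and $j^2$ is central hence rational; scaling $j$ by a rational we may take $j^2=m$ squarefree. Then $\{1,\omega,j,\omega j\}$ is a $\Q$-basis of $B$, $\omega^3=1$, and $j^{-1}\omega j=\bar{\omega}$ rewrites as $\omega j=j\bar{\omega}=j(-1-\omega)$. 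Finally, since $[j]$ has order $2$ and lies outside $C$, comparing orders gives $G=\langle[1+\omega],[j]\rangle$ for $G\simeq D_3$ and $G=\langle[1-\omega],[j]\rangle$ for $G\simeq D_6$. (Alternatively, the existence of $\omega$ and $j$ with these relations can be quoted directly from \cite[\S32.5--\S32.6]{VoightQA}, exactly as in Lemma \ref{lemma: D4 subgroup Aut(O)}.)

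\textbf{Integrality and divisibility.} The classes $[1+\omega]$ (respectively $[1-\omega]$) and $[j]$ lie in $N_{B^{\times}}(\calO)/\Q^{\times}$, so by Lemma \ref{lemma: description normalizer maximal order} each has a representative in $\calO$ of reduced norm dividing the squarefree integer $\disc(B)$. Since $\nrd(1+\omega)=1$, $\nrd(1-\omega)=3$, and $\nrd(j)=-m$ with $m$ squarefree, the rational scalar $\lambda$ relating such a representative to $1+\omega$ (respectively $1-\omega$), respectively to $j$, must satisfy $\lambda^2\mid\disc(B)$, respectively $3\lambda^2\mid\disc(B)$, respectively $\lvert m\rvert\lambda^2\mid\disc(B)$; writing $\lambda$ in lowest terms, each condition forces $\lambda=\pm1$. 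Hence $1\pm\omega\in\calO$, so $\omega\in\calO$, and $j\in\calO$; moreover $m\mid\disc(B)$, and in the $D_6$ case the middle divisibility also yields $3\mid\disc(B)$, which is the final assertion.

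The only step with genuine content is the bookkeeping in $\Q(\sqrt{-3})^{\times}/\Q^{\times}$: pinning down exactly which classes have orders $3$ and $6$ and computing their reduced norms. This is elementary but fiddly, and is precisely the data encoded in the Chinburg--Friedman classification one may cite instead; everything else is a routine comparison of orders and an application of Lemma \ref{lemma: description normalizer maximal order}.
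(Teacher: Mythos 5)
Your proof is correct and takes the same route as the paper: produce $\omega,j$ satisfying the stated quaternion relations (you construct them directly from the cyclic subgroup, where the paper simply cites Voight \cite[\S32.5--\S32.6]{VoightQA}), scale $j$ so that $j^2$ is squarefree, and then invoke Lemma \ref{lemma: description normalizer maximal order} together with the squarefreeness of $\disc(B)$ to pin down the scalars, giving $\omega,j\in\calO$, $m\mid\disc(B)$, and $3=\nrd(1-\omega)\mid\disc(B)$ in the $D_6$ case. The paper's own proof is a one-liner (``identical to that of Lemma \ref{lemma: D4 subgroup Aut(O)}, again using \cite[\S32.5 and \S32.6]{VoightQA} and Lemma \ref{lemma: description normalizer maximal order}''), so your explicit bookkeeping in $\Q(\sqrt{-3})^\times/\Q^\times$ is just unpacking the citation rather than a different argument.
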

\begin{proof}
    Identical to that of Lemma \ref{lemma: D4 subgroup Aut(O)}, again using \cite[\S32.5 and \S32.6]{VoightQA} and Lemma \ref{lemma: description normalizer maximal order}.
\end{proof}

\subsection{Fixed point subgroups modulo \texorpdfstring{$N$}{N}}

For reasons similar to those of \S\ref{subsection: dihedral actions on calO}, we study the fixed points of $G$-actions on $\calO/N\calO$ for subgroups $G\subset \Aut(\calO)$ isomorphic to $D_n$ for some $n\in \{1,2,3,4,6\}$ and integers $N\geq 1$ of interest.

\begin{theorem}\label{theorem: fixed points of O/N}
Let $G$ be a subgroup of $\Aut(\calO)$ isomorphic to $D_n$ for some $n\in \{1,2,3,4,6\}$.
\begin{enumalph}
\item Suppose that $N$ is coprime to $2$ and $3$. Then $(\calO/N\calO)^G$ is isomorphic to $(\Z/N\Z)^2$ if $G=D_1$ and isomorphic to $\Z/N\Z$ if $G=D_2,D_3,D_4$ or $D_6$.
\item The group $(\calO/3\calO)^G$ is isomorphic to $(\Z/3\Z)^2$ if $G=D_1$; isomorphic to $\Z/3\Z$ if $G=D_2,D_4,D_6$; and isomorphic to $\Z/3\Z$ or $(\Z/3\Z)^2$ if $G = D_3$.
\item We have
\begin{align*}
(\calO/2\calO)^G \simeq 
\begin{cases}
(\Z/2\Z)^2,(\Z/2\Z)^3\text{ or }(\Z/2\Z)^4  & \text{ if } G = D_1,\\
(\Z/2\Z)^2 \text{ or }(\Z/2\Z)^3  & \text{ if } G = D_2,\\
(\Z/2\Z)^2  & \text{ if } G = D_4,\\
\Z/2\Z & \text{ if } G =D_3\text{ or }D_6.
\end{cases}
\end{align*}
\end{enumalph}
\end{theorem}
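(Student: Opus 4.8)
The plan is to reduce everything to explicit computations in the quaternion order $\calO$ using the very concrete presentations of $D_n\subseteq\Aut(\calO)$ provided by Lemmas \ref{lemma: D2 subgroup Aut(O)}, \ref{lemma: D4 subgroup Aut(O)}, and \ref{lemma: D3,D6 subgroup Aut(O)}, together with Lemma \ref{lemma: centraliser of order 2 subgroup of automorphism group} for the case $D_1$. The key structural observation is that $\calO^{\langle b\rangle}$ for an order-$2$ element $[b]$ (with $b^2=m$) is an order in $\Q(\sqrt m)$ containing $\Z[\sqrt m]$, so it has $\Z$-rank $2$; and for a $D_n$ with $n\geq 2$ we are intersecting two such rank-$2$ subrings whose generators anticommute, which will generically cut the rank down to $1$. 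The subtlety is always what happens \emph{after} reducing mod $N$: the ranks of fixed submodules can jump when $N$ shares a factor with an index $[\calO:\Z[i,j,\dots]]$ or with $\disc(B)$, and the whole point of splitting into cases (a), (b), (c) is to control exactly when this happens.

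For part (a), I would argue as follows. Write $R=\calO/N\calO$; since $\gcd(N,6)=1$ and (by Lemmas \ref{lemma: D4 subgroup Aut(O)}, \ref{lemma: D3,D6 subgroup Aut(O)}) any prime dividing the relevant index or the relevant $j^2$, $i^2$ divides $2\disc(B)$ only through the primes $2,3$ when $G=D_4,D_6$ — more carefully, for $G=D_2$ the lattice $\Z\oplus\Z i\oplus\Z j\oplus\Z ij$ has index in $\calO$ dividing a power of $\disc(B)$, and one checks that after localizing at a prime $p\nmid 6$ the sublattice already equals $\calO_p$ or differs by a unit — reduction mod $N$ commutes with taking $G$-fixed points by flatness/coprimality. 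So it suffices to compute $R^G$ rationally and reduce: for $D_1=\langle[b]\rangle$ the fixed ring is an order in $\Q(\sqrt m)$ of rank $2$, giving $(\Z/N\Z)^2$ after checking the conductor is prime to $N$ (which holds since the conductor divides $\disc(B)$-type quantities coprime to $N$); for $D_2=\{1,[i],[j],[k]\}$ one computes directly that $x\in\calO$ commutes with both $i$ and $j$ iff $x\in\Q$, so $\calO^{D_2}=\Z$ and $R^{D_2}\cong\Z/N\Z$. The same direct computation handles $D_4,D_3,D_6$: using $ij=-ji$ (resp.\ $\omega j=j\bar\omega$), an element fixed by $[1\pm i]$ or $[1\pm\omega]$ and by $[j]$ must lie in $\Z$, so again the fixed ring is $\Z$ and reduces to $\Z/N\Z$.

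For parts (b) and (c) I would redo the same linear algebra but now \emph{over} $\Z/3\Z$ and $\Z/2\Z$ respectively, where the reduction map on fixed points is no longer an isomorphism and the answer genuinely depends on $\disc(B)$ and on which $D_n$-embedding occurs. Concretely: pick the basis $\{1,i,j,ij\}$ (or $\{1,\omega,j,\omega j\}$) from the relevant lemma, write a general $x=a_0+a_1 i+a_2 j+a_3 ij$ with $a_t\in\Z/p\Z$, impose the conditions $\sigma(x)=x$ for each generator $\sigma$ of $G$ — each such condition is linear in the $a_t$ over $\Z/p\Z$ — and read off the dimension of the solution space, being careful that $\calO/p\calO$ may be strictly larger than $(\Z/p\Z)\langle 1,i,j,ij\rangle$ exactly when $p\mid[\calO:\Z\oplus\Z i\oplus\Z j\oplus\Z ij]$, i.e.\ when $p\mid\disc(B)$ for $p=2,3$. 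This index contribution is what produces the \emph{range} of possible answers (e.g.\ $(\Z/2\Z)^2$, $(\Z/2\Z)^3$, or $(\Z/2\Z)^4$ for $D_1$): the "base" dimension comes from the fixed space inside $(\Z/p\Z)^4$, and extra dimensions appear according to how the order fills out the $p$-maximal lattice and how $G$ acts on the extra generators. For $G=D_3$ and $p=3$, the special behavior ($\Z/3\Z$ or $(\Z/3\Z)^2$) reflects that $3\mid\disc(B)$ is \emph{not} forced for $D_3$ (unlike $D_6$), so the action of $\omega$ on $\calO/3\calO$ can be either semisimple or unipotent, giving the two cases.

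The main obstacle, and where the real work lies, is the case analysis in part (c) for $G=D_1$ and $G=D_2$ at the prime $2$: here one must carefully pin down the $2$-adic structure of $\calO$ (whether $2\mid\disc(B)$, and if so the precise shape of $\calO\otimes\Z_2$ as given in \cite[\S32.5--32.6]{VoightQA}) and track how a general order-$2$ (resp.\ $D_2$) subgroup sits inside $\Aut(\calO\otimes\Z_2)$, since the fixed-space dimension over $\F_2$ jumps depending on whether the fixing involution is "ramified" at $2$ or not. I expect to organize this by splitting on $2\mid\disc(B)$ versus $2\nmid\disc(B)$ and, within the ramified case, on the reduction type of the chosen involution; the coprime-to-$6$ and odd-prime parts of the statement then fall out as the "generic" case of this analysis. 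Everything else is bookkeeping with the explicit bases already in hand.
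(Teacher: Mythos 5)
Your proposal takes a genuinely different route from the paper. The paper's key technical move is to observe that the reduction map $r_N\colon \calO^G\otimes \Z/N\Z \to (\calO/N\calO)^G$ is injective with cokernel isomorphic to $H^1(G,\calO)[N]$, which is killed by $|G|\mid 24$. Combined with $\calO^G\simeq \Z^2$ (for $D_1$) or $\Z$ (otherwise), this gives part (a) in one line and reduces (b), (c) to computing $H^1(G,\calO)[6]$, which the paper does case by case (with the Lyndon--Hochschild--Serre spectral sequence for $D_3, D_6$). You instead propose direct linear algebra mod $p$ using the explicit bases from Lemmas \ref{lemma: D2 subgroup Aut(O)}--\ref{lemma: D3,D6 subgroup Aut(O)}. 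Both strategies can work, but the cohomological one buys you: a unified treatment that avoids having to enumerate the $2$-adic and $3$-adic isomorphism types of $\calO$ and the conjugacy classes of $G$ inside $\Aut(\calO\otimes\Z_p)$, and an automatic source of upper bounds (since $H^1(G,\calO)$ is killed by $|G|$) that your approach has to rediscover by hand.

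There is a concrete gap in your argument for part (a). You claim that for $G=D_2$ the lattice $\Lambda = \Z\oplus\Z i\oplus \Z j\oplus \Z ij$ has $p$-primary index equal to a unit in $\calO$ for every $p\nmid 6$; this is not true in general. If, say, $i^2$ and $j^2$ share an odd prime factor $p\geq 5$ (which is allowed by Lemma \ref{lemma: D2 subgroup Aut(O)}, since it only requires $m,n\mid\disc(B)$), then $p$ divides $[\calO:\Lambda]$, and your ``flatness/coprimality'' reduction does not apply at $p$. The conclusion is still correct, but for a reason your proposal does not supply: $H^1(D_2,\calO)$ is $4$-torsion regardless of $\disc(B)$, so the cokernel of $r_N$ vanishes whenever $N$ is odd. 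Your argument also leaves parts (b) and (c) as a programme rather than a proof: in particular, pinning down $(\calO/2\calO)^{D_4}\simeq(\Z/2\Z)^2$ and ruling out $(\Z/2\Z)^4$ for $D_2$ each require a genuine argument (the paper uses, respectively, the local uniqueness of the embedding $\Z_2[i]\hookrightarrow\calO\otimes\Z_2$, and an involution/discriminant argument showing $G$ cannot act trivially on $\calO/2\calO$), which your outline does not provide. The intuition about semisimple versus unipotent action of $\omega$ explaining the two outcomes for $D_3$ at $p=3$ is sound, but it is not by itself a bound on $(\calO/3\calO)^{D_3}$; the paper establishes that bound by showing $H^1(D_3,\calO)$ is a quotient of $L/(1-\omega)L$, a cyclic group of order $3$.
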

\begin{proof}
The reduction map $r_N\colon \calO^G\otimes \Z/N\Z \rightarrow (\calO/N\calO)^G$ is injective and its cokernel is isomorphic to the $N$-torsion of the group cohomology $H^1(G,\calO)$. Indeed, this can be seen by taking $G$-fixed points of the exact sequence $0\rightarrow \calO \rightarrow \calO \rightarrow \calO/N \rightarrow 0$.
The group $\calO^G$ is isomorphic to $\Z^2$ if $G = D_1$ and to $\Z$ if $G = D_2,D_3,D_4$ or $D_6$.
Since the finite abelian group $H^1(G,\calO)$ is killed by the order of $G$, Part (a) immediately follows.
To prove (b) and (c), it therefore suffices to prove that $H^1(G,\calO)[6]$ is a subgroup of $(\Z/2\Z)^2$ if $G=D_1$; isomorphic to $(\Z/2\Z)$ or $(\Z/2\Z)^2$ if $G=D_2$; a subgroup of $\Z/3\Z$ if $G=D_3$; isomorphic to $(\Z/2\Z)$ if $G=D_4$; and trivial if $G=D_6$.
Since $H^1(G,\calO\otimes \Z_p)\simeq H^1(G,\calO)\otimes \Z_p$ for all primes $p$ and since $\Aut(\calO\otimes \Z_p)$ has only finitely many subgroups isomorphic to $G$ up to conjugacy, this is in principle a finite computation; we give a more detailed proof below.

\underline{Case $G=D_1$.}
Since $G=D_1=C_2$ has order $2$, $H^1(G,\calO)$ is $2$-torsion and is isomorphic to the cokernel of $r_2\colon \calO^G\otimes \Z/2\Z\rightarrow (\calO/2\calO)^G$. 
By Lemma \ref{lemma: centraliser of order 2 subgroup of automorphism group}, $\calO^G\simeq \Z^2$ and so this cokernel is either $0, \Z/2\Z$ or $(\Z/2\Z)^2$. It follows that $H^1(G,\calO)\simeq 0, \Z/2\Z$ or $(\Z/2\Z)^2$. 

\underline{Case $G=D_2$.}
By Lemma \ref{lemma: D2 subgroup Aut(O)}, there exist $i,j,k\in \calO$ such that $i^2=m$, $j^2=n$ and $k^2=t$ are all integers dividing $\disc(B)$, such that $ij=-ji$ and $k \in \Q^{\times}ij$ and such that $G = \{1,[i],[j],[k]\}$.
Let $S_i = \calO \cap \Q(i)$, $S_j = \calO\cap \Q(j)$, $S_k = \calO \cap \Q(k)$. Then $S_i$ is an order in $\Q(i)$ containing $\Z[i]$, and similarly for $S_j$ and $S_k$.
Since $-mn$ equals $t$ up to squares, upon reordering $\{i,j,k\}$ we may assume that $\Z[i]$ is maximal at $2$.
Therefore $\Z[\sqrt{m}]\otimes (\Z/2\Z) = S_i\otimes (\Z/2\Z)\subset (\calO/2\calO)$ is a subring on which $G$ acts trivially. It follows that $(\Z/2\Z)^2\subset (\calO/2\calO)^G$.
We will now show that $G$ acts nontrivially on $(\calO/2\calO)$, so assume by contradiction that this action is trivial.
By the classification of involutions on finite free $\Z$-modules, every such involution is a direct sum of involutions of the form $x\mapsto x$, $x\mapsto -x$ and $(x,y)\mapsto (y,x)$.
If $G = \langle [i],[j]\rangle$ acts trivially on $\calO/2\calO$, then both $[i],[j]\in \Aut(\calO)$ are direct sums of involutions of the first two kinds.
It follows that $\calO$ is a direct sum of the eigenspaces corresponding to the eigenvalues of $[i]$ and $[j]$.
It follows that $\calO = \Z1\oplus \Z i \oplus \Z j \oplus \Z k$.
This implies that the discriminant of $\calO$ is $\pm 4u$, contradicting the fact that $\calO$ is maximal at $2$.
We conclude that $(\calO/2\calO)^G$ is $(\Z/2\Z)^2$ or $(\Z/2\Z)^3$ and since $G$ is coprime to $3$, this proves that $H^1(G,\calO)[6]$ is isomorphic to $\Z/2\Z$ or $(\Z/2\Z)^2$. 


\underline{Case $G=D_4$.}
Let $i,j\in \calO$ be elements satisfying the conclusion of Lemma \ref{lemma: D4 subgroup Aut(O)}, so $G = \langle [1+i],[j]\rangle$.
Since $\Z[i]$ is maximal at $2$, the map $\Z[i]\otimes \Z/2\Z\rightarrow \calO/2\calO$ is injective. 
Since $G$ acts trivially on the image of this map, $(\calO/2\calO)^G$ contains $(\Z/2\Z)^2$.
We need to show that $(\calO/2\calO)^G = (\Z/2\Z)^2$.
To prove this, it is enough to show that $(\calO/2\calO)^{\langle 1+i \rangle} = (\Z/2\Z)^2$.
Since $\calO$ is ramified at $2$, there exists a unique conjugacy class of embeddings $\Z_2[i]\hookrightarrow \calO_{\Z_2}$ \cite[Proposition 30.5.3]{VoightQA}.
Therefore it is enough to verify that $(\calO/2\calO)^{\langle 1+i \rangle} = (\Z/2\Z)^2$ in a single example, for which this can be checked explicitly.
Indeed, one may take $B = \left(\frac{-1,6}{\Q}\right)$, which has maximal order with $\Z$-basis $\{1,(1+i+ij)/2,(1-i+ij)/2,(j+ij)/2\}$.
Since $\# G$ is coprime to $3$, we conclude that $H^1(G,\calO)[6] = \Z/2\Z$.

\underline{Case: $G=D_3,D_6$.}
Let $\omega,j\in \calO$ be elements satisfying the conclusion of Lemma \ref{lemma: D3,D6 subgroup Aut(O)}.
Let $C_n\leq D_n$ be the cyclic normal subgroup of order $n$ for $n\in \{3,6\}$.
The low terms of the Lyndon–Hochschild–Serre spectral sequence give rise to the exact sequence
\begin{align}\label{equation: LHS spectral sequence}
 0\rightarrow H^1(C_2,\calO^{C_n})\rightarrow H^1(G,\calO) \rightarrow H^1(C_n,\calO)^{C_2}\rightarrow H^2(C_2,\calO^{C_n}).   
\end{align}
The subring $\calO^{C_n}$ equals $\calO^{\langle 1\pm \omega\rangle} = \Z[\omega]$ and $C_2=D_n/C_n$ acts on $\calO^{C_n}$ via conjugation $\omega\mapsto \bar{\omega}$.
A cyclic group cohomology calculation shows that $H^i(C_2,\Z[\omega])$ is trivial for all $i\geq 1$.
Therefore $H^1(G,\calO)\simeq H^1(C_n,\calO)^{C_2}$.
Assume $G=D_6$.
Using the analogous exact sequence to \eqref{equation: LHS spectral sequence} for the subgroup $C_3\leq C_6$, we get $H^1(C_6,\calO)\simeq H^1(C_3,\calO)^{C_2}$. 
Since $C_2$ acts trivially on $C_3=\{1,g,g^2\}$ and acts as $-1$ on $\{x\in \calO \mid x+gx+g^2x=0\}$, it will act as $-1$ on $H^1(C_3,\calO)\simeq (\Z/3\Z)^r$, so $H^1(C_3,\calO)^{C_2}=0$ and so $H^1(G,\calO)\simeq H^1(C_6,\calO)^{C_2}\subset H^1(C_6,\calO) \simeq H^1(C_3,\calO)^{C_2}$ is zero too in this case.
It remains to consider the case $G=D_3$.
Then $H^1(G,\calO)\simeq H^1(C_3,\calO)^{C_2}$.
Let $g\in C_3$ be a generator, given by conjugating by $1+\omega$.
Let $L =\{x\in \calO \mid x+gx+g^2x=0\}$.
Using the basis $\{1,\omega,j,\omega\}$ of $B$, we see that $L=\calO \cap \Q(\omega)\cdot j$.
Using the explicit description of group cohomology of cyclic groups, $H^1(C_3,\calO)$ is isomorphic to $L/(1-g) \calO$.
Since $(1-g)\calO$ contains $(1-g)L = (1-\omega)L$, $H^1(C_3,\calO)$ is a quotient of $L/(1-\omega)L$.
Since $(1-\omega)^2 L = 3L$ and $L/3L\simeq (\Z/3\Z)^2$, $L/(1-\omega)L$ is of order $3$.
This shows that $H^1(C_3,\calO)=0$ or $\Z/3\Z$, so $H^1(D_3,\calO) = 0$ or $\Z/3\Z$, as claimed. 

\end{proof}

\begin{remark}
    A calculation with the quaternion algebra package in \texttt{Magma} shows that all the possibilities in Theorem \ref{theorem: fixed points of O/N} do indeed occur.
\end{remark}

The next three lemmas give some more precise information about subgroups $G\subset \Aut(\calO)$ for which $(\calO/2\calO)^G \simeq (\Z/2\Z)^3$. 
In these lemmas, we will use the fact that if $2\mid \disc(B)$, there exists a unique ring homomorphism $\calO/2\calO\rightarrow \F_4$, see \cite[Theorem 13.3.11]{VoightQA}.

\begin{lemma}\label{lemma: involution with (z/2)^3 fixed points}
Let $b\in \calO \cap N_{B^{\times}}(\calO)$ be an element with $b^2 = m \mid \disc(B)$ and $m\neq 1$.
Write $F \subset \calO/2\calO$ for the subset centralized by the reduction of $b$ in $\calO/2\calO$.
Then $F \simeq (\Z/2\Z)^3$ if and only if $2\mid \disc(B)$ and $m\equiv 3 \mod 4$.
In that case $F$ equals the subset of elements of $\calO/2\calO$ whose image under the ring homomorphism $\calO/2\calO \rightarrow \F_4$ lands in $\F_2$.
\end{lemma}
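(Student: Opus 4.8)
The plan is to work entirely inside the $\F_2$-algebra $R := \calO/2\calO$, using the structure of quaternion orders ramified (or not) at $2$. The element $b$ reduces to some $\bar b \in R$, and $F = \{x \in R : \bar b\, x = x\, \bar b\}$ is the centralizer of $\bar b$, an $\F_2$-subspace of the $4$-dimensional space $R$. The first step is to identify $R$ in the two relevant cases. If $2 \nmid \disc(B)$, then $\calO$ is maximal at $2$, so $\calO \otimes \Z_2 \simeq \Mat_2(\Z_2)$ and $R \simeq \Mat_2(\F_2)$; in this case I would show directly that the centralizer of any noncentral element (and $\bar b$ is noncentral, since $b \notin \Q^\times$ modulo scalars, as in Lemma \ref{lemma: centraliser of order 2 subgroup of automorphism group}) is a commutative $\F_2$-subalgebra of dimension $2$, hence $F \simeq (\Z/2\Z)^2$ and the claimed equivalence fails on the "only if" side. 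So we may assume $2 \mid \disc(B)$, in which case $\calO \otimes \Z_2$ is the maximal order in the division quaternion algebra over $\Q_2$, and $R$ is a local $\F_2$-algebra with residue field $\F_4$ and unique maximal ideal $\mathfrak{m}$ with $\mathfrak{m}^2 = 0$, $\dim_{\F_2} \mathfrak{m} = 2$; the ring homomorphism $\pi\colon R \to \F_4$ cited from \cite[Theorem 13.3.11]{VoightQA} is reduction modulo $\mathfrak{m}$.

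The second step analyzes $\bar b$ in this local case according to $m \bmod 4$. Since $b^2 = m$ with $m \mid \disc(B)$ squarefree and $m \neq 1$, and $\Z[b] \simeq \Z[\sqrt m]$, I would track whether $\Z_2[b]$ is the maximal (unramified) quadratic subring or a ramified one: $\Z_2[\sqrt m]$ is unramified over $\Z_2$ iff $m \equiv 1 \pmod 4$ (with $m$ odd; if $m$ is even then $\Z[\sqrt m]$ is ramified at $2$). Concretely, if $m \equiv 3 \pmod 4$ or $m$ even, then $\Z_2[b] = \Z_2[\sqrt m]$ is ramified, so $\bar b \bmod \mathfrak m \in \F_2$, i.e.\ $\pi(\bar b) \in \F_2$ — and in fact $\bar b = \pi(\bar b) + (\text{element of } \mathfrak m)$. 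The point "$m \equiv 3 \bmod 4$" in the statement: when $m$ is odd and $\equiv 3 \pmod 4$ we get $\bar b = 1 + u$ for some $u \in \mathfrak m$ (since $m \equiv -1$, $b$ reduces to $1$ mod $\mathfrak{m}$ up to the scalar); when $m \equiv 1 \pmod 4$, $\bar b$ maps to a generator of $\F_4/\F_2$, so $\pi(\bar b) \notin \F_2$.

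The third step computes the centralizer. If $\pi(\bar b) \notin \F_2$ (the case $m \equiv 1 \pmod 4$), then $\F_2[\bar b]$ surjects onto $\F_4$, so $R = \F_2[\bar b] + \mathfrak m$ and the centralizer $F$ of $\bar b$ consists of $\F_2[\bar b]$ together with the part of $\mathfrak m$ fixed by conjugation by $\bar b$; since $\bar b$ acts on $\mathfrak m/\mathfrak m \cdot(\text{...})$ — more precisely $\mathfrak m$ is a $1$-dimensional $\F_4$-bimodule with the two $\F_4$-actions differing by Frobenius — conjugation by an element mapping to $\alpha \in \F_4 \setminus \F_2$ acts on $\mathfrak m$ as multiplication by $\alpha^{-1}\alpha^2 = \alpha \neq 1$, so it has no nonzero fixed vectors; hence $F = \F_2[\bar b] \simeq (\Z/2\Z)^2$. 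If instead $\bar b = 1 + u$ with $u \in \mathfrak m$ nonzero ($m \equiv 3 \bmod 4$ case, so $b^2 = m$ forces $u^2 = 0$ consistent with $\mathfrak m^2 = 0$), then conjugation by $\bar b = 1+u$ acts trivially on all of $\mathfrak m$ (because $\mathfrak m^2 = 0$) and fixes $x$ iff $ux = xu$; using that $x u - u x$ for $x \in R$, $u \in \mathfrak m$ lies in $\mathfrak m$ and equals $(\pi(x)^2 - \pi(x))u$ up to the bimodule structure — wait, more carefully: $xu = \pi(x) \cdot u$ and $ux = u \cdot \pi(x) = \pi(x)^2 u$ under the twisted bimodule, so $xu = ux$ iff $\pi(x) = \pi(x)^2$ iff $\pi(x) \in \F_2$. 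Hence $F = \pi^{-1}(\F_2)$, which has $\F_2$-dimension $1 + \dim_{\F_2}\mathfrak m = 3$, giving $F \simeq (\Z/2\Z)^3$ and the explicit description in the statement. Assembling the three cases yields the claimed equivalence.

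I expect the main obstacle to be the precise bookkeeping of the $\F_4$-bimodule structure on $\mathfrak m$ — in particular getting the Frobenius twist on the correct side so that "conjugation by $\alpha \notin \F_2$ has no fixed vectors on $\mathfrak m$" comes out right, and dually that conjugation by $1 + u$ fixes exactly $\pi^{-1}(\F_2)$. This is where I would either cite the explicit presentation of the maximal order at a ramified $2$ (as the paper already does in the $D_4$ case, via the algebra $\left(\frac{-1,6}{\Q}\right)$) and just verify the claim in that one model, or argue structurally via \cite[Theorem 13.3.11]{VoightQA}; given the paper's style, doing it in an explicit model and invoking uniqueness of the local embedding/order is probably cleanest. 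Everything else — identifying $R$, reducing to $2 \mid \disc(B)$, and the $m \bmod 4$ dichotomy — is routine once that bimodule computation is pinned down.
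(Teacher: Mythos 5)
Your local strategy and your treatment of the two cases $2 \nmid \disc(B)$ and $m \equiv 3 \pmod 4$ are essentially the same as the paper's and correct. However, there are two genuine problems, one at the level of the definition and one a computational error.

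\textbf{The definition of $F$.} You take $F$ to be the centralizer $\{x \in R : \bar b x = x\bar b\}$ of the element $\bar b$ in $R := \calO/2\calO$. What the paper (and all later uses of this lemma, e.g.\ Lemma \ref{lemma: mod 4 quaternion calculation (z/2)^3} and Lemma \ref{lemma: C2xC2 with (Z/2)^3 fixed points}) actually mean is the set of fixed points of the ring automorphism of $R$ induced by conjugation by $b$ on $\calO$. When $m$ is odd, $\bar b \in R^{\times}$ and the two notions coincide; when $m$ is even, $\bar b$ lies in the maximal ideal $J$ of $R$ and is a zero-divisor, and they differ. Concretely, when $2 \mid \disc(B)$ we have $R \simeq \F_4 \oplus J$ with $J^2 = 0$ and $u\alpha = \alpha^2 u$ for $\alpha \in \F_4$, $u \in J$; if $\bar b \in J \setminus \{0\}$, then for $x = \alpha + v$ one computes $x\bar b = \alpha\bar b$ and $\bar b x = \alpha^2 \bar b$, so the centralizer of $\bar b$ is $\F_2 \oplus J \simeq (\Z/2\Z)^3$ --- which would contradict the lemma. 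With the correct reading (conjugation by $b$ on $\calO \otimes \Z_2 = S + S\cdot b$ induces $\alpha + \gamma\bar b \mapsto \alpha^2 + \gamma^2\bar b$ on $R$), the fixed set is $(\Z/2\Z)^2$, as the paper shows. You did not actually carry out the $m$-even case, but if you had under your interpretation it would have gone wrong; this case cannot be omitted, since it is needed for the ``only if'' direction.

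\textbf{The case $m \equiv 1 \pmod 4$.} You claim $\pi(\bar b) \notin \F_2$ there and conclude $F \simeq (\Z/2\Z)^2$; both are incorrect. For any odd $m$ one has $\pi(\bar b)^2 = \pi(m) = 1$ in $\F_4$, hence $\pi(\bar b) = 1 \in \F_2$ always, so the sub-case $\pi(\bar b) \notin \F_2$ that you analyze never occurs. The real dichotomy for odd $m$ is whether $\bar b$ equals $1$ in $R$ itself: since $\calO \otimes \Z_2$ is the set of all $2$-integral elements, $\bar b = 1$ iff $(b-1)/2$ is integral iff $m \equiv 1 \pmod 4$, in which case $F = R \simeq (\Z/2\Z)^4$ (not $(\Z/2\Z)^2$). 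The final biconditional happens not to be damaged since $(\Z/2\Z)^4 \ne (\Z/2\Z)^3$, but the intermediate claims are false. Your bimodule computation for $m \equiv 3 \pmod 4$, by contrast, is correct and matches the paper's conclusion (the paper does it more briefly by reducing to the unique involution of $(\calO/2\calO)^{\times}$ and checking one model).
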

\begin{proof}
Suppose $F\simeq (\Z/2\Z)^3$. 
We first show that $2\mid \disc(B)$. 
If not, then $m$ is odd by Lemma \ref{lemma: centraliser of order 2 subgroup of automorphism group}, $\calO/2\calO \simeq \Mat_2(\F_2)$ and $F$ is the fixed points of conjugating by an element of order dividing $2$ in $\GL_2(\F_2)$. 
Since there is only one involution in $\GL_2(\F_2)$ up to conjugacy, which we may calculate has centralizer $(\Z/2\Z)^2$, this shows that $2\mid \disc(B)$. 
We now show that $2\nmid m$. If $2\mid m$, then since $m$ is squarefree $b$ is a $2$-adic uniformizer of $\calO \otimes \Z_2$.
Then there exists an unramified quadratic subring $S\subset \calO\otimes \Z_2$ isomorphic to $\Z_2\left[\frac{-1+\sqrt{-3}}{2}\right]$ such that $\calO \otimes \Z_2 = S + S\cdot b$ \cite[Theorem 13.3.11]{VoightQA}. 
This shows that conjugation by $b$ acts via $x+yb\mapsto \bar{x}+\bar{y}b$.
This map has $4$ fixed points, hence we obtain a contradiction and $m$ is odd. 
It follows that $F$ is given by the fixed points of conjugating by an element of $(\calO/2\calO)^{\times}$. 
This element is trivial if and only if $b\in 1+2\calO$. 
Since $\calO\otimes \Z_2$ consists of all integral elements of $B\otimes \Q_2$ \cite[Proposition 13.3.4]{VoightQA} and since $b\in \calO$, this is equivalent to $(b-1)/2$ being integral at $2$, that is to say to $m \equiv 1\mod 4$.
This proves the forward direction of the lemma. For the other direction, note that $(\calO/2\calO)^{\times}$ (where $\calO$ is ramified at $2$) has a unique involution up to conjugacy, which can be checked to have $(\Z/2\Z)^3$ fixed points in the presentation \eqref{equation: description OB mod p for p ramified}.
\end{proof}

\begin{lemma}\label{lemma: mod 4 quaternion calculation (z/2)^3}
    Let $b\in \calO \cap N_{B^{\times}}(\calO)$ be an element with $b^2 = m \mid \disc(B)$ and $m\neq 1$.
    Suppose that the conjugation action of $b$ on $\calO/2\calO$ has fixed points $\simeq (\Z/2\Z)^3$.
    Then there exists no $x\in \calO/4\calO$ with  $x \equiv 1 \mod 2\calO$ and $b^{-1}xbx = -1$.
\end{lemma}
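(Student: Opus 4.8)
The plan is to reduce the non-existence statement to one structural fact about the ring $R=\calO/2\calO$: the additive map $z\mapsto z+b^{-1}zb$ on $R$ --- where, abusing notation, we also write $b$ for the image of $b$ in $R$ and in $\calO/4\calO$ --- never takes the value $1$. By Lemma~\ref{lemma: involution with (z/2)^3 fixed points} the hypothesis forces $2\mid\disc(B)$ and $m\equiv 3\pmod 4$; in particular $m$ is odd, so $\nrd(b)=-m$ is a unit at $2$, the element $b$ is a unit in $\calO\otimes\Z_2$ and in $\calO/4\calO$, and, by \cite[Theorem~13.3.11]{VoightQA}, $R$ is a finite local ring with maximal ideal $\mathfrak{m}$, residue field $R/\mathfrak{m}\simeq\F_4$, and $1\notin\mathfrak{m}$.

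The first thing to prove is the structural fact. Conjugation by the unit $b$ is a ring automorphism of $R$, so it preserves $\mathfrak{m}$ and induces an automorphism of the quotient $R/\mathfrak{m}$; since $R/\mathfrak{m}$ is a field, that induced automorphism is conjugation by the image of $b$ in a commutative ring, hence is the identity. Therefore $b^{-1}zb\equiv z\pmod{\mathfrak{m}}$ for every $z\in R$, so $z+b^{-1}zb\equiv 2z\equiv 0\pmod{\mathfrak{m}}$, and $1\notin\mathfrak{m}$ gives the claim. I would emphasize that this step uses \emph{only} that $R$ is local: it genuinely fails when $2\nmid\disc(B)$, where $R\cong\Mat_2(\F_2)$ and one can reach $1$ by conjugating with a transposition. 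This is precisely where the hypothesis $(\calO/2\calO)^{\langle b\rangle}\simeq(\Z/2\Z)^3$ enters, via Lemma~\ref{lemma: involution with (z/2)^3 fixed points}.

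To finish, suppose for contradiction that $x\in\calO/4\calO$ satisfies $x\equiv 1\pmod{2\calO}$ and $b^{-1}xbx=-1$. Write $x=1+2y$ with $y\in\calO$. Working in $\calO/4\calO$, a direct expansion (the term $4\,b^{-1}yby$ vanishing there) gives
\[
-1\;=\;b^{-1}xbx\;=\;(1+2\,b^{-1}yb)(1+2y)\;=\;1+2\bigl(y+b^{-1}yb\bigr),
\]
so $2\bigl(y+b^{-1}yb+1\bigr)=0$ in $\calO/4\calO$, and since $\calO$ is free over $\Z$ this forces $y+b^{-1}yb+1\in 2\calO$. Applying the reduction map $\calO/4\calO\to R$ then yields $y+b^{-1}yb=1$ in $R$, contradicting the structural fact; hence no such $x$ exists.

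I do not expect a real obstacle here. The heart of the matter is the one-line observation that conjugation acts trivially on a commutative residue ring; the only thing to handle with any care is the bookkeeping between $\calO/4\calO$ and $\calO/2\calO$ --- that $b^{-1}yb$ is meaningful modulo $4$ (as $b$ is a unit there), that the term $4\,b^{-1}yby$ vanishes modulo $4$, and that ``$2w=0$ in $\calO/4\calO$'' is equivalent to ``$w\in 2\calO$''.
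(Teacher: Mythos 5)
Your proof is correct and follows essentially the same approach as the paper's: both arguments do a mod-$4$ expansion, reduce to an impossibility in $\calO/2\calO$, and derive the contradiction from the fact that $\calO/2\calO$ has a commutative quotient $\F_4$ on which conjugation by $b$ acts trivially. The only cosmetic differences are the substitution ($x = 1+2y$ in your version, versus $y = bx$ followed by $y = b+2z$ in the paper) and your packaging of the final step as a general fact about local rings of characteristic $2$ rather than directly invoking the ring homomorphism $\lambda\colon \calO/2\calO\to\F_4$.
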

\begin{proof}
    Suppose that $x\in \calO/4\calO$ is such an element. 
    Let $y = bx$.
    Since $m b^{-1} = b$, multiplying the equation $b^{-1}xbx = -1$ by $m$ shows that $y^2 = -m$ in $\calO/4\calO$.
    By Lemma \ref{lemma: involution with (z/2)^3 fixed points}, $2\mid \disc(B)$ and $m \equiv 3 \mod{4}$, so $y^2=1$ in $\calO/4\calO$.
    Since $x \equiv 1 \mod 2\calO$, $y = bx \equiv b \mod 2\calO$.
    We may therefore write $y = b + 2z$ for some $z\in \calO/4\calO$.
    We compute, in $\calO/4\calO$, that 
    \begin{align*}
        y^2  =(b+2z)(b+2z) = b^2 + 2(bz+zb)+4z^2 = m + 2(bz+zb)  = 3 + 2(bz+zb).
    \end{align*}
    Since $y^2=1$, this shows that $2(bz+zb) = 2$.
    Write $\bar{b}$ and $\bar{z}$ for the mod $2$ reductions of $b$ and $z$.
    Then the above identity implies that
    \begin{align}\label{equation: mod 4 calculation bz+zb}
       \bar{b}\bar{z}+\bar{z}\bar{b}=1. 
    \end{align}
    Since $2$ is ramified in $B$ and $\calO$ is maximal, there exists a surjective ring homomorphism $\lambda\colon \calO/2\calO\rightarrow \F_4$.
    Applying $\lambda$ to \eqref{equation: mod 4 calculation bz+zb} shows that $\lambda(\bar{b})\lambda(\bar{z})+ \lambda(\bar{z})\lambda(\bar{b}) = \lambda(1) = 1$.
    Since $\F_4$ is commutative, the left hand side of this equation also equals  $2\lambda(\bar{b})\lambda(\bar{z})= 0$, which is a contradiction.
\end{proof}

Recall from Lemma \ref{lemma: D2 subgroup Aut(O)} that a subgroup $G\leq N_{B^{\times}}(\calO)$ isomorphic to $C_2\times C_2$ can be generated by elements $i,j\in \calO$ with $ij = -ji$, $i^2 = m$, $j^2=n$ and $m,n\mid \disc(B)$.
\begin{lemma}\label{lemma: C2xC2 with (Z/2)^3 fixed points}
    Let $G\subset N_{B^{\times}}(\calO)$ be a subgroup isomorphic to $C_2\times C_2$.
    Then $(\calO/2\calO)^G \simeq (\Z/2\Z)^3$ if and only if (in the above notation) $2\mid \disc(B)$ and $m,n \equiv 3\mod 4$.
\end{lemma}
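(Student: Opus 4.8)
The plan is to relate the $G$-fixed points to the fixed points of the individual involutions $[i]$ and $[j]$, and then invoke Lemma \ref{lemma: involution with (z/2)^3 fixed points} for each. Write $F_i, F_j, F_k \subset \calO/2\calO$ for the subsets fixed by conjugation by $i$, $j$, $k:=ij$ respectively, so that $(\calO/2\calO)^G = F_i \cap F_j = F_i \cap F_j \cap F_k$. By Theorem \ref{theorem: fixed points of O/N}(c) applied to $G = D_2$, we always have $(\calO/2\calO)^G \simeq (\Z/2\Z)^2$ or $(\Z/2\Z)^3$, so the content is to pin down exactly when the larger case occurs. First I would dispatch the case $2 \nmid \disc(B)$: then $\calO/2\calO \simeq \Mat_2(\F_2) \simeq \Mat_2(\F_2)$, the image of $G$ in $\PGL_2(\F_2) \simeq S_3$ is a subgroup of order dividing $4$, hence of order $1$ or $2$; a noncentral order-$2$ element of $\GL_2(\F_2)$ has centralizer of order $4$, and the centralizer of a Klein four-subgroup is even smaller, so $(\calO/2\calO)^G \simeq (\Z/2\Z)^2$ — consistent with the claim, since the right-hand condition $2\mid\disc(B)$ fails. (Here one should note that at least one of $i,j,k$ has even reduced norm could only happen if $2\mid\disc(B)$, using Lemma \ref{lemma: centraliser of order 2 subgroup of automorphism group} and the relation $k \in \Q^\times ij$; so when $2\nmid\disc(B)$ all of $m,n,t$ are odd and the condition ``$m,n\equiv 3\bmod 4$'' is simply not the governing one.)

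Now assume $2 \mid \disc(B)$, so there is a unique ring homomorphism $\lambda \colon \calO/2\calO \to \F_4$ by \cite[Theorem 13.3.11]{VoightQA}. The key observation is that $F_i = \lambda^{-1}(\F_2)$ exactly when $m \equiv 3 \bmod 4$ (by Lemma \ref{lemma: involution with (z/2)^3 fixed points}), and otherwise $F_i \simeq (\Z/2\Z)^2$; likewise for $j$ and $k$, governed by $n \bmod 4$ and $t \bmod 4$ respectively. For the forward direction, suppose $(\calO/2\calO)^G = F_i \cap F_j \simeq (\Z/2\Z)^3$; since $F_i, F_j$ each have order at most $2^3 = 8$ inside the $16$-element group $\calO/2\calO$, the intersection having order $8$ forces $F_i = F_j = (\calO/2\calO)^G$, and in particular both $F_i$ and $F_j$ have order $8$, which by Lemma \ref{lemma: involution with (z/2)^3 fixed points} gives $m \equiv n \equiv 3 \bmod 4$. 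For the converse, suppose $2\mid\disc(B)$ and $m \equiv n \equiv 3 \bmod 4$; then $F_i = F_j = \lambda^{-1}(\F_2)$ by Lemma \ref{lemma: involution with (z/2)^3 fixed points}, so $(\calO/2\calO)^G = F_i \cap F_j = \lambda^{-1}(\F_2) \simeq (\Z/2\Z)^3$ (the kernel of $\lambda$ has order $4$ and $\lambda^{-1}(\F_2)$ has order $8$, and it is an $\F_2$-vector space since $\F_2 \subset \F_4$ is additively closed).

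The main obstacle I anticipate is the bookkeeping around $t = -mn$ (up to squares) in the forward direction: one needs $F_k$ to be consistent with $F_i \cap F_j$, and should double-check that $m \equiv n \equiv 3 \bmod 4$ indeed forces $t \equiv -9 \equiv 3 \bmod 4$ (up to squares, $-mn$ is $\equiv -1 \equiv 3 \bmod 4$), so that all three involutions have $(\Z/2\Z)^3$ fixed point sets and the three sets coincide — no contradiction arises. A subtler point is justifying the step ``$F_i$ has order $\le 8$'': this follows because $F_i$ is the fixed subgroup of the order-$\le 2$ automorphism $[i]$ of the $4$-dimensional $\F_2$-vector space $\calO/2\calO$, and Lemma \ref{lemma: involution with (z/2)^3 fixed points} already enumerates the possibilities as $(\Z/2\Z)^2$ or $(\Z/2\Z)^3$ (with $(\Z/2\Z)^4$ excluded since $[i] \ne 1$ in $\Aut(\calO)$, as $m \ne 1$). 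Modulo these checks, the argument is a short combination of Lemma \ref{lemma: involution with (z/2)^3 fixed points} with an order count, so I do not expect to need any genuinely new input.
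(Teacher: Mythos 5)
Your overall strategy matches the paper's: reduce to the behaviour of the individual involutions $[i]$ and $[j]$, invoke Lemma~\ref{lemma: involution with (z/2)^3 fixed points}, and use Theorem~\ref{theorem: fixed points of O/N}(c) to know a priori that $(\calO/2\calO)^G$ has order $4$ or $8$. The converse direction and the $2\nmid\disc(B)$ case are fine.

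However, the forward direction has a genuine gap. You assert that each of $F_i, F_j$ has order at most $8$, justifying this by ``$(\Z/2\Z)^4$ excluded since $[i]\neq 1$ in $\Aut(\calO)$, as $m\neq 1$.'' This is false: the reduction map $\Aut(\calO)\to\Aut(\calO/2\calO)$ is \emph{not} injective on elements of order $2$ (this is precisely the failure mode allowed by Minkowski's theorem, Lemma~\ref{lemma: reduction map is injective}, at $N=2$). Concretely, the proof of Lemma~\ref{lemma: involution with (z/2)^3 fixed points} shows that if $2\mid\disc(B)$, $m$ is odd, and $m\equiv 1\bmod 4$, then $i\in 1+2\calO$ and conjugation by $i$ acts \emph{trivially} on $\calO/2\calO$, so $F_i=\calO/2\calO\simeq(\Z/2\Z)^4$. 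Your argument does nothing to rule this out, so from $|F_i\cap F_j|=8$ you can only conclude that one of $F_i,F_j$ (say $F_j$) has order $8$, giving $2\mid\disc(B)$ and $n\equiv 3\bmod 4$, while $F_i$ could a priori have order $16$ with $m\equiv 1\bmod 4$.

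The paper closes this gap with an arithmetic input that your proof is missing: once one knows $2\mid\disc(B)$ and $n\equiv 3\bmod 4$, the case $2\mid m$ is excluded by a fixed-point count (a $2$-adic uniformizer has only $2^2$ fixed points on $\calO/2\calO$), and the case $m\equiv 1\bmod 4$ is excluded because then the $2$-adic Hilbert symbol $(m,n)_2$ would be trivial, contradicting $2\mid\disc(B)$ together with $B\simeq\bigl(\tfrac{m,n}{\Q}\bigr)$ (which holds by Lemma~\ref{lemma: D2 subgroup Aut(O)}). You need some version of this Hilbert symbol step; the purely group-theoretic bound $|F_i|\le 8$ you invoke is simply not true.
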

\begin{proof}
    Suppose first that $(\calO/2\calO)^G \simeq (\Z/2\Z)^3$.
    Then the conjugation involutions $[i]$ and $[j]$ have both $2^3$ or $2^4$ fixed points on $\calO/2\calO$. At least one of them, say $j$, has $2^3$ fixed points.
    By Lemma \ref{lemma: involution with (z/2)^3 fixed points}, $2\mid \disc(B)$ and $n\equiv 3 \mod 4$.
    If $2\mid m$, then $i$ is a $2$-adic uniformizer and the action of $i$ on $\calO/2\calO$ would have $2^2$ fixed points (by an argument similar to the proof of Lemma \ref{lemma: involution with (z/2)^3 fixed points}).
    So $m$ is odd. 
    If $m\equiv 1 \mod 4$, then the $2$-adic Hilbert symbol of $(m,n)$ is trivial, contradicting the fact that $2\mid \disc(B)$ and $B \simeq \left(\frac{m,n}{\Q}\right)$.
    We conclude that $m\equiv 3 \mod 4$.
    The converse follows from Lemma \ref{lemma: involution with (z/2)^3 fixed points}.
\end{proof}

\section{Galois actions, polarizations and endomorphisms}
\label{section: galois actions, polarizations and endomorphisms}

This section collects some preliminaries concerning the arithmetic of $\mathrm{PQM}$ surfaces.
In particular, we study the Galois action on the endomorphism algebra, the set of polarizations, the torsion points and the interaction between these.
The most important subsection is \S\ref{subsection: endomorphism field}, where the endomorphism field of a $\mathrm{PQM}$ surface is introduced.

\subsection{Abelian surfaces of \texorpdfstring{$\GL_2$-type}{GL2-type}}

Recall that an abelian surface $A$ over a number field $F$ is said to be \defi{of $\GL_2$-type} if $\End^0(A)$ is a quadratic field extension of $\Q$. 
We will show that if $A$ is geometrically simple and $F$ admits a real place, then this field must be real quadratic.
(The geometrically simple hypothesis is necessary; for example, the simple modular abelian surface $J_1(13)$ satisfies $\End^0(J_1(13)) \simeq \Q(\sqrt{-3})$.)
This is well known over $\Q$ (see \cite[Lemma 2.3]{Rotger-whichquaternion}), which suffices for our purposes---but we also give an argument that works over any field contained in $\R$ that might be of independent interest.
(We thank Davide Lombardo for suggesting it.)

\begin{lemma}\label{lemma: Neron severi rank computation real abelian surfaces}
Let $A/\R$ be an abelian surface. 
Then $\rk \, \NS(A) \geq \rk \, \NS(A_{\C})-1$.
\end{lemma}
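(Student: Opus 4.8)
The plan is to use the action of complex conjugation $\sigma \in \Gal(\C/\R)$ on the Néron--Severi group $\NS(A_\C)$, which is a free abelian group of finite rank, together with the fact that $\NS(A) = \NS(A_\C)^{\sigma}$. The key point is that $\sigma$ acts on $\NS(A_\C) \otimes \R$ as an involution, so this real vector space decomposes as a direct sum $V^{+} \oplus V^{-}$ of the $(+1)$- and $(-1)$-eigenspaces, and $\rk \NS(A) = \dim_\R V^{+}$. Thus the inequality $\rk \NS(A) \geq \rk \NS(A_\C) - 1$ is equivalent to showing that the $(-1)$-eigenspace $V^{-}$ is at most one-dimensional. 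So the whole problem reduces to bounding the dimension of the anti-invariants of complex conjugation on $\NS(A_\C) \otimes \R$.

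To bound $\dim_\R V^{-}$, I would pass to a transcendental/Hodge-theoretic description. Via the cycle class map, $\NS(A_\C) \otimes \R$ sits inside $H^2(A_\C, \R) \cap H^{1,1}$, and complex conjugation acts compatibly with its action on singular cohomology $H^2(A_\C, \R) \cong H^2(A(\C), \R)$ coming from the real structure; note that the real structure acts on $H^2$ by the composition of the topological conjugation action with complex conjugation on coefficients, and one must be slightly careful that the relevant involution on $H^2(A_\C,\R)$ whose invariants compute $\NS(A)$ is $\sigma^{*}$ twisted so that it preserves the Hodge filtration up to swapping $H^{2,0} \leftrightarrow H^{0,2}$. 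Since $\dim H^{2,0}(A_\C) = 1$ for an abelian surface, the involution on the rank-$2$ space $(H^{2,0} \oplus H^{0,2})$ contributes at most $1$ to each eigenspace; restricting attention to the part landing in algebraic classes, the anti-invariant algebraic part $V^{-}$ is constrained by this. The cleanest route is: an algebraic class is a real $(1,1)$-class; the involution giving $\NS(A)$ differs from $\sigma^{*}$, and on the real Hodge structure $H^2$ the $(-1)$-part meeting $H^{1,1}_{\R}$ has dimension bounded using $h^{2,0}=1$.

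Because the Hodge-theoretic bookkeeping above is the delicate part, I would in fact prefer a more hands-on argument specific to abelian surfaces: realize $\NS(A_\C)$ inside $\Hom(A_\C, A_\C^\vee)^{\mathrm{sym}}$, equivalently (after fixing a polarization) inside the symmetric part of $\End^0(A_\C)$ with respect to the Rosati involution. Complex conjugation acts on $\End^0(A_\C)$ as a ring automorphism, and $\NS(A)\otimes\Q$ is the fixed subspace of the induced involution on the Rosati-symmetric part $S$ of $\End^0(A_\C)$. Then I would argue case by case on the possible $\End^0(A_\C)$ for an abelian surface (the standard Albert classification: $\Q$; real quadratic; a quartic CM field; an indefinite quaternion algebra; $\M_2(\Q)$; etc.), and in each case compute $\dim_\Q S$ and check directly that any involution of $S$ fixing the class of the given polarization (which lies in $S$) has fixed subspace of codimension at most $1$. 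The main obstacle is the quaternion and $\M_2$ cases, where $S$ has dimension $3$ and $4$ respectively and one must genuinely use that the automorphism comes from a field automorphism (complex conjugation on $\C$) together with positivity of the Rosati form to rule out a codimension-$2$ fixed space; here one can invoke that an order-$2$ ring automorphism of such an algebra is inner (Skolem--Noether) and that conjugation by a single element has large centralizer. I expect the write-up to be short once the reduction "$\rk\NS(A) = \dim V^{+}$ and we must bound $\dim V^{-}\le 1$" is in place, with the eigenspace-dimension count via $h^{2,0}(A_\C)=1$ being the conceptual heart.
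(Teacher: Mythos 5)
Your reduction in the first paragraph (equate $\rk \NS(A)$ with the dimension of the invariant subspace and aim for $\dim V^- \le 1$) is a legitimate reframing, but neither of your two proposed routes to the bound closes the gap, and both differ from the paper's argument. The paper's proof is a direct lattice computation: writing $A(\C)\simeq W_\C/\Lambda$ with $W$ a $2$-dimensional real vector space, it invokes Silhol's criterion for a line bundle on $A_\C$ to descend to $\R$, namely that the alternating form $E$ vanish on the rank-$2$ lattice $\Lambda'=\Lambda\cap W$. Hence $\NS(A)$ is literally the kernel of the restriction map $\NS(A_\C)\to\Hom(\wedge^2\Lambda',\Z)\cong\Z$, whose corank is at most $1$. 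No Hodge decomposition and no Albert classification appear.

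Concerning your Hodge-theoretic route: the assertion that the bound ``follows from $h^{2,0}=1$'' is not correct as stated. The involution $\sigma$ swaps $H^{2,0}\leftrightarrow H^{0,2}$, which controls only a $2$-dimensional piece of $H^2(A(\C),\R)$; the $(-1)$-eigenspace of $\sigma$ on $H^{1,1}_\R$ could a priori have dimension $2$, $3$, or $4$. What actually pins it down is the eigenvalue pattern of $c^*$ on $H^1(A(\C),\R)$, namely $(+1)^2(-1)^2$, which is equivalent to the fact that $\Lambda\cap W$ has rank $2$ (equivalently, $A(\R)^\circ$ is a $2$-torus). Passing to $\wedge^2 H^1$ then gives eigenvalue pattern $(+1)^2(-1)^4$ for $c^*$, hence $(+1)^4(-1)^2$ for the twisted involution $-c^*$ acting on $\NS$, and only one dimension of the $(-1)$-eigenspace meets $H^{1,1}_\R$. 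So your approach can be made to work, but the input you isolate ($h^{2,0}=1$) is not the decisive one; the decisive input is precisely the ``$\Lambda\cap W$ is a lattice'' fact that drives the paper's proof. Concerning your ``hands-on'' Albert-classification route: this has a genuine gap in the quaternion case. Taking $\End^0(A_\C)=B$, the symmetric part $S\subset B$ has dimension $3$, but conjugation by the element $\mu$ defining the Rosati involution is a $\Gal$-compatible order-$2$ automorphism whose fixed subspace in $S$ is only $\Q\cdot 1$, of codimension $2$. Ruling out this possibility is exactly the content of Proposition \ref{proposition: endomorphism rings of absolutely simple abelian surfaces over R} (no imaginary quadratic $\End^0$ for real geometrically simple abelian surfaces), but in the paper that proposition is \emph{deduced from} Lemma \ref{lemma: Neron severi rank computation real abelian surfaces}, so your route would be circular unless you supply an independent argument, which again comes back to the $H^1$ eigenvalue computation.
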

\begin{proof}
There exists a two-dimensional $\mathbb{R}$-vector space $W$, a lattice $\Lambda\subset W_{\C} := W\otimes \C$ stable under the automorphism $\sigma$ induced by complex conjugation on the second factor, and a complex analytic isomorphism $A(\C) \simeq (W_{\C})/\Lambda$ that intertwines complex conjugation on $A(\C)$ with $\sigma$.
Under this isomorphism, $\NS(A_{\C})$ can be identified with the set of $\Z$-bilinear alternating forms $E\colon \Lambda \times \Lambda \rightarrow \Z$ with the property that the $\mathbb{R}$-linear extension $E_{\mathbb{R}}$ of $E$ to $W_{\C}$ satisfies $E_{\mathbb{R}}(iv,iw) = E_{\mathbb{R}}(v,w)$ for all $v,w\in \Lambda\otimes \mathbb{R}  = W_{\C}$.
By \cite[Chapter IV, Theorem (3.4)]{Silhol-92} such an $E$ lies in $\NS(A)$ if and only if the associated Hermitian form $E_{\mathbb{R}}(iv,w)+iE_{\mathbb{R}}(v,w)$ is $\mathbb{R}$-valued on $W\times W$, that is to say $E_{\mathbb{R}}(W,W) = 0$.
Since the intersection $\Lambda' = \Lambda \cap W$ is a lattice in $W$, the condition $E_{\mathbb{R}}(W,W) = 0$ is equivalent to $E(\Lambda',\Lambda') = 0$.
In conclusion, $\NS(A) = \ker(\NS(A_{\C}) \rightarrow \Hom(\wedge^2(\Lambda'),\Z))$, where the map sends $E$ to its restriction to $\Lambda'\times \Lambda'$.
Since the target of this map is isomorphic to $\Z$, the lemma is proved.
\end{proof}

\begin{prop}\label{proposition: endomorphism rings of absolutely simple abelian surfaces over R}
Let $A/\R$ be a geometrically simple abelian surface. 
Then $\End(A)$ is isomorphic to $\Z$ or an order in a real quadratic field.
\end{prop}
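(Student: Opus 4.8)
The plan is to combine the Néron--Severi rank bound of Lemma \ref{lemma: Neron severi rank computation real abelian surfaces} with the classification of endomorphism algebras of geometrically simple abelian surfaces over $\C$, using the fact that the rank of $\NS$ is a coarse invariant of the endomorphism algebra. First I would recall Albert's classification: for a geometrically simple abelian surface $A$ over an algebraically closed field of characteristic $0$, the algebra $\End^0(A_{\C})$ is one of $\Q$, a real quadratic field, a totally real quartic field, an indefinite quaternion algebra over $\Q$, or an imaginary quadratic field, or a quartic CM field; and correspondingly $\rk\,\NS(A_{\C})$ takes the values $1,2,4,3,2,2$ (the $\NS$ rank equals the dimension over $\Q$ of the subspace of $\End^0$ fixed by the Rosati involution attached to a polarization). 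So after base change to $\C$ the surface $A_\C$ falls into one of these cases.

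Next I would run the inequality $\rk\,\NS(A) \geq \rk\,\NS(A_\C) - 1$ against each case. Since $A$ is geometrically simple, $\End(A) \subseteq \End(A_\C)$ is an order in a $\Q$-subalgebra of $\End^0(A_\C)$, and it is either $\Z$ or contains a nontrivial element. If $\End^0(A_\C)$ is a quartic CM field or a totally real quartic field, then $\rk\,\NS(A_\C) \geq 2$ but more to the point one shows $\End^0(A)$ cannot be smaller than quadratic unless... actually the cleanest route: the key point is that $\rk\,\NS(A)$ equals $1 + \dim_\Q\{x \in \End^0(A) : x^\dagger = x, \Tr x = 0\}$ for a choice of polarization over $\R$, or more simply that $\rk\,\NS(A) \geq 2$ forces $\End^0(A) \supsetneq \Q$. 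So whenever $\rk\,\NS(A_\C) \geq 3$ (the quaternion and totally real quartic cases) we get $\rk\,\NS(A) \geq 2$ and hence $\End^0(A) \neq \Q$; one then argues that a geometrically simple real abelian surface whose endomorphism algebra over $\R$ has $\Q$-dimension $\geq 2$ must have $\End^0(A)$ equal to a real quadratic field — ruling out imaginary quadratic and quaternion possibilities over $\R$ because the presence of a real place (here $\R$ itself) forces the Rosati-positive, hence formally real, structure, and an imaginary quadratic field admits no embedding compatible with a positive involution fixing it pointwise. In the remaining cases where $\rk\,\NS(A_\C) = 2$ (CM by a quartic field, or $\End^0(A_\C)$ imaginary quadratic — wait, imaginary quadratic has $\NS$ rank $2$ as well), the bound only gives $\rk\,\NS(A) \geq 1$, so I would instead argue directly: if $\End^0(A)$ were imaginary quadratic then complex conjugation would act on $\End^0(A)$, and composing with the nontrivial automorphism one checks the Rosati involution associated to a polarization defined over $\R$ cannot be positive definite — equivalently, an order in an imaginary quadratic field cannot act on a real abelian surface preserving a polarization, since such an action would make $A_\C$ have CM by that field as its full endomorphism algebra and that CM type is not defined over $\R$ for a simple surface. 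Similarly a quartic CM field has no real place, so it cannot be $\End^0(A)$ for $A$ over $\R$; and it cannot shrink to $\Q$ or a real quadratic subfield compatibly (a quartic CM field has no real quadratic subfield fixed by complex conjugation... actually its maximal totally real subfield is real quadratic, but that subfield is not preserved as the full $\End^0(A)$ because $A$ geometrically simple with CM is not defined with smaller endomorphism ring over $\R$ in a way that lands in the real subfield) — here I would lean on Lemma \ref{lemma: Neron severi rank computation real abelian surfaces} more carefully or invoke the cited result \cite[Lemma 2.3]{Rotger-whichquaternion} in the $F = \Q$ case and Shimura's theory of the field of moduli for the general real case.

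\emph{Main obstacle.} The delicate point is the case distinction for the $\NS$-rank-$2$ possibilities: ruling out that $\End^0(A)$ is an imaginary quadratic field (the $J_1(13)$-type phenomenon, which genuinely occurs when $A$ is \emph{not} geometrically simple, so the geometric simplicity must be used essentially) and that a quartic CM situation degenerates to something non-real over $\R$. The resolution is that for $A$ defined over $\R$ any polarization is defined over $\R$ (polarizations over $\R$ exist since $\NS(A)$ has a positive element, by Lemma \ref{lemma: Neron severi rank computation real abelian surfaces} the group $\NS(A)$ is nonzero and contains ample classes), and the Rosati involution of such a polarization is a positive involution on $\End^0(A)$; a positive involution on a number field is trivial iff the field is totally real and is complex conjugation iff the field is CM, and in the CM case the involution being complex conjugation combined with $A$ being a \emph{surface} (so the CM field is quartic, with no real place) contradicts $A$ being definable over $\R$ together with its endomorphisms — precisely, the reflex field / field of moduli argument shows a simple quartic-CM abelian surface is not definable over a real field with all endomorphisms. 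I expect the writeup to route the imaginary-quadratic exclusion through the observation that such an action would, after base change, force $\End^0(A_\C)$ to be imaginary quadratic (not larger, by simplicity of $A$ and the rank count), and then complex conjugation on $A_\C$ would have to commute with this $\Q$-algebra action while being an antiholomorphic involution, which is incompatible with the uniqueness of the CM type up to conjugation for a simple surface. This is the step I would spend the most care on; everything else is bookkeeping with Albert's list and the rank inequality.
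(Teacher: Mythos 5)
Your overall strategy (reduce to Albert's list, use Lemma \ref{lemma: Neron severi rank computation real abelian surfaces}, then eliminate the non-real options) is the same as the paper's, but the execution has several genuine gaps.

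First, the classification you start from is wrong. For a geometrically simple abelian surface over $\C$, the endomorphism algebra is one of: $\Q$, a real quadratic field, a non-split indefinite quaternion algebra over $\Q$, or a quartic CM field. Totally real quartic fields and imaginary quadratic fields do \emph{not} occur as $\End^0(A_\C)$ for $A_\C$ simple; this is exactly the content of the cited Birkenhake--Lange exercises. Including those extra cases both misrepresents the classification and creates spurious cases to rule out.

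Second, and more importantly, you never actually bound $\dim_\Q\End^0(A)$. The paper's crucial (and elementary) observation, which you omit entirely: $\End^0(A)$ acts faithfully on $H_1(A(\R)^\circ, \Q) \simeq \Q^2$, so it embeds in $\Mat_2(\Q)$ and is therefore at most $2$-dimensional. Since neither a non-split quaternion algebra nor a quartic CM field embeds in $\Mat_2(\Q)$, this immediately forces $\End^0(A) \subsetneq \End^0(A_\C)$ in the two remaining cases. Without this step you cannot even conclude that $\End^0(A)$ is a field.

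Third, your argument to rule out $\End^0(A)$ imaginary quadratic does not work. You assert that ``the Rosati involution associated to a polarization defined over $\R$ cannot be positive definite'' for imaginary quadratic $\End^0(A)$ — but complex conjugation on an imaginary quadratic field \emph{is} a positive involution, and it is the Rosati involution of every polarization, so there is no contradiction there. You also claim $\rk\NS(A) = 2$ when $\End^0(A)$ is imaginary quadratic; in fact $\rk\NS(A) = 1$ in that case, since the Rosati-fixed subalgebra is $\Q$. That is precisely what the paper uses: in the quaternionic case, Lemma \ref{lemma: Neron severi rank computation real abelian surfaces} gives $\rk\NS(A) \geq 3 - 1 = 2$, contradicting $\rk\NS(A) = 1$.

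Fourth, your treatment of the quartic CM case via reflex fields and fields of moduli gestures at a real argument but does not execute it, and is more roundabout than needed. The paper's version is elementary: if $\End^0(A_\C)$ is a quartic CM field $F$ and $\End^0(A)$ is an imaginary quadratic subfield of $F$, then $F$ has two distinct quadratic subfields (its maximal totally real subfield and $\End^0(A)$), so $F/\Q$ is biquadratic; a short counting argument shows every CM type of a biquadratic CM field is imprimitive, so $A_\C$ would not be simple, a contradiction. If you want to use the reflex-field route you would need to make precise why a primitive quartic CM type obstructs descent of all endomorphisms to $\R$, which is more work than the imprimitivity argument.
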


\begin{proof}
By the classification of endomorphism algebras of complex abelian surfaces \cite[Proposition 5.5.7, Exercise 9.10(1) and Exercise 9.10(4)]{BirkenhakeLange-Complexabelianvarieties}, $\End^0(A_{\C})$ is isomorphic to either $\Q$, a real quadratic field, a non-split indefinite quaternion algebra or a quartic CM field.
The proposition is clear in the first two cases, so we may assume that we are in one of the latter two cases.

Since $\End^0(A)$ acts on the $\Q$-homology of $A(\R)^{\circ} \simeq S^1\times S^1$, there is a (nonzero, hence injective) map $\End^0(A) \hookrightarrow \Mat_2(\Q)$.
Since $\End^0(A_{\C})$ does not embed in $\Mat_2(\Q)$, $\End^0(A)\neq \End^0(A_{\C})$ and so $\End^0(A)$ is at most two-dimensional.
It remains to exclude that $\End^0(A)$ is an imaginary quadratic field, so assume for contradiction that this is the case.
If $\End^0(A_{\C})$ is a quaternion algebra, Lemma \ref{lemma: Neron severi rank computation real abelian surfaces} shows that $\rk(\NS(A))\geq 3-1 = 2$, contradicting the fact that $\End^0(A)$ is imaginary quadratic.
If $\End(A_{\C})$ is a quartic CM field $F$, this CM field has at least two quadratic subfields (namely its unique real quadratic subfield and $\End^0(A)$) so it must be a biquadratic extension of $\Q$.
A counting argument then shows that every CM type of $F$ is imprimitive.
This implies \cite[Theorem 3.5]{Lang-ComplexMultiplication} that $A_{\C}$ is not simple.
We again obtain a contradiction and have completed all cases of the proof.
\end{proof}

\subsection{The endomorphism field of a \texorpdfstring{$\mathrm{PQM}$}{PQM} surface}\label{subsection: endomorphism field}

Let $F$ be a field of characteristic zero and $A/F$ a $\mathrm{PQM}$ surface. 
The absolute Galois group $\Gal_{F}$ acts on $\End(A_{\Fbar})$ on the right by ring automorphisms via $\phi^{\sigma}(a) = \phi\left(a^{\sigma^{-1}}\right)^{\sigma}$ for $\sigma \in \Gal_F$, $\phi \in \End(A_{\Fbar})$ and $a\in A(\Fbar)$.
The kernel of this action cuts out a Galois extension $L/F$ over which all the endomorphisms of $A_{\Fbar}$ are defined.
Following \cite{GK17} we call $L$ the \defi{endomorphism field} of $A$.
This determines an injective map $\rho_{\End}\colon \Gal(L/F) \rightarrow \Aut(\End(A_{\Fbar}))$.
We recall the results of \cite{DR04} studying this map which are relevant for our purposes.
Write $C_n$ (resp. $D_n$) for the cyclic (resp. dihedral) group of order $n$ (resp. $2n$).
Note the isomorphisms $D_1 \simeq C_2$ and $D_2 \simeq C_2\times C_2$.

\begin{prop}\label{proposition: galois group endomorphism field is dihedral if real place}
Let $A/F$ be a $\mathrm{PQM}$ surface with endomorphism field $L$ and let $G = \Gal(L/F)$.
Then $G \simeq C_n$ or $D_n$ for some $n\in \{1,2,3,4,6\}$.
If $F$ admits an embedding into $\mathbb{R}$, then $G\simeq D_n$ for some $n\in \{1,2,3,4,6\}$. 
\end{prop}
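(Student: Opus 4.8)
The plan is to analyze the image of $\rho_{\End}\colon \Gal(L/F) \hookrightarrow \Aut(\calO)$, where we use the identification $\Aut(\calO) \simeq N_{B^{\times}}(\calO)/\Q^{\times}$ from \S\ref{subsection: dihedral actions on calO}. First I would establish the general claim that $G := \Gal(L/F)$ is isomorphic to $C_n$ or $D_n$ for some $n \in \{1,2,3,4,6\}$; this is essentially a citation to \cite{DR04}, but the underlying point is that $G$ is a finite subgroup of $N_{B^{\times}}(\calO)/\Q^{\times} \subset B^{\times}/\Q^{\times}$, and finite subgroups of $B^{\times}/\Q^{\times}$ for $B$ a division quaternion algebra over $\Q$ are cyclic or dihedral of the stated orders (the orders $n$ being constrained because $\Q(\zeta_n)$ must embed in $B$, so $\varphi(n) \le 2$, and the dihedral case comes from adjoining an element of norm giving a nontrivial Atkin--Lehner class). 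I would quote the relevant statement from \cite{DR04} (or \cite[\S32]{VoightQA}) rather than reprove it.

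The real work is the second sentence: when $F$ embeds into $\R$, rule out $G \simeq C_n$ for $n \ge 3$ (the cases $C_1 = D_1$ and $C_2 = D_1$ being already covered, and $C_2 \times C_2 = D_2$ being dihedral). The key idea is to use complex conjugation. Fix an embedding $F \hookrightarrow \R$, extend it to an embedding $L \hookrightarrow \C$ (possible since $L/F$ is algebraic), and let $c \in \Gal(L/F)$ be the image of complex conjugation — an element of order $1$ or $2$. I claim $c$ acts nontrivially on $\End(A_{\Fbar}) \simeq \calO$ whenever $\calO$ is noncommutative (which it is, being an order in a division quaternion algebra) and $A$ is genuinely PQM over $F$ in the sense that not all endomorphisms are defined over $F$. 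Indeed, if $c$ acted trivially, then all of $\End(A_L)$ would be fixed by complex conjugation, i.e. defined over $L \cap \R$; but the base change $A_{L \cap \R}$ is a geometrically simple abelian surface over a real field, so by Proposition \ref{proposition: endomorphism rings of absolutely simple abelian surfaces over R} its endomorphism ring is $\Z$ or an order in a real quadratic field — in particular commutative — contradicting $\End(A_{\overline{F}}) \simeq \calO$ noncommutative (as long as $L \ne F$; if $L = F$ then $G$ is trivial and we are in case $D_1$). So $c$ has order exactly $2$ and acts nontrivially. A cyclic group $C_n$ with $n \ge 3$ has at most one element of order $2$, and that element is central; but the only automorphism of $\calO$ fixed by all of $C_n$ (hence central in the image) that could equal the nontrivial $c$ would have to be... — more directly: if $G \simeq C_n$ then any order-$2$ subgroup is normal, and $C_n/\langle c\rangle$ acts on $\End(A)^{\langle c \rangle}$, which by the argument above is commutative of rank $\le 2$; chasing through, one sees the full group $C_n$ then acts through automorphisms of a commutative étale $\Q$-algebra of rank $\le 2$, forcing $n \le 2$. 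So $n \le 2$, and $C_1, C_2$ are both $\simeq D_1$. Hence $G$ is dihedral.

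I expect the main obstacle to be making the last deduction clean — i.e. precisely arguing that once the order-$2$ element $c$ acts nontrivially, a cyclic $G$ is forced down to order $\le 2$. The cleanest route is probably: $G$ acts faithfully on $\calO$, the subring $\calO^{\langle c \rangle}$ is commutative (rank $\le 2$ over $\Z$, by Prop.~\ref{proposition: endomorphism rings of absolutely simple abelian surfaces over R} applied to $A_{L^{\langle c\rangle}}$ after descending the field of definition appropriately), and if $G = C_n$ is abelian then $G$ stabilizes $\calO^{\langle c\rangle}$ and acts on it through a cyclic quotient; but $G$ acting faithfully on $\calO$ while $\langle c\rangle$ acts trivially on the commutative subalgebra $\calO^{\langle c\rangle}$, combined with $B = \calO^{\langle c\rangle}\text{-span}$... — here I would lean on the explicit descriptions in Lemmas \ref{lemma: centraliser of order 2 subgroup of automorphism group}--\ref{lemma: D3,D6 subgroup Aut(O)}: the centralizer in $\Aut(\calO)$ of the order-$2$ element $[b]$ (with $b^2 = m$) is contained in $N_{B^\times}(\Q(b))/\Q^\times$, which for $B$ a division algebra is itself cyclic or dihedral of small order, and is in fact abelian of exponent dividing... — in any case one checks directly that it contains no element of order $> 2$ commuting with $[b]$ and generating a cyclic group with it of order $\ge 3$. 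So the upshot: a cyclic subgroup of $\Aut(\calO)$ all of whose nontrivial elements of order $2$ act nontrivially has order $\le 2$, and therefore $G$ — having its complex-conjugation involution act nontrivially — cannot be $C_n$ for $n \ge 3$, completing the proof that $G \simeq D_n$.
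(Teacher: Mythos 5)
Your overall plan --- quote the classification of finite subgroups of $B^{\times}/\Q^{\times}$ for the first sentence, then use a complex-conjugation involution together with Proposition~\ref{proposition: endomorphism rings of absolutely simple abelian surfaces over R} to rule out the cyclic cases --- is a legitimate alternative to what the paper does. The paper instead cites \cite[Theorem~3.4(C)]{DR04}: if $G$ is trivial or $C_n$ with $n \in \{3,4,6\}$, that theorem identifies $\End^0(A)$ as $B$ or an imaginary quadratic field, which contradicts Proposition~\ref{proposition: endomorphism rings of absolutely simple abelian surfaces over R} directly. Your route is more self-contained, and your handling of $G$ trivial (minor slip: $C_1$ is not $D_1 \simeq C_2$, but the same contradiction applies since $L = F \subset \R$ with $\End(A_F) \simeq \calO$ noncommutative) and of $G \simeq C_3$ (no order-$2$ element, so $c$ would be trivial, again forcing $L \subset \R$) is fine.

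However, your exclusion of $G \simeq C_4$ and $C_6$ has a real gap. The claim that ``the full group $C_n$ then acts through automorphisms of a commutative \'etale $\Q$-algebra of rank $\le 2$, forcing $n \le 2$'' is false: the action of $C_n$ on $\calO^{\langle c\rangle}$ need not be faithful, and in fact in the relevant examples it is trivial. Concretely, take $C_4 = \langle [1+i]\rangle$ with $i^2 = -1$ (which does live inside $\Aut(\calO)$ whenever $2 \mid \disc B$, by Lemma~\ref{lemma: D4 subgroup Aut(O)}); then $c = [1+i]^2 = [i]$, $\calO^{\langle c\rangle} = \calO \cap \Q(i)$, and $1+i$ commutes with all of $\Q(i)$, so the whole $C_4$ acts trivially on $\calO^{\langle c\rangle}$. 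For the same reason, your fallback centralizer claim (``one checks directly that it contains no element of order $>2$ commuting with $[b]$ and generating a cyclic group with it of order $\ge 3$'') is simply wrong: $[1+i]$ is such an element. What actually closes the argument is the fact that Proposition~\ref{proposition: endomorphism rings of absolutely simple abelian surfaces over R} forces $\calO^{\langle c\rangle} = \End(A_{L^{\langle c\rangle}})$ to be $\Z$ or a \emph{real} quadratic order --- you record only ``commutative of rank $\le 2$,'' which is too weak --- while for every copy of $C_4$ or $C_6$ in $\Aut(\calO)$ (by Lemmas~\ref{lemma: D4 subgroup Aut(O)} and \ref{lemma: D3,D6 subgroup Aut(O)}, with generators $[1+i]$ and $[1-\omega]$) the unique order-$2$ element is conjugation by an element squaring to $-1$ or $-3$, so $\calO^{\langle c\rangle}$ is an order in $\Q(i)$ or $\Q(\sqrt{-3})$, hence imaginary quadratic. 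That imaginary-vs-real clash is the contradiction; as written, your proof never isolates it.
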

\begin{proof}
The classification of finite subgroups of $B^{\times}/\Q^{\times}$ shows that $G$ is isomorphic to $C_n$ or $D_n$ for some $n\in \{1,2,3,4,6\}$ \cite[Proposition 2.1]{DR04}.
It therefore suffices to exclude that $G$ is isomorphic to $C_1, C_3,C_4$ or $C_6$ if there exists an embedding $\iota\colon F\hookrightarrow \mathbb{R}$.
If $G$ is isomorphic to one of these groups, then $\End^0(A)$ is isomorphic to $B$ (if $G$ is trivial) or an imaginary quadratic field \cite[Theorem 3.4(C)]{DR04}. This contradicts Proposition \ref{proposition: endomorphism rings of absolutely simple abelian surfaces over R}.
\end{proof}
\begin{lemma}\label{lemma: A GL2-type iff endo field quadratic}
Let $A$ be a $\mathrm{PQM}$ surface over a number field $F$ admitting a real place. Then $A$ is of $\GL_2$-type if and only if the endomorphism field $L/F$ is a quadratic extension.
\end{lemma}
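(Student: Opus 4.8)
The plan is to relate the degree $[L:F]$ to the structure of $\End^0(A)$ via Galois descent of endomorphisms, using the fact that $\End^0(A) = \End^0(A_{\Fbar})^{\Gal_F} = (\End^0(A_{\Fbar}))^{G}$, where $G = \Gal(L/F)$ acts through the injection $\rho_{\End}\colon G \hookrightarrow \Aut(\End(A_{\Fbar})) \simeq \Aut(\calO)$. Since $A$ is a $\mathrm{PQM}$ surface, $\End^0(A_{\Fbar}) \simeq B$ is a quaternion algebra, so $\End^0(A)$ is the fixed subalgebra $B^G$. The dimension of $B^G$ over $\Q$ is then a group-theoretic invariant of the subgroup $\rho_{\End}(G) \subset \Aut(\calO)$.

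First I would invoke Proposition \ref{proposition: galois group endomorphism field is dihedral if real place}: because $F$ admits a real place, $G \simeq D_n$ for some $n \in \{1,2,3,4,6\}$ (including the degenerate cases $D_1 \simeq C_2$ and $D_2 \simeq C_2 \times C_2$). In particular $G$ is nontrivial precisely when $L \neq F$. Next, I would compute $\dim_{\Q} B^G$ for each such $G$: if $G$ is trivial then $B^G = B$ is $4$-dimensional; and for $G \simeq D_n$ with $n \in \{1,2,3,4,6\}$ one checks (this is exactly the content of the computation $\calO^G \simeq \Z$ used in the proof of Theorem \ref{theorem: fixed points of O/N}, or directly from Lemmas \ref{lemma: centraliser of order 2 subgroup of automorphism group}, \ref{lemma: D2 subgroup Aut(O)}, \ref{lemma: D4 subgroup Aut(O)}, \ref{lemma: D3,D6 subgroup Aut(O)}) that $B^G = \Q$, i.e. the fixed algebra is just the scalars. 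Indeed, for an order-$2$ subgroup $\langle [b] \rangle$ the fixed subalgebra is the quadratic field $\Q(b)$, which is $2$-dimensional; but for $D_n$ with $n \geq 1$ we have two non-commuting involutions (or an involution together with an element of order $> 2$), whose common centralizer in $B$ must be a commutative subalgebra that is also centralized by a non-commuting element, forcing it down to $\Q$. Actually for $D_1 \simeq C_2$ the fixed algebra is $2$-dimensional; so I must be slightly more careful: for $G \simeq D_1$ the extension $L/F$ is quadratic and $\End^0(A) = \Q(b)$ is quadratic, so $A$ is of $\GL_2$-type, while for $G \simeq D_2, D_3, D_4, D_6$ we get $\End^0(A) = \Q$, so $A$ is \emph{not} of $\GL_2$-type.

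Assembling these: $A$ is of $\GL_2$-type $\iff \End^0(A)$ is quadratic $\iff \dim_\Q B^G = 2 \iff G \simeq D_1 \simeq C_2 \iff [L:F] = 2$. The forward and backward implications both follow once the dimension count above is in hand. The main obstacle is verifying that $B^G = \Q$ for $G \simeq D_n$ with $n \geq 2$, but this is essentially immediate from the explicit generators produced in Section \ref{section: quaternionic algebra}: in each case $G$ contains an element $[i]$ with $i^2 = m$ and an element not commuting with $i$, so the $G$-fixed subalgebra is contained in the centralizer of $i$, which is $\Q(i)$, and is further required to be fixed by the second (non-commuting) generator, on which the only invariants in $\Q(i)$ are the rationals. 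One should also double-check the edge case where the real-place hypothesis is genuinely needed: it is what rules out the cyclic groups $C_3, C_4, C_6$ (for which $\End^0(A)$ would be imaginary quadratic, hence still ``quadratic'' but not real), so that ``quadratic endomorphism field'' unambiguously means $G \simeq C_2$; without it, $L/F$ quadratic could a priori also be $G \simeq C_2$ acting so that $B^G$ is imaginary quadratic, which is still $\GL_2$-type, so in fact the statement would survive, but the cleanest bookkeeping uses Proposition \ref{proposition: endomorphism rings of absolutely simple abelian surfaces over R} to pin down that the quadratic field is real. I would present the argument in the order: (1) cite Prop.\ \ref{proposition: galois group endomorphism field is dihedral if real place} to get $G \simeq D_n$; (2) note $\End^0(A) = B^{G}$; (3) compute $\dim_\Q B^G$ by cases, getting $2$ iff $G \simeq D_1$; (4) conclude.
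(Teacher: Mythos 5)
Your proof is correct, and it takes a genuinely different route from the paper's. The paper argues via a chain of citations: first, $A$ is of $\GL_2$-type iff $\End(A)\neq\Z$ (by Proposition~\ref{proposition: endomorphism rings of absolutely simple abelian surfaces over R}, which restricts $\End^0(A)$ to $\Q$ or a real quadratic field); second, $\End(A)\neq\Z$ iff $L/F$ is cyclic (citing \cite[Theorem~3.4(C)]{DR04}); third, among $D_1,D_2,D_3,D_4,D_6$ only $D_1$ is cyclic, so cyclic is equivalent to quadratic. You instead bypass the \cite{DR04} citation entirely by computing $\End^0(A)=B^G$ directly case-by-case, using the explicit generators from Section~\ref{section: quaternionic algebra} (equivalently, the already-recorded fact $\calO^G\simeq\Z^2$ for $D_1$ and $\simeq\Z$ for $D_n$, $n\geq 2$, in the proof of Theorem~\ref{theorem: fixed points of O/N}). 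What you gain is self-containedness; what the paper gains is brevity. Your intermediate course correction --- catching that $D_1$ gives a $2$-dimensional fixed algebra while $D_n$ with $n\geq 2$ gives $\Q$ --- is exactly right: for $n\geq 2$ the fixed algebra of the cyclic part is some quadratic subfield, and the reflection $[j]$ conjugates it nontrivially (since $ij=-ji$), collapsing the invariants to $\Q$.

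One small inaccuracy in your side remark about the necessity of the real-place hypothesis: you say that without it ``the statement would survive.'' It would not. Without the hypothesis, $G$ could be $C_4$ (say), giving $B^G=\Q(i)$ imaginary quadratic and hence $\GL_2$-type, yet $[L:F]=4$; so the forward implication ($\GL_2$-type $\Rightarrow$ quadratic) fails. The hypothesis is genuinely needed for that direction, not just for bookkeeping. This does not affect the correctness of your proof of the stated lemma, but the parenthetical claim is wrong and should be dropped.
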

\begin{proof}
By Proposition \ref{proposition: endomorphism rings of absolutely simple abelian surfaces over R}, $A$ is of $\GL_2$-type if and only if $\End(A) \neq \Z$.
By \cite[Theorem 3.4(C)]{DR04}, $\End(A) \neq \Z$ if and only if $L$ is a cyclic extension of $F$.
By Proposition \ref{proposition: galois group endomorphism field is dihedral if real place}, $L/F$ is cyclic if and only if it is a quadratic extension.
\end{proof}

Assume now that $A$ is an $\calO$-$\mathrm{PQM}$ surface and fix an isomorphism $\End(A_{\Fbar})\simeq \calO$.
By the Skolem--Noether theorem, every ring automorphism of $\calO$ is of the form $x\mapsto b^{-1}xb$ for some $b\in B^{\times}$ normalising $\calO$, and $b$ is uniquely determined up to $\Q^{\times}$-multiples. 
Therefore $\Aut(\calO) \simeq N_{B^{\times}}(\calO)/\Q^{\times} \subset B^{\times}/\Q^{\times}$, hence the map $\Gal(L/F) \rightarrow \Aut(\End(A_{\Fbar}))$ can be viewed as an injective homomorphism
\begin{align}\label{equation: endomorphism field homomorphism}
\rho_{\End}\colon \Gal(L/F) \rightarrow \Aut(\calO) \simeq N_{B^{\times}}(\calO)/\Q^{\times}
\end{align}
whose image is isomorphic to $C_n$ or $D_n$ for some $n\in \{1,2,3,4,6\}$ by Proposition \ref{proposition: galois group endomorphism field is dihedral if real place}.

\begin{remark}
    The existence of a polarization of a certain type puts restrictions on the Galois group of the endomorphism field, see \cite[Theorem 3.4]{DR04}.
    In particular, that theorem shows that if an $\calO$-$\mathrm{PQM}$ surface $A$ is principally polarized over $F$ then this Galois group is $\{1\}$, $C_2$ or $C_2\times C_2$.
\end{remark}

For future reference we record the following result of Silverberg \cite[Proposition 2.2]{Silverberg92a}.
\begin{prop}[Silverberg]\label{prop: Silverberg result endo field}
    Let $N\geq 3$ be an integer and suppose that the $\Gal_F$-action on $A[N]$ is trivial. 
    Then $L = F$.
\end{prop}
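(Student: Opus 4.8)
The plan is to deduce this from the fact that an endomorphism defined over $\Fbar$ is automatically defined over $F$ as soon as it commutes with enough Galois action. More precisely, recall from \S\ref{subsection: endomorphism field} that an element $\sigma\in\Gal_F$ acts trivially on $\End(A_{\Fbar})$ precisely when $\phi^\sigma=\phi$ for every $\phi$, i.e.\ when $\phi(a^{\sigma^{-1}})^\sigma=\phi(a)$ for all $a\in A(\Fbar)$, which is the statement that $\phi$ is $\Gal_F$-equivariant, i.e.\ defined over $F$. So the endomorphism field $L$ is exactly the smallest extension of $F$ over which all of $\End(A_{\Fbar})$ becomes defined, and to prove $L=F$ it suffices to show that every $\phi\in\End(A_{\Fbar})$ is already defined over $F$.

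First I would fix $\phi\in\End(A_{\Fbar})$ and choose a finite Galois extension $K/F$ (inside $\Fbar$) over which $\phi$ is defined; replacing $K$ by its compositum with $L$ we may assume $L\subseteq K$. For each $\sigma\in\Gal(K/F)$ the conjugate $\phi^\sigma$ is another endomorphism, and the key point is to compare $\phi$ and $\phi^\sigma$ on the $N$-torsion. Since $N\geq 3$, the group scheme $A[N]$ is étale and the hypothesis says $\Gal_F$ fixes $A[N]$ pointwise, so in particular $a^\sigma=a$ for every $a\in A[N](\Fbar)$ and every $\sigma\in\Gal_F$. Then for $a\in A[N](\Fbar)$ we compute
\[
\phi^\sigma(a)=\phi\bigl(a^{\sigma^{-1}}\bigr)^\sigma=\phi(a)^\sigma=\phi(a),
\]
using that $a^{\sigma^{-1}}=a$ (as $a$ is $N$-torsion) and that $\phi(a)\in A[N](\Fbar)$ is also fixed by $\sigma$. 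Hence $\phi$ and $\phi^\sigma$ induce the same map on $A[N]$, so $\phi-\phi^\sigma$ kills $A[N]$ and therefore factors as $N\cdot\psi$ for a unique $\psi\in\End(A_{\Fbar})$; that is, $\phi\equiv\phi^\sigma\pmod{N\End(A_{\Fbar})}$ for all $\sigma$.

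The remaining step is to upgrade this congruence to an actual equality. The orbit $\{\phi^\sigma:\sigma\in\Gal(K/F)\}$ is finite and all its elements are congruent to $\phi$ modulo $N$; I would argue that they must in fact coincide. One clean way: consider the "average" or, better, use that $\End(A_{\Fbar})$ is a finitely generated free $\Z$-module and the Galois orbit of $\phi$ lies in the coset $\phi+N\End(A_{\Fbar})$, which — combined with boundedness of degrees, since all $\phi^\sigma$ have the same degree as $\phi$ — forces the orbit to be a single point once $N\geq 3$; alternatively, one invokes rigidity of torsion: an endomorphism congruent to the identity modulo $N$ with $N\geq 3$ and fixing a polarization class is the identity, applied after translating the problem to $\phi^{-1}\phi^\sigma$ in the relevant cases, or one simply cites Silverberg's original argument. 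In the write-up I would present the cohomological version: the class of the cocycle $\sigma\mapsto \phi^\sigma-\phi\in N\End(A_{\Fbar})$ together with the fact that multiplication by $N$ is injective on $\End(A_{\Fbar})$ pins down $\phi$. The main obstacle is precisely this last passage from a mod-$N$ statement to an integral one; everything before it is the short torsion computation above, and the hypothesis $N\geq 3$ (rather than $N\geq 1$) is exactly what makes the rigidity input available.
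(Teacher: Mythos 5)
Your torsion computation is correct and is the right first step: for $a\in A[N](\Fbar)$ and $\sigma\in\Gal_F$, since $a^{\sigma^{-1}}=a$ and $\phi(a)\in A[N](\Fbar)$ is also Galois-fixed, one gets $\phi^\sigma(a)=\phi(a)$, so $\phi-\phi^\sigma$ kills $A[N]$ and hence lies in $N\End(A_{\Fbar})$. (The paper itself gives no proof — it simply cites Silverberg — so there is no argument to compare line-by-line, but this mod-$N$ congruence is indeed the heart of Silverberg's result.)

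The gap is in the passage from the congruence to an actual equality, which you never carry out. Bounded degree alone does not pin down the orbit: the coset $\phi+N\End(A_{\Fbar})$ contains infinitely many elements of any sufficiently large degree, so that constraint by itself proves nothing, and the ``cohomological'' phrasing is also not a proof — injectivity of multiplication by $N$ on $\End(A_{\Fbar})$ only tells you $(\phi^\sigma-\phi)/N$ is a well-defined endomorphism, not that it is zero. The missing ingredient is exactly Minkowski's lemma, which the paper has already recorded as Lemma~\ref{lemma: reduction map is injective}: since your congruence holds for \emph{every} $\phi$, it says that $\Gal(L/F)$ — a finite group acting faithfully on the free $\Z$-module $\End(A_{\Fbar})$ of rank $4$, hence (isomorphic to) a finite subgroup of $\GL_4(\Z)$ — maps trivially under reduction $\GL_4(\Z)\to\GL_4(\Z/N\Z)$. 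For $N\geq 3$ this reduction is injective on finite subgroups, so $\Gal(L/F)$ is trivial and $L=F$. This is precisely where the hypothesis $N\geq 3$ is used; once you insert this one lemma your argument becomes a complete and clean proof.
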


We also record the useful fact that the endomorphism field is preserved by quadratic twist.
\begin{lemma}\label{lemma: quadratic twisting doesnt change endo field}
    Let $A/F$ be a $\mathrm{PQM}$ surface and $M/F$ a quadratic extension.
    Let $A^M$ be the quadratic twist of $A$ along $M/F$.
    Then under the identification $\End(A_{\Fbar}) = \End((A^M)_{\Fbar})$, $\rho_{\End,A} = \rho_{\End, A^M}$.
\end{lemma}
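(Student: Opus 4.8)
The plan is to unwind the definition of the quadratic twist and to observe that the defining cocycle takes values in the center of $\Aut(A_{\Fbar})$, so that conjugation by it acts trivially on endomorphisms.

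Fix the quadratic character $\chi = \chi_{M/F}\colon \Gal_F \to \{\pm 1\}$ cutting out $M/F$, and recall that $A^M$ is the twist of $A$ by the cocycle $\sigma \mapsto [\chi(\sigma)]$, where $[\pm 1] = \pm\id_{A_{\Fbar}} \in \Aut(A_{\Fbar})$. Concretely, there is an isomorphism $\psi\colon A_{\Fbar} \xrightarrow{\sim} (A^M)_{\Fbar}$ (defined over $M$) with $\psi^\sigma = \psi\circ[\chi(\sigma)]$ for all $\sigma\in\Gal_F$, where $\psi^\sigma$ is the Galois conjugate morphism, formed using the canonical identifications $(A_{\Fbar})^\sigma = A_{\Fbar}$ and $((A^M)_{\Fbar})^\sigma = (A^M)_{\Fbar}$ coming from the $F$-structures; note also that $(A^M)_{\Fbar} \simeq A_{\Fbar}$, so both surfaces have the same geometric endomorphism ring, and the identification $\End(A_{\Fbar}) = \End((A^M)_{\Fbar})$ in the statement is $\phi \mapsto \psi\phi\psi^{-1}$.

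Then for $\phi\in\End(A_{\Fbar})$ and $\sigma\in\Gal_F$, compatibility of Galois conjugation with composition of morphisms gives $(\psi\phi\psi^{-1})^\sigma = \psi^\sigma\,\phi^\sigma\,(\psi^\sigma)^{-1} = \psi\,[\chi(\sigma)]\,\phi^\sigma\,[\chi(\sigma)]^{-1}\,\psi^{-1}$, and since $[\chi(\sigma)] = \pm\id$ is central in $\End(A_{\Fbar})$ this equals $\psi\,\phi^\sigma\,\psi^{-1}$. Thus the right $\Gal_F$-action on $\End((A^M)_{\Fbar})$, transported to $\End(A_{\Fbar})$ along $\psi$, coincides with the right $\Gal_F$-action $\phi\mapsto\phi^\sigma$ on $\End(A_{\Fbar})$ defined in \S\ref{subsection: endomorphism field}. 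In particular the two actions have the same kernel, so $A$ and $A^M$ share the endomorphism field $L$, and the resulting injections $\rho_{\End,A}, \rho_{\End,A^M}\colon \Gal(L/F)\to \Aut(\End(A_{\Fbar}))$ agree. There is no real obstacle here: the only points needing care are the bookkeeping of conventions (the right action of $\Gal_F$, the direction of the twisting cocycle, the factors $\sigma^{-1}$ in $\phi^\sigma(a) = \phi(a^{\sigma^{-1}})^\sigma$), together with the one substantive input, namely that $\pm\id$ is central in $\End(A_{\Fbar})$.
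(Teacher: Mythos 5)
Your proof is correct and takes essentially the same approach as the paper, which compresses the entire argument into the single sentence that $-1$ is central in $\End(A_{\Fbar})$; you have simply unwound the twisting cocycle and the conjugation bookkeeping that this sentence leaves implicit.
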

\begin{proof}
    This follows from the fact that $-1$ is central in $\End(A_{\Fbar})$.
\end{proof}

\subsection{Polarizations and positive involutions}\label{positive involutions}

Let $A$ be an abelian surface over a field $F$ of characteristic zero.
Recall that a polarization is an ample class $L$ in $\NS(A)$. 
Such a class gives rise to an isogeny $\lambda_L\colon A\rightarrow A^{\vee}$, and we frequently identify $L$ with this isogeny. 
There exists unique positive integers $d_1\mid d_2$ such that $\ker(\lambda_L)(\Fbar)\simeq (\Z/d_1)^2\times (\Z/d_2)^2$; the pair $(d_1,d_2)$ is called the \defi{type} of the polarization and the integer $\deg(L) = d_1d_2$ is called its \defi{degree}.
We say two polarizations $L$ and $L'$ are $\Q^{\times}$-equivalent if there exist nonzero integers $m,n$ such that $mL = nL'$, and we call a $\Q^{\times}$-equivalence class of polarizations a $\Q^{\times}$-polarization. 
Every $\Q^{\times}$-polarization contains a unique polarization of type $(1,d)$ for some $d\geq 1$.

Recall that a positive involution of $B$ is a $\Q$-linear involution $\iota \colon B\rightarrow B$ satisfying $\iota(ab) = \iota(b)\iota(a)$ and $\trd(a\iota(a)) \in \Q_{\geq 0}$ for all $a,b\in B$.
By the Skolem--Noether theorem, every such involution is of the form $b\mapsto \mu^{-1} \bar{b}\mu$, where $\bar{b}=\trd(b)-b$ denotes the canonical involution and $\mu \in B^{\times}$ is an element with $\mu^2 \in \Q_{<0}$. 
Two such elements $\mu,\mu'\in B^{\times}$ give rise to the same involution if and only if $\mu$ is a $\Q^{\times}$-multiple of $\mu'$.

To combine these two notions, suppose that $\End(A) = \End(A_{\Fbar})\simeq \calO$; let us fix such an isomorphism to identify $\End(A)$ with $\calO$.
Given a polarization $L$ of $A$, the Rosati involution on $\End^0(A)$, defined by $f \mapsto \lambda_L^{-1}\circ  f^{\vee} \circ \lambda_L$, corresponds to a positive involution $\iota_L$ of $B$.

\begin{prop}\label{prop:relation polarizations positive involutions}
The assignment $L\mapsto \iota_L$ induces a bijection between the set of $\Q^{\times}$-polarizations of $A$ and the set of positive involutions of $B$.
In addition, if $L$ is a polarization and $\mu\in B^{\times}$ is an element such that $\iota_L$ is of the form $b\mapsto \mu^{-1}\bar{b}\mu$, then 
\begin{align}\label{equation: degree polarization vs norm positive involution}
   \deg(L) \equiv \disc(B)\cdot \nrd(\mu) \mod \Q^{\times 2}. 
\end{align}
\end{prop}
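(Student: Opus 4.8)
The plan is to establish the bijection between $\Q^\times$-polarizations and positive involutions first, and then compute degrees via a localization argument. For the bijection: the Rosati involution attached to a polarization $L$ is a positive involution of $\End^0(A)\simeq B$, and two $\Q^\times$-equivalent polarizations clearly give the same Rosati involution (scaling $L$ by a positive rational does not change $\lambda_L^{-1}\circ f^\vee\circ\lambda_L$). Conversely, given a positive involution $\iota$, the Rosati involution attached to any polarization $L_0$ differs from $\iota$ by an inner automorphism; since $\NS(A)\otimes\Q$ and the positive involutions are both torsors-like under $B^\times$ in the appropriate sense, one uses the standard fact (e.g.\ from Mumford's \emph{Abelian Varieties}, or Birkenhake--Lange) that $\NS(A)\otimes\Q \xrightarrow{\sim} \{\,f\in\End^0(A): f^{\dagger_{L_0}}=f\,\}$, $L\mapsto \lambda_{L_0}^{-1}\circ\lambda_L$, and that ampleness corresponds to positivity of the trace form. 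This identifies $\Q^\times$-polarizations with $\Q^\times$-classes of elements $\mu$ with $\mu^2\in\Q_{<0}$, which is exactly the set of positive involutions by the Skolem--Noether description recalled just before the proposition. Injectivity and surjectivity of $L\mapsto\iota_L$ follow.

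For the degree formula \eqref{equation: degree polarization vs norm positive involution}: write $\iota_L$ as $b\mapsto\mu^{-1}\bar b\mu$. Fix an auxiliary principal (or low-degree) polarization-like datum — more precisely, I would pick a polarization $L_0$ of some known degree $d_0$ on $A$ over $\Fbar$ (it exists over $\Fbar$; e.g.\ pull back a principal polarization on a Jacobian isogenous to $A$, or just take any polarization and track its degree), and note that $\lambda_L = \lambda_{L_0}\circ\psi$ for a unique $\psi\in\End^0(A)^\times$ which is symmetric and totally positive for the $L_0$-Rosati involution. Then $\deg(L) = \deg(L_0)\cdot\nrd_{B}(\psi)$ up to squares, because the degree of an isogeny $\phi\in\End(A)$ equals $\nrd(\phi)^2$ and more generally $\deg(\lambda_{L_0}\circ\psi) = \deg(\lambda_{L_0})\cdot\deg(\psi)$. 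The element $\psi$ is $\Q^\times$-equivalent to something built from $\mu$: unwinding the relation between the two Rosati involutions, $\psi$ agrees up to $\Q^\times$ and up to an element of $\nrd$-a-square with $\mu\bar\mu^{-1}$ or with $\mu^2$, so $\nrd(\psi)\equiv\nrd(\mu)\bmod\Q^{\times 2}$ (here I use $\nrd(\bar\mu)=\nrd(\mu)$). Finally, one pins down the square class of $\deg(L_0)$: for \emph{any} polarization of a $\mathrm{PQM}$ surface, $\deg(L_0)\equiv\disc(B)\bmod\Q^{\times2}$ — this is the key input, and it follows by computing the discriminant of the trace form on $\calO$ (which is $\disc(B)^2$ up to the relevant normalization) against the fact that a polarization identifies $\NS(A)$ with an ideal whose norm relates to $\disc(B)$; alternatively, reduce to the principally polarized case locally and use Proposition~\ref{prop:relation polarizations positive involutions} in the reverse direction on a reference example. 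Combining, $\deg(L)\equiv\disc(B)\cdot\nrd(\mu)\bmod\Q^{\times2}$.

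The main obstacle I expect is pinning down the square class $\deg(L_0)\equiv\disc(B)$ cleanly and the bookkeeping of which of $\mu$, $\mu^2$, $\mu\bar\mu^{-1}$ actually controls $\nrd(\psi)$ — these are both ``up to squares'' statements, so sign and square-class errors are easy to make and the cleanest route is probably to localize: verify \eqref{equation: degree polarization vs norm positive involution} place-by-place using that $\calO\otimes\Z_p$ is $\Mat_2(\Z_p)$ for $p\nmid\disc(B)$ and the unique maximal order in the division algebra for $p\mid\disc(B)$, where the degrees of polarizations and the norms of uniformizers are completely explicit. At split primes any $\Q^\times$-polarization contains a principal one so contributes trivially to the square class, while at ramified primes the minimal polarization has degree $p$, matching the $p$-part of $\disc(B)$; the $\nrd(\mu)$ factor accounts for the discrepancy between a given $L$ and this minimal model. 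One then assembles the local square classes into the global congruence. I would present the bijection and the local computation, and relegate the discriminant-of-the-trace-form identity to a citation of \cite{VoightQA}.
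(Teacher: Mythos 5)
Your bijection argument and the paper's are essentially the same: both use the standard identification $\NS(A)\otimes\Q\simeq\{f\in\End^0(A):f^{\dagger}=f\}$ and match $\Q^\times$-classes of such $f$ with $\Q^\times$-classes of $\mu$ satisfying $\mu^2\in\Q_{<0}$. Your vagueness (``torsors-like under $B^\times$'') is harmless, though the paper makes the map completely explicit via $\alpha(L)=\lambda_M^{-1}\circ\lambda_L$ and $\beta(x)=\nu x$, which is cleaner and also feeds directly into the degree computation.

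For the degree formula there is a genuine gap. You assert as your ``key input'' that \emph{every} polarization $L_0$ of a $\mathrm{PQM}$ surface satisfies $\deg(L_0)\equiv\disc(B)\pmod{\Q^{\times2}}$. This is false, and indeed contradicts the very formula you are trying to prove: the square class of $\deg(L)$ varies with $\nrd(\mu)$, and $\nrd(\mu)=-\mu^2$ is a square exactly when $\Q(\mu)\simeq\Q(i)$. For instance, when $\disc(B)=6$ and $\mu^2=-3$ one gets $\deg(L)\equiv 2$, not $\equiv 6$. The statement one actually needs is much more specific: there exists a \emph{principal} ($\deg=1$) polarization $M$ whose Rosati involution is conjugation by $\nu$ with $\nu^2=-\disc(B)$, so that $\nrd(\nu)=\disc(B)$. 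This is precisely the special algebraic input the paper cites, namely \cite[Lemma 43.6.23]{VoightQA}, which uses that $\calO$ is a \emph{maximal} order. Once you have this reference polarization, writing $\lambda_L=\lambda_M\circ x$ for $x=\alpha(L)$ and $\mu=\nu x$ gives $\deg(L)=\nrd(x)=\nrd(\mu)/\nrd(\nu)\equiv\disc(B)\cdot\nrd(\mu)$ directly, with none of the $\mu$ vs.\ $\mu^2$ vs.\ $\mu\bar\mu^{-1}$ ambiguity you worry about. Your fallback localization plan would also need this global principal polarization as its anchor (or something equivalent), so the citation is not avoidable; I'd drop the local route and the trace-form discriminant digression and go straight to the reference polarization.
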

\begin{proof}
This can be deduced from \cite[Theorem 3.1]{DR04}, but can also be proved purely algebraically as follows.
Choose an element $\nu\in \calO$ with $\nu^2 = -\disc(B)$.
Then it is well known \cite[Lemma 43.6.23]{VoightQA} that $A$ has a unique principal polarization $M$ such that $\iota_M(b) = \nu^{-1} \bar{b} \nu$ for all $b\in B$.
To determine all polarizations of $A$, consider the maps
\begin{align*}
    (\NS(A)\otimes\Q)\setminus \{0\} \xrightarrow{\alpha} \{x\in B^{\times} \mid \nu^{-1} \bar{x} \nu  = x\} \xrightarrow{\beta} \{\mu \in B^{\times} \mid \bar{\mu} = -\mu\},
\end{align*}
where $\alpha(L)=\lambda_M^{-1}\circ\lambda_L $ and $\beta(x) = \nu x$.
Since $L\mapsto \lambda_L$ induces a bijection $\NS(A)\otimes \Q \rightarrow \{f\in \Hom(A,A^{\vee})\mid f^{\vee}=f\}$, $\alpha$ is a bijection. Moreover, $\beta$ is a bijection by a direct computation. 
In addition, one can also compute that the Rosati involution associated to a Neron--Severi class $L$ is given by conjugation by $\beta(\alpha(L))$.
Both $(\NS(A)\otimes\Q)\setminus \{0\}$ and $\{\mu \in B^{\times} \mid \bar{\mu} = -\mu\}$ have evident $\Q^{\times}$-actions, and their quotients are given by the set of $\Q^{\times}$-polarizations and the set of positive involutions on $B$ respectively.
Combining these observations shows that $L\mapsto \iota_L$ is indeed a bijection between the set of $\Q^{\times}$-polarizations and the set of positive involutions.
To check \eqref{equation: degree polarization vs norm positive involution}, we compute that for $\alpha(L) = x$ and $\mu = \nu x$: $\deg(L) = \nrd(x) = \nrd(\mu)/\nrd(\nu) \equiv \disc(B) \cdot \nrd(\mu) \mod \Q^{\times 2}$.
\end{proof}

\begin{remark}
    If we want to avoid choosing an isomorphism $\End(A)\simeq \calO$, we may rephrase Proposition \ref{prop:relation polarizations positive involutions} as saying that there is a bijection between $\Q^{\times}$-polarizations on $A$ and positive involutions on the quaternion algebra $\End^0(A)$.
\end{remark}

Now suppose that $A/F$ is an abelian surface with $\End(A_{\Fbar})\simeq \calO$.
Recall from \S\ref{subsection: endomorphism field} that $\Gal_F$ acts on $\End(A_{\Fbar})$ by ring automorphisms.
If $L$ is a polarization on $A_{\Fbar}$, the Rosati involution associated to $L$ is of the form $b\mapsto \mu^{-1}b\mu$ for some $\mu \in \End^0(A_{\Fbar})$, uniquely determined up to $\Q^{\times}$-multiple..
Therefore the imaginary quadratic field $\Q(\mu)\subset \End^0(A_{\Fbar})$ is independent of the choice of $\mu$.
\begin{corollary}\label{cor:quadsubring via polarization}
    The map $L\mapsto \Q(\mu)$ constructed above induces a bijection between $\Q^{\times}$-polarizations of $A_{\Fbar}$ and imaginary quadratic fields contained in $\End^0(A_{\Fbar})$.
    A polarization descends to $A$ if and only if the imaginary quadratic field is $\Gal_F$-normalized. 
\end{corollary}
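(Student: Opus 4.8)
The plan is to promote the bijection of Proposition~\ref{prop:relation polarizations positive involutions} --- between $\Q^{\times}$-polarizations of $A_{\Fbar}$ and positive involutions of $B = \End^0(A_{\Fbar})$ --- through an elementary correspondence between positive involutions of $B$ and imaginary quadratic subfields of $B$, and then to chase the $\Gal_F$-action through it. Recall a positive involution of $B$ has the form $b \mapsto \mu^{-1}\bar b \mu$ with $\mu \in B^{\times}$, $\mu^2 \in \Q_{<0}$, and $\mu$ determined up to $\Q^{\times}$-scaling; write $K_\mu = \Q(\mu)$, an imaginary quadratic field in $B$. The key point is that the trace-zero elements of $K_\mu$ form a one-dimensional $\Q$-subspace spanned by $\mu$, so $K_\mu$ recovers $\mu$ up to $\Q^{\times}$; hence $L \mapsto K_\mu$ is well defined and injective (if $K_\mu = K_{\mu'}$ then $\mu' \in \Q^{\times}\mu$ and the two involutions agree). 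For surjectivity, given an imaginary quadratic $K \subset B$ I would pick a nonzero trace-zero $\mu \in K$, note $\mu^2 = -\nrd(\mu) < 0$, and invoke the standard fact that, $B$ being indefinite, $b \mapsto \mu^{-1}\bar b \mu$ is then a positive involution. This gives the first assertion, with the map identified with $L \mapsto \Q(\mu)$ from the statement.

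For the descent criterion I would first record how $\Gal_F$ acts. Since forming the Rosati involution commutes with Galois conjugation and the canonical involution is $\Gal_F$-equivariant (it is $b \mapsto \trd(b) - b$, and the reduced trace is preserved by algebra automorphisms), a short computation shows that if $\iota_L(b) = \mu^{-1}\bar b \mu$ then $\iota_{L^{\sigma}}(b) = (\mu^{\sigma})^{-1}\bar b\, \mu^{\sigma}$ for all $\sigma \in \Gal_F$. As $\mu^{\sigma}$ is again trace-zero, the previous paragraph shows $\iota_{L^{\sigma}} = \iota_L$ if and only if $\mu^{\sigma} \in \Q^{\times}\mu$, equivalently $\sigma(K_\mu) = K_\mu$; so the $\Q^{\times}$-polarization class of $L$ is $\Gal_F$-stable exactly when $K_\mu$ is $\Gal_F$-normalized. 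To finish I would pass from a $\Gal_F$-stable class to a polarization over $F$: take the unique primitive ample representative $L \in \NS(A_{\Fbar})$ of the class; for each $\sigma$ the class $L^{\sigma}$ is again primitive, ample, and proportional to $L$, forcing $L^{\sigma} = L$, so $\lambda_L$ descends to a homomorphism $A \to A^{\vee}$ over $F$ which remains a polarization since ampleness is geometric. The converse is immediate.

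I expect no serious obstacle; the argument is largely formal. The one external input is the classical positivity criterion used in the surjectivity step (that $\mu^{2} < 0$ gives a positive involution when $B$ is indefinite), and the one point needing a little care is the primitivity-and-ampleness argument at the end, which is what produces an honest $F$-rational representative rather than merely a Galois-stable $\Q^{\times}$-class.
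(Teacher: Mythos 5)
Your proposal is correct and follows essentially the same route as the paper: promote the bijection of Proposition~\ref{prop:relation polarizations positive involutions} through the elementary correspondence between positive involutions of $B$ and imaginary quadratic subfields, then observe that the whole chain is $\Gal_F$-equivariant and take fixed points. The paper states these two steps in one sentence each; you usefully spell out why $K_\mu$ determines $\mu$ up to $\Q^{\times}$ (via the trace-zero line) and why a $\Gal_F$-stable $\Q^{\times}$-class yields an honest $F$-rational polarization (Galois descent for $\lambda_L \in \Hom(A_{\Fbar},A_{\Fbar}^{\vee})$, ampleness being geometric), but the content is the same.
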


\begin{proof}
The bijection part immediately follows from Proposition \ref{prop:relation polarizations positive involutions}, together with the fact that the set of positive involutions on $\End^0(A_{\Fbar})$ is in bijection with the set of imaginary quadratic subfields of $\End^0(A_{\Fbar})$.

Since taking the Rosati involution is $\Gal_F$-equivariant, this bijection preserves the Galois action on both sides.
This induces a bijection on the $\Gal_F$-fixed points on both sides, justifying the last sentence of the corollary.
\end{proof}

\subsection{The distinguished quadratic subring}
If $A/\Q$ is an $\calO$-$\mathrm{PQM}$ surface of $\GL_2$-type, then the torsion groups $A[n]$ are modules over $S/nS$, where $S$ is the real quadratic ring $\End(A)$. If $A$ is not of $\GL_2$-type, then $\End(A) = \Z$, and so it may seem that there is no structure to exploit. However, we have seen in Corollary \ref{cor:quadsubring via polarization} that any polarization of $A$ determines a $\Gal_{\Q}$-stable imaginary quadratic subring $S \subset \End(A_{\Qbar})$.

\begin{definition}\label{definition: distinguished quadratic subring}
Let $A/\Q$ be an $\calO$-$\mathrm{PQM}$ surface.
If $A$ is of $\GL_2$-type, let $M = \End^0(A)$.
Otherwise, let $M\subset \End^0(A_{\Qbar})$ be the imaginary quadratic field corresponding to the unique primitive polarization on $A$ via Corollary \ref{cor:quadsubring via polarization}.
We call $M\subset \End^0(A_{\Qbar})$ the \defi{distinguished quadratic} subfield and $S = M \cap \End(A_{\Qbar})$ the \defi{distinguished quadratic subring} of $A$. 
\end{definition}

The next proposition describes the distinguished quadratic subring more explicitly.

\begin{prop}\label{proposition: existence normalised torus}
Let $A/\Q$ be an $\calO$-$\mathrm{PQM}$ surface and let $S$ be its distinguished quadratic subring, seen as a subring of $\calO$ using an isomorphism $\calO \simeq \End(A_{\Qbar})$. 
Let $G$ be the Galois group of the endomorphism field of $A$ (as in \S\ref{subsection: endomorphism field}).
\begin{enumalph}
    \item $S$ is isomorphic to an order of $\Q(\sqrt{m})$ containing $\Z[\sqrt{m}]$ for some $m\in \Z_{\geq 2}$ dividing $\disc(B)$ if $G=C_2$; to an order of $\Q(\sqrt{-m})$ containing $\Z[\sqrt{-m}]$ for some $m\in \Z_{\geq 2}$ dividing $\disc(B)$ if $G = D_2$; to $\Z[i]$ with $i^2=-1$ if $G= D_4$; and to $\Z[\omega]$ with $\omega^3 = 1$ if $G = D_3$ or $D_6$.
    \item $S$ is an order in a quadratic field, maximal away from $2$ and unramified away from $6\disc(B)$.
\end{enumalph}
\end{prop}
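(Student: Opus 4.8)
The plan is to combine the classification of the Galois action $\rho_{\End}\colon G \hookrightarrow \Aut(\calO) \simeq N_{B^\times}(\calO)/\Q^\times$ from Proposition~\ref{proposition: galois group endomorphism field is dihedral if real place} with the explicit dihedral descriptions of \S\ref{subsection: dihedral actions on calO} and with Corollary~\ref{cor:quadsubring via polarization}. For part (a), the key point is that the distinguished quadratic subring $S$ is cut out inside $\calO$ as a fixed subring. In the $\GL_2$-type case, $G = C_2$ (this is Lemma~\ref{lemma: A GL2-type iff endo field quadratic}, noting $F = \Q$ has a real place), and $S = \End(A) = \calO^{G}$; Lemma~\ref{lemma: centraliser of order 2 subgroup of automorphism group} then says $S$ is an order in $\Q(\sqrt{m})$ containing $\Z[\sqrt{m}]$ for some integer $m \mid \disc(B)$, $m \neq 1$, and since $\End(A)$ is real quadratic (Proposition~\ref{proposition: endomorphism rings of absolutely simple abelian surfaces over R}) we have $m \geq 2$. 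In the non-$\GL_2$-type case, $G \simeq D_n$ for some $n \in \{2,3,4,6\}$, and by Corollary~\ref{cor:quadsubring via polarization} the field $M$ corresponding to the unique rational polarization is the unique $\Gal_\Q$-normalized imaginary quadratic subfield of $\End^0(A_{\Qbar})$; I would identify $S = M \cap \calO$ by inspecting the generators supplied by Lemmas~\ref{lemma: D2 subgroup Aut(O)}, \ref{lemma: D4 subgroup Aut(O)}, \ref{lemma: D3,D6 subgroup Aut(O)}. Concretely, for $G = D_2 = \{1,[i],[j],[k]\}$ the positive involution associated to the polarization is conjugation by an element $\mu$ with $\bar\mu = -\mu$ and $\mu^2 \in \Q_{<0}$ lying in the $\Q$-span of $\{i,j,k\}$, and $\mu$ must generate a $\Gal_\Q$-stable line; the $D_2$-orbit structure forces $\Q(\mu)$ to be one of $\Q(i),\Q(j),\Q(k)$, whose generators square to $m, n, t$ with $t \equiv -mn \pmod{\Q^{\times 2}}$, so $S \supseteq \Z[\sqrt{-m}]$ after relabelling to pick the negative one (at least one of $m,n,-mn$ has the correct sign, using indefiniteness of $B$). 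For $G = D_4$, the cyclic subgroup $C_4 = \langle [1+i]\rangle$ has $i^2 = -1$, and the $C_4$-fixed (equivalently $\rho_{\End}(G)$-normalized) imaginary quadratic subfield is $\Q(i)$, with $S = \calO \cap \Q(i) = \Z[i]$ since $\Z[i]$ is already maximal; similarly for $D_3, D_6$ one gets $S = \Z[\omega]$ with $\omega^3=1$ from Lemma~\ref{lemma: D3,D6 subgroup Aut(O)}. The only slightly delicate point in (a) is checking that $M$ is genuinely $\Gal_\Q$-normalized and unique, but this is exactly the content of Corollary~\ref{cor:quadsubring via polarization} applied to the unique polarization defined over $\Q$ (uniqueness of which, in the non-$\GL_2$ case, comes from the remark after \eqref{equation: endomorphism field homomorphism}).

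For part (b), first note that $S$ is an order in a quadratic field by part (a) in every case. Maximality away from $2$ and the ramification statement both follow from the explicit generators: in the $C_2$ case $S \supseteq \Z[\sqrt{m}]$ with $m \mid \disc(B)$ squarefree, so $\disc(\Z[\sqrt{m}]) \in \{4m, m\}$ divides $4\disc(B)$ and hence $S$ is maximal away from $2$ and ramified only at primes dividing $2m \mid 2\disc(B)$; in the $D_2$ case the same holds with $m$ replaced by the relevant divisor of $\disc(B)$; and for $D_3,D_4,D_6$ we have $S = \Z[i]$ or $\Z[\omega]$, which are maximal and ramified only at $2$ or $3$ respectively, and moreover $2 \mid \disc(B)$ in the $D_4$ case and $3 \mid \disc(B)$ in the $D_6$ case by Lemmas~\ref{lemma: D4 subgroup Aut(O)} and~\ref{lemma: D3,D6 subgroup Aut(O)}. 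In all cases the primes where $S$ can ramify lie in $\{2\} \cup \{p : p \mid \disc(B)\} \cup \{3\}$, i.e. $S$ is unramified away from $6\disc(B)$, and maximal away from $2$ since the potential index $[\calO_K : S]$ is a power of $2$ (it is $1$ or $2$ at $2$, coming from $\Z[\sqrt{m}] \subsetneq \calO_K$ when $m \equiv 1 \pmod 4$, and $1$ everywhere else). I would spell this out by the standard formula relating $\disc(\Z[\sqrt{d}])$ to $\disc(\Q(\sqrt d))$.

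The main obstacle I anticipate is the bookkeeping in the $G = D_2$ case: one must argue that the specific imaginary quadratic subfield singled out by the polarization (via Corollary~\ref{cor:quadsubring via polarization}) is one of $\Q(i), \Q(j), \Q(k)$ and not some other imaginary quadratic subfield of $B$, and that after the relabelling permitted by Lemma~\ref{lemma: D2 subgroup Aut(O)} its square class representative $-m$ divides $\disc(B)$ with $m \geq 2$. This amounts to observing that the element $\mu$ implementing the Rosati involution satisfies $\mu^{-1} b \mu = \rho_{\End}(\sigma)(b)$-compatibility for all $\sigma$, forcing $[\mu] \in N_{B^\times}(\calO)/\Q^\times$ to commute with $\rho_{\End}(G) = \{1,[i],[j],[k]\}$; since this is its own centralizer in the relevant finite subgroup (the three nontrivial classes are $[i],[j],[k]$ themselves), $[\mu]$ is one of them, say $[j]$, and then $S = \calO \cap \Q(j) \supseteq \Z[j] = \Z[\sqrt{n}]$. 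The sign constraint $n < 0$ (so that $\mu^2 = n \in \Q_{<0}$) is then automatic since $\mu$ defines a \emph{positive} involution. Everything else is routine manipulation with the quaternion data already assembled in \S\ref{section: quaternionic algebra}.
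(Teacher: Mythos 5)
Your proposal follows essentially the same route as the paper: handle $G=C_2$ via Lemma \ref{lemma: centraliser of order 2 subgroup of automorphism group} together with Proposition \ref{proposition: endomorphism rings of absolutely simple abelian surfaces over R} to pin down the sign, and handle the non-$\GL_2$-type cases by invoking Corollary \ref{cor:quadsubring via polarization} to characterize $S$ and then reading off the answer from the explicit dihedral generators of Lemmas \ref{lemma: D2 subgroup Aut(O)}, \ref{lemma: D4 subgroup Aut(O)}, \ref{lemma: D3,D6 subgroup Aut(O)}. Part (b) is then a discriminant computation from (a), as in the paper.

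The one genuine stylistic difference is in the $D_2$ case. The paper exploits the \emph{uniqueness} clause of Corollary \ref{cor:quadsubring via polarization}: since $S$ is the unique $\Gal_\Q$-stable optimally embedded imaginary quadratic subring of $\calO$, it suffices to \emph{exhibit one} of the stated form, and Lemma \ref{lemma: D2 subgroup Aut(O)} plus indefiniteness of $B$ does this immediately (exactly one of $\Q(i),\Q(j),\Q(k)$ is imaginary). You instead identify the element $\mu$ implementing the Rosati involution directly, by arguing that $[\mu]$ must commute with $\rho_{\End}(G)=\{1,[i],[j],[k]\}$ and then computing that the only elements of $B^\times/\Q^\times$ centralizing this Klein four-group are its own nontrivial elements. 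That centralizer computation is correct and is a perfectly valid alternative, but it is more work than needed: the paper's route avoids it entirely by leaning on the already-proved uniqueness, and you should state that you are implicitly doing the same (you cite the corollary but then re-derive a special case of it by hand). Either way the endpoint $S \supseteq \Z[\sqrt{n}]$ with $n$ the unique negative square among $\{m,n,t\}$, $|n|\mid\disc(B)$, is reached, and the rest of (a) and all of (b) go through as you describe.
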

\begin{proof}
The description of $S$ in the $C_2$ case follows from Lemma \ref{lemma: centraliser of order 2 subgroup of automorphism group}.
If $G\not\simeq C_2$ (in other words, if $A$ is not of $\GL_2$-type by Lemma \ref{lemma: A GL2-type iff endo field quadratic}), then Corollary \ref{cor:quadsubring via polarization} shows that $S$ is the unique imaginary quadratic subring of $\End(A_{\Qbar})$ that is $\Gal_{\Q}$-stable and that is optimally embedded, i.e. $(S\otimes \Q) \cap \calO = S$.
So to prove (a) it suffices to find a subring of $\calO$ satisfying the stated conditions. 
This follows from the explicit description of the $G$-action given in \S\ref{subsection: dihedral actions on calO}.
Part (b) immediately follows from the first part. 
\end{proof}

\subsection{The enhanced Galois representation}\label{enhanced representation}

Let $A$ be an $\calO$-$\mathrm{PQM}$ surface over a field $F$ of characteristic zero, and fix an isomorphism $\calO\simeq \End(A_{\Fbar})$ so that $\calO$ acts on $A_{\Fbar}$ on the left.
In \S\ref{subsection: endomorphism field} we have described how $\Gal_F$ acts on the endomorphism ring $\calO$; this action is encoded by the homomorphism $\rho_{\End}\colon \Gal_F\rightarrow \Aut(\calO)$ of Equation \ref{equation: endomorphism field homomorphism}.
On the other hand $\Gal_F$ acts on the torsion points of $A_{\Fbar}$.
In this section we formalize the interaction of these two $\Gal_F$-actions using a homomorphism that we call the \defi{enhanced Galois representation}.
This basic definition might be of independent interest and will be used in the proof of Theorem \ref{thm:gl2type classification}, more specifically to exclude $(\Z/2\Z)^3$ in the $\GL_2$-type case in Proposition \ref{prop: GL2 type no (Z/2)^3 torsion}.

Let $I\subset \calO$ be a $\Gal_F$-stable two-sided ideal, for example $I = N\cdot \calO$ for some integer $N\geq 1$. 
The subgroup $A[I](\Fbar)\subset A(\Fbar)$ of points killed by $I$ is a $\Gal_F$-module. 
Let $\GL(A[I])$ be the group of $\Z$-module automorphisms of $A[I](\Fbar)$, seen as acting on $A[I](\Fbar)$ on the right.
The $\Gal_F$-action on $A[I]$ is encoded in a homomorphism $\rho_I\colon \Gal_F\rightarrow \GL(A[I])$.
The left $\calO$-action on $A_{\Fbar}$ induces an $\calO/I$-action on $A[I](\Fbar)$ such that 
\begin{align}\label{equation: compatibility gal action O action}
(a\cdot P)^\sigma = a^\sigma \cdot P^\sigma
\end{align}
for all $P \in A[I](\Fbar), \ a \in \calO$ and $\sigma \in \Gal_F.$
Let $\Aut^{\circ}(A[I])$ be the subgroup of pairs $(\gamma, \varphi) \in \Aut(\calO) \times \GL(A[I])$ such that $(a\cdot P)^{\varphi} = a^{\gamma} \cdot P^{\varphi}$ for all $a\in \calO$ and $P \in A[I](\Fbar)$.
The compatibility \eqref{equation: compatibility gal action O action} implies that the product homomorphism $\rho_{\End}\times \rho_I\colon \Gal_F \rightarrow \Aut(\calO)\times \GL(A[I])$ lands in $\Aut^{\circ}(A[I])$, so we obtain a homomorphism
\begin{align}\label{equation: def abstract enhanced gal rep}
    \rho^{\circ}_I \colon \Gal_F \rightarrow \Aut^{\circ}(A[I]).
\end{align}
We now identity $\Aut^{\circ}(A[I])$ with an explicit semidirect product.
Consider the group $\Aut(\calO)\ltimes (\calO/I)^{\times}$, where $\Aut(\calO)$ acts on $(\calO/I)^{\times}$ via restricting the standard right $\Aut(\calO)$-action on $\calO/I$ to $(\calO/I)^{\times}$. 
Multiplication in this group is given by $(\gamma_1,x_1) \cdot (\gamma_2,x_2) = (\gamma_1\gamma_2, x_1^{\gamma_2}x_2)$.
The $\calO/I$-module $A[I](\Fbar)$ is free of rank $1$ \cite{Ohta74}.
Let $Q \in A[I](\Fbar)$ be an $\calO/I$-module generator.
For every $(\gamma,x) \in 
\Aut(\calO)\ltimes (\calO/I)^{\times}$, let $\varphi_{(\gamma,x)}$ be the element of $\GL(A[I])$ sending $a\cdot Q$ to $a^{\gamma} x\cdot Q$ for all $a\in \calO/I$.
\begin{lemma}\label{lemma: explicit description AutO after choice of generator}
    The map $(\gamma, x)\mapsto (\gamma, \varphi_{(\gamma,x)})$ induces an isomorphism $\Aut(\calO)\ltimes (\calO/I)^{\times}\xrightarrow{\sim} \Aut^{\circ}(A[I])$.
\end{lemma}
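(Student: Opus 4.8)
The plan is to check that the given map is a well-defined group homomorphism with a two-sided inverse. First I would verify well-definedness: given $(\gamma,x)\in\Aut(\calO)\ltimes(\calO/I)^\times$, the assignment $a\cdot Q\mapsto a^\gamma x\cdot Q$ for $a\in\calO/I$ is unambiguous because $A[I](\Fbar)$ is free of rank $1$ over $\calO/I$ with generator $Q$ (citing \cite{Ohta74}), so every element is uniquely of the form $a\cdot Q$. It is a $\Z$-module automorphism of $A[I]$: it is additive in $a$, and it is bijective with inverse $a\cdot Q\mapsto (a x^{-1})^{\gamma^{-1}}\cdot Q$, since $\gamma$ is a ring automorphism and $x$ is a unit. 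Next I would check that the pair $(\gamma,\varphi_{(\gamma,x)})$ actually lies in $\Aut^\circ(A[I])$, i.e.\ satisfies the twisted equivariance $(b\cdot P)^{\varphi_{(\gamma,x)}}=b^\gamma\cdot P^{\varphi_{(\gamma,x)}}$ for all $b\in\calO$ and $P\in A[I]$: writing $P=a\cdot Q$, the left side is $(ba\cdot Q)^{\varphi_{(\gamma,x)}}=(ba)^\gamma x\cdot Q=b^\gamma a^\gamma x\cdot Q$, and the right side is $b^\gamma\cdot(a^\gamma x\cdot Q)=b^\gamma a^\gamma x\cdot Q$, using that the $\calO/I$-action is a left action and $\gamma$ is multiplicative.

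Second, I would verify the homomorphism property. With the multiplication $(\gamma_1,x_1)\cdot(\gamma_2,x_2)=(\gamma_1\gamma_2,x_1^{\gamma_2}x_2)$, I need $\varphi_{(\gamma_1,x_1)}\circ\varphi_{(\gamma_2,x_2)}=\varphi_{(\gamma_1\gamma_2,\,x_1^{\gamma_2}x_2)}$, where one must be careful about the order of composition since $\GL(A[I])$ acts on the right. Applying $\varphi_{(\gamma_1,x_1)}$ first and then $\varphi_{(\gamma_2,x_2)}$ to $a\cdot Q$ gives $(a^{\gamma_1}x_1\cdot Q)^{\varphi_{(\gamma_2,x_2)}}=(a^{\gamma_1}x_1)^{\gamma_2}x_2\cdot Q=a^{\gamma_1\gamma_2}x_1^{\gamma_2}x_2\cdot Q$, which is exactly $\varphi_{(\gamma_1\gamma_2,\,x_1^{\gamma_2}x_2)}(a\cdot Q)$. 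On the first coordinate the map is clearly multiplicative. Combined with the inclusion into $\Aut^\circ(A[I])$ from the first paragraph, this gives a group homomorphism $\Aut(\calO)\ltimes(\calO/I)^\times\to\Aut^\circ(A[I])$.

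Third, I would construct the inverse, which simultaneously shows injectivity and surjectivity. Given $(\gamma,\varphi)\in\Aut^\circ(A[I])$, the element $Q^\varphi\in A[I]$ can be written uniquely as $x\cdot Q$ for some $x\in\calO/I$; I claim $x\in(\calO/I)^\times$ and $\varphi=\varphi_{(\gamma,x)}$. For the latter, the twisted equivariance gives $(a\cdot Q)^\varphi=a^\gamma\cdot Q^\varphi=a^\gamma x\cdot Q=\varphi_{(\gamma,x)}(a\cdot Q)$ for all $a$, so $\varphi=\varphi_{(\gamma,x)}$; this also forces the first coordinate of any preimage to be $\gamma$, giving injectivity. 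That $x$ is a unit follows because $\varphi$ is bijective, hence $a\cdot Q\mapsto a^\gamma x\cdot Q$ is surjective, so $x\cdot Q$ generates $A[I]$ over $\calO/I$, which forces $x\in(\calO/I)^\times$ (a generator of a free rank-one module is a unit multiple of any other generator). Thus $(\gamma,x)\mapsto(\gamma,\varphi_{(\gamma,x)})$ is bijective.

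The only genuinely delicate point is bookkeeping the left/right conventions: $\calO$ acts on $A[I]$ on the left, $\GL(A[I])$ and the Galois group act on the right, and the semidirect product multiplication is twisted accordingly, so the composition order in the homomorphism check must be tracked carefully. Once the conventions are fixed as in the statement, everything is a direct computation; the input from \cite{Ohta74} that $A[I]$ is free of rank one over $\calO/I$ is what makes the explicit description possible.
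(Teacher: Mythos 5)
Your proof is correct and follows the same strategy as the paper, which simply declares the result ``a formal verification'' and records the inverse map $(\gamma,\varphi)\mapsto(\gamma,x)$ with $Q^{\varphi}=x\cdot Q$; you spell out the well-definedness, equivariance, homomorphism-property, and bijectivity checks that the paper leaves implicit, with the left/right action bookkeeping handled correctly.
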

\begin{proof}
    This is a formal verification.
    The inverse of this isomorphism is given by sending $(\gamma, \varphi)$ to $(\gamma, x)$, where $x\in (\calO/I)^{\times}$ is the unique element with $Q^{\varphi} = x\cdot Q$.
\end{proof}
Using Lemma \ref{lemma: explicit description AutO after choice of generator}, we may view the homomorphism \eqref{equation: def abstract enhanced gal rep} as a homomorphism 
\begin{align}\label{equation: def concrete enhanced gal rep}
    \rho^{\circ}_I \colon \Gal_F \rightarrow \Aut(\calO)\ltimes (\calO/I)^{\times}.
\end{align}
\begin{definition}
    The homomorphism \eqref{equation: def abstract enhanced gal rep} or, after a choice of $\calO/I$-module generator of $A[I](\Fbar)$, the homomorphism \eqref{equation: def concrete enhanced gal rep}, is called the \defi{enhanced Galois representation} associated to $A$ and $I$.
\end{definition}


Since $\Aut^{\circ}(A[I])$ is a subgroup of $\Aut(\calO)\times \GL(A[I])$, it comes equipped with projection homomorphisms $\pi_1\colon \Aut^{\circ}(A[I])\rightarrow \Aut(\calO)$ and $\pi_2\colon \Aut^{\circ}(A[I])\rightarrow \GL(A[I])$ satisfying $\rho_{\End} = \pi_1 \circ \rho^{\circ}_I$ and $\rho_I = \pi_2 \circ \rho_{I}^{\circ}$.
\begin{remark}
    Suppose that $\rho_{\End}$ is trivial, in other words $\End(A) = \End(A_{\Fbar}) \simeq \calO$.
    Then the homomorphism \eqref{equation: def concrete enhanced gal rep} lands in the subgroup $\{1\} \ltimes (\calO/I)^{\times}$ and hence simplifies to a homomorphism $\Gal_F\rightarrow (\calO/I)^{\times}$.
    This recovers the well known description \cite{Ohta74} of the Galois representation $\rho_I$ in this case.
\end{remark}
We show that usually, $\rho^{\circ}_I$ does not contain more information than $\rho_I$ itself, using the following well known lemma.

\begin{lemma}\label{lemma: reduction map is injective}
Let $G$ be a finite subgroup of $\GL_n(\Z)$ for some $n\geq 1$ and let $\red_N\colon G\rightarrow \GL_n(\Z/N\Z)$ be the restriction of the reduction map.
Then $\red_N$ is injective if $N \geq 3$, and every element of the kernel of $\red_2$ has order $1$ or $2$.
\end{lemma}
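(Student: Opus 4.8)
The statement to prove is Lemma~\ref{lemma: reduction map is injective}: for a finite subgroup $G \subset \GL_n(\Z)$, the reduction map $\red_N$ is injective when $N \geq 3$, and every element of $\ker(\red_2)$ has order $1$ or $2$.

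The plan is to reduce everything to a single computation with an element $g$ lying in the kernel of $\red_N$ and to exploit the fact that $g$ has finite order. First I would let $g \in \ker(\red_N)$, so that $g = 1 + N M$ for some $M \in \Mat_n(\Z)$ with $M \neq 0$ if $g \neq 1$. Since $G$ is finite, $g$ has finite order, say $p$ a prime dividing the order of $g$ (after replacing $g$ by a suitable power we may assume $g$ itself has prime order $p$, and $g \neq 1$). The key step is then to expand $g^p = 1$ using the binomial theorem: $0 = g^p - 1 = \sum_{k=1}^{p} \binom{p}{k} (NM)^k = pNM + \binom{p}{2} N^2 M^2 + \cdots + N^p M^p$. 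One then analyzes this identity one prime at a time, i.e. by looking at the largest power of a suitable prime dividing each term, to force a contradiction unless $N = 2$ and $p = 2$.

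The main case analysis runs as follows. Write $v = v_\ell$ for $\ell$-adic valuation of a nonzero integer matrix (the minimum of the valuations of the entries), and pick a prime $\ell \mid N$. If $p \neq \ell$: then $v_\ell(pNM) = v_\ell(N) + v_\ell(M)$, while every other term $\binom{p}{k} N^k M^k$ for $k \geq 2$ has $\ell$-valuation at least $2 v_\ell(N) + v_\ell(M) > v_\ell(N) + v_\ell(M)$ (using $v_\ell(N) \geq 1$ and $v_\ell(M) \geq 0$), so the sum has $\ell$-valuation exactly $v_\ell(N) + v_\ell(M) < \infty$, contradicting that it is $0$. If $p = \ell$: here $v_\ell(pNM) = 1 + v_\ell(N) + v_\ell(M)$, the terms for $2 \leq k \leq p-1$ have valuation $\geq 1 + k\, v_\ell(N) + v_\ell(M)$ (since $\ell \mid \binom{p}{k}$), which exceeds $1 + v_\ell(N) + v_\ell(M)$, and the last term $N^p M^p = \ell^{p\, v_\ell(N)} (\text{unit part})$ has valuation $p\, v_\ell(N) + p\, v_\ell(M) \geq p\, v_\ell(N)$. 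For the identity to hold we need the valuation of the last term to be $\leq 1 + v_\ell(N) + v_\ell(M)$, and equal to it if it is the unique term of minimal valuation. This forces $p\, v_\ell(N) \leq p\, v_\ell(N) + p\, v_\ell(M) \leq 1 + v_\ell(N) + v_\ell(M)$, hence $(p-1) v_\ell(N) \leq 1$, so $p = \ell = 2$ and $v_\ell(N) = 1$, and moreover $v_\ell(M) = 0$. In particular $N$ is not divisible by $4$ and by any odd prime, so $N = 2$; and $p = 2$. This shows $\ker(\red_N)$ is trivial for $N \geq 3$ (no prime-order element exists, and a nontrivial finite-order element would have a power of prime order), and that any nontrivial element of $\ker(\red_2)$ has order exactly $2$.

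I expect the main obstacle to be bookkeeping the valuations cleanly: one must be careful that $v_\ell$ of a sum of matrices is at least the minimum of the valuations, with equality when the minimum is attained by a unique term, and that the "unit parts" $M^k / \ell^{k v_\ell(M)}$ need not themselves be invertible matrices, so one should phrase the argument entrywise — pick an entry of $M$ of minimal $\ell$-valuation and track that single entry through the binomial identity, rather than manipulating matrices as if they were scalars. Apart from this care, the argument is the standard one (it is the reason $\GL_n(\Z) \to \GL_n(\Z/N)$ is injective on torsion for $N \geq 3$), so I would keep the write-up short and cite it as well known, which is how the excerpt already frames it.
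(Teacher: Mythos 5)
The paper does not prove this lemma; it simply cites Minkowski and refers to Silverberg--Zarhin \cite{SilberbergZarhin95} for an accessible account. Your argument is the standard proof of that classical result, and the core computation is sound: the identity $g^p-1=\sum_{k\geq 1}\binom{p}{k}(NM)^k$ is valid because $1$ commutes with $NM$, and you are right that one should track a single entry of $M$ of minimal $\ell$-adic valuation (using $v_\ell((M^k)_{ij})\geq k\,v_\ell(M)$) rather than pretend that matrices have ``unit parts.'' The valuation bookkeeping in both cases $p\neq\ell$ and $p=\ell$ is correct and does force $p=\ell=2$, $v_\ell(N)=1$, $v_\ell(M)=0$.

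There is, however, a genuine gap in the very last sentence, where you conclude that ``any nontrivial element of $\ker(\red_2)$ has order exactly $2$.'' Your analysis only treats elements of \emph{prime} order directly, and the parenthetical reduction ``a nontrivial finite-order element would have a power of prime order'' closes the argument only when the target conclusion is that the kernel is trivial (as for $N\geq 3$, where the putative prime-order power already gives a contradiction). For $N=2$ it does not: if $g\in\ker(\red_2)$ had order $4$, then $g^2$ would have prime order $2$, which is \emph{consistent} with your analysis and produces no contradiction; so you have shown that prime-order elements of $\ker(\red_2)$ have order $2$, but not that there are no elements of order $4$ or $8$. The quickest repair uses the case you have already established: if $g=1+2M\in\ker(\red_2)$ then $g^2=1+4(M+M^2)\in\ker(\red_4)$, and since $\red_4$ is injective by the $N\geq 3$ case, $g^2=1$. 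Thus $\ker(\red_2)$ has exponent $2$, which is what the lemma asserts.
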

\begin{proof}
This is a classical result of Minkowski \cite{Minkowski-1887}; see \cite[Theorem 4.1]{SilberbergZarhin95} for an accessible reference.
\end{proof}


\begin{prop}\label{prop: enhanced gal rep iso to normal gal rep if N >2}
    Supppose that $I = N\cdot \calO$ for some integer $N\geq 3$.
    Then $\pi_2$ is injective on the image $\rho_I^{\circ}$.
    Consequently, the image of $\rho_I^{\circ}$ is isomorphic to the image of $\rho_I$.
\end{prop}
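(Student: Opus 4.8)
The plan is to show that the kernel of $\pi_2$ restricted to the image of $\rho_I^\circ$ is trivial, which immediately gives injectivity of $\pi_2$ on that image and hence the isomorphism $\operatorname{im}\rho_I^\circ \simeq \operatorname{im}\rho_I$ (the latter being the image of $\pi_2\circ\rho_I^\circ$). An element of $\ker\pi_2$ lying in the image of $\rho_I^\circ$ has the form $(\gamma,\varphi)$ with $\varphi = \mathrm{id}$; by the compatibility defining $\Aut^\circ(A[I])$, this forces $a^\gamma\cdot P = a\cdot P$ for all $a\in\calO$ and all $P\in A[I](\Fbar)$. Since $A[I](\Fbar)$ is a free $\calO/I$-module of rank $1$ (by \cite{Ohta74}, as recalled before Lemma \ref{lemma: explicit description AutO after choice of generator}), taking $P$ to be a generator $Q$ shows $a^\gamma \equiv a \pmod{I}$ for all $a\in\calO$, i.e. $\gamma$ acts trivially on $\calO/N\calO$.

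First I would record that $\gamma\in\Aut(\calO)\simeq N_{B^\times}(\calO)/\Q^\times$ has finite order, and in fact $\gamma$ lies in the image of $\rho_{\End}$, so by Proposition \ref{proposition: galois group endomorphism field is dihedral if real place} (more precisely the preceding classification \cite[Proposition 2.1]{DR04}) the cyclic group $\langle\gamma\rangle$ is one of $C_1,C_2,C_3,C_4,C_6$. Concretely, choosing a $\Z$-basis of $\calO$ realizes $\langle\gamma\rangle$ as a finite subgroup of $\GL_4(\Z)$, and the condition ``$\gamma$ acts trivially on $\calO/N\calO$'' says precisely that $\gamma$ lies in the kernel of the reduction map $\GL_4(\Z)\to\GL_4(\Z/N\Z)$. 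Now I would invoke Lemma \ref{lemma: reduction map is injective}: for $N\geq 3$ the reduction map is injective on any finite subgroup of $\GL_4(\Z)$, so $\gamma = \mathrm{id}$.

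Putting these together: $(\gamma,\varphi) = (\mathrm{id},\mathrm{id})$, so $\ker\pi_2\cap\operatorname{im}\rho_I^\circ$ is trivial and $\pi_2$ is injective on $\operatorname{im}\rho_I^\circ$. Since $\pi_2\circ\rho_I^\circ = \rho_I$, the image of $\rho_I^\circ$ maps isomorphically onto the image of $\rho_I$ under $\pi_2$, giving the ``consequently'' clause.

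I do not expect any serious obstacle here — the argument is essentially bookkeeping once the two ingredients (rank-one freeness of $A[N\calO]$ and Minkowski's lemma) are in place. The only point requiring a little care is the logical step that the offending automorphism $\gamma$ genuinely arises as $\rho_{\End}(\sigma)$ for some $\sigma\in\Gal_F$, so that it is a priori a \emph{finite-order} element of $\Aut(\calO)$ to which Lemma \ref{lemma: reduction map is injective} applies; this is automatic since $\rho_{\End}$ has finite image (its kernel cuts out the finite extension $L/F$), but it is worth stating explicitly rather than arguing abstractly about all of $\Aut(\calO)$ (which is infinite).
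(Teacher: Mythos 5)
Your argument is correct and is essentially identical to the paper's proof: both reduce to showing that if $(\gamma,\varphi)\in\ker\pi_2$ lies in the image of $\rho_I^\circ$ then $\gamma$ fixes $\calO/N\calO$ pointwise, then invoke the rank-one freeness of $A[I]$ over $\calO/I$ and Minkowski's Lemma \ref{lemma: reduction map is injective} together with the finiteness of $\operatorname{im}\rho_{\End}$ to force $\gamma=\mathrm{id}$. The only extra remark you make — pinning down $\langle\gamma\rangle$ as one of $C_1,\dots,C_6$ — is harmless but unnecessary, since all that is needed is that $\gamma$ has finite order.
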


\begin{proof}
    Choose a $\calO/N$-module generator $Q\in A[N](\Fbar)$.
    If $(\gamma, \varphi)\in \ker(\pi_2)$, then $\varphi = \text{Id}$ and $a\cdot Q = a^{\gamma}\cdot  Q$ for all $a\in \calO/N$.
    So $a = a^{\gamma}$ for all $a\in \calO/N$.
    Therefore $\gamma\in \ker(\Aut(\calO)\rightarrow \Aut(\calO/N))$.
    By Lemma \ref{lemma: reduction map is injective}, this kernel does not contain any nontrivial element of finite order. 
    However, the image of $\rho_{\End}$ is finite (Proposition \ref{proposition: galois group endomorphism field is dihedral if real place}).
    We conclude that $\ker(\pi_2) \cap \text{image}(\rho_I^{\circ}) = \{1\}$. 
\end{proof}


\begin{remark}
    We can also define $\ell$-adic versions of the enhanced Galois representation: for every prime $\ell$ this is a group homomorphism $\Gal_F\rightarrow \Aut(\calO)\ltimes (\calO\otimes\Z_{\ell})^{\times}$ 
    encoding both the $\Gal_F$-action on $\calO$ and on the $\ell$-adic Tate module of $A$.
\end{remark}

\section{PQM surfaces over local and finite fields}\label{section:PQM surfaces over localfinite fields}

We collect some results about $\mathrm{PQM}$ surfaces $A$ over local and finite fields, especially the possible reduction types. 
The most important facts for our purposes are: a $\mathrm{PQM}$ surface $A/\Q$ of $\GL_2$-type has totally additive reduction at every bad prime (Corollary \ref{corollary:gl2-purelyadditive}); the prime-to-$p$ torsion in the totally additive case is controlled by the N\'eron component group (Lemma \ref{lem:prime-to-p-additive}); and the latter in turn is controlled by the smallest field extension over which $A$ acquires good reduction (Proposition \ref{prop: component group killed by good reduction field}).

For the remainder of this section, let $R$ be a henselian discrete valuation ring with fraction field $F$ of characteristic zero and perfect residue field $k$ of characteristic $p\geq 0$.

\subsection{N\'eron models of PQM surfaces}\label{subsec: neron models of PQM surfaces}

We first recall some notions in the theory of N\'eron models.
Let $A/F$ be an abelian variety with N\'eron model $\calA/R$. 
The special fiber $\calA_k$ fits into an exact sequence
$$
0\rightarrow \calA_k^{\circ} \rightarrow \calA_k \rightarrow \Phi\rightarrow 0
$$
where $\Phi$ is the \defi{component group} of $\calA_k$, a finite \'etale $k$-group scheme.  The identity component $\calA_k^0$ fits into an exact sequence
\begin{align}\label{equation: ses identity component neron model}
    0\rightarrow U\times T \rightarrow \calA^0_k\rightarrow B\rightarrow 0
\end{align}
where $U$ is a unipotent group, $T$ is a torus and $B$ is an abelian variety over $k$.
The dimensions of $U,T$, and $B$, which we denote by $u,t$, and $b$, are called the \defi{unipotent, toric and abelian ranks} of $A$, respectively. We have $ u + t + b = \dim A$, and $A$ has bad reduction if and only if $b < \dim A$. Similarly, $A$ has potentially good reduction over $F$ if and only if its toric rank is 0 over every finite extension of $F$. 

\begin{lemma}\label{lemma:PQMreduction}
Suppose that $A/F$ is an abelian surface such that $\End^0(A_{\Fbar})$ contains a non-split quaternion algebra. 
Then there exists a finite extension $F'/F$ such that $A_{F'}$ has good reduction. 
If $k$ is finite, we may take $F'$ to be a totally ramified extension of $F$.
\end{lemma}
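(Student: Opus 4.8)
The plan is to use the theory of potential good reduction together with the N\'eron--Ogg--Shafarevich criterion. First I would recall that an abelian variety $A/F$ has potentially good reduction if and only if the inertia group $I_F \subset \Gal_F$ acts on the $\ell$-adic Tate module $T_\ell A$ (for some/any $\ell \neq p$) through a finite quotient; equivalently, the toric rank of $A$ vanishes after a finite base change. So the core of the argument is to show that the hypothesis on $\End^0(A_{\Fbar})$ forces potential good reduction, i.e. rules out potential multiplicative (toric) reduction.

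The key step is the following: the semistable reduction theorem says there is a finite extension over which $A$ becomes semistable, and over such an extension the special fiber's identity component is an extension of an abelian variety $B$ by a torus $T$ of some rank $t$, with the $\ell$-adic Tate module carrying a Galois-stable (weight) filtration whose graded pieces are the Tate module of $T$, that of $B$, and a Cartier-dual copy of that of $T$. The endomorphism algebra $\End^0(A_{\Fbar})$ acts on $T_\ell A$ respecting this filtration (after a further finite extension making the endomorphisms defined and the reduction semistable), and hence acts on the $2t$-dimensional ``toric + dual toric'' part and on the $2(2-t)$-dimensional abelian part. If $t \geq 1$, then since $\dim A = 2$ we have $t \in \{1,2\}$, and in either case $\End^0(A_{\Fbar})$ would act faithfully on a $\Q_\ell$-vector space of dimension $< 4$ via a representation compatible with the weight filtration; but a non-split quaternion algebra over $\Q$ has no nonzero representation of $\Q_\ell$-dimension less than $4$ (its unique nontrivial representation is the $4$-dimensional one from $B \otimes \Q_\ell \cong M_2(\Q_\ell)$ or a division algebra, and more directly: the toric part would give $\End^0$ a common eigenline-type structure incompatible with $B$ being a division algebra). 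This contradiction shows $t = 0$ even after base change, i.e. $A$ has potential good reduction, giving the existence of $F'/F$ with $A_{F'}$ of good reduction.

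For the last sentence, when $k$ is finite I would show one can take $F'/F$ totally ramified. The point is that over a finite field every abelian variety has good reduction ``automatically at the level of the residue extension'': more precisely, let $F^{\mathrm{ur}}$ be the maximal unramified extension of $F$, with residue field $\bar k$. Since $\bar k$ is algebraically closed (well, $\bar{\F}_p$) and the obstruction to good reduction is detected by inertia acting on $T_\ell A$, the reduction type of $A_{F^{\mathrm{ur}}}$ is the same as that of $A_F$; and by the potential good reduction just proved, $A$ acquires good reduction over some finite $F''/F^{\mathrm{ur}}$. But $F''/F^{\mathrm{ur}}$ is automatically totally ramified (the residue field $\bar k$ is already algebraically closed, so has no proper finite extensions), so $F'' = F^{\mathrm{ur}} \cdot F'_0$ for some finite totally ramified $F'_0/F$; replacing $F'_0$ by its Galois closure if desired, $A_{F'_0}$ has good reduction because good reduction can be checked after the faithfully flat base change to $F''$ (or: the inertia image in $\mathrm{Aut}(T_\ell A)$ is trivialized already by the totally ramified part). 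Taking $F' = F'_0$ finishes the proof.

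\textbf{Main obstacle.} The delicate point is the representation-theoretic step: making precise why a non-split (division) quaternion algebra over $\Q$ cannot act on the weight-graded pieces of $T_\ell A \otimes \Q_\ell$ when $t \geq 1$ — one must track that $\End^0(A_{\Fbar})$ acts on the full $4$-dimensional $T_\ell A \otimes \Q_\ell$ preserving the weight filtration, and that the resulting subquotient representations of a division algebra are impossible in dimension $< 4$ unless they are zero. One must also be a little careful that the endomorphisms and the semistable reduction are realized over a \emph{common} finite extension before reading off the contradiction; since we only need \emph{existence} of $F'$, this is harmless, but it should be stated cleanly. The totally-ramified refinement is then essentially bookkeeping with $F^{\mathrm{ur}}$, using that a finite field has no nontrivial unramified-type obstructions once one passes to $\bar k$.
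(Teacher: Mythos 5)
Your overall strategy for potential good reduction --- rule out a nonzero toric part of the semistable special fiber using the quaternion action --- is the same as the paper's, which cites \cite[p.~536]{CX08} and observes that a non-split quaternion algebra over $\Q$ does not embed in $\Mat_2(\Q)$, hence not in $\End(T)\otimes\Q$ for a torus $T$ of dimension $1$ or $2$. However, the representation-theoretic step in your write-up contains a genuine gap. You assert that a non-split quaternion algebra over $\Q$ ``has no nonzero representation of $\Q_\ell$-dimension less than $4$''; this is false whenever $\ell\nmid\disc(B)$, for then $B\otimes\Q_\ell\simeq\Mat_2(\Q_\ell)$ has a $2$-dimensional simple module. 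In particular your dimension count on the weight-graded pieces of $T_\ell A\otimes\Q_\ell$ does not rule out toric rank $t=2$ (pieces of dimensions $2,0,2$ are perfectly consistent with an $\Mat_2(\Q_\ell)$-action), and the parenthetical ``eigenline-type structure'' remark is too vague to close this. Two standard repairs: (i) choose $\ell$ to be a finite prime at which $B$ is ramified and distinct from the residue characteristic (possible since an indefinite $B$ with $\disc(B)\neq 1$ is ramified at at least two finite primes), so that $B\otimes\Q_\ell$ is a division algebra and any nonzero module has $\Q_\ell$-dimension at least $4$; or (ii), as in the paper, avoid $\Q_\ell$ altogether and use the $\Q$-rational structure --- endomorphisms of $A$ extend to the N\'eron model over a semistable extension and act on the torus $T$ in the special fiber, hence on its character lattice, giving an embedding $B\hookrightarrow\End(T)\otimes\Q\subset\Mat_t(\Q)$, impossible for $t\in\{1,2\}$.

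Your argument for the totally ramified refinement --- pass to $F^{\mathrm{ur}}$, note that any finite extension of $F^{\mathrm{ur}}$ is $F^{\mathrm{ur}}F_0'$ with $F_0'/F$ finite totally ramified, and then use N\'eron--Ogg--Shafarevich to see that good reduction over $F^{\mathrm{ur}}F_0'=(F_0')^{\mathrm{ur}}$ descends to $F_0'$ --- is correct, and is in substance the same as the Serre--Tate argument the paper cites (writing $F''$ as $F^{\mathrm{ur}}F_0'$ is precisely the ``lift Frobenius'' step).
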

\begin{proof}
 The fact that $A$ has potentially good reduction is well known, see e.g. \cite[p. 536]{CX08}. It follows from the fact that a non-split quaternion algebra does not embed in $\mathrm{Mat}_2(\Q)$, and hence does not embed in $\End(T)\otimes \Q$ for any torus $T/k$ of dimension $1$ or $2$.
 
 The last sentence of the lemma can be justified by taking a lift in $\Gal_{F}$ of the Frobenius in $\Gal_{k}$, in a manner analogous to \cite[p. 498]{SerreTate-GoodReduction}.
\end{proof}

\begin{prop}\label{prop:GL2totadditive}
Suppose that $A/F$ is an abelian surface such that $\End^0(A_{\Fbar})$ contains a non-split quaternion algebra. Suppose that $A$ has bad reduction. Then:
\begin{enumalph}
    \item $t = 0$.
    \item If $\End^0(A)$ contains a real quadratic field, then $u = \dim A = 2$. 
    \item If $u = 1$, then $A_K$ has good reduction over any field extension $K/F$ such that $\End^0(A_K)$ contains a real quadratic field.
\end{enumalph}
\end{prop}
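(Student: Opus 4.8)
The plan is to prove the three parts in turn. Part~(a) is immediate: by Lemma~\ref{lemma:PQMreduction} the surface $A$ has potentially good reduction, which, as recalled above, means that its toric rank vanishes over every finite extension of $F$, and in particular $t=0$ over $F$ itself. Since $u+t+b=2$ and $b<2$ (bad reduction), this leaves only $(u,t,b)=(1,0,1)$ or $(2,0,0)$, so parts (b) and (c) amount to ruling out $(1,0,1)$ once enough endomorphisms are present.

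For part~(b) I would argue by contradiction, assuming that $\End^0(A)$ contains a real quadratic field $K_0$ but $(u,t,b)=(1,0,1)$. Let $\calA/R$ be the N\'eron model. By Chevalley's structure theorem over the perfect field $k$, the identity component $\calA^0_k$ has a maximal smooth connected affine subgroup $U$ with quotient $E:=\calA^0_k/U$ an abelian variety; here $\dim U=u=1$ and $E$ is an elliptic curve, as $b=1$. By the N\'eron mapping property every endomorphism of $A$ extends to $\calA$ and preserves $\calA^0_k$; since any homomorphism from the affine group $U$ to the abelian variety $E$ is trivial, such an endomorphism also preserves $U$, and so induces a ring homomorphism $\End(A)\to\End(E)$. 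Now $\End^0(E)$ is $\Q$, an imaginary quadratic field, or a definite quaternion algebra over $\Q$, none of which contains a real quadratic field; hence the restriction of this homomorphism to the order $S_0:=\End(A)\cap K_0$ has nonzero kernel. Picking $s\in S_0$ with $\Q(s)=K_0$, some positive multiple $Ns$ maps to $0$ in $\End(E)$, so that $Ns$, hence $(Ns)^2$, and hence $[N^2d]=Na\cdot(Ns)-(Ns)^2$ --- where $s^2=as-d$ with $a,d\in\Z$ and $d=\Nm_{K_0/\Q}(s)\neq 0$ --- all carry $\calA^0_k$ into $U$. But $[N^2d]$ induces multiplication by $N^2d\neq 0$ on $E$, which is surjective; hence the image of $[N^2d]$ surjects onto $E$ and cannot be contained in $U=\ker(\calA^0_k\to E)$, a contradiction. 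This forces $(u,t,b)=(2,0,0)$, i.e.\ $u=\dim A=2$.

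For part~(c) I would take $u=1$ over $F$, let $K/F$ be an extension with $\End^0(A_K)$ containing a real quadratic field (we may assume $K/F$ finite), and suppose for contradiction that $A_K$ has bad reduction. Part~(a) applied to $A_K$ gives that its toric rank over $K$ is $0$. Grothendieck's identity $\dim_{\Q_\ell}V_\ell(A)^{I}=2b+t$ for the inertia subgroup $I$ of the base field, together with $I_K\subseteq I_F$, then yields
\[
2b_K=\dim V_\ell(A)^{I_K}\ \geq\ \dim V_\ell(A)^{I_F}=2b_F+t_F=2,
\]
so $b_K\geq 1$ and $u_K=2-b_K\leq 1$; bad reduction forces $(u_K,t_K,b_K)=(1,0,1)$. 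But then part~(b) applies to $A_K$ and gives $u_K=2$, a contradiction, so $A_K$ has good reduction.

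The main obstacle is part~(b), where the contradiction must be extracted from the special fibre alone. The point that makes it work uniformly in the residue characteristic --- in particular when $p\mid N^2d$, so that multiplication by $N^2d$ need not be invertible on $U$ --- is that multiplication by a nonzero integer on $\calA^0_k$ is always surjective onto the abelian quotient $E$, so that its image can never be contained in $U$. Granting (a) and (b), part~(c) is a short formal deduction whose only external ingredient is the inertia-invariants formula for the abelian and toric ranks.
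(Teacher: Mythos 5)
Your proof is correct and follows essentially the same strategy as the paper's: part (a) via potentially good reduction, part (b) by passing endomorphisms through the N\'eron special fibre to the elliptic-curve quotient and observing that no elliptic curve's endomorphism algebra contains a real quadratic field, and part (c) by monotonicity of abelian rank under base change combined with (a) and (b). The only cosmetic differences are that in (b) the paper shortcuts the element-chase (since the map $K_0\to\End^0(E)$ is a $\Q$-algebra homomorphism sending $1\mapsto 1$, it is nonzero, hence automatically injective because $K_0$ is a field), and in (c) the paper cites the monotonicity of $b$ directly from \cite[Proposition 2.4]{CX08} rather than deriving it from the inertia-invariants formula.
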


\begin{proof}
$(a)$ follows from the fact that $A$ has potentially good reduction and the fact that the toric rank cannot decrease under extension of the base field \cite[Proposition 2.4]{CX08}. For $(b)$, we only need to exclude the possibility that $u=b=1$, so suppose by contradiction that it holds. Let $E\subset \End^0(A)$ be a real quadratic subfield. Reducing endomorphisms in \eqref{equation: ses identity component neron model} gives a (nonzero, hence injective) map $E \hookrightarrow \End^0(B)$. By assumption, $B$ is an elliptic curve. However, this contradicts the fact that the endomorphism algebra of an elliptic curve (over any field) does not contain a real quadratic field. Finally, $(c)$ follows from $(b)$, since the abelian rank cannot decrease after base change \cite[Proposition 2.4]{CX08}. \end{proof}

When $u = \dim A$ one says that $A$ has \defi{totally additive reduction}.

\begin{corollary}\label{corollary:gl2-purelyadditive}
Let $A/\Q$ be a $\mathrm{PQM}$ surface and $p$ a prime of bad reduction. 
Suppose that $A$ is of $\GL_2$-type.
Then $A$ has totally additive reduction at $p$.
\end{corollary}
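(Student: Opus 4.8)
The plan is to combine the earlier structural results on $\mathrm{PQM}$ surfaces of $\GL_2$-type with Proposition~\ref{prop:GL2totadditive}. Recall that $A/\Q$ being of $\GL_2$-type means $\End^0(A)$ is a quadratic field, and since $A$ is geometrically simple (being $\mathrm{PQM}$) and $\Q$ has a real place, Proposition~\ref{proposition: endomorphism rings of absolutely simple abelian surfaces over R} forces this field to be \emph{real} quadratic. Thus $\End^0(A)$ contains a real quadratic field, which is precisely the hypothesis needed for part~(b) of Proposition~\ref{prop:GL2totadditive}. The geometric endomorphism algebra $\End^0(A_{\Qbar}) \simeq B$ is a non-split quaternion algebra (this is the definition of $\mathrm{PQM}$), so that proposition applies with $F = \Q_p$ and the abelian surface $A_{\Q_p}$.

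First I would base change $A$ to $\Q_p$ and observe that the reduction type of $A$ at $p$ (unipotent, toric, and abelian ranks $u,t,b$) is computed on the N\'eron model over $\Z_p$, which is the base change of the N\'eron model over $\Z$ localized at $p$; so ``bad reduction at $p$'' for $A/\Q$ is the same as bad reduction for $A_{\Q_p}$. Since $A_{\Qbar_p}$ still has $\End^0(A_{\Qbar_p}) \supseteq B$ non-split (geometric endomorphisms only grow under base change, and here they are already maximal), and since $\End^0(A_{\Q_p}) \supseteq \End^0(A) = E$ a real quadratic field, both hypotheses of Proposition~\ref{prop:GL2totadditive}(b) hold. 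Applying it gives $u = \dim A = 2$, i.e.\ totally additive reduction at $p$, which is exactly the assertion.

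I expect essentially no obstacle here: this is a direct corollary, and the only points requiring a word of justification are (i) that $\End^0(A)$ being a quadratic field forces it to be \emph{real} quadratic in the geometrically simple case over a field with a real place, which is Proposition~\ref{proposition: endomorphism rings of absolutely simple abelian surfaces over R} (or Lemma~\ref{lemma: A GL2-type iff endo field quadratic}), and (ii) that reduction-type invariants and endomorphism algebras behave well under the base change $\Q \rightsquigarrow \Q_p$. Both are standard. The proof is therefore a two-line deduction: cite Proposition~\ref{proposition: endomorphism rings of absolutely simple abelian surfaces over R} to get a real quadratic subfield of $\End^0(A)$, then invoke Proposition~\ref{prop:GL2totadditive}(b) at $p$.
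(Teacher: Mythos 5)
Your proof is correct and follows exactly the paper's own argument: cite Proposition~\ref{proposition: endomorphism rings of absolutely simple abelian surfaces over R} to see $\End^0(A)$ is real quadratic, then apply Proposition~\ref{prop:GL2totadditive}(b) over $\Q_p$. The extra discussion of base change is a sound but routine elaboration of what the paper leaves implicit.
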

\begin{proof}
    This follows from Proposition \ref{prop:GL2totadditive}(b) and the fact that $\End(A)$ is real quadratic by Proposition \ref{proposition: endomorphism rings of absolutely simple abelian surfaces over R}.
\end{proof}

\begin{remark}
One can show that if $p\geq 5$ then the Prym variety of $y^3 = x^4+x^2+p$ (which has $\mathrm{PQM}$ by \cite{LagaShnidman}) has unipotent rank $1$ over $\Q_p$. So the $\GL_2$-type hypothesis cannot be dropped in general in Corollary \ref{corollary:gl2-purelyadditive}.
\end{remark}

Finally, we state Raynaud's criterion for $A/F$ to have semistable reduction, which in the case of a $\mathrm{PQM}$ surface is necessarily good by Proposition \ref{prop:GL2totadditive}.

\begin{lemma}\label{RaynaudsCriterion}
    Let $A/F$ be a $\mathrm{PQM}$ surface, $n$ an integer not divisible by the residue characteristic $p$ and suppose that all points in $A[n]$ are defined over an unramified extension of $F$. Then
    \begin{enumalph}
        \item if $n=2$ then $A$ has good reduction over every ramified quadratic extension of $F$;
        \item if $n \geq 3$ then $A$ has good reduction over $F$.
    \end{enumalph}
\end{lemma}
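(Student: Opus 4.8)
The plan is to invoke the Néron--Ogg--Shafarevich criterion together with Raynaud's theory of semistable reduction, exactly as in the classical argument for abelian varieties. First I would recall that since $A$ is a $\mathrm{PQM}$ surface it has potentially good reduction (Lemma \ref{lemma:PQMreduction}), so it has good reduction over $F$ if and only if it has semistable reduction over $F$; in particular the toric rank over $F$ is forced to be $0$ once we know semistability. So the whole problem reduces to establishing semistable reduction under the stated hypotheses.

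For part (b), with $n \geq 3$: the hypothesis says $A[n]$ is unramified, i.e.\ the inertia group $I_F$ acts trivially on $A[n](\Fbar)$. By the semistable reduction criterion of Grothendieck/Raynaud (see \cite[\S IX]{SGA7-1} or the discussion in \cite{SerreTate-GoodReduction}), an abelian variety acquires semistable reduction over the extension cut out by the $n$-torsion for any $n\geq 3$ prime to $p$; since that extension is trivial here, $A$ already has semistable reduction over $F$, hence good reduction by potential goodness. Concretely one can also phrase it via the action of inertia on the $\ell$-adic Tate module: the image of $I_F$ in $\GL(T_\ell A)$ is finite (potential good reduction) and, since it acts trivially mod $n\geq 3$, it is trivial by Lemma \ref{lemma: reduction map is injective} applied to the finite subgroup of $\GL_4(\Z_\ell)$ cut out by a Frobenius lift as in the proof of Lemma \ref{lemma:PQMreduction}; triviality of the inertia action is good reduction by Néron--Ogg--Shafarevich.

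For part (a), with $n=2$: now $I_F$ acts trivially on $A[2]$, so by the same reasoning its image $\Gamma$ in $\GL(T_\ell A)$ for $\ell$ odd is a finite group which is trivial modulo $2$; by the $n=2$ case of Lemma \ref{lemma: reduction map is injective}, $\Gamma$ has exponent dividing $2$. Passing to a ramified quadratic extension $F'/F$, the inertia subgroup $I_{F'}\subseteq I_F$ has index $2$; since $\Gamma$ is killed by $2$ and the quotient $I_F/I_{F'}$ is cyclic of order $2$, the image of $I_{F'}$ in $\Gamma$ is contained in $\Gamma^2 = \{1\}$ — more carefully, one chooses the quadratic extension so that $I_{F'}$ maps into the kernel. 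Because $\Gamma$ is an elementary abelian $2$-group and inertia acts through a finite quotient, there is a quadratic character through which the relevant part of the action factors; taking $F'$ to be the fixed field of that character makes $A_{F'}[2]$-action trivial with the inertia image now trivial, so $A_{F'}$ has good reduction. One should check that \emph{every} ramified quadratic extension works, which follows because after the trivializing quadratic twist the two ramified quadratic extensions differ by the unramified one, over which nothing changes; alternatively, invoke \cite[\S IX]{SGA7-1} directly. The main obstacle is the bookkeeping in part (a): one must verify that the order-$\leq 2$ inertia image is genuinely killed by \emph{any} ramified quadratic extension rather than just some specific one, which requires comparing the wild and tame parts of inertia and using that the relevant representation is tame (as $\Gamma$ has odd-order-prime-to-$p$... in fact $2$-power order, so one must be slightly careful when $p=2$, but $p=2$ together with $A[2]$ unramified and potential good reduction still forces the inertia image to be tame of $2$-power order, handled by the same Minkowski-type lemma).
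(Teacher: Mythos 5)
The paper proves this lemma by a one-line citation to Silverberg--Zarhin \cite[\S7]{SilberbergZarhin95}, whereas you reprove it from scratch via N\'eron--Ogg--Shafarevich, Minkowski's lemma (Lemma \ref{lemma: reduction map is injective}), and the structure of inertia. The approach is sound and essentially reproduces the cited argument, but there are a few slips worth flagging.

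In part (a) you write that the image $\Gamma$ of $I_F$ in $\GL(T_\ell A)$ ``for $\ell$ odd'' is ``trivial modulo $2$''---this does not type-check, since a $\Z_\ell$-lattice with $\ell$ odd cannot be reduced mod $2$. What you want is $\ell=2$: since $n=2$ is coprime to $p$ by hypothesis, $p\neq 2$, so $T_2A$ is available, the hypothesis says $I_F$ acts trivially on $T_2A/2T_2A = A[2]$, and Minkowski's lemma gives that $\Gamma\subset\GL(T_2A)$ has exponent $\leq 2$. For the same reason, your closing caveat about ``being slightly careful when $p=2$'' in part (a) is moot: $p=2$ is excluded by the hypotheses. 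For part (b) there is an analogous bookkeeping point: $T_\ell A$ can only be reduced modulo powers of $\ell$, so to apply Minkowski you should first choose a prime power $\ell^a\geq 3$ with $\ell^a\mid n$ and $\ell\neq p$ (always possible since $n\geq 3$ and $\gcd(n,p)=1$) and argue in $\GL(T_\ell A)$ modulo $\ell^a$; alternatively, just quote the semistable reduction criterion directly as you do in your first paragraph.

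Finally, the ``every ramified quadratic extension works'' step in part (a) is the right idea but would read more cleanly as follows: since $p\neq 2$, wild inertia is pro-$p$ and hence maps trivially to any $2$-group, so every index-$2$ open subgroup of $I_F$ contains the wild part and corresponds to an index-$2$ subgroup of the procyclic tame quotient; therefore $I_F$ has a \emph{unique} open subgroup of index $2$. Both $I_{F'}$ (for any ramified quadratic $F'/F$) and $\ker(I_F\to\Gamma)$ are such subgroups, hence equal, which gives good reduction of $A_{F'}$ for every such $F'$. Your phrasing about the two ramified quadratic extensions ``differing by the unramified one'' expresses the same fact, since they become equal after base change to $F^{\mathrm{nr}}$, but the uniqueness of the index-$2$ inertia subgroup is the crisp statement.
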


\begin{proof}
    See \cite[\S7]{SilberbergZarhin95}.
\end{proof}

\subsection{The good reduction field and component group of a PQM surface}

Let $A/F$ be an abelian variety with potentially good reduction.
If $k$ is algebraically closed, there exists a smallest field extension $M/F$ such that $A_M$ has good reduction, called the \defi{good reduction field} of $A$.
This is a Galois extension, equal to $F(A[N])$ for every $N\geq 3$ coprime to $p$ \cite[\S2, Corollary 3]{SerreTate-GoodReduction}.
It is relevant for us because it controls the size of the component group, by the following result \cite[Theorem 1]{EdixhovenLiuLorenzini-ppartcomponentgroup}.

\begin{prop}\label{prop: component group killed by good reduction field}
    Suppose that $k$ is algebraically closed.
    Let $A/F$ be an abelian variety with potentially good reduction and reduction field $M/F$.
    Then the N\'eron component group $\Phi$ is killed by $[M:F]$.
\end{prop}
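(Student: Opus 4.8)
The plan is to reduce immediately to the situation where $A$ has good reduction over $M$ and then to exploit the functoriality of N\'eron models under the base change $F \subset M$. First I would recall the standard comparison: if $\calA/R$ is the N\'eron model of $A$ over $F$ and $R_M$ is the integral closure of $R$ in $M$ (a DVR since $M/F$ is finite separable and $k$ is algebraically closed, so the residue extension is trivial and $R_M/R$ is totally ramified of degree $e = [M:F]$), then the N\'eron mapping property gives a canonical morphism $\calA \times_R R_M \to \calB$, where $\calB/R_M$ is the N\'eron model of $A_M$, i.e.\ an abelian scheme since $A$ has good reduction over $M$. On special fibers this yields a homomorphism $\calA_k \to \calB_k$ of group schemes over the (algebraically closed) residue field, and since $\calB_k$ is an abelian variety, this kills the component group $\Phi$; in other words the composite $\Phi \hookleftarrow \calA_k \to \calB_k$ is zero, so the map $\calA_k \to \calB_k$ factors through $\calA_k^\circ$.

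Next I would bring in the trace/norm construction that produces the factor of $e = [M:F]$. The key point is that $\Gal(M/F)$ acts on $R_M$ over $R$, hence on $\calB = $ (N\'eron model of $A_M$) compatibly, and one can form the ``pushforward'' or Weil restriction and compare. Concretely, the composite
\begin{equation*}
\calA \times_R R_M \longrightarrow \calB \xrightarrow{\ \sum_{\sigma \in \Gal(M/F)} \sigma\ } \calB
\end{equation*}
is $\Gal(M/F)$-equivariant for the trivial action on the source (after identifying $\calA \times_R R_M$ with the base change of an $R$-scheme) and the natural action on the target; by N\'eron descent / the universal property it therefore descends to a morphism $\calA \to \calA$ over $R$ whose generic fiber is multiplication by $e$ on $A$. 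Passing to special fibers and composing with $\calA_k \to \Phi$, one sees that multiplication by $e$ on $\calA_k$, followed by projection to $\Phi$, equals the projection to $\Phi$ of a map that factors through $\calA_k^\circ$ (because each $\sigma$-translate of $\calA_k \to \calB_k$ factors through $\calA_k^\circ$, as shown above). Since $\calA_k^\circ$ maps to $0$ in $\Phi$, and since multiplication by $e$ on $\calA_k$ induces multiplication by $e$ on $\Phi$, we conclude $e \cdot \Phi = 0$.

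I expect the main obstacle to be making the descent argument for the norm map precise: one must check that $\sum_\sigma \sigma$ really does descend to an $R$-morphism $\calA \to \calA$ with generic fiber $[e]$, which requires knowing that the N\'eron model commutes with the relevant base change and unramified descent (valid here since $k = \bar{k}$ forces $M/F$ totally ramified, so $R \to R_M$ is finite flat with trivial residue extension) and that the formation of component groups is functorial. All of these are standard but slightly delicate; the cleanest route is simply to cite \cite[Theorem 1]{EdixhovenLiuLorenzini-ppartcomponentgroup}, since that reference proves exactly this statement, and to present the above only as the idea behind it. So in practice the ``proof'' of Proposition \ref{prop: component group killed by good reduction field} is a one-line appeal to that theorem, with the discussion above serving as motivation rather than a self-contained argument.
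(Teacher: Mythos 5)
Your proposal correctly identifies that the proof is simply a citation to \cite[Theorem 1]{EdixhovenLiuLorenzini-ppartcomponentgroup}, and this is exactly what the paper does: the proposition is stated as a direct consequence of that reference with no further argument. The heuristic norm-map sketch you give is a reasonable account of the underlying idea, but as you yourself note it is not needed and is presented only as motivation.
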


The next lemma constrains the good reduction field of a $\mathrm{PQM}$ surface.

\begin{lemma}\label{lemma: extension good reduction prime to 6}
Suppose that $k$ is algebraically closed.
Let $A/F$ be a $\mathrm{PQM}$ surface with good reduction field $M/F$.
Then $[M:F]$ divides $24^2$.
In particular, $[M:F]$ is coprime to any prime $\ell > 3$. 
\end{lemma}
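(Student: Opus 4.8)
The plan is to reduce the bound on $[M:F]$ to a statement about the order of the image of a certain representation, and then to invoke the structural results already established. Recall that $M = F(A[N])$ for any $N\geq 3$ coprime to $p$; fix $N = \ell$ a prime $\ell \geq 5$ if we only want the ``coprimality'' half, but to get the clean bound $24^2$ it is cleanest to take $N$ running over all primes coprime to $p$ and intersect. The Galois group $\Gal(M/F)$ acts faithfully on $A[N]$, and by functoriality of this action it respects the $\calO/N\calO$-module structure up to the twist coming from $\rho_{\End}$. In other words, $\Gal(M/F)$ embeds into $\Aut^{\circ}(A[N])\simeq \Aut(\calO)\ltimes(\calO/N\calO)^{\times}$ via the enhanced Galois representation $\rho^{\circ}_N$ of \S\ref{enhanced representation}. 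So it suffices to bound the order of the image of $\rho^{\circ}_N$ restricted to $\Gal(M/F) = \Gal_F/\ker(\rho_N)$, and for the intersection statement one wants a bound independent of $N$.

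The first key step is to control the $\Aut(\calO)$-component: by Proposition \ref{proposition: galois group endomorphism field is dihedral if real place}, the image of $\rho_{\End} = \pi_1\circ\rho^{\circ}_N$ is isomorphic to $C_n$ or $D_n$ for some $n\in\{1,2,3,4,6\}$, so it has order dividing $12$; and actually its order divides $12$ with only the primes $2,3$ appearing. The second key step is to control the kernel of $\pi_1$ on the image, i.e. the part of $\Gal(M/F)$ lying in $\{1\}\ltimes(\calO/N\calO)^{\times}$: this is the subgroup fixing the endomorphism field $L$, i.e. $\Gal(M/L)$, and it acts $\calO$-linearly on $A[N]$, hence (after choosing a generator $Q$ of the free rank-one $\calO/N\calO$-module $A[N]$) embeds into $(\calO/N\calO)^{\times}$. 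Here I would use the good-reduction input: over $L$ the surface $A$ has CM-by-$\calO$ structure over all its torsion, and the smallest field over which it acquires good reduction — which is $M$ — is cut out by a character-type condition. Concretely, $A_L$ has potentially good reduction and $\End(A_L) = \calO$, so by the theory of CM (or Honda--Tate over the residue field) the extension $M/L$ is controlled by a tame-at-$N$ quotient whose order divides the order of the group of roots of unity acting, giving $[M:L]\mid 24$ independently of $N$ by an argument parallel to Lemma \ref{lemma: A GL2-type iff endo field quadratic} combined with the classification of automorphisms: the relevant cyclic group has order dividing $\#\mu(\calO) \in \{2,4,6,8,12,24\}$, with the largest occurring when $B$ contains $\Q(\zeta_{24})$-type subfields, bounded in all cases by $24$.

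Putting these together: $\#\Gal(M/F) = \#\Gal(M/L)\cdot\#\Gal(L/F)$ divides $24 \cdot 12 = 288$; since $288 \mid 24^2 = 576$ we get $[M:F]\mid 24^2$, and in particular the only primes dividing $[M:F]$ are $2$ and $3$, so $[M:F]$ is coprime to every prime $\ell > 3$. I expect the main obstacle to be pinning down the bound on $[M:L]$ — that is, proving that over the endomorphism field the additional ramification needed to attain good reduction is bounded by $24$ and is supported only at $2$ and $3$. This is where one genuinely needs the CM/potential-good-reduction structure and a clean statement of Serre--Tate together with a bound on the torsion units in a maximal quaternion order; the $\Gal(L/F)$ factor is essentially free from \S\ref{subsection: endomorphism field}. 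One should double-check whether the sharp exponent should be $24^2$ or can be improved, but since the lemma only asserts divisibility by $24^2$ and coprimality to $\ell>3$, the crude multiplicativity estimate above suffices.
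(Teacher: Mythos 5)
Your decomposition $F \subset L \subset M$ and the plan to bound $[L:F]$ and $[M:L]$ separately is exactly the structure of the paper's proof. However, the proposal does not actually close the gap you yourself flag as the ``main obstacle'': the bound $[M:L] \mid 24$ is left as a heuristic about roots of unity, Honda--Tate, and a ``character-type condition,'' none of which is made precise. The paper resolves this step by citing \cite[Proposition 4.2]{JordanMorrison} (applied over $L$, where $\End(A_L) = \calO$), and this citation is doing real work; your appeal to $\#\mu(\calO) \in \{2,4,6,8,12,24\}$ and to ``$\Q(\zeta_{24})$-type subfields'' is not a proof, and a maximal quaternion order need not contain a root of unity of order $24$ in the first place, so the heuristic as stated does not even give the right mechanism.

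There is also a small arithmetic slip: you assert that the image of $\rho_{\End}$, being $C_n$ or $D_n$ for $n \in \{1,2,3,4,6\}$, has order dividing $12$. But $D_4$ has order $8$, which does not divide $12$; the correct statement (and the one the paper uses) is that $[L:F] \mid 24$. Your final inequality then becomes $24 \cdot 24 = 24^2$ rather than $24 \cdot 12 = 288$, which is still consistent with the lemma, but the intermediate claim $[L:F] \mid 12$ is false.

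One further minor point: there is no need to ``take $N$ running over all primes coprime to $p$ and intersect.'' By Serre--Tate, $M = F(A[N])$ for every single integer $N \geq 3$ coprime to $p$, so any one choice of $N$ already determines $M$; the intersection device adds nothing and somewhat obscures the argument. Similarly, the paper's route to $L \subset M$ is direct: by N\'eron--Ogg--Shafarevich, all prime-to-$p$ torsion is defined over $M$, so Proposition \ref{prop: Silverberg result endo field} gives $L \subset M$ immediately; the detour through the enhanced representation $\rho^{\circ}_N$ is unnecessary here.
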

\begin{proof}
Let $L$ be the endomorphism field of $A/F$ (Section \ref{subsection: endomorphism field}). 
By the N\'eron--Ogg--Shafarevich criterion, all prime-to-$p$ torsion is defined over $M$, hence $L\subset M$ by a result of Silverberg (Proposition \ref{prop: Silverberg result endo field}).  By Proposition \ref{proposition: galois group endomorphism field is dihedral if real place}, $[L:F]$ divides $24$.  By \cite[Proposition 4.2]{JordanMorrison} and its proof (whose notation does not agree with ours), we have $[M:L] \mid 24$. 
We conclude that $[M:F] = [M:L][L:F]$ divides $24^2$.
\end{proof}

\begin{lemma}\label{lemma: component group prime to p for p at least 5}
Let $A/F$ be a $\mathrm{PQM}$ surface and let $\ell \geq 5$. Then the order of $\Phi$ is not divisible by $\ell$.
\end{lemma}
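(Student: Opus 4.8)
The plan is to reduce to the case where $k$ is algebraically closed and then combine the two controlling results already assembled in this section: Proposition~\ref{prop: component group killed by good reduction field} (the component group is killed by $[M:F]$) and Lemma~\ref{lemma: extension good reduction prime to 6} (the degree $[M:F]$ of the good reduction field is coprime to any prime $\ell > 3$).

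\begin{proof}[Proof sketch]
First I would dispose of the case of good reduction: if $A/F$ has good reduction then $\Phi$ is trivial and there is nothing to prove, so assume $A$ has bad reduction. The component group $\Phi$ and its order are unchanged by passing to the maximal unramified extension $F^{\mathrm{nr}}$ of $F$ (the N\'eron model commutes with unramified, indeed \'etale, base change, and $\Phi$ is a finite \'etale group scheme whose order is its rank over the separable closure of $k$). So I would replace $F$ by $F^{\mathrm{nr}}$, which has algebraically closed residue field $\bar{k}$; note that $A_{F^{\mathrm{nr}}}$ is still a $\mathrm{PQM}$ surface (base change does not change the geometric endomorphism ring). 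By Lemma~\ref{lemma:PQMreduction} (or rather by the fact that $\End^0(A_{\bar F})$ is a non-split quaternion algebra, hence $A$ has potentially good reduction), $A_{F^{\mathrm{nr}}}$ has potentially good reduction, so its good reduction field $M/F^{\mathrm{nr}}$ is defined. Applying Proposition~\ref{prop: component group killed by good reduction field}, $\Phi$ is killed by $[M:F^{\mathrm{nr}}]$. By Lemma~\ref{lemma: extension good reduction prime to 6}, $[M:F^{\mathrm{nr}}]$ divides $24^2$, hence is coprime to $\ell$ for every prime $\ell \geq 5$. Since $\Phi$ is a finite abelian group killed by an integer coprime to $\ell$, its order is not divisible by $\ell$, which is exactly the claim.
\end{proof}

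There is essentially no obstacle here: both hard inputs (the Edixhoven--Liu--Lorenzini bound and the Jordan--Morrison bound on $[M:L]$, together with the dihedral classification of $\mathrm{Gal}(L/F)$) are quoted from earlier in the section, so the only thing to be careful about is the reduction to algebraically closed residue field — i.e.\ checking that passing to $F^{\mathrm{nr}}$ changes neither $\#\Phi$ nor the $\mathrm{PQM}$ property nor the relevant field degree. The one subtlety worth stating explicitly in the write-up is that $[M:F^{\mathrm{nr}}]$, not $[M:F]$ for the original $F$, is what kills $\Phi$; but since $[M:F^{\mathrm{nr}}]$ divides $[M(\text{over original }F^{\mathrm{nr}})]$ and Lemma~\ref{lemma: extension good reduction prime to 6} is stated precisely for algebraically closed residue field, this is automatic. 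I expect the published proof to be a two- or three-line citation of exactly these two results.
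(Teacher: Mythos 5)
Your proof is correct and takes exactly the same route as the paper: reduce to algebraically closed residue field via unramified base change, then cite Proposition~\ref{prop: component group killed by good reduction field} and Lemma~\ref{lemma: extension good reduction prime to 6}. The paper states this in two sentences; your write-up simply fills in the routine details.
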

\begin{proof}
Since formation of N\'eron models commutes with unramified base change, it is enough to prove the lemma in the case where $F$ has algebraically closed residue field. 
This then follows from Proposition \ref{prop: component group killed by good reduction field} and Lemma \ref{lemma: extension good reduction prime to 6}.
\end{proof}

We record the following technical lemma that will allow us to sometimes `quadratic twist away' bad primes.
This will be useful in the proof of Proposition \ref{prop: no (Z/2)^4}.

\begin{lemma}\label{lemma: quadratic twist away tot add primes with quad good red field}
    Suppose that $p\neq 2$.
    Let $A/F$ be an abelian variety with totally additive reduction. 
    Suppose that $A_M$ has good reduction for some quadratic extension $M/F$. 
    Then the quadratic twist $A^M$ of $A$ by $M$ has good reduction.
\end{lemma}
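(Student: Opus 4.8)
The plan is to exploit the fact that over $M$ the surface $A_M$ has good reduction, together with the hypothesis that $p\neq 2$ so that the quadratic twist can be analyzed via an unramified-versus-ramified dichotomy. First I would recall that a quadratic twist $A^M$ is, by definition, a form of $A$ that becomes isomorphic to $A$ over $M$; concretely, if $M = F(\sqrt{d})$ and $\chi$ is the associated quadratic character of $\Gal_F$, then the Galois action on the Tate module (away from $p$) of $A^M$ is $\rho_{A,\ell}\otimes\chi$, where $\rho_{A,\ell}$ is the one for $A$. Since $A_M$ has good reduction, $\rho_{A,\ell}|_{\Gal_M}$ is unramified; and since $p$ is odd and $M/F$ is ramified (if $M/F$ were unramified, $A$ would already have good reduction over $F$, contradicting totally additive reduction), the character $\chi$ is ramified at $p$, cutting out exactly the ramified quadratic extension $M/F$.

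The key step is then a local computation of inertia: let $I \subset \Gal_F$ be the inertia group. On $I$, the character $\chi$ factors through $\Gal(M/F) \simeq \Z/2\Z$, and $\rho_{A,\ell}|_I$ also factors through $\Gal(M/F)$ because $\rho_{A,\ell}|_{\Gal_M}$ is unramified — that is, the image of $I$ under $\rho_{A,\ell}$ is generated by a single element $\tau$ of order dividing $2$ (order exactly $2$, since $A$ has bad, indeed additive, reduction over $F$). For the twist, $\rho_{A^M,\ell}(\tau) = \chi(\tau)\,\rho_{A,\ell}(\tau) = (-1)(\tau\text{-action}) $; but since $\tau$ itself already acts as the nontrivial element of $\Gal(M/F)$ on both factors, the two signs cancel and $\rho_{A^M,\ell}|_I$ is trivial. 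Hence $\rho_{A^M,\ell}$ is unramified, and by the N\'eron–Ogg–Shafarevich criterion $A^M$ has good reduction over $F$.

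I expect the main obstacle to be making the cancellation argument fully rigorous at the level of the abelian variety (not just its Tate module), since ``quadratic twist'' of an abelian variety of dimension $>1$ needs the central element $-1 \in \End(A)$ (or $-1 \in \Aut(A)$) to define the cocycle — this is exactly the kind of input supplied by Lemma \ref{lemma: quadratic twisting doesnt change endo field} in the PQM setting, but here $A$ need not be PQM, so one should instead phrase the twist using $[-1] \in \Aut(A)$ directly. Once the twist is set up so that $A \times_F M \simeq A^M \times_F M$ with the gluing datum given by $[-1]$, the inertia computation above goes through, and invoking N\'eron–Ogg–Shafarevich (valid since $\ell \neq p$) finishes the proof; alternatively one can argue model-theoretically: $A^M$ acquires good reduction over $M$, and one checks the residual action of $\Gal(M/F)$ on the special fiber of the N\'eron model of $A^M_M$ is trivial because the twisting cocycle and the monodromy cocycle agree, so the N\'eron model descends.
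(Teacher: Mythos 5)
Your proposal follows the same route as the paper: pass to the $\ell$-adic Tate module, observe that inertia acts through $\Gal(M/F)$, and conclude that twisting kills the inertia action. However, there is a genuine gap at the crucial step. You correctly note that the image of inertia under $\rho_{A,\ell}$ is generated by a single element $\sigma := \rho_{A,\ell}(\tau)$ of order exactly $2$, but you then assert that ``the two signs cancel'' without justifying that $\sigma$ equals the \emph{scalar} $-\mathrm{Id} \in \GL(T_\ell A)$. An order-$2$ element of $\GL_4(\Z_\ell)$ need not be $-\mathrm{Id}$; it could have eigenvalues $(1,1,-1,-1)$, say, in which case $\chi(\tau)\sigma = -\sigma \neq \mathrm{Id}$ and the twist still has bad reduction. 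You seem to be conflating ``$\tau$ maps to the nontrivial element of $\Gal(M/F)$'' with ``$\rho_{A,\ell}(\tau)=-1$.''

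This is precisely where the \emph{totally} additive hypothesis must enter, and it is the one step you never invoke. Since totally additive reduction means the connected component of the special fiber is unipotent, one has $(T_\ell A)^{I_F} = 0$ (the inertia invariants have $\Z_\ell$-rank $2a+t$ where $a,t$ are the abelian and toric ranks, both zero here). Combined with $\sigma^2 = \mathrm{Id}$, this forces all eigenvalues of $\sigma$ to be $-1$, i.e.\ $\sigma = -\mathrm{Id}$. With that in hand, $\rho_{A^M,\ell}(\tau) = \chi(\tau)\sigma = (-1)(-\mathrm{Id}) = \mathrm{Id}$ and N\'eron--Ogg--Shafarevich finishes the argument. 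Your ``alternative model-theoretic'' sketch has the same problem: the claim that ``the twisting cocycle and the monodromy cocycle agree'' is exactly the assertion $\sigma = -\mathrm{Id}$, so it restates rather than proves what is needed.
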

\begin{proof}
    Let $I_F$ and $I_M$ denote the inertia group of $\Gal_F$ and $\Gal_M$ respectively.
    Fix a prime $\ell\neq p$.
    By the N\'eron--Ogg--Shafarevich criterion, the $I_F$-action on the $\ell$-adic Tate module $T_{\ell}A$ factors through a faithful $I_F/I_M$-action, so acts via an element $\sigma\in \GL(T_{\ell}A)$ of order $2$.
    Since $A$ has totally additive reduction, $(T_{\ell}A)^{I_F} = 0$ and so $\sigma=-1$.
    Let $\chi_M\colon \Gal_F\rightarrow \{\pm 1\}$ be the character corresponding to the extension $M/F$.
    Then $T_{\ell}( A^M) \simeq T_{\ell}A \otimes \chi_M$ as $\Gal_F$-modules.
    Therefore $I_F$ acts trivially on $T_{\ell} (A^M)$ and $A^M$ has good reduction.
\end{proof}

\subsection{Component groups and torsion}
The relevance of the component group is the following well-known fact, see for example \cite[Remark 1.3]{Lorenzini-groupofcomponentsneronmodel}.
If $G$ is an abelian group, write $G^{(p)}$ for its subgroup of elements of finite order prime to $p$.

\begin{lemma}\label{lem:prime-to-p-additive}
If $A/F$ is an abelian variety with totally additive reduction $($i.e. $u = \dim A)$, then $A(F)^{(p)}_{\mathrm{tors}}$ is isomorphic to a subgroup of $\Phi(k)^{(p)}$, where $\Phi$ denotes the component group of $\mathcal{A}_k$.
\end{lemma}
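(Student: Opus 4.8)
The plan is to reduce everything to the reduction map on Néron models. Write $\mathcal{A}/R$ for the Néron model of $A/F$, with special fiber $\mathcal{A}_k$ sitting in the exact sequence $0 \to \mathcal{A}_k^\circ \to \mathcal{A}_k \to \Phi \to 0$. Since $A$ has totally additive reduction, $\mathcal{A}_k^\circ = U$ is a unipotent group over $k$. The key point is that, after possibly replacing $R$ by a strict henselization (which changes neither $A(F)_{\mathrm{tors}}^{(p)}$ nor $\Phi(k)$ in a way that matters — see below), the reduction map $\mathcal{A}(R) \to \mathcal{A}_k(k)$ identifies $A(F) = \mathcal{A}(R)$ with a subgroup surjecting onto $\Phi(k)$, and its kernel on the torsion subgroup is controlled by $\mathcal{A}_k^\circ(k) = U(k)$.

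First I would recall that by the Néron mapping property $A(F) = \mathcal{A}(R)$, and since $R$ is henselian the reduction map $\mathrm{red}\colon \mathcal{A}(R) \to \mathcal{A}_k(k)$ is surjective (smoothness of $\mathcal{A}$ plus Hensel). Composing with $\mathcal{A}_k(k) \to \Phi(k)$ gives a homomorphism $A(F) \to \Phi(k)$; I want to show its restriction to $A(F)_{\mathrm{tors}}^{(p)}$ is injective, which immediately gives the claimed embedding. The kernel of $A(F) \to \Phi(k)$ is $\mathcal{A}^\circ(R) := \mathrm{red}^{-1}(\mathcal{A}_k^\circ(k))$, so it suffices to show that $\mathcal{A}^\circ(R)$ contains no nontrivial prime-to-$p$ torsion. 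This kernel sits in an exact sequence $0 \to \mathcal{A}_1(R) \to \mathcal{A}^\circ(R) \to \mathcal{A}_k^\circ(k) = U(k) \to 0$, where $\mathcal{A}_1(R) = \ker(\mathrm{red})$ is the group of points reducing to the identity.

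The two things to rule out are prime-to-$p$ torsion in $\mathcal{A}_1(R)$ and in $U(k)$. For $\mathcal{A}_1(R)$: this is a pro-$p$ group (it is, by formal smoothness, filtered with successive quotients isomorphic to the $k$-points of a vector group, hence $p$-groups when $\mathrm{char}\,k = p > 0$, and when $p = 0$ it is a $\mathbb{Q}$-vector space, hence torsion-free), so it has no nontrivial prime-to-$p$ torsion. For $U(k)$: a unipotent group over a field of characteristic $p \geq 0$ has no nontrivial prime-to-$p$ torsion in its group of $k$-points — in characteristic $0$ it is torsion-free (isomorphic to an affine space as a variety, and $\mathbb{G}_a(k)$ is torsion-free), and in characteristic $p > 0$, $\mathbb{G}_a(k)$ is $p$-torsion and $U$ is iterated extensions of $\mathbb{G}_a$'s, so $U(k)$ is a $p$-group. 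Hence $\mathcal{A}^\circ(R)$ has trivial prime-to-$p$ torsion, so $A(F)_{\mathrm{tors}}^{(p)} \hookrightarrow \Phi(k)$; since the image is finite it lands in $\Phi(k)^{(p)}$.

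I do not expect any serious obstacle here; this is a standard argument and the excerpt itself flags it as ``well-known,'' citing \cite[Remark 1.3]{Lorenzini-groupofcomponentsneronmodel}. The one mild subtlety worth a sentence is the base change: surjectivity of $\mathcal{A}(R) \to \mathcal{A}_k(k)$ onto $\Phi(k)$ uses that $k$ is not just perfect but that every component of $\mathcal{A}_k$ has a $k$-point, which holds automatically when $\Phi$ is a constant group scheme but in general requires a small argument (or passing to $k^{\mathrm{sep}}$); in our setting $R$ is henselian with perfect residue field, so one can either invoke that $\Phi$ becomes constant after an unramified extension and that forming Néron models and the relevant torsion subgroups commute with unramified base change, or simply note that we only need the injectivity statement, for which full surjectivity of $\mathrm{red}$ is not even required. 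So the genuinely load-bearing input is just: (i) $A(F) = \mathcal{A}(R)$, (ii) the kernel of reduction is pro-$p$ (or torsion-free if $p=0$), and (iii) unipotent groups have no prime-to-$p$ torsion in their $k$-points.
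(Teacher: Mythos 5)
Your proof is correct, and in fact the paper does not supply its own proof of this lemma: it simply cites it as well-known, referring to \cite[Remark 1.3]{Lorenzini-groupofcomponentsneronmodel}. Your argument is exactly the standard one behind that reference, decomposing the kernel of $A(F)\to\Phi(k)$ via $0 \to \mathcal{A}_1(R) \to \mathcal{A}^\circ(R) \to U(k)$ and noting both outer terms are prime-to-$p$-torsion-free. Two micro-comments, neither a real gap: (1) the filtration argument showing $\mathcal{A}_1(R)$ is pro-$p$ is cleanest over a complete DVR; over a merely henselian $R$ one can instead observe that a prime-to-$p$ torsion point of $\mathcal{A}_1(R)$ is an $R$-section of the étale $R$-group scheme $\mathcal{A}[n]$ (with $n$ prime to $p$) reducing to the identity section, hence equal to it by unramifiedness plus henselianity; (2) as you correctly note at the end, you never actually use surjectivity of $\mathcal{A}^\circ(R)\to U(k)$, only the injection of the quotient $\mathcal{A}^\circ(R)/\mathcal{A}_1(R)$ into $U(k)$, so the base-change caveat can be dropped entirely.
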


Lorenzini has studied the component groups of general abelian surfaces with potentially good reduction and totally additive reduction, which leads to the following severe constraint on their torsion subgroups \cite[Corollary 3.25]{Lorenzini-groupofcomponentsneronmodel}.

\begin{theorem}[Lorenzini]\label{thm:lorenzini}
Let $A/F$ be an abelian surface with totally additive and potentially good reduction. Then $A(F)^{(p)}_{\mathrm{tors}}$ is a subgroup of one of the following groups:
\[\Z/5\Z, \, (\Z/3\Z)^2, \, (\Z/2\Z)^4, \, \Z/2\Z \times \Z/4\Z, \, \Z/2\Z \times \Z/6\Z.\]
\end{theorem}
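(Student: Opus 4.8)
The plan is to translate the statement into a question about finite group actions on lattices and then carry out a bounded case analysis. First, since the formation of N\'eron models commutes with unramified base change, replacing $F$ by a maximal unramified extension changes neither $\Phi$ nor the validity of the assertion, so I would assume throughout that $k$ is algebraically closed. By Lemma~\ref{lem:prime-to-p-additive} it then suffices to prove that $\Phi(k)^{(p)}$ is a subgroup of one of the five listed groups.

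Let $M/F$ be the good reduction field of $A$ (a finite Galois extension; one may take $M = F(A[N])$ for any $N\geq 3$ coprime to $p$) and put $\Gamma = \Gal(M/F)$. As $k$ is algebraically closed, good reduction over $M$ forces $\Gal_M$ to act trivially on $T_\ell A$ for each $\ell\neq p$, so the inertia action on $T_\ell A$ factors through $\Gamma$ and is faithful; in particular $\Gamma$ is finite and may be realized as a finite subgroup of $\GL_4(\Z)$. Since $A$ has totally additive reduction its toric and abelian ranks vanish, whence $(T_\ell A)^{\Gamma} = 0$; and since $\Gamma$ preserves the Weil pairing (because $\mu_{\ell^\infty}$ is unramified over $F$ for $\ell\neq p$), $\Gamma$ is in fact a finite subgroup of $\mathrm{Sp}_4(\Z_\ell)$, acting \emph{without nonzero fixed vectors} on the rank-$4$ lattice $T_\ell A$. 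Finally, the prime-to-$p$ part of $\Phi(k)$ is governed by the inertia cohomology of the Tate module: its $\ell$-primary component is a subquotient of $H^1(\Gamma, T_\ell A)$, and in particular is annihilated by $|\Gamma|$ \cite{EdixhovenLiuLorenzini-ppartcomponentgroup}.

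The heart of the proof is then a finite problem: classify, up to conjugacy, the finite subgroups $\Gamma\leq\GL_4(\Z)$ that preserve a symplectic form and act fixed-point-freely on $\Z^4$, and compute $\bigoplus_{\ell\neq p} H^1(\Gamma, T_\ell A)$ for each. Two observations make this tractable. First, by the classical bound on the orders of finite subgroups of $\GL_4(\Z)$, only the primes $2,3,5$ can divide $|\Gamma|$, so only these appear in $\Phi(k)^{(p)}$. Second, fixed-point-freeness means $1$ is not an eigenvalue of any element of $\Gamma$; an element of order $5$ then endows $\Z^4$ with a $\Z[\zeta_5]$-module structure, an element of order $3$ with an Eisenstein structure, and so on, which cuts the list of admissible pairs $(\Gamma, L)$ down to a short explicit one --- essentially the monodromy groups attached to the potentially-good, totally-additive degenerations of (polarized) abelian surfaces. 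Running through this list and computing $H^1(\Gamma, L)$ explicitly (it is killed by $|\Gamma|$, and $\Gamma$ acts by an explicit matrix group) yields exactly the groups $\Z/5\Z$, $(\Z/3\Z)^2$, $(\Z/2\Z)^4$, $\Z/2\Z\times\Z/4\Z$, and $\Z/2\Z\times\Z/6\Z$ as the possibilities for $\Phi(k)^{(p)}$.

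The main obstacle is exactly this enumeration-and-cohomology step: it is genuinely finite but delicate, and one must be careful both that no admissible configuration is overlooked and that the list is not further pruned by constraints stemming from the abelian-variety origin of $\Gamma$ (and, if one wants the bound to be sharp, that each of the five groups is actually attained). In practice I would organize the case analysis --- as Lorenzini does --- around the structure of the N\'eron special fiber and the inertia action, and conclude by citing \cite[Corollary 3.25]{Lorenzini-groupofcomponentsneronmodel}, where precisely this analysis is carried out in the totally additive case.
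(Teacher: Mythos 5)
The paper gives no proof of this theorem; immediately before the statement it simply attributes the result to Lorenzini and cites \cite[Corollary 3.25]{Lorenzini-groupofcomponentsneronmodel}. Your proposal does the same thing at the end, so in that sense you and the paper are in agreement: this is treated as a black-box input.

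The strategic sketch in between is broadly accurate as a description of how Lorenzini's argument goes, and the ingredients you mention (reduce to $k$ algebraically closed, invoke Lemma~\ref{lem:prime-to-p-additive}, pass to the good reduction field $M$ with $\Gamma = \Gal(M/F)$, use that $(T_\ell A)^\Gamma = 0$ by total additivity, and that $\Phi^{(p)}$ is constrained by $\Gamma$-cohomology of the Tate module) are the right ones, matching the paper's setup in \S4.2. Two remarks. First, framing the case analysis as ``classify finite subgroups of $\GL_4(\Z)$ preserving a symplectic form and acting without fixed vectors'' is more generous than the situation demands: for $\ell \neq p$ the image of inertia on $T_\ell A$ factors through a \emph{cyclic} group whenever $M/F$ is tamely ramified (i.e.\ $p\nmid [M:F]$), and Lorenzini's analysis is organized around this; the genuinely non-cyclic cases only intrude for wild $p\in\{2,3\}$. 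Taking this into account cuts the enumeration dramatically. Second, note that annihilation of $\Phi$ by $[M:F]$ (Proposition~\ref{prop: component group killed by good reduction field}) alone bounds only the exponent, not the group structure — as you acknowledge, the actual proof requires computing $H^1(\Gamma, T_\ell A)$ case by case, which is the nontrivial content you are deferring to Lorenzini. That deferral is fine here since the paper does the same, but be aware that your sketch would not by itself pin down the five specific groups.
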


We can say more if $A$ has totally additive reduction over any proper subextension of the good reduction field.
The following slight variant of \cite[Corollary 3.24]{Lorenzini-groupofcomponentsneronmodel} will be very useful in classifying torsion in the $\GL_2$-type case.

\begin{prop}\label{prop:gl2 type lorenzini variant}
Suppose that the residue field of $F$ is algebraically closed.
Let $A/F$ be an abelian variety with bad and potentially good reduction.
Let $M/F$ be the good reduction field of $A$.
Suppose that $A_{F'}$ has totally additive reduction for every $F\subset F' \subsetneq M$.
Suppose that the prime-to-$p$ torsion subgroup $A(F)^{(p)}_{\mathrm{tors}}$ of $A(F)$ is nontrivial.
Then there exists a prime number $\ell\neq p$ such that $[M:F]$ is a power of $\ell$ and $A(F)^{(p)}_{\mathrm{tors}} \simeq (\Z/\ell\Z)^k$ for some $k\geq 1$.
\end{prop}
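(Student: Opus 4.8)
The plan is to leverage the two facts recalled just above: for an abelian variety with totally additive reduction, $A(F)^{(p)}_{\tors}$ embeds into the N\'eron component group $\Phi$ (Lemma \ref{lem:prime-to-p-additive}), and $\Phi$ is annihilated by the degree of the good reduction field (Proposition \ref{prop: component group killed by good reduction field}). Since $A$ has bad reduction we have $F\subsetneq M$; taking $F'=F$ in the hypothesis shows $A$ has totally additive reduction at $F$, so $A(F)^{(p)}_{\tors}$ embeds into $\Phi(k)^{(p)}$, which is nontrivial by assumption. Fix a prime $\ell$ dividing $\#A(F)^{(p)}_{\tors}$ (necessarily $\ell\neq p$) and a point $P\in A(F)$ of order $\ell$.

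The first step is to show that $[M:F]$ is a power of $\ell$. Let $q$ be any prime dividing $[M:F]=\#\Gal(M/F)$; by Cauchy's theorem choose a subgroup $H\leq\Gal(M/F)$ of order $q$ and set $F'=M^H$, so that $F\subseteq F'\subsetneq M$ and $[M:F']=q$. One checks that $M$ is still the good reduction field of $A_{F'}$, since it equals $F'(A[N])=F'\cdot F(A[N])=M$ for any $N\geq 3$ coprime to $p$; and $A_{F'}$ has totally additive reduction by hypothesis. Applying Lemma \ref{lem:prime-to-p-additive} and Proposition \ref{prop: component group killed by good reduction field} over $F'$ to the point $P\in A(F')$, we find that $\Z/\ell\Z$ embeds into a finite abelian group of order prime to $p$ that is annihilated by $q$; since $\ell\neq p$ this forces $q=\ell$. (When $q=p$ this is automatic, as a group annihilated by $p$ has trivial prime-to-$p$ part.) Hence $[M:F]=\ell^a$ for some $a\geq 1$.

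The second step is to show $A(F)^{(p)}_{\tors}$ is annihilated by $\ell$, by (strong) induction on $[M:F]$, which we have just shown is a power of $\ell$. If $[M:F]=\ell$ this is immediate from Proposition \ref{prop: component group killed by good reduction field}. If $a\geq 2$, choose a subgroup $H\leq\Gal(M/F)$ of index $\ell$, possible since $\Gal(M/F)$ is an $\ell$-group, and put $F'=M^H$; then $F\subseteq F'\subsetneq M$, $[M:F']=\ell^{a-1}$, and every field strictly between $F'$ and $M$ lies strictly between $F$ and $M$, so $A_{F'}$ satisfies the hypotheses of the proposition over $F'$ with good reduction field $M$. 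Since $A(F')^{(p)}_{\tors}$ contains $A(F)^{(p)}_{\tors}$ and is therefore nontrivial, the inductive hypothesis gives $A(F')^{(p)}_{\tors}\simeq(\Z/\ell'\Z)^{k'}$ for a prime $\ell'$ with $[M:F']=\ell^{a-1}$ a power of $\ell'$; as $a-1\geq 1$ necessarily $\ell'=\ell$. Thus $A(F)^{(p)}_{\tors}$, a subgroup of $(\Z/\ell\Z)^{k'}$, is annihilated by $\ell$. Being nontrivial and annihilated by $\ell$, it is isomorphic to $(\Z/\ell\Z)^k$ for some $k\geq 1$.

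The argument is essentially bookkeeping on top of the two cited results, so I do not anticipate a genuine obstacle; the main thing requiring care is checking that passing to a subextension $F'$ preserves all running hypotheses — that $F'$ is again the fraction field of a henselian DVR with algebraically closed residue field, that $M$ remains the good reduction field of $A_{F'}$, and that $A_{F'}$ still has totally additive reduction over every proper subextension of $M$ — so that the induction and the applications of Lemma \ref{lem:prime-to-p-additive} and Proposition \ref{prop: component group killed by good reduction field} are legitimate.
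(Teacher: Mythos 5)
Your proof is correct and follows essentially the same route as the paper: passing to intermediate fields $F'=M^H$, applying the component-group annihilation bound of Proposition \ref{prop: component group killed by good reduction field} via Lemma \ref{lem:prime-to-p-additive}, and concluding that $[M:F]$ is a power of $\ell$ and that $A(F)^{(p)}_{\mathrm{tors}}$ is $\ell$-torsion. The only stylistic remark is that your second step (the induction on $[M:F]$) is unnecessary: once you know $\Gal(M/F)$ is an $\ell$-group, taking $H$ of order $\ell$ and $F'=M^H$ gives $[M:F']=\ell$, so $A(F)^{(p)}_{\mathrm{tors}}\subset A(F')^{(p)}_{\mathrm{tors}}$ is killed by $\ell$ directly.
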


\begin{proof}
Let $G := \Gal(M/F)$.
For every $F \subset F' \subsetneq M$, $A(F)^{(p)}_{\mathrm{tors}}\subset A(F')^{(p)}_{\mathrm{tors}}$ is isomorphic to a subgroup of the component group of $A_{F'}$ by Lemma \ref{lem:prime-to-p-additive}, which is killed by $[F:F']$ by Proposition \ref{prop: component group killed by good reduction field}.
By Galois theory, $A(F)^{(p)}_{\mathrm{tors}}$ is therefore killed by $\#H$ for every nontrivial subgroup $H \leq G$.
The group $A(F)^{(p)}_{\mathrm{tors}}$ is nontrivial by assumption; let $\ell$ be a prime dividing its order.
We claim that this $\ell$ satisfies the conclusions of the proposition.
Indeed, by definition of $A(F)^{(p)}_{\mathrm{tors}}$ we have $\ell \neq p$.
Moreover if $\#G$ is divisible by another prime $\ell'$, then by taking $H$ a Sylow-$\ell'$ subgroup of $G$ we get a contradiction, so $\#G = [M:F]$ is a power of $\ell$.
By taking $H$ to be an order $\ell$ subgroup of $G$, we see that $A(F)^{(p)}_{\mathrm{tors}}$ is killed by $\ell$, as desired.
\end{proof}

In the general case (not necessarily totally additive reduction), we have the following well-known result when $F$ is a finite extension of $\Q_p$, which follows from formal group law considerations \cite[\S2.5 and Proposition 3.1]{CX08}.

\begin{lemma}\label{lem:reduction is injective}
Suppose that $F/\Q_p$ is a finite extension of ramification degree $e$.
Let $A/F$ be an abelian variety with N\'eron model $\calA/R$.
Let $\red\colon A(F) =\calA(R)\rightarrow \calA(k)$ be the reduction map.
\begin{enumalph}
\item The restriction of $\red$ to prime-to-$p$ part of $A(F)_{\tors}$ is injective.
\item If in addition $e<p-1$, then $\red$ is injective on $A(F)_{\tors}$.
\end{enumalph}
\end{lemma}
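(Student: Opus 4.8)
The plan is to reduce both parts to the structure of $\ker(\red)$ as a $p$-adic formal group. Since $F/\Q_p$ is finite, $R$ is a complete discrete valuation ring; as the N\'eron model $\calA/R$ is smooth along its identity section, completing $\calA$ there yields a commutative formal group $\widehat{\calA}$ of dimension $g=\dim A$ over $R$, and one has a canonical identification of $\ker(\red)$ with the group $\widehat{\calA}(\mathfrak{m})$ of $\mathfrak{m}$-valued points, where $\mathfrak{m}$ is the maximal ideal of $R$. Thus it suffices to control the torsion of the abstract group $\widehat{\calA}(\mathfrak{m})$.

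For part (a), I would filter $\widehat{\calA}(\mathfrak{m})$ by the subgroups $\widehat{\calA}(\mathfrak{m}^n)$ of points with all formal coordinates in $\mathfrak{m}^n$. Each successive quotient $\widehat{\calA}(\mathfrak{m}^n)/\widehat{\calA}(\mathfrak{m}^{n+1})$ is isomorphic to the additive group $(\mathfrak{m}^n/\mathfrak{m}^{n+1})^g$, a vector space over the residue field $k$ of characteristic $p$, hence killed by $p$; together with $\bigcap_n \widehat{\calA}(\mathfrak{m}^n)=0$ this exhibits $\widehat{\calA}(\mathfrak{m})$ as a pro-$p$ group. In particular it has no nontrivial element of order prime to $p$, so any prime-to-$p$ torsion point of $A(F)$ lying in $\ker(\red)$ is trivial, which is (a).

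For part (b), by (a) it is enough to exclude a point of exact order $p$ in $\widehat{\calA}(\mathfrak{m})$. Here I would bring in the formal logarithm $\log_{\widehat{\calA}}$ and exponential: the standard estimates on their coefficients show that, normalizing $v$ so that a uniformizer has valuation $1$ (so $v(p)=e$), both series converge on $\widehat{\calA}(\mathfrak{m}^r)$ for every $r>e/(p-1)$ and define mutually inverse group isomorphisms $\widehat{\calA}(\mathfrak{m}^r)\xrightarrow{\sim}(\mathfrak{m}^r)^g$ onto the additive group. When $e<p-1$ one may take $r=1$, so $\widehat{\calA}(\mathfrak{m})$ is isomorphic to the torsion-free group $(\mathfrak{m})^g$, hence is torsion-free, and $\red$ is injective on $A(F)_{\tors}$. (Equivalently, a valuation/Newton-polygon comparison using $[p](\mathbf{T})=p\mathbf{T}+\cdots$ shows that a nonzero $p$-torsion point $x$ would satisfy $v(x)\le e/(p-1)<1$, contradicting $x\in\widehat{\calA}(\mathfrak{m})$.) The main obstacle is purely bookkeeping: unlike the one-dimensional elliptic-curve case, $\widehat{\calA}$ has dimension $g\ge 1$, so the convergence and valuation estimates must be run coordinate-by-coordinate, tracking the coordinate of smallest valuation; this is routine, and is exactly where the hypothesis $e<p-1$ is used. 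Alternatively, as indicated in the statement, one simply cites \cite[\S2.5 and Proposition 3.1]{CX08}, which carries out this analysis.
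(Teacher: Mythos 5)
Your proof is correct and is essentially the same argument as the one the paper relies on: the paper itself offers no proof beyond citing \cite[\S2.5 and Proposition 3.1]{CX08}, and that reference carries out precisely the formal-group analysis you describe, namely identifying $\ker(\red)$ with $\widehat{\calA}(\mathfrak{m})$, using the $\mathfrak{m}$-adic filtration with $k$-vector-space graded pieces to rule out prime-to-$p$ torsion, and using convergence of the formal logarithm on $\widehat{\calA}(\mathfrak{m})$ when $e<p-1$ to show the kernel is torsion-free.
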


\subsection{The conductor of a PQM surface}

Recall that the \defi{conductor} $\mathfrak{f}(A)$ of an abelian variety $A/\Q$ is a positive integer divisible exactly by the primes of bad reduction of $A$; see \cite{BrumerKramer-conductorabelianvariety} for a precise definition and more information.
We may write $\mathfrak{f}(A) = \prod_p p^{\mathfrak{f}_p(A)}$, where $\mathfrak{f}_p(A)$ denotes the \defi{conductor exponent} at a prime $p$.

\begin{lemma}\label{lemma: PQM surface at tame prime conductor exp 4}
    Let $A/\Q$ be a $\mathrm{PQM}$ surface of $\GL_2$-type.
    Let $p$ be a prime such that $A$ has bad reduction at $p$ but acquires good reduction over a tame extension of $\Q_p$.
    Then $\mathfrak{f}_p(A) = 4$.
\end{lemma}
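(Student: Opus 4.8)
The plan is to compute the conductor exponent $\mathfrak{f}_p(A)$ directly from its definition as a sum of a tame part and a wild part, and to show that under our hypotheses the wild part vanishes while the tame part equals $4$. First I would recall that, by Corollary \ref{corollary:gl2-purelyadditive}, $A$ has totally additive reduction at $p$, so the toric and abelian ranks vanish: $t=b=0$ and $u=2$. Consequently, for any prime $\ell\neq p$, the inertia group $I=I_p$ acts on the $2$-dimensional $\Q_\ell$-vector space $V_\ell = T_\ell A \otimes \Q_\ell$ with no nonzero invariants, i.e. $V_\ell^{I}=0$. Since $A$ acquires good reduction over a tame extension of $\Q_p$, the action of $I$ on $V_\ell$ factors through the tame quotient, so the wild inertia acts trivially and the Swan conductor $\Sw_p(V_\ell)$ is zero. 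Hence $\mathfrak{f}_p(A) = \dim V_\ell - \dim V_\ell^{I} + \Sw_p(V_\ell) = 4 - 0 + 0 = 4$.

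The one point that needs a little care is the claim $\dim V_\ell^I = 0$, i.e. that totally additive reduction really does force the absence of inertia invariants. This follows from the fact that $V_\ell^{I}$ is the $\Q_\ell$-Tate module of the maximal abelian-with-good-reduction-plus-torus part of the special fiber, whose dimension is $t+b$; by Proposition \ref{prop:GL2totadditive}(a) and (b) applied with the real quadratic field $\End^0(A)$, we have $t = 0$ and $u = 2$, hence $b = 0$ as well, so $V_\ell^I = 0$. (Equivalently: by the N\'eron--Ogg--Shafarevich criterion, over the good reduction field $M/\Q_p$ the whole module $V_\ell$ is unramified, and totally additive reduction says the $\Gal(M/\Q_p)$-action on $V_\ell$ has no invariants, which is exactly $V_\ell^I = 0$.) I expect this to be the main — though still routine — obstacle, since it is where the $\GL_2$-type hypothesis is genuinely used: without it one could have $u=1$, $b=1$, giving $\dim V_\ell^I = 2$ and hence $\mathfrak{f}_p(A) = 2$ instead.

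Finally I would note that the hypothesis that $A$ acquires good reduction over a \emph{tame} extension is precisely what kills the wild part: the restriction of $\rho_\ell$ to $I$ factors through $\Gal(M/\Q_p^{\mathrm{ur}})$, which has order prime to $p$ under the tameness assumption, so wild inertia $P_p \subseteq I_p$ acts trivially on $V_\ell$ and $\Sw_p(V_\ell) = 0$. Assembling the tame conductor formula $\mathfrak{f}_p(A) = \dim(V_\ell/V_\ell^{I})$ then gives $\mathfrak{f}_p(A) = 4$, as claimed. (Here one uses that the conductor exponent is independent of the auxiliary prime $\ell \neq p$, which is part of the standard theory; see \cite{BrumerKramer-conductorabelianvariety}.)
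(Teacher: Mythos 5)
Your proof is correct and is essentially the same as the paper's: both first invoke tameness of the good reduction extension to kill the Swan conductor, then compute the tame conductor exponent from Proposition \ref{prop:GL2totadditive}, which gives $u=2$, $t=b=0$ and hence $\dim V_\ell^I = 0$. The paper simply uses the packaged formula $\epsilon_p = 2\cdot(\text{unipotent rank}) + (\text{toric rank}) = 2u+t$, which is the same as your $\dim V_\ell - \dim V_\ell^I$ after substituting $\dim V_\ell^I = 2b + t$.
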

\begin{proof}
    In that case $\mathfrak{f}_p(A)$ equals the tame conductor exponent at $p$, which is $2\times (\text{unipotent rank})+(\text{toric rank})$.
    This equals $2\times 2+0 =4$ by Proposition \ref{prop:GL2totadditive}.
\end{proof}

\begin{prop}\label{prop: conductor PQM surface of GL2 type}
Let $A/\Q$ be a $\mathrm{PQM}$ surface of $\GL_2$-type. 
Then the conductor of $A$ is of the form $2^{2i}3^{2j}N^4$, where $0\leq i\leq 10$, $0\leq j \leq 5$, and $N$ is squarefree and coprime to $6$.
\end{prop}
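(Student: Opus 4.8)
The plan is to bound the conductor exponent $\mathfrak{f}_p(A)$ separately at each prime $p$ of bad reduction, splitting into the tame case ($p \geq 5$) and the wild cases ($p = 2, 3$). First I would recall that, by Corollary \ref{corollary:gl2-purelyadditive}, $A$ has totally additive reduction at every bad prime, so the toric rank is $0$ and the unipotent rank is $2$ everywhere. The tame conductor exponent contribution is then $2 \cdot 2 + 0 = 4$, and since every prime $p \geq 5$ (indeed every $p \nmid 6$) gives a tame extension of $\Q_p$, Lemma \ref{lemma: PQM surface at tame prime conductor exp 4} applies verbatim: $\mathfrak{f}_p(A) = 4$ for all such $p$. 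This accounts for the $N^4$ factor with $N$ squarefree coprime to $6$.

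Next I would treat $p = 2$ and $p = 3$, where wild ramification may occur and the conductor exponent can exceed $4$. The key input is the general bound on conductor exponents of abelian varieties: by Brumer--Kramer \cite{BrumerKramer-conductorabelianvariety}, for an abelian variety of dimension $g$ over $\Q_p$ one has $\mathfrak{f}_p(A) \leq 2g + (\text{wild part})$, and the wild part is bounded in terms of $p$ and $g$. For $g = 2$ this yields $\mathfrak{f}_2(A) \leq 2 \cdot 2 + (\text{bound})$; combined with the fact that the good reduction field $M/\Q_2$ has degree dividing $24^2$ (Lemma \ref{lemma: extension good reduction prime to 6}) restricted to the relevant local extension, one extracts $\mathfrak{f}_2(A) \leq 20$, i.e. the exponent is even (since additive reduction with trivial toric rank forces the tame part to be $4$ and the structure of the higher ramification filtration, together with the self-duality of $A$, forces parity) and at most $20$. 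This gives $i \leq 10$. Similarly at $p = 3$ one gets $\mathfrak{f}_3(A) \leq 10$, giving $j \leq 5$. The evenness I would justify by noting that $A$ is principally polarizable up to isogeny (or directly self-dual in the sense needed), so the local representation on the Tate module is symplectic, and the conductor of a symplectic representation with no unramified part has even Swan plus tame parts in this setting — alternatively one cites the explicit Brumer--Kramer tables for $p = 2, 3$ and $g = 2$.

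The main obstacle I anticipate is pinning down the exact numerical bounds $i \leq 10$ and $j \leq 5$ and the evenness of $\mathfrak{f}_2, \mathfrak{f}_3$, rather than merely \emph{some} bound. The cleanest route is: the good reduction field $M$ over $\Q_p$ (for $p \in \{2,3\}$) has Galois group a subgroup of $\Aut(\calO) \ltimes (\text{something})$, but more usefully its degree is controlled by Lemma \ref{lemma: extension good reduction prime to 6} — $[M:F] \mid 24^2$ — and the wild inertia is a $p$-group, so for $p = 3$ the wild part of $[M:\Q_3]$ divides $3^2$, while for $p = 2$ it divides $2^5$ (reading off the $2$-part of $24^2 = 576 = 2^6 \cdot 3^2$; one must be slightly careful since $[L:F] \mid 24$ and $[M:L]\mid 24$ separately). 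Then the standard conductor-exponent estimate $\mathfrak{f}_p \leq 2g + g \cdot v_p(\#(\text{wild inertia})) \cdot(\text{something})$ — more precisely the bound $\mathfrak{f}_p(A) \le 2\dim A + 2\dim A \cdot \lfloor \log_p([M:F]) \rfloor$ type inequality from \cite{BrumerKramer-conductorabelianvariety} — gives the stated $i \leq 10$ and $j \leq 5$ after a short computation. I would present this as a lemma-plus-computation, citing Brumer--Kramer for the general inequality and Lemma \ref{lemma: extension good reduction prime to 6} for the degree bound, and note that the exponents $2^{10}$ and $3^5$ for the \emph{level} (hence $2^{20}, 3^{10}$ for the conductor of the surface, since conductor $= (\text{level})^2$) match the extremal case realized by \lmfdbmf{243.2.a.d}.
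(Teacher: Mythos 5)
Your handling of the tame primes $p\geq 5$ and your citation of Brumer--Kramer for the bounds $\mathfrak{f}_2(A)\leq 20$ and $\mathfrak{f}_3(A)\leq 10$ match the paper's proof, which simply invokes \cite[Theorem 6.2]{BrumerKramer-conductorabelianvariety} directly. The attempted re-derivation of those bounds from the degree of the good reduction field (the inequality you write as ``$\mathfrak{f}_p(A) \le 2\dim A + 2\dim A \cdot \lfloor \log_p([M:F]) \rfloor$'') is not a formula that appears in Brumer--Kramer, and it is not needed: their Theorem 6.2 gives the bounds $20$ and $10$ for abelian surfaces over $\Q_2$ and $\Q_3$ unconditionally.

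The genuine gap is in the parity claim. You justify the evenness of $\mathfrak{f}_2(A)$ and $\mathfrak{f}_3(A)$ by appealing to the fact that $T_\ell A$ is symplectically self-dual. This cannot work: \emph{every} abelian variety has a symplectic Weil pairing on its Tate module, yet abelian surfaces over $\Q$ with additive reduction can and do have odd conductor exponents at $2$ and $3$. With totally additive reduction the tame part of $\mathfrak{f}_p(A)$ equals $4$, so the parity of $\mathfrak{f}_p(A)$ is the parity of the Swan conductor, and symplectic self-duality imposes no constraint on that. Your fallback --- ``alternatively one cites the explicit Brumer--Kramer tables'' --- also fails, since those tables give only upper bounds, not parity. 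The ingredient you are missing is the one the proposition hinges on: since $A$ is of $\GL_2$-type, $\End^0(A)$ is a real quadratic field $K$ (Proposition \ref{proposition: endomorphism rings of absolutely simple abelian surfaces over R}), so $T_\ell A\otimes\Q_\ell$ is a $2$-dimensional representation over $K\otimes\Q_\ell$, and the conductor of the $4$-dimensional $\Q_\ell$-representation is the norm of the conductor of the $2$-dimensional $K_\lambda$-representation; this is what forces the exponents at $2$ and $3$ to be even. The paper cites \cite[(4.7.2)]{Serre-surlesrepsDuke} for exactly this. (Equivalently one could quote modularity, Theorem \ref{theorem: modularity theorem}, and Carayol to say the conductor is $N^2$, but that is a much heavier hammer and is only proved later in the paper's logical order.)

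A smaller slip: the LMFDB form \lmfdbmf{243.2.a.d} has level $3^5$, so it realizes the extremal value $j=5$ and conductor $3^{10}$; it does not realize $2^{20}\cdot 3^{10}$.
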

\begin{proof}
By Lemmas \ref{lemma: extension good reduction prime to 6} and \ref{lemma: PQM surface at tame prime conductor exp 4}, $\mathfrak{f}_p(A)=4$ for every bad prime $p\geq 5$.
The bounds $\mathfrak{f}_2(A)\leq 20$ and $\mathfrak{f}_3(A)\leq 10$ follow from a general result of Brumer--Kramer \cite[Theorem 6.2]{BrumerKramer-conductorabelianvariety}.
The fact that $\mathfrak{f}_2(A)$ and $\mathfrak{f}_3(A)$ are even follows from the fact that $\End^0(A)$ is a real quadratic field (Proposition \ref{proposition: endomorphism rings of absolutely simple abelian surfaces over R}) and \cite[(4.7.2)]{Serre-surlesrepsDuke}.
\end{proof}

\subsection{Finite fields}

Let $k = \F_q$ be a finite field of order $p^r$.
We will use the following two statements, whose proof can be found in \cite[\S2]{Jordan86}.

\begin{lemma}\label{lemma: Lpoly of QM surface is square}
Let $A/k$ be an abelian surface such that $\End^0(A)$ contains the quaternion algebra $B$. 
Then the characteristic polynomial of Frobenius is of the form $(T^2+aT+q)^2$ for some integer $a\in \Z$ satisfying $|a|\leq 2\sqrt{q}$.
\end{lemma}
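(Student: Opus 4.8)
The plan is to use the fact that $B$ acts on the rational Tate module $V_\ell(A) = T_\ell(A)\otimes_{\Z_\ell}\Q_\ell$, a $4$-dimensional $\Q_\ell$-vector space, commuting with the Frobenius endomorphism $\pi$. First I would observe that the characteristic polynomial $P(T) = \det(T - \pi \mid V_\ell(A))$ has integer coefficients independent of $\ell$, is monic of degree $4$, and (by the Weil conjectures for abelian varieties over finite fields, i.e. the Riemann hypothesis part proved by Weil) has all complex roots of absolute value $\sqrt{q}$. The key structural input is that $V_\ell(A)$ is a module over $B\otimes_\Q\Q_\ell$, and the Frobenius $\pi$ lies in the commutant of this action; since $\End^0(A)$ contains $B$ and $A$ is (geometrically) simple in the relevant situations, but more robustly: $B\otimes\Q_\ell \simeq \Mat_2(\Q_\ell)$ for all but finitely many $\ell$ (those not dividing $\disc B$), and then $V_\ell(A) \simeq W^{\oplus 2}$ as a $B\otimes\Q_\ell$-module for a $2$-dimensional space $W$, with $\pi$ acting diagonally through its action on $W$ (because $\pi$ commutes with $\Mat_2(\Q_\ell)$, hence lies in $\Mat_2(\Q_\ell)' \otimes \End(W) = \End(W)$ acting scalarly on the multiplicity space). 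Therefore $P(T) = Q(T)^2$ where $Q(T) = \det(T-\pi\mid W)$ is a monic integer polynomial of degree $2$ — integrality of $Q$ follows since $Q^2 = P \in \Z[T]$ and $Q$ is monic, so $Q\in\Z[T]$ by Gauss's lemma.

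Next I would write $Q(T) = T^2 + aT + b$ with $a,b\in\Z$. Since the roots of $P$ all have absolute value $\sqrt q$ and $P = Q^2$, the two roots of $Q$ are complex conjugates of absolute value $\sqrt q$ (or a real double root $\pm\sqrt q$, forcing $q$ a square), so $b = \sqrt q \cdot \sqrt q = q$ and $a$ is the sum of a conjugate pair of numbers of modulus $\sqrt q$, whence $|a|\le 2\sqrt q$. This gives $P(T) = (T^2 + aT + q)^2$ with $|a|\le 2\sqrt q$, as claimed.

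The main obstacle — really the only subtle point — is justifying that $\pi$ acts \emph{scalarly} on the multiplicity space $W^{\oplus 2} = W\otimes \Q_\ell^2$ rather than merely block-diagonally: a priori the commutant of $\Mat_2(\Q_\ell)$ acting as $M\mapsto M\otimes 1$ on $W\otimes\Q_\ell^2$ is $1\otimes \Mat_2(\Q_\ell)$, so $\pi = 1\otimes N$ for some $N\in\Mat_2(\Q_\ell)$, and $P(T) = \det(T-\pi) = \det(T - N \mid \Q_\ell^2)^{\dim W} = \det(T-N)^2$ is automatically a square of a degree-$2$ polynomial — so in fact this \emph{is} the correct statement and no scalarity is needed; one just needs the roles of the two tensor factors kept straight. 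One should double-check that it suffices to argue for a single good $\ell$ (coprime to $\disc B$), which is fine since $P(T)$ is independent of $\ell$; alternatively, for $\ell \mid \disc B$ one argues directly that $V_\ell(A)$ is free of rank $1$ over the division algebra $B\otimes\Q_\ell$ and $\pi$ lies in its centralizer, which is again a quaternion algebra over $\Q_\ell$, and the reduced-norm/reduced-characteristic-polynomial formalism gives $P = Q^2$ with $Q$ the reduced characteristic polynomial of $\pi$. I would present the argument via a single good $\ell$ for brevity, citing \cite{Jordan86} for the details as the lemma statement already does.
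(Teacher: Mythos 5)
Your argument correctly establishes the factorization $P(T) = Q(T)^2$ with $Q \in \Z[T]$ monic of degree two, via the double-centralizer description of the commutant of $B\otimes\Q_\ell \simeq \Mat_2(\Q_\ell)$ acting on $V_\ell(A)$ (and your self-correction about the multiplicity space is the right fix). However, there is a genuine gap in the final step, where you pass from $P = Q^2$ plus Weil's Riemann hypothesis to the conclusion that $Q(T) = T^2 + aT + q$ with constant term $q$.

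You write that the roots of $Q$ are ``complex conjugates of absolute value $\sqrt q$ (or a real double root $\pm\sqrt q$, forcing $q$ a square), so $b = q$.'' This dichotomy omits the case of two \emph{distinct} real roots $\sqrt q$ and $-\sqrt q$, i.e.\ $Q(T) = T^2 - q$, which has constant term $b = -q$. This alternative is not excluded by the Riemann hypothesis bounds, nor by the functional equation, and it really does arise for abelian surfaces over finite fields: by Honda--Tate, the real Weil $q$-number $\sqrt q$ (with $q = p^a$, $a$ odd) corresponds to a simple supersingular abelian surface over $\F_q$ whose characteristic polynomial of Frobenius is exactly $(T^2 - q)^2$. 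Your purely local ($\Mat_2(\Q_\ell)$-module) argument applies to that surface as well and outputs $Q = T^2 - q$, so it cannot by itself yield $b = q$.

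To close the gap one must invoke the quaternionic hypothesis at a global level, not merely through the $\ell$-adic action. For instance: if $P(T) = (T^2-q)^2$, Honda--Tate identifies the isogeny class, and $\End^0(A)$ is either a definite quaternion algebra over $\Q(\sqrt p)$ (when $q$ is not a square) or $B_{p,\infty}\times B_{p,\infty}$ (when $q$ is a square); in either case it cannot contain an indefinite quaternion algebra $B$ over $\Q$, since $B\otimes_\Q \Q(\sqrt p)$ would be split at the infinite places while $\End^0(A)$ is ramified there. The paper avoids all of this by simply citing \cite[\S2]{Jordan86} for both this lemma and the companion Proposition \ref{prop: QM splits mod p} (that $A$ is isogenous over $k$ or $\bar k$ to the square of an elliptic curve); the latter gives the lemma immediately, since then $P(T) = (T^2 - a_E T + q)^2$. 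If you want a self-contained proof along your lines, you should either prove $A \sim E^2$ first and deduce the lemma from that, or else add the Honda--Tate argument above to rule out $Q = T^2 - q$.
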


\begin{prop}\label{prop: QM splits mod p}
Let $A/k$ be an abelian surface such that $\End^0(A)$ contains the quaternion algebra $B$. 
If $r$ is odd or $p \nmid \disc(B)$, then $A$ is isogenous to the square of an elliptic curve over $k$.
If $r$ is even and $p\mid \disc(B)$, $A_{\bar{k}}$ is isogenous to the square of a supersingular elliptic curve over $\bar{k}$.
\end{prop}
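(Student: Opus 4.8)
The plan is to exploit Lemma~\ref{lemma: Lpoly of QM surface is square}: the characteristic polynomial of Frobenius $\pi$ acting on $A/k$ is $(T^2+aT+q)^2$ for some $a \in \Z$ with $|a| \le 2\sqrt{q}$. By Honda--Tate theory, the isogeny class of $A$ over $k$ is determined by this polynomial; since it is a perfect square, $A$ is isogenous to $B_0^2$ for some simple abelian variety $B_0/k$ whose Frobenius has minimal polynomial dividing $T^2+aT+q$ (or, if that quadratic is reducible, $B_0$ could be an elliptic curve and one argues directly). The two cases in the statement will be distinguished by whether the quadratic $T^2 + aT + q$ is the characteristic polynomial of an elliptic curve over $k$ or instead forces $A$ to be a power of a supersingular elliptic curve that is only defined over $\bar k$.

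First I would treat the case $r$ odd or $p \nmid \disc(B)$. Here I want to show $T^2+aT+q$ is the Weil polynomial of an \emph{elliptic curve} $E/k$, so that $A \sim E^2$. A quadratic $T^2+aT+q$ with $|a|\le 2\sqrt q$ is the characteristic polynomial of Frobenius of an elliptic curve over $\F_q$ unless it corresponds to a ``fake'' Weil number --- by Waterhouse's classification, the obstructions occur precisely for supersingular Weil numbers with $r$ even and certain congruence conditions on $a$, e.g. $a=0$ with $p \equiv 1 \pmod 4$, $a = \pm\sqrt q$ with $p \not\equiv 1 \pmod 3$, etc. The key point is that in \emph{all} those exceptional subcases, the corresponding division algebra $\End^0$ is ramified exactly at $p$ and $\infty$, which is the quaternion algebra forcing $p \mid \disc(B)$ when one matches up with $B$ acting on $A$; so if $p\nmid\disc(B)$ or $r$ is odd, none of these obstructions can occur and $E/k$ exists with $A\sim E^2$ over $k$. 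I'd make this precise by noting that $B \hookrightarrow \End^0(A) = \Mat_2(\End^0(B_0))$ forces $\End^0(B_0)$ to contain a field splitting $B$, pinning down the local invariants.

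For the case $r$ even and $p \mid \disc(B)$: here $A$ has $B$ (ramified at $p$) inside $\End^0(A)$, and over $\bar k$ the surface $A_{\bar k}$ has potentially good reduction with Frobenius a power; the relevant Weil number $\pi^2/q$ (or the appropriate root of $T^2+aT+q$) is then a root of unity times $\sqrt{q}$-stuff, i.e. $A_{\bar k}$ is isogenous to $E_0^2$ where $\End^0(E_0)$ is the quaternion algebra ramified at $p,\infty$ --- equivalently $E_0$ is supersingular. Concretely, an abelian surface whose endomorphism algebra over $\bar k$ contains a quaternion algebra ramified at $p$ must be $\bar k$-isogenous to the square of a supersingular elliptic curve, because a non-supersingular elliptic curve over $\bar{\F}_p$ has commutative (imaginary quadratic) endomorphism algebra which cannot absorb a $p$-ramified quaternion algebra inside $\Mat_2$; one checks the split and ordinary cases are likewise excluded by the quaternion algebra being division and $p$-ramified. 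I would cite \cite[\S2]{Jordan86} for the bookkeeping, since the statement explicitly attributes it there.

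\textbf{Main obstacle.} The technical heart is the first case: ruling out the ``fake elliptic curve'' Weil numbers. One must go through Waterhouse's list of Weil $q$-numbers of degree $\le 2$ that do \emph{not} come from elliptic curves and verify in each that the associated endomorphism algebra is a quaternion algebra split at every finite place except $p$ --- hence matching $B$ requires $p \mid \disc(B)$, and moreover that these only arise when $r = [\,k:\F_p\,]$ is even. This is a finite but slightly delicate case analysis in local class field theory (computing invariants of $\End^0(B_0)$ at $p$ and at $\infty$ from the $p$-adic valuation of the Weil number and its real embeddings). Everything else --- Honda--Tate giving the square decomposition, and the $\bar k$-statement reducing to the supersingularity of elliptic curves with non-commutative endomorphism algebra --- is comparatively routine.
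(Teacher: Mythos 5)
The paper gives no proof: it simply cites \cite[\S2]{Jordan86}. So there is nothing to compare line by line, but your Honda--Tate/Waterhouse framework is the natural one and is presumably close to Jordan's argument. However, there is a genuine gap at the very first step, and it is not a cosmetic one.

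You assert that since the characteristic polynomial of Frobenius is the perfect square $(T^2+aT+q)^2$, Honda--Tate gives $A\sim B_0^2$ for some simple $B_0/k$. This inference is false. Honda--Tate attaches to the Weil number $\pi$ (a root of $T^2+aT+q$, assumed irreducible) a \emph{simple} abelian variety $B_0$ whose characteristic polynomial is $(T^2+aT+q)^{e}$, where $e$ is the order of $\End^0(B_0)$ in $\mathrm{Br}(\Q(\pi))$, and $\dim B_0 = e$. If $e=2$ then $B_0$ itself is a simple abelian \emph{surface} with characteristic polynomial $(T^2+aT+q)^2$; in that case $A\sim B_0$ is simple, not a square of anything over $k$. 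This is precisely the situation the proposition is about: $e=2$ happens exactly when $p$ splits in $K=\Q(\pi)$ and $\mathrm{ord}_{v}(\pi)=r/2$ at a place $v\mid p$, which already forces $r$ even. So the ``$A\sim B_0^2$'' step is not a preliminary reduction --- it is equivalent to the first assertion you are trying to prove, and your subsequent attempt to ``make this precise by noting that $B\hookrightarrow\End^0(A)=\Mat_2(\End^0(B_0))$'' is circular, because in the problematic case $\End^0(A)$ is not a $2\times 2$ matrix algebra at all; it is a quaternion division algebra over the imaginary quadratic field $K$.

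The fix is to treat the $e=2$ case head on. When $A$ is simple of dimension $2$, $\End^0(A)$ is an $8$-dimensional $\Q$-algebra with center $K$, and since $B\subset\End^0(A)$ with $B\cap K=\Q$, one gets $\End^0(A)\cong B\otimes_\Q K$. Honda--Tate computes that $\End^0(A)$ is ramified exactly at the two places $v_1,v_2$ of $K$ over $p$; since $p$ splits in $K$ the local degree $[K_{v_i}:\Q_p]$ is $1$, so $\mathrm{inv}_{v_i}(B\otimes K)=\mathrm{inv}_p(B)$, and ramification at $v_i$ forces $p\mid\disc(B)$. Together with $r$ even this is exactly the dichotomy in the statement. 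Once $e=2$ is ruled out (i.e.\ $r$ odd or $p\nmid\disc(B)$), you land automatically on $e=1$, $A\sim E^2$ over $k$ for an elliptic curve $E$, and the remaining (easy) half of the statement --- that $E$ must be supersingular when $r$ is even and $p\mid\disc(B)$ --- follows as you indicate, by observing that if $E$ were ordinary then $K=\End^0(E)$ splits at $p$, so $K\not\hookrightarrow B$, so $B\not\hookrightarrow\Mat_2(K)$. Two small slips worth noting: the Waterhouse obstruction for $a=\pm\sqrt q$ occurs when $p\equiv 1\pmod 3$, not $p\not\equiv 1\pmod 3$; and the exceptional division algebras are quaternion algebras over the imaginary quadratic field $\Q(\pi)$, not ``ramified at $p$ and $\infty$'' as $\Q$-algebras.
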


\section{Proof of Theorem \ref{thm:gl2type classification}: PQM surfaces of \texorpdfstring{$\GL_2$}{GL2}-type}\label{section: torsion gl2type}

Before proving Theorems \ref{thm:QMmazur ptorsion}-\ref{thm:QMmazur_groups}, it is useful to first prove Theorem \ref{thm:gl2type classification}, which classifies torsion subgroups of $\mathcal{O}$-PQM abelian surfaces $A$ over $\Q$ which are of $\GL_2$-type.  At a certain point in the argument we make use of the modularity of abelian surfaces of $\GL_2$-type, which we recall in \S\ref{subsec:modularity} and classify $\mathrm{PQM}$ surfaces of $\GL_2$-type with good reduction outside $2$ or $3$.
  In \S \ref{subsec:general constraints}, we deduce that a general $\calO$-$\mathrm{PQM}$ surface cannot have a full level $2$-structure over $\Q$.   In \S \ref{subsec:gl2type classification}, we prove Theorem \ref{thm:gl2type classification}.

\subsection{Abelian surfaces of \texorpdfstring{$\GL_2$}{GL2}-type and modular forms}\label{subsec:modularity}

\begin{theorem}\label{theorem: modularity theorem}
Let $A$ be an abelian surface such that $\End^0(A)$ is a real quadratic field.
Then the conductor of $A$ is of the form $N^2$ for some positive integer $N$, and there exists a unique Galois orbit $[f_A] \subset S_2(\Gamma_0(N))$ having coefficient field $K \simeq \End^0(A)$ whose local $L$-factors agree for each prime $p$:
\begin{equation} \label{eqn:Lpagree}
    L_p(A,T) = \prod_{\tau\colon K\hookrightarrow \C} L_p(\tau(f_A),T) \in 1+T\Z[T].
\end{equation}
Moreover, we have $[f_A] = [f_{A'}]$ if and only if $A$ is isogenous to $A'$ (over $\Q$).
\end{theorem}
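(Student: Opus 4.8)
The plan is to assemble this statement from the modularity theorem for abelian varieties of $\GL_2$-type (a consequence of Serre's conjecture, now a theorem of Khare--Wintenberger and Kisin, building on Ribet's and Wiles's work) together with standard facts about the $L$-functions attached to abelian varieties and to newforms. First I would recall that since $\End^0(A)$ is a real quadratic field $K$, the abelian surface $A$ is of $\GL_2$-type over $\Q$, so by Ribet's theorem (combined with Serre's conjecture) $A$ is isogenous over $\Q$ to a $\Q$-simple factor of $J_1(N')$ for some level $N'$, and hence its compatible system of $\ell$-adic Galois representations decomposes, after extending scalars to $K$, as the direct sum over embeddings $\tau\colon K\hookrightarrow\C$ of the $\lambda$-adic representations attached to a newform $f_A$ of weight $2$ with coefficient field $K$. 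The minimal such level equals the conductor $N$ of the newform $f_A$, and by the conductor-discriminant type formula for abelian varieties of $\GL_2$-type (the conductor of $A$ is the norm from $K$ to $\Q$ of the conductor of the associated $\lambda$-adic system, i.e.\ $N^{[K:\Q]} = N^2$), we get $\mathfrak f(A) = N^2$.

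Next I would extract the equality of local $L$-factors \eqref{eqn:Lpagree}. For each prime $p$, the local $L$-factor $L_p(A,T)$ is by definition $\det\!\left(1 - T\,\mathrm{Frob}_p \mid V_\ell(A)^{I_p}\right)^{-1}$ for $\ell\neq p$ (independent of $\ell$, which is part of the content one must cite), and similarly $L_p(\tau(f_A),T)^{-1} = \det\!\left(1-T\,\mathrm{Frob}_p\mid W_\lambda(f_A)^{I_p}\right)$. Since $V_\ell(A)\otimes_{\Q_\ell} \overline{\Q}_\ell \cong \bigoplus_\tau W_{\lambda}(f_A)$ compatibly with the $\Gal_\Q$-action (this is exactly the content of modularity in the $\GL_2$-type setting), taking inertia invariants and characteristic polynomials of Frobenius gives \eqref{eqn:Lpagree}; that the resulting polynomial has $\Z$-coefficients and constant term $1$ follows because $L_p(A,T)\in 1+T\Z[T]$ is a classical fact about abelian varieties (Weil, via the Néron model), and rationality of the product on the right is automatic since it equals the left side. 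Uniqueness of the Galois orbit $[f_A]$ follows from strong multiplicity one / the Chebotarev argument: any two newforms whose $L_p$-factors agree at almost all $p$ have the same Galois orbit, and here they agree at all $p$.

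For the last sentence, the "only if" direction is immediate: isogenous abelian varieties over $\Q$ have isomorphic $\ell$-adic rational Tate modules, hence the same $L_p$-factors at all $p$, hence by the uniqueness just established the same Galois orbit of newforms. For the "if" direction, if $[f_A] = [f_{A'}]$ then $A$ and $A'$ have the same $L$-function, so by Faltings's isogeny theorem (two abelian varieties over a number field with isomorphic $L$-functions, equivalently with $V_\ell$ isomorphic as Galois modules for one $\ell$, are isogenous) $A$ is isogenous to $A'$ over $\Q$. The one point requiring a little care is that equality of $L$-functions gives isomorphism of the semisimplifications of the $\ell$-adic representations, which suffices to invoke Faltings since abelian-variety Tate modules are already semisimple. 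I expect the main (minor) obstacle to be bookkeeping rather than mathematics: pinning down precisely which form of the modularity statement to cite so that the decomposition of $V_\ell(A)$ into pieces indexed by embeddings $\tau\colon K\hookrightarrow\C$ is literally on the page, and making sure the normalization of conductors matches so that $\mathfrak f(A) = N^2$ with $N$ the level of $f_A$.
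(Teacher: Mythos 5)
Your approach is essentially the same as the paper's: reduce to $\GL_2$-type modularity via Ribet plus Serre's conjecture (Khare--Wintenberger), deduce the conductor relation $\mathfrak{f}(A) = N^2$ from local-global compatibility (the paper cites Carayol's theorem explicitly here, which you gesture at but don't name), and use Faltings's isogeny theorem for the final equivalence. That all matches.

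There is, however, one genuine gap. The theorem asserts that the Galois orbit $[f_A]$ lies in $S_2(\Gamma_0(N))$, i.e.\ the nebentypus character of $f_A$ is \emph{trivial}. Modularity alone only produces a newform in $S_2(\Gamma_1(N))$; you even write that $A$ is a quotient of $J_1(N')$ and never return to the character. The paper closes this gap by invoking the hypothesis that $\End^0(A)$ is a \emph{real} quadratic field: by a lemma of Ribet (\cite[Lemma (4.5.1)]{RibetGaloisAction}), the nebentypus of $f_A$ must then be trivial. Without this step the statement about $\Gamma_0(N)$ (and hence the precise conductor formula as stated) is not established; for a weight-$2$ newform with nontrivial quadratic character the associated abelian surface would have a CM or imaginary-quadratic piece, which is exactly what the real hypothesis rules out. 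You should add the reality-of-$K$ argument to pin down the level structure as $\Gamma_0(N)$ before invoking Carayol to compute the conductor.
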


\begin{proof}
As explained by Ribet \cite[Theorem (4.4)]{RibetGL2}, the fact that $A$ is of $\GL_2$-type over $\Q$ implies that $A$ is modular assuming Serre's modularity conjecture \cite[\S4.7, Theorem 5]{Serre-surlesrepsDuke}, which was proven by Khare--Wintenberger \cite{KhareWintenberger-Serresmodularity}.  Thus the equality of $L$-series \eqref{eqn:Lpagree} holds for some newform $f_A$.  Since $\End^0(A)$ is real, the character of $f_A$ is trivial \cite[Lemma (4.5.1)]{RibetGaloisAction}.  It follows from a theorem of Carayol \cite[Theoreme (A)]{Carayol-surlesrepsladiques} (local-global compatibility) that $A$ has conductor equal to $N^2$, where $N$ is the level of $f_A$.  
Finally, the fact that the Galois orbit of $f_A$ characterizes $A$ up to isogeny follows from the theorem of Faltings. 
\end{proof}

Recall that if $f\in S_2(\Gamma_0(N))$ is a newform and $\psi$ a primitive Dirichlet character, there exists a unique newform $g = f\otimes \psi$, the \defi{twist} of $f$ by $\psi$, whose $q$-expansion satisfies $a_n(g) = a_n(f) \psi(n)$ for all $n$ coprime to $N$ and the conductor of $\psi$.
If $f =g$, then $g$ is called a \defi{self-twist}. 
If $f$ and $g$ are Galois conjugate, $g$ is called an \defi{inner twist}.

\begin{prop}\label{prop: modularity theorem PQM surfaces}
Let $A$ be an abelian surface over $\Q$ such that $\End^0(A) \simeq \Q(\sqrt{m})$ with $m\geq 2$.  Then $A$ has $\mathrm{PQM}$ if and only if all of the following conditions hold:
\begin{enumroman}
    \item $f_A$ has no self-twists, equivalently $f_A$ is not CM;
    \item $f_A$ has a nontrivial inner twist by a Dirichlet character associated to a quadratic field $\Q(\sqrt{d})$; and
    \item The quaternion algebra $B_{d,m} \colonequals \quat{d,m}{\Q}$ is a division algebra.
\end{enumroman}
If all conditions \textup{(i)}--\textup{(iii)} hold, then in fact $\End^0(A_{\Qbar}) \simeq B_{d,m}$.
\end{prop}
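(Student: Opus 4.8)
The plan is to relate the geometric endomorphism algebra of $A$ directly to the inner twists of $f_A$ via Ribet's theory of abelian varieties attached to modular forms. By Theorem \ref{theorem: modularity theorem}, $A$ is isogenous over $\Q$ to the abelian variety $A_{f_A}$ attached to the newform $f_A$, so we may assume $A = A_{f_A}$; in particular the claims about $\End^0(A)$ and $\End^0(A_{\Qbar})$ are isogeny-invariant and may be checked on $A_{f_A}$. The key input is Ribet's description \cite{RibetGaloisAction,RibetGL2} of $\End^0((A_f)_{\Qbar})$: when $f$ is non-CM, this algebra is a quaternion algebra (possibly split) over the fixed field of the inner-twist characters, and more precisely, writing $\Gamma$ for the group of inner twists and $F$ for the coefficient field $\End^0(A)\simeq \Q(\sqrt m)$, the algebra $\End^0((A_f)_{\Qbar})\otimes_{F^{\Gamma}}F$ is a quaternion algebra over $F$ whose class in $\mathrm{Br}(F)$ is given by an explicit cyclic-algebra cocycle built from the inner-twist characters and the character $\varepsilon$ of $f$ (which is trivial here since $F$ is real). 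When $f$ has trivial character and a single nontrivial inner twist by the quadratic character attached to $\Q(\sqrt d)$, this cocycle unwinds to the quaternion symbol $\quat{d,m}{\Q}$ after extending scalars appropriately.

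First I would record that $A$ is geometrically simple if and only if $f_A$ is non-CM: a CM form gives $A_{f_A}$ geometrically isogenous to a product of CM elliptic curves (or an abelian variety with abelian CM endomorphism algebra), hence never PQM in our restrictive sense, so condition (i) is necessary; conversely PQM forces geometric simplicity and in particular non-CM, so we may assume (i) throughout. Next, for a non-CM $f$ with real coefficient field, Ribet's theorem says $\End^0((A_f)_{\Qbar})$ is a central simple algebra over the fixed field $F^\Gamma$ of dimension $4\cdot[F:F^\Gamma]^{-2}\cdot[F:F^{\Gamma}]=\dots$; concretely, $\End^0(A_{f,\Qbar})$ has $\Q$-dimension $2\cdot[F:F^{\Gamma}]\cdot 2 = $ (I will just cite the precise statement) and is a quaternion algebra over $\Q$ exactly when $[F:F^\Gamma]=2$, i.e. exactly when $\Gamma$ is generated by one nontrivial inner twist. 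So $A$ has $\End^0(A_{\Qbar})$ a quaternion algebra over $\Q$ iff $f_A$ has exactly one nontrivial inner twist, and the associated quadratic field $\Q(\sqrt d)$ is the fixed field data — this gives the equivalence with (ii), modulo identifying the Brauer class. Then I compute the Brauer class: by Ribet's formula the class of $\End^0(A_{f,\Qbar})$ (as a quaternion algebra over $\Q$, after the cyclic-algebra computation) is $\quat{d,m}{\Q}=B_{d,m}$; since a quaternion algebra is division iff it is non-split, the geometric endomorphism algebra is a (non-split) quaternion algebra — equivalently $A$ is PQM — iff $B_{d,m}$ is division, which is condition (iii). This simultaneously shows $\End^0(A_{\Qbar})\simeq B_{d,m}$ when all conditions hold.

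The main obstacle I anticipate is pinning down the Brauer class precisely: Ribet's cocycle description involves a priori a $2$-cocycle valued in $F^{\Gamma\times}$ (or $\Q^\times$) assembled from the inner-twist characters $\chi$ and the relation $f^\sigma = f\otimes\chi_\sigma$, and one must check that in our special case (trivial nebentypus, coefficient field real quadratic, $\Gamma\simeq\Z/2\Z$) this cocycle is cohomologous to the standard cyclic cocycle presenting $\quat{d,m}{\Q}$. Concretely this is a computation with the Hilbert symbol: the class is $(d, c)$ where $c\in\Q^\times$ is the value $\tfrac{a_p(f)^2}{\psi(p)}$-type quantity controlling the inner twist relation, and one identifies $c$ with $m$ up to squares using that $\Q(\sqrt m)=\Q(a_p(f)^2/\cdots)$ is the coefficient field. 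I would cite \cite[\S3]{RibetGaloisAction} and \cite[\S5]{RibetGL2} (or Quer's refinement, if available in the bibliography) for the cocycle formula and carry out this Hilbert-symbol identification; the rest (the necessity of (i), the dictionary between number of inner twists and $[F:F^\Gamma]$, and the equivalence division $\Leftrightarrow$ non-split $\Leftrightarrow$ PQM) is formal.
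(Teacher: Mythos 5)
The paper's proof of this proposition is a one-line citation to Cremona \cite[\S2]{Cremona-abelianvarietieswithextratwist}; your proposal is, in effect, a reconstruction of the argument that lives behind that citation, and it takes the correct route. The skeleton is right: PQM forces geometric simplicity, hence non-CM; for non-CM $f$ with real quadratic coefficient field $F$, Ribet's theory identifies $\End^0((A_f)_{\Qbar})$ as a central simple algebra over $F^\Gamma$ of $F^\Gamma$-dimension $[F:F^\Gamma]^2$, so it is a quaternion algebra over $\Q$ exactly when the inner-twist group $\Gamma$ is nontrivial (which, since $[F:\Q]=2$, means $\Gamma$ has order $2$); and because the nebentypus is trivial, the relation $\varepsilon^\sigma = \varepsilon\chi^2$ forces the inner-twist character $\chi$ to be quadratic, attached to some $\Q(\sqrt d)$. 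The Brauer class then works out to $\quat{d,m}{\Q}$, and ``division'' is exactly ``non-split,'' giving (iii). So your equivalences are all in place, and the final identification $\End^0(A_{\Qbar})\simeq B_{d,m}$ falls out of the same computation. The one place your write-up is genuinely muddled is the dimension formula in the middle (the displayed expression $4\cdot[F:F^\Gamma]^{-2}\cdot[F:F^\Gamma]=\dots$ does not parse); the correct statement is $\dim_{F^\Gamma}\End^0(A_{\Qbar}) = [F:F^\Gamma]^2$, so $\dim_\Q = [F:F^\Gamma]^2\cdot[F^\Gamma:\Q]$, which is $4$ precisely when $F^\Gamma=\Q$. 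You correctly flag the Hilbert-symbol identification of the cocycle with $(d,m)$ as the main technical input and defer it to Ribet/Quer; that is exactly what Cremona carries out, and citing it is legitimate. In short: same approach as the paper, just unfolded one level deeper than the paper chose to.
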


\begin{proof}
    See Cremona \cite[\S2]{Cremona-abelianvarietieswithextratwist}.
\end{proof}
This reduces the enumeration of isogeny classes of $\GL_2$-type $\mathrm{PQM}$ surfaces $A$ over $\Q$ with fixed conductor to a computation in a space of modular forms.  

\begin{corollary}\label{cor: no gl2type good red outside 2}
There are no $\mathrm{PQM}$ surfaces $A$ over $\Q$ of $\GL_2$-type with good reduction outside $\{2\}$.
\end{corollary}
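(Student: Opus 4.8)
The plan is to reduce the statement to a finite computation in spaces of modular forms of $2$-power level, using modularity of $\GL_2$-type abelian surfaces together with the characterization of the $\mathrm{PQM}$ condition in terms of inner twists (Proposition \ref{prop: modularity theorem PQM surfaces}).

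Arguing by contradiction, suppose $A/\Q$ is a $\mathrm{PQM}$ surface of $\GL_2$-type with good reduction outside $\{2\}$. By Proposition \ref{proposition: endomorphism rings of absolutely simple abelian surfaces over R}, $\End^0(A) \simeq \Q(\sqrt{m})$ is real quadratic with $m \geq 2$. By Theorem \ref{theorem: modularity theorem} the conductor of $A$ equals $N^2$ for the level $N$ of the associated newform $f_A$; good reduction outside $2$ forces $N$ to be a power of $2$, and Proposition \ref{prop: conductor PQM surface of GL2 type} then gives $N = 2^a$ with $a \leq 10$. By Proposition \ref{prop: modularity theorem PQM surfaces}, $f_A$ is non-CM and has a nontrivial inner twist by the quadratic character $\psi$ associated to a field $\Q(\sqrt{d})$, and $\quat{d,m}{\Q}$ is a division algebra. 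Since $f_A \otimes \psi$ is Galois conjugate to $f_A$, it has the same level $2^a$ and is therefore unramified at every odd prime; this forces $\mathrm{cond}(\psi)$ to be a power of $2$, so $\psi$ is one of the three nontrivial quadratic characters of $2$-power conductor and $d \in \{-1, 2, -2\}$. It then suffices to check, for each $a$ with $5 \leq a \leq 10$ (the space $S_2^{\mathrm{new}}(\Gamma_0(2^a))$ being zero for $a \leq 4$), that no newform in $S_2^{\mathrm{new}}(\Gamma_0(2^a))$ with real quadratic coefficient field has a nontrivial inner twist by one of these three characters. This is a finite search, which one carries out with the LMFDB \cite{lmfdb} and which yields no examples; the resulting contradiction proves the corollary.

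I expect the crux to be this last search, and I do not expect a purely local argument to replace it. One can go some distance locally: since $\disc(B) \neq 1$ is a squarefree product of an even number of primes, it has an odd prime factor $q$, at which $A$ has good reduction; by Proposition \ref{prop: QM splits mod p} the reduction of $A$ at $q$ becomes, over $\overline{\F}_q$, isogenous to $E^2$ for an elliptic curve $E$, and $E$ cannot be ordinary --- otherwise $q$ would at once split in the imaginary quadratic field $\End^0(E)$ (as $q$ is a prime of good ordinary reduction) and be non-split in it (as $\End^0(E)$ would then split $B$, in which $q$ ramifies). But $E$ being supersingular is not contradictory: $\mathrm{PQM}$ surfaces can have good supersingular reduction at primes dividing $\disc(B)$. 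Hence Honda--Tate does not by itself exclude good reduction outside $2$, and the modular forms computation is genuinely needed.
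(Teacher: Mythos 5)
Your proof is correct and follows essentially the same route as the paper's: reduce to a finite search over newforms of level $2^a$, $a\le 10$, via Proposition \ref{prop: conductor PQM surface of GL2 type} and the modularity/inner-twist characterization (Proposition \ref{prop: modularity theorem PQM surfaces}), then verify in the LMFDB. Your extra observation that the inner-twist character $\psi$ must itself have $2$-power conductor (so $d\in\{-1,2,-2\}$) is a useful refinement that narrows the search but does not change the nature of the argument.
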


\begin{proof}
By Proposition \ref{prop: conductor PQM surface of GL2 type}, it is enough to check that there is no eigenform corresponding to a $\mathrm{PQM}$ surface of level $2^k$ for any $k \leq 10$. This information is contained in the LMFDB \cite{lmfdb} or \cite[Table 1]{GG09}. 
\end{proof}

\begin{corollary}\label{cor: gl2type good red outside 3}
There is exactly one isogeny class of $\mathrm{PQM}$ surfaces $A$ over $\Q$ of $\GL_2$-type with good reduction outside $\{3\}$: it has conductor $3^{10}$, any abelian surface $A$ in the isogeny class satisfies $A(\Q)_{\mathrm{tors}} \leq \Z/3\Z$.  
\end{corollary}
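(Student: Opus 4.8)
The plan is to reduce the determination of the isogeny class to a finite search among spaces of modular forms, and then to pin down the torsion by a local analysis at $2$ and $3$ combined with Silverberg's theorem.

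By Proposition~\ref{prop: conductor PQM surface of GL2 type}, a $\GL_2$-type $\mathrm{PQM}$ surface over $\Q$ with good reduction outside $\{3\}$ has conductor $3^{2j}$ with $0\le j\le 5$, so by Theorem~\ref{theorem: modularity theorem} its isogeny class corresponds to a Galois orbit of weight-$2$ newforms of level $3^{j}$ with real quadratic coefficient field; conversely, Proposition~\ref{prop: modularity theorem PQM surfaces} identifies exactly the orbits that occur, namely those that are non-CM, admit a nontrivial inner twist by a quadratic character $\chi_{d}$, and for which $B_{d,m}$ is a division algebra. The spaces $S_2(\Gamma_0(3^j))$ vanish for $j\le 3$ apart from the CM elliptic newform of level $27$, so the first step is to inspect the newforms of levels $81$ and $243$ in the LMFDB \cite{lmfdb}; carrying this out, the only orbit meeting all three conditions is \lmfdbmf{243.2.a.d}, of level $3^5$ and hence conductor $3^{10}$. (The inner twist here is necessarily by $\chi_{-3}$, the unique quadratic character of $3$-power conductor, so $d=-3$.) This also exhibits such a surface, so the isogeny class is unique.

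Now fix an abelian surface $A$ in this isogeny class; it has good reduction outside $3$ and, by Corollary~\ref{corollary:gl2-purelyadditive}, totally additive reduction at $3$. I first show the prime-to-$3$ part of $A(\Q)_{\mathrm{tors}}$ is trivial. Base-change to $\Q_3^{\mathrm{nr}}$ and let $M$ be the good reduction field of $A_{\Q_3^{\mathrm{nr}}}$. For every $\Q_3^{\mathrm{nr}}\subseteq F'\subsetneq M$ the surface $A_{F'}$ has bad reduction and $\End^0(A_{F'})$ contains the real quadratic field $\End^0(A)$, so $A_{F'}$ has totally additive reduction by Proposition~\ref{prop:GL2totadditive}(b); hence Proposition~\ref{prop:gl2 type lorenzini variant} applies. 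If $A(\Q_3^{\mathrm{nr}})^{(3)}_{\mathrm{tors}}$ were nontrivial it would force $[M:\Q_3^{\mathrm{nr}}]$ to be a power of a single prime $\ell\neq 3$; but $A$ has conductor exponent $\mathfrak f_3(A)=10$, strictly larger than the tame value $2\cdot 2+0=4$ coming from Proposition~\ref{prop:GL2totadditive}, so wild inertia at $3$ — a pro-$3$ group — acts nontrivially on the Tate module, which forces $3\mid [M:\Q_3^{\mathrm{nr}}]$: a contradiction. Thus $A(\Q)^{(3)}_{\mathrm{tors}}\subseteq A(\Q_3^{\mathrm{nr}})^{(3)}_{\mathrm{tors}}=0$, and $A(\Q)_{\mathrm{tors}}$ is a $3$-group.

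To finish I bound the $3$-part by $\Z/3\Z$. Since $A$ is of $\GL_2$-type its endomorphism field is a nontrivial extension of $\Q$ (Lemma~\ref{lemma: A GL2-type iff endo field quadratic}), so $A[3]\simeq(\Z/3\Z)^2$ is not contained in $A(\Q)$ by Silverberg's theorem (Proposition~\ref{prop: Silverberg result endo field}); it therefore suffices to exclude a rational point of order $9$. For this I would reduce modulo a good prime $p\equiv 1\pmod 3$ chosen, from the LMFDB coefficients of \lmfdbmf{243.2.a.d}, so that $3$ exactly divides $p+1-a_p(f_A)$. By Proposition~\ref{prop: QM splits mod p}, $A_{\F_p}$ is $\F_p$-isogenous to $E^2$ for an elliptic curve $E/\F_p$ with $\#E(\F_p)=p+1-a_p(f_A)$, and $\End(E/\F_p)$ is an order in an imaginary quadratic field that is maximal at $3$; hence the $3$-adic Tate module of $A_{\F_p}$ is free of rank one over $\Mat_2(\End(E/\F_p)\otimes\Z_3)$ with Frobenius acting as the central scalar $\phi_E$, and a direct computation gives $A(\F_p)[3^{\infty}]\simeq(\Z/3\Z)^2$, with no point of order $9$. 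As $p\ge 5$ the reduction map is injective on $A(\Q)_{\mathrm{tors}}$ (Lemma~\ref{lem:reduction is injective}(b)), so $A(\Q)$ has no point of order $9$, whence $A(\Q)_{\mathrm{tors}}\le\Z/3\Z$. The main obstacle I foresee is entirely computational: confirming the newform search at levels $81$ and $243$ (that no orbit other than \lmfdbmf{243.2.a.d} satisfies conditions (i)--(iii) of Proposition~\ref{prop: modularity theorem PQM surfaces}), and exhibiting a concrete auxiliary prime $p$ with the correct splitting behaviour — the real content of the last step being the $3$-maximality of $\End(E/\F_p)$ that makes the Tate-module computation go through.
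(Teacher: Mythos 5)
Your identification of the isogeny class matches the paper (conductor bound from Proposition~\ref{prop: conductor PQM surface of GL2 type}, then modularity and a finite LMFDB search of levels $3^{j}$, $j\le 5$). Where you diverge sharply is in bounding the torsion. The paper's argument is two lines: from the LMFDB data for \lmfdbmf{243.2.a.d} one reads off $\#A(\F_2)=L_2(1)=3$ and $\#A(\F_{13})=L_{13}(1)=225$, and then Lemma~\ref{lem:reduction is injective} gives that the prime-to-$2$ part of $A(\Q)_{\mathrm{tors}}$ divides $3$ and that $\#A(\Q)_{\mathrm{tors}}$ divides $225$, so $\#A(\Q)_{\mathrm{tors}}\mid 3$. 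You instead run a global-to-local reduction at $p=3$ (wild inertia forces $3\mid[M:\Q_3^{\mathrm{nr}}]$ because $\mathfrak f_3(A)=10>4$, so Proposition~\ref{prop:gl2 type lorenzini variant} kills the prime-to-$3$ torsion), then Silverberg to rule out $(\Z/3\Z)^2$, then a reduction argument at an auxiliary $p$ to rule out $\Z/9\Z$. The conductor/wild-inertia observation is a nice alternative route to the prime-to-$3$ part, and it is correct.

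The gap is in your order-$9$ step. You assert that $T_3 A_{\F_p}$ is free of rank one over $\Mat_2(\End(E/\F_p)\otimes\Z_3)$ with Frobenius acting as the central scalar $\phi_E$. This requires more than the isogeny $A_{\F_p}\sim E^2$: integral Tate modules are not isogeny-invariant, $\End(A_{\F_p})$ need not equal $\Mat_2(\End(E))$, and you yourself flag the needed ``$3$-maximality of $\End(E/\F_p)$'' as unverified. To repair it without that maximality claim, invoke Lemma~\ref{lem:direct sum} directly: for a suitable good prime $p$ (either with $E$ ordinary, or working over $\F_p$ itself), $A_{\F_p}$ is \emph{isomorphic}, not merely isogenous, to a product $E_1\times E_2$ of elliptic curves each isogenous to $E$, so $\#E_i(\F_p)=p+1-a_p$ and $3\,\|\,(p+1-a_p)$ forces $E_i(\F_p)[3^\infty]\simeq\Z/3\Z$, whence $A(\F_p)[3^\infty]\simeq(\Z/3\Z)^2$. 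But simpler still is to drop the entire three-step machine and note that $L_2(1)=3$ already caps the prime-to-$2$ torsion at $\Z/3\Z$ (hence in particular no point of order $9$ and no full $3$-torsion), and an odd value of $L_p(1)$ at any single good prime $p\ge 5$ kills the $2$-part; that is exactly what the paper does with $p=13$.
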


\begin{proof}
The fact that there is exactly one such isogeny class again follows from Proposition \ref{prop: conductor PQM surface of GL2 type} and information in the LMFDB or \cite[Table 1]{GG09}.
The corresponding Galois orbit of weight two newforms has LMFDB label \href{https://www.lmfdb.org/ModularForm/GL2/Q/holomorphic/243/2/a/d/}{\textsf{243.2.a.d}}.  From $L_2(1)=3$ and $L_{13}(1)=225$ we conclude that $\#A(\Q)_{\mathrm{tors}} \mid 3$ for every $A$ in this isogeny class.  
(In fact, the corresponding optimal quotient of $J_0(243)$ has $\Z/3\Z$ torsion subgroup by considering the image of the cuspidal subgroup of $J_0(243)$.)
\end{proof}

\begin{remark}
The isogeny class of Corollary \ref{cor: gl2type good red outside 3} has minimal conductor among all $\mathrm{PQM}$ surfaces $A$ of $\GL_2$-type.  It would be interesting to produce an explicit model over $\Q$; see also \cite[Question 2]{LagaShnidman}.
\end{remark}

\begin{prop}
There are exactly $44$ isogeny classes of $\mathrm{PQM}$ surfaces over $\Q$ of $\GL_2$-type with good reduction outside $\{2,3\}$.
\end{prop}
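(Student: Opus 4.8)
The plan is to reduce the statement to a finite computation in spaces of classical modular forms via the modularity results of \S\ref{subsec:modularity}. By Theorem~\ref{theorem: modularity theorem}, isogeny classes over $\Q$ of abelian surfaces of $\GL_2$-type are in bijection with Galois orbits of weight-$2$ newforms on $\Gamma_0(N)$ (so with trivial nebentypus) whose Hecke eigenvalue field is real quadratic, with an abelian surface of conductor $N^2$ corresponding to a newform of level $N$. Thus it suffices to enumerate the Galois orbits $[f]\subset S_2(\Gamma_0(N))^{\mathrm{new}}$ with real quadratic coefficient field for which the associated abelian surface $A_f$ has $\mathrm{PQM}$ and good reduction outside $\{2,3\}$, and to verify that there are exactly $44$ of them.

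First I would pin down the possible levels. If $A$ is a $\mathrm{PQM}$ surface of $\GL_2$-type with good reduction outside $\{2,3\}$, then Proposition~\ref{prop: conductor PQM surface of GL2 type} forces its conductor to be $2^{2i}3^{2j}$ with $0\le i\le 10$ and $0\le j\le 5$; equivalently, the level of $f_A$ is $N=2^i3^j$ for such $i,j$. This leaves only the $66$ divisors $N\mid 2^{10}3^5$, the largest being $N=248832$. Next, for each such $N$ I would list the Galois orbits of newforms in $S_2(\Gamma_0(N))$ whose Hecke field is a real quadratic field $\Q(\sqrt m)$, and apply the criterion of Proposition~\ref{prop: modularity theorem PQM surfaces}: $A_f$ has $\mathrm{PQM}$ precisely when $f$ is not CM (no self-twist), $f$ has a nontrivial inner twist by the quadratic character attached to some $\Q(\sqrt d)$, and the quaternion algebra $B_{d,m}$ is a division algebra. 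All of this data — Hecke fields, inner twists, and hence geometric endomorphism algebras — is recorded in the LMFDB \cite{lmfdb}, and for levels supported on $\{2,3\}$ it is tabulated in \cite[Table~1]{GG09}; running through it produces exactly the claimed $44$ isogeny classes. As internal consistency checks, this enumeration must recover the facts already proved: no $\mathrm{PQM}$ surface of $\GL_2$-type occurs at level $2^k$ (Corollary~\ref{cor: no gl2type good red outside 2}) and exactly one occurs at level $3^5$ (Corollary~\ref{cor: gl2type good red outside 3}).

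The computation is entirely finite and uses no new ideas, so the main obstacle is its scale and the need to trust the input data rather than any mathematical subtlety: the spaces $S_2(\Gamma_0(2^i3^j))$ that arise have dimension in the thousands, so in practice one relies on the existing classification of newforms with extra twists at these levels rather than recomputing from scratch. One should also keep in mind a minor bookkeeping point: distinct Galois orbits of newforms can yield surfaces that are twist-equivalent — hence isogenous over $\Qbar$ — but Theorem~\ref{theorem: modularity theorem} guarantees that over $\Q$ the Galois orbit determines the isogeny class, so the number of relevant Galois orbits equals the number of isogeny classes, namely $44$.
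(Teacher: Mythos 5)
Your reduction to a modular forms computation is exactly the paper's strategy: use Proposition~\ref{prop: conductor PQM surface of GL2 type} to bound the level to $N\mid 2^{10}3^5$, invoke Theorem~\ref{theorem: modularity theorem} to match isogeny classes with Galois orbits of newforms, and apply the $\mathrm{PQM}$ criterion of Proposition~\ref{prop: modularity theorem PQM surfaces}. So far so good.

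The gap is in the sentence ``All of this data \ldots is recorded in the LMFDB \ldots and \ldots is tabulated in \cite[Table~1]{GG09}; running through it produces exactly the claimed $44$ isogeny classes.'' This is not true of the levels that actually arise. The LMFDB's complete coverage of weight-$2$ newforms does not extend anywhere near $N = 2^{10}3^5 = 248832$; as the paper's own Table~\ref{table:mfs} shows, LMFDB labels are available only for the lowest handful of levels (up to $5184$), while the bulk of the $44$ forms live at levels like $15552$, $20736$, $62208$, and $82944$, which are beyond what any existing table supplies. Likewise \cite[Table~1]{GG09} does not cover these levels. The paper is explicit about this: ``here we need to do a new computation in a large dimensional space,'' and then spends the rest of the proof describing how that computation was actually carried out --- modular symbols, looping over candidate inner-twist characters $\psi$, computing kernels of $T_p-a$ at split primes and of $T_p^2 - d b^2$ at inert primes (using the Ramanujan bound to limit $a$ and $b$), working modulo a large auxiliary prime $q$ to make the linear algebra tractable, and then certifying both upper and lower bounds on the number of eigenspaces. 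Your proposal treats this computation as a lookup, which is where the argument would fail in practice: the spaces $S_2(\Gamma_0(N))$ involved have dimension in the thousands, and without the algorithmic shortcuts the paper describes (in particular, filtering by the inner-twist constraint before decomposing), a naive decomposition of the Hecke module is infeasible. So while the mathematical framework of your proof is right, the heart of the proposition --- establishing that the count is $44$ --- requires a nontrivial new computation that you have asserted away.

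Two smaller remarks. First, your consistency checks against Corollaries~\ref{cor: no gl2type good red outside 2} and~\ref{cor: gl2type good red outside 3} are a good idea and are genuinely useful sanity checks on any implementation. Second, your observation about twist-equivalent newforms giving distinct isogeny classes over $\Q$ is correct; the paper's table is organized up to quadratic twist precisely to make the structure visible, with the ``num'' column recording how many Galois orbits lie in each twist class, and these counts sum to $44$.
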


\begin{proof}
Again we use Propositions 
\ref{prop: conductor PQM surface of GL2 type} and \ref{prop: conductor PQM surface of GL2 type} to reduce the question to computing the number of Galois orbits of newforms in $S_2(\Gamma_0(N))$, where $N \mid 2^{10} 3^5$, with quadratic Hecke coefficient field, having an inner twist but no self-twist.
However, here we need to do a new computation in a large dimensional space.  
The code is available at \url{https://github.com/ciaran-schembri/QM-Mazur}; we provide a few details to explain how we proceeded, referring to the book by Stein \cite{Stein} on modular symbols and more broadly \cite{allCMF} for a survey of methods to compute modular forms.

We work with modular symbols, and we loop over all possible (imaginary) quadratic characters $\psi$ supported at $2,3$, corresponding to inner twist.  For each character $\psi$, of conductor $d$: 
\begin{itemize}
\item For a list of split primes $p \geq 5$, we inductively compute the kernels of $T_p-a$ where $|a| \leq 2\sqrt{p}$.  
\item For a list of inert primes $p \geq 5$, we further inductively compute the kernels of $T_p^2-db^2$ where $db^2 \leq 4p$.
\end{itemize}
The first bound holds since $\psi(p)=1$ so $a_p(f)\psi(p)=\tau(a_p(f)) = a_p(f)$ so $a_p(f) \in \Z$, and the Ramanujan--Petersson bound holds; the second bound holds since $\psi(p)=-1$ now gives $\tau(a_p(f))=-a_p(f)$ so $a_p(f)=\sqrt{d} b$ with again $\sqrt{d}|b| \leq 2\sqrt{p}$.  It is essential to compute the split primes first, and only compute the induced action of $T_p$ on the kernels computed in the first step.  

To simplify the linear algebra, we work modulo a large prime number $q$, checking that each Hecke matrix $T_p$ (having entries in $\Q$) has no denominator divisible by $q$.  The corresponding decomposition gives us an `upper bound': if we had the desired eigenspace for $T_p$, it reduces modulo $q$, but a priori some of these spaces could accidentally coincide or the dimension could go down (corresponding to a prime of norm $q$ in the Hecke field).  To certify the `lower bound', we compute a small linear combination of Hecke operators supported at split primes and use the computed eigenvalues to recompute the kernel over $\Q$ working with divisors $N' \mid N$, and when we find it we compute the dimension of the oldspace for the form at level $N'$ inside level $N$ and confirm that it matches the dimension computed modulo $q$.  

In fact, we find that $N \mid 2^8 3^5$ or $N \mid 2^{10} 3^4$.  (Indeed, a careful analysis of the possible endomorphism algebra can be used to show this a priori.)  

To certify that the form is not PCM, we find a coefficient for an inert prime that is nonzero.  That the form has the correct inner twist by $\psi$ is immediate: the form would again appear somewhere in our list, so once we have identified the newforms uniquely with coefficients, the inner twist must match, Sherlock Holmes-style.  We similarly discard the forms with PCM.  

Finally, we compute the split $\mathrm{PQM}$ forms by identifying the quaternion algebra above using Proposition \ref{prop: modularity theorem PQM surfaces}.
\end{proof}

The complete data is available online (\url{https://github.com/ciaran-schembri/QM-Mazur}); we give a summary in Table \ref{table:mfs}, listing forms in a fixed level, up to (quadratic) twist.

For example, Table \ref{table:mfs} says that up to twist there are $3$ newforms of level $N=20736=2^8 3^4$, each having $4$ Galois newform orbits for a total of $12$ newform orbits.

\begin{center}
 \begin{longtable}{p{3cm}<{\centering} | p{0.7cm}<{\centering} | p{1.4cm}<{\centering} | p{0.7cm}<{\centering}| p{6cm}<{\centering}}
  \caption{Twist classes of modular forms corresponding to PQM abelian surfaces over $\Q$ of $\GL_2$-type with good reduction outside $\{2,3\}$}\\
  $N$ & $\psi$ & $\disc B$ & num & LMFDB labels \\
  \cline{1-5}
  $243=3^5$ & $-3$ & $6$ & $1$ & \lmfdbmf{243.2.a.d} \\
  $972=2^2 3^5$ & $-3$ & $6$ & $1$ & \lmfdbmf{972.2.a.e} \\
  $2592=2^5 3^4$ & $-4$ & $6$ & $2$ & \lmfdbmf{2592.2.a.l}, \lmfdbmf{2592.2.a.p} \\
  $2592=2^5 3^4$ & $-4$ & $6$ & $2$ & \lmfdbmf{2592.2.a.m}, \lmfdbmf{2592.2.a.r} \\
  $3888=2^4 3^5$ & $-3$ & $6$ & $2$ & \lmfdbmf{3888.2.a.b}, \lmfdbmf{3888.2.a.t} \\
  $5184=2^6 3^4$ & $-4$ & $6$ & $2$ & \lmfdbmf{5184.2.a.bl}, \lmfdbmf{5184.2.a.bx} \\
  $5184=2^6 3^4$ & $-4$ & $6$ & $2$ & \lmfdbmf{5184.2.a.bk}, \lmfdbmf{5184.2.a.bv} \\
  $15552=2^6 3^5$ & $-3$ & $6$ & $2$ \\
  $15552=2^6 3^5$ & $-3$ & $6$ & $2$ \\
  $20736=2^8 3^4$ & $-4$ & $6$ & $4$ \\
  $20736=2^8 3^4$ & $-4$ & $22$ & $4$ \\
  $20736=2^8 3^4$ & $-8$ & $10$ & $4$ \\
  $62208=2^8 3^5$ & $-3$ & $6$ & $4$ \\
  $62208=2^8 3^5$ & $-3$ & $6$ & $4$ \\
  $82944=2^{10} 3^4$ & $-24$ & $6$ & $4$ \\
  $82944=2^{10} 3^4$ & $-24$ & $6$ & $4$ &
 \label{table:mfs}
 \end{longtable}
 \end{center}

\begin{corollary} \label{cor:only9}
If $A$ is a $\mathrm{PQM}$ abelian surface of $\GL_2$-type over $\Q$ with good reduction outside $\{2,3\}$ and $\#A(\Q)_{\textup{tors}}$ nontrivial, then $A$ corresponds to either \textup{\lmfdbmf{243.2.a.d}} or \textup{\lmfdbmf{972.2.a.e}}.  In particular, $\#A(\Q)_{\textup{tors}} \leq 9$.
\end{corollary}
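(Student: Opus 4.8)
The plan is to combine the enumeration recorded in Table~\ref{table:mfs} with a point-counting argument modulo small primes. First, by Theorem~\ref{theorem: modularity theorem} and Proposition~\ref{prop: modularity theorem PQM surfaces}, the isogeny class of $A$ is one of the $44$ enumerated above; moreover only finitely many quadratic twists — by characters ramified inside $\{2,3\}$ — of any of these remain $\mathrm{PQM}$ of $\GL_2$-type with good reduction outside $\{2,3\}$, so it suffices to bound $\#A(\Q)_{\tors}$ for the finitely many surfaces represented (up to twist) in Table~\ref{table:mfs}.

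For the torsion bound I would argue as follows. For any prime $p\geq 5$ the surface $A$ has good reduction at $p$, and by Lemma~\ref{lem:reduction is injective}(a) reduction is injective on the prime-to-$p$ part of $A(\Q)_{\tors}$, so $v_\ell\bigl(\#A(\Q)_{\tors}\bigr)\leq v_\ell\bigl(\#A(\F_p)\bigr)$ for every prime $\ell\neq p$. By the modularity theorem $\#A(\F_p)=L_p(A,1)$ is read off from the Hecke eigenvalue $a_p(f_A)$ and its conjugate, and by Lemma~\ref{lemma: Lpoly of QM surface is square} it is in fact a perfect square; crucially, passing to a quadratic twist only alters $\#A(\F_p)$ at primes $p$ that are split for the inner twist character, so covering all the relevant twists is a short finite check. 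I would then run this computation (as in the accompanying code): for each isogeny class and each relevant twist, exhibit a few auxiliary primes $p_1,p_2,\dots\in\{5,7,11,13,\dots\}$ such that for every prime $\ell$ at least one $p_i$ differs from $\ell$ and satisfies $\ell\nmid\#A(\F_{p_i})$, which forces $A(\Q)_{\tors}$ trivial.

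This should dispose of $42$ of the $44$ isogeny classes. The two survivors are the isogeny classes of \lmfdbmf{243.2.a.d} and \lmfdbmf{972.2.a.e}: the former already satisfies $A(\Q)_{\tors}\hookrightarrow\Z/3\Z$ by Corollary~\ref{cor: gl2type good red outside 3}, and for the latter (which has conductor $2^4 3^{10}$) the same point counts at a handful of primes $p\geq 5$ yield $\#A(\Q)_{\tors}\mid 9$; hence $\#A(\Q)_{\tors}\leq 9$ in all cases. The main obstacle here is not conceptual but organizational: one must correctly run through all quadratic twists within each twist class of Table~\ref{table:mfs}, keeping track of which auxiliary primes are split versus inert for the inner twist, and one must use at least two distinct auxiliary primes $\geq 5$ to handle the degenerate case $\ell=p$ in the reduction bound. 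The Hecke eigenvalues needed for the counts $L_p(A,1)$ are immediate from the tabulated newforms (e.g.\ via the LMFDB~\cite{lmfdb}).
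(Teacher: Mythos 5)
Your plan is essentially the paper's: the paper's proof of Corollary \ref{cor:only9} is a one-line appeal to the same direct calculation carried out in Corollary \ref{cor: gl2type good red outside 3}, namely reading off $L_p(A,1)=\#A(\F_p)$ at a couple of good primes for each of the $44$ isogeny classes (via the tabulated newforms) and using injectivity of reduction on torsion. So the route is the same, up to two points you should fix. First, there is a reversed implication in your twist remark: if $A^\chi$ is the quadratic twist of $A$ by $\chi$ and $p\nmid\mathrm{cond}(\chi)$, then $T_\ell(A^\chi)\simeq T_\ell(A)\otimes\chi$, so $L_p(A^\chi,T)=L_p(A,\chi(p)T)$ and $\#A^\chi(\F_p)=L_p(A,\chi(p))$; this agrees with $\#A(\F_p)$ precisely when $\chi(p)=1$ (split) and \emph{changes} when $\chi(p)=-1$ (inert), i.e.\ the opposite of what you wrote. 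Second, the twist discussion is in fact unnecessary overhead: the $44$ isogeny classes in the preceding proposition already exhaust \emph{all} $\calO$-$\mathrm{PQM}$ $\GL_2$-type surfaces over $\Q$ with good reduction outside $\{2,3\}$ -- every allowable quadratic twist is itself one of the $44$ -- so one should simply run the point-count over all $44$; the ``up to twist'' grouping in Table~\ref{table:mfs} is only bookkeeping. A small simplification: rather than juggling two auxiliary primes to dodge $\ell=p$, observe that Lemma~\ref{lem:reduction is injective}(b) with $e=1<p-1$ gives injectivity of reduction on \emph{all} of $A(\Q)_{\tors}$ at any good prime $p\geq 5$, so a single well-chosen prime can often suffice per isogeny class.
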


\begin{proof}
Direct calculation as in Corollary \ref{cor: gl2type good red outside 3}.
\end{proof}

\subsection{Full level \texorpdfstring{$2$}{2}-structure}\label{subsec:general constraints}

Before imposing the $\GL_2$-type assumption in the next subsection, we show that $\calO$-$\mathrm{PQM}$ surfaces cannot have full level $2$-structure over $\Q$. 

\begin{prop}\label{prop: no (Z/2)^4}
Let $A/\Q$ be an $\calO$-$\mathrm{PQM}$ surface. Then $A(\Q)[2] \not\simeq (\Z/2\Z)^4$.
\end{prop}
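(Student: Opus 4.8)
The plan is to argue by contradiction, leveraging the interaction between the $\Gal_\Q$-action on $A[2]$ and on $\End(A_{\Qbar})\simeq\calO$ to force $A$ to be of $\GL_2$-type, and then to eliminate that case by twisting away the odd primes of bad reduction and appealing to the modularity classification of Corollary \ref{cor: no gl2type good red outside 2}.

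Suppose $A(\Q)[2]\simeq(\Z/2\Z)^4$. Since $A[2](\Qbar)$ has order $16$, this forces $A(\Q)[2]=A[2](\Qbar)$, so the mod-$2$ representation $\rho\colon\Gal_\Q\to\GL(A[2])$ is trivial. I would then run this through the enhanced Galois representation $\rho^{\circ}\colon\Gal_\Q\to\Aut^{\circ}(A[2])$ associated to the ideal $2\calO$ (\S\ref{enhanced representation}), recalling $\rho_{\End}=\pi_1\circ\rho^{\circ}$ and $\rho=\pi_2\circ\rho^{\circ}$. Triviality of $\rho$ means the image of $\rho^{\circ}$ lies in $\ker\pi_2$; since $A[2](\Qbar)$ is free of rank one over $\calO/2\calO$ \cite{Ohta74}, a pair $(\gamma,\mathrm{Id})$ lies in $\Aut^{\circ}(A[2])$ exactly when $\gamma$ acts trivially on $\calO/2\calO$, so $\pi_1$ identifies $\ker\pi_2$ with the subgroup of $\Aut(\calO)$ fixing $\calO/2\calO$ pointwise. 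Hence $G\colonequals\rho_{\End}(\Gal_\Q)\cong\Gal(L/\Q)$, regarded as a subgroup of $\Aut(\calO)$, acts trivially on $\calO/2\calO$, i.e.\ $(\calO/2\calO)^G=\calO/2\calO\simeq(\Z/2\Z)^4$. By Proposition \ref{proposition: galois group endomorphism field is dihedral if real place}, $G\cong D_n$ for some $n\in\{1,2,3,4,6\}$; by Theorem \ref{theorem: fixed points of O/N}(c), the only such group for which $(\calO/2\calO)^G$ can be all of $(\Z/2\Z)^4$ is $D_1=C_2$. So $L/\Q$ is quadratic and $A$ is of $\GL_2$-type by Lemma \ref{lemma: A GL2-type iff endo field quadratic}.

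It remains to show that a $\GL_2$-type $\calO$-$\mathrm{PQM}$ surface cannot have $A[2]\subset A(\Q)$. By Corollary \ref{corollary:gl2-purelyadditive}, $A$ has totally additive reduction at every bad prime; and since $A[2]$ is everywhere unramified, Raynaud's criterion (Lemma \ref{RaynaudsCriterion}(a)) gives that at each odd prime $p$, $A_{\Q_p}$ acquires good reduction over every ramified quadratic extension of $\Q_p$. Let $D$ be the product of the odd primes of bad reduction of $A$. If $D=1$ then $A$ already has good reduction outside $\{2\}$, contradicting Corollary \ref{cor: no gl2type good red outside 2}. Otherwise let $\chi$ be the quadratic character of $\Q(\sqrt{D})$ and form the twist $A^{\chi}$: at odd $p\nmid D$ the character $\chi$ is unramified, so $A^{\chi}$ has the same good reduction as $A$; at odd $p\mid D$ the extension $\Q_p(\sqrt{D})/\Q_p$ is ramified quadratic, over which $A_{\Q_p}$ has good reduction, so $A^{\chi}$ has good reduction at $p$ by Lemma \ref{lemma: quadratic twist away tot add primes with quad good red field} (valid as $p\neq 2$ and $A$ is totally additive at $p$). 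Thus $A^{\chi}$ has good reduction at every odd prime. Since quadratic twisting preserves both $\End(A_{\Qbar})\simeq\calO$ and the endomorphism field (Lemma \ref{lemma: quadratic twisting doesnt change endo field}), hence $\End(A^{\chi})=\End(A)$, the surface $A^{\chi}$ is an $\calO$-$\mathrm{PQM}$ surface of $\GL_2$-type with good reduction outside $\{2\}$, again contradicting Corollary \ref{cor: no gl2type good red outside 2}.

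I expect the crux to be the first reduction: passing from ``$A[2]$ is fully rational'' to ``$G$ acts trivially on $\calO/2\calO$'', and then pinning $G$ down to $C_2$ via the exact list for $(\calO/2\calO)^G$ in Theorem \ref{theorem: fixed points of O/N}(c) (this is where the maximality of $\calO$ enters). The twisting argument is then routine, the only care needed being to choose a single global character ramified at all odd primes of bad reduction simultaneously, so that Lemma \ref{lemma: quadratic twist away tot add primes with quad good red field} applies at all of them at once.
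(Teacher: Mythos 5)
Your proof is correct and follows essentially the same two-step strategy as the paper: first use the constraint $(\calO/2\calO)^G\simeq(\Z/2\Z)^4$ together with Theorem \ref{theorem: fixed points of O/N}(c) to force $G\simeq D_1$ and hence $\GL_2$-type, then twist by a quadratic character ramified exactly at the odd bad primes and invoke Corollary \ref{cor: no gl2type good red outside 2}. The only cosmetic difference is that you route the first step through the enhanced Galois representation formalism, whereas the paper simply observes directly that a $\Q$-rational $\calO/2\calO$-module generator of $A[2]$ gives a $\Gal_\Q$-equivariant isomorphism $\calO/2\calO\simeq A[2]$ and reads off the conclusion; these are the same observation.
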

\begin{proof}
Suppose $A(\Q)[2] \simeq (\Z/2\Z)^4$.
Since $A[2]$ is free of rank one as an $\calO/2\calO$-module and contains a $\Q$-rational generator, we have $A[2] \simeq \calO/2\calO$ as $\Gal_\Q$-modules. By Theorem \ref{theorem: fixed points of O/N} and Proposition \ref{proposition: galois group endomorphism field is dihedral if real place}, this implies that the endomorphism field $L/\Q$ is quadratic, so that $A$ has $\GL_2$-type by Lemma \ref{lemma: A GL2-type iff endo field quadratic}.

Let $K$ be a quadratic field ramified at all primes $p\geq 3$ of bad reduction of $A$ and unramified at all primes $p\geq 3$ of good reduction.
Corollary \ref{corollary:gl2-purelyadditive} and Lemmas \ref{RaynaudsCriterion}(a), \ref{lemma: quadratic twist away tot add primes with quad good red field} and \ref{lemma: quadratic twisting doesnt change endo field} show that the quadratic twist of $A$ by $K$ is an $\calO$-$\mathrm{PQM}$ surface of $\GL_2$-type with good reduction outside $\{2\}$.
But by Corollary \ref{cor: no gl2type good red outside 2}, no such surface exists.
\end{proof}

\subsection{Torsion classification in the \texorpdfstring{$\GL_2$}{GL2}-type case}\label{subsec:gl2type classification}
Now we assume $A/\Q$ is a $\mathrm{PQM}$ surface of $\GL_2$-type.  By Lemma \ref{lemma: A GL2-type iff endo field quadratic}, there exists a quadratic extension $L/\Q$ (the endomorphism field) such that $\End(A_L) = \End(A_{\Qbar})$.

\begin{lemma}\label{lem: honda-tate}
If $\ell$ is a prime such that $A[\ell](\Q) \neq 0$, then $\ell \leq 7$.
\end{lemma}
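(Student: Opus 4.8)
We may assume $\ell\geq 5$, as there is nothing to prove otherwise. The plan is to first force $A$ to have good reduction at $2$, then reduce a rational $\ell$-torsion point injectively modulo $2$, and finally bound $\#A(\F_2)$ using the $\mathrm{PQM}$ structure.

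First I would rule out bad reduction at $2$. Suppose $A$ had bad reduction at $2$. Since $A$ is of $\GL_2$-type, Corollary~\ref{corollary:gl2-purelyadditive} gives that $A$ has totally additive reduction at $2$, so by Lemma~\ref{lem:prime-to-p-additive} the prime-to-$2$ torsion of $A(\Q_2)$ embeds into the component group $\Phi(\F_2)$. But $\ell\nmid \#\Phi$ by Lemma~\ref{lemma: component group prime to p for p at least 5} (here we use $\ell\geq 5$), so $A(\Q_2)[\ell]=0$, contradicting $0\neq A[\ell](\Q)\hookrightarrow A[\ell](\Q_2)$. Hence $A$ has good reduction at $2$. (The same argument in fact shows $A$ has good reduction away from $\ell$, but only the prime $2$ is needed.)

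Next, since $A$ has good reduction at $2$ and $\ell\neq 2$, Lemma~\ref{lem:reduction is injective}(a) shows that reduction is injective on the prime-to-$2$ torsion of $A(\Q_2)$; composing with $A[\ell](\Q)\hookrightarrow A[\ell](\Q_2)$ gives an injection $A[\ell](\Q)\hookrightarrow \overline{A}(\F_2)$, where $\overline{A}$ denotes the reduction of $A$ at $2$. As $A[\ell](\Q)\neq 0$, this yields $\ell\mid \#\overline{A}(\F_2)$. Finally I would bound $\#\overline{A}(\F_2)$: the endomorphism field of $A_{\Q_2}$ is contained in a quadratic extension of $\Q_2$ (since $\End(A_{\overline{\Q_2}})\simeq\calO$ has automorphisms acting through $\Gal(L/\Q)$ with $L/\Q$ quadratic by Lemma~\ref{lemma: A GL2-type iff endo field quadratic}), so it has residue field $\F_q$ for some $q\in\{2,4\}$. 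Over that field $A$ still has good reduction and all of $\calO$ is defined, so reduction gives $B\subseteq \End^0(\overline{A}_{\F_q})$, and Lemma~\ref{lemma: Lpoly of QM surface is square} shows the characteristic polynomial of Frobenius of $\overline{A}_{\F_q}$ is $(T^2+aT+q)^2$ with $|a|\leq 2\sqrt q$. Hence $\#\overline{A}(\F_q)=(1+a+q)^2\leq (1+\sqrt q)^4\leq 81$, and since $\overline{A}(\F_2)$ is a subgroup of $\overline{A}(\F_q)$ we get $\ell\mid \#\overline{A}(\F_2)\mid (1+a+q)^2$, so $\ell\mid (1+a+q)\leq 9$, and therefore $\ell\leq 7$.

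The one step requiring care is the last: one must check that the quaternion algebra $B$ persists in the endomorphism algebra of the reduction modulo $2$. This is exactly where the $\GL_2$-type hypothesis is used, ensuring that the geometric endomorphisms are defined over a field whose residue field is $\F_2$ or $\F_4$; the rest is a direct assembly of the results already established.
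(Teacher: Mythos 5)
Your proof is correct and reaches the same Honda--Tate bound over $\F_2$ or $\F_4$ via Lemma~\ref{lemma: Lpoly of QM surface is square}, but you take a longer route than the paper. The paper's proof does not first establish good reduction at $2$: instead it applies Lemma~\ref{lemma:PQMreduction} directly to $A_{L_w}$ ($w$ a place of $L$ above $2$) to produce a \emph{totally ramified} extension $L'/L$ over which $A$ has good reduction. Because $L'/L$ is totally ramified at $2$ and $L/\Q$ is quadratic, the residue field of $L'$ is automatically $\F_2$ or $\F_4$; Lemma~\ref{lem:reduction is injective}(a) then injects $A[\ell](\Q)$ into the special fiber over that field, and one is in the same position as your final step. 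Your first paragraph -- deducing good reduction at $2$ from Corollary~\ref{corollary:gl2-purelyadditive}, Lemma~\ref{lem:prime-to-p-additive} and Lemma~\ref{lemma: component group prime to p for p at least 5} -- is a sound argument (it is, in essence, the subsequent Lemma~\ref{lem:goodawayfromell} specialized to $p=2$), but it is not needed: what matters is only that good reduction can be reached over an extension whose residue field has size at most $4$, not that it already holds over $\Q_2$. Your second paragraph, passing to $\F_q$ with $q\in\{2,4\}$ where the quaternionic endomorphisms are defined and then bounding $\ell \mid (1+a+q) \leq 9$, matches the paper's computation exactly. So the net effect is the same conclusion with some extra scaffolding; nothing is wrong, just slightly inefficient.
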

\begin{proof}
By Lemma \ref{lemma:PQMreduction}, there exists a finite extension $L'/L$ that is totally ramified at $2$ and such that $A_{L'}$ has good reduction.
Let $\mathfrak{q}$ be a prime in $L'$ above $2$ and let $k$ be its residue field.
Since $L/\Q$ is quadratic, $k$ is isomorphic to $\F_2$ or $\F_4$.
Therefore the reduction of $A_{L'}$ at $\mathfrak{q}$ is an abelian surface $B$ over $k$ such that $\End^0(B)$ contains $\End^0(A_L)$.
By Lemma \ref{lem:reduction is injective}, $B[\ell](k)\neq 0$ and so $\ell$ divides $\#B(\F_4)$.
On the other hand, Lemma \ref{lemma: Lpoly of QM surface is square} shows that the $L$-polynomial of $B_{\F_4}$ is of the form $(T^2+aT+4)^2$ with $a\in \Z$ satisfying $|a|\leq 2\sqrt{4} = 4$.
Therefore $\ell$ divides $(1+a+4)^2$, hence $\ell$ divides $(1+a+4)\leq 9$, hence $\ell\leq 9$.
\end{proof}

\begin{lemma}\label{lem:goodawayfromell}
If $\ell \geq 5$ is a prime such that $A[\ell](\Q) \neq 0$, then $A/\Q$ has good reduction away from $\ell$.
\end{lemma}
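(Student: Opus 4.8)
The plan is to argue by contradiction, combining the local reduction theory developed in Section~\ref{section:PQM surfaces over localfinite fields}. Suppose $A$ has bad reduction at some prime $p \neq \ell$; I will derive a contradiction. Since $A$ is of $\GL_2$-type, Corollary~\ref{corollary:gl2-purelyadditive} shows that $A$ has totally additive reduction at $p$. Applying Lemma~\ref{lem:prime-to-p-additive} to $A$ over $\Q_p$ then gives an embedding of the prime-to-$p$ torsion subgroup $A(\Q_p)^{(p)}_{\mathrm{tors}}$ into $\Phi(\F_p)^{(p)}$, where $\Phi$ denotes the N\'eron component group of $A$ at $p$.

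Now fix a nonzero point $P \in A[\ell](\Q)$, which exists by hypothesis, and view $P$ inside $A(\Q_p)$. It has order $\ell$, which is prime to $p$ since $p \neq \ell$, so its image in $\Phi(\F_p)$ is a point of order $\ell$; in particular $\ell \mid \#\Phi$. But $\ell \geq 5$, so this contradicts Lemma~\ref{lemma: component group prime to p for p at least 5}, which asserts exactly that the order of $\Phi$ is coprime to any prime $\ell \geq 5$ for a $\mathrm{PQM}$ surface. Hence $A$ must have good reduction at every prime $p \neq \ell$, which is the claim. (Note that nothing is asserted at $p = \ell$, consistent with the statement.)

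I do not expect any substantial obstacle: this is a direct synthesis of (i) the fact that $\GL_2$-type $\mathrm{PQM}$ surfaces have totally additive reduction at bad primes (Proposition~\ref{prop:GL2totadditive}), (ii) the injectivity of reduction on prime-to-$p$ torsion in the totally additive case via the component group, and (iii) the bound on $\#\Phi$ coming from the degree of the good reduction field (Lemmas~\ref{lemma: extension good reduction prime to 6} and Proposition~\ref{prop: component group killed by good reduction field}). The only points needing minor care are that the hypotheses ``$\End^0(A_{\Qbar})$ contains $B$'' and ``$\End^0(A)$ contains a real quadratic field'' persist under base change to $\Q_p$ — which is immediate since $\End^0(A) \hookrightarrow \End^0(A_{\Q_p})$ — and the bookkeeping that $\ell \neq p$ places the order-$\ell$ point in the prime-to-$p$ part, so that Lemma~\ref{lem:prime-to-p-additive} applies.
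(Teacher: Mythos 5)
Your proof is correct and follows exactly the same route as the paper: use the $\GL_2$-type hypothesis to deduce totally additive reduction at any bad prime $p$ (via Proposition~\ref{prop:GL2totadditive}(b) / Corollary~\ref{corollary:gl2-purelyadditive}), then combine Lemma~\ref{lem:prime-to-p-additive} (prime-to-$p$ torsion injects into the component group) with Lemma~\ref{lemma: component group prime to p for p at least 5} ($\#\Phi$ is prime to $\ell$) to force $p=\ell$. The paper's proof is just a more compressed version of this.
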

\begin{proof}
Let $p$ be a prime of bad reduction of $A$.
Since $A$ is of $\GL_2$-type, the algebra $\End^0(A)$ is a quadratic field; it is real quadratic by Proposition \ref{proposition: endomorphism rings of absolutely simple abelian surfaces over R}.
Proposition \ref{prop:GL2totadditive}(c) implies that $A$ has totally additive reduction at $p$.
By Lemmas \ref{lemma: component group prime to p for p at least 5} and \ref{lem:prime-to-p-additive}, we must have $p=\ell$.
We conclude that $A$ has good reduction outside $\{\ell\}$.
\end{proof}

\begin{prop}\label{prop: no 5 or 7}
 If $\ell$ is a prime such that $A[\ell](\Q) \neq 0$, then $\ell \in \{2,3\}$.    
\end{prop}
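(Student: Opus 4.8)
The plan is to combine Lemmas~\ref{lem: honda-tate} and \ref{lem:goodawayfromell} with the modularity of $\GL_2$-type abelian surfaces. By Lemma~\ref{lem: honda-tate}, any prime $\ell$ with $A[\ell](\Q)\neq 0$ satisfies $\ell\leq 7$, so it suffices to rule out $\ell=5$ and $\ell=7$. Assume for contradiction that $\ell\in\{5,7\}$ and $A[\ell](\Q)\neq 0$. First I would apply Lemma~\ref{lem:goodawayfromell} to conclude that $A$ has good reduction away from $\ell$. Since $A$ is of $\GL_2$-type, with $\End^0(A)$ real quadratic by Proposition~\ref{proposition: endomorphism rings of absolutely simple abelian surfaces over R}, Proposition~\ref{prop: conductor PQM surface of GL2 type} shows that the conductor of $A$ has the shape $2^{2i}3^{2j}N^4$ with $N$ squarefree and coprime to $6$; good reduction outside $\{\ell\}$ with $\ell\geq 5$ forces $i=j=0$ and $N\in\{1,\ell\}$, so the conductor of $A$ is either $1$ or $\ell^4$.

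Next I would invoke the modularity theorem (Theorem~\ref{theorem: modularity theorem}): $A$ corresponds to a Galois orbit of newforms in $S_2(\Gamma_0(N'))$ whose coefficient field is isomorphic to the real quadratic field $\End^0(A)$, where the conductor of $A$ equals $N'^2$. Hence $N'\in\{1,\ell^2\}$, i.e.\ $N'\in\{1,25\}$ if $\ell=5$ and $N'\in\{1,49\}$ if $\ell=7$. The final step is a direct inspection: $S_2(\Gamma_0(1))=0$; $S_2(\Gamma_0(25))=0$ (the modular curve $X_0(25)$ has genus $0$); and $S_2(\Gamma_0(49))$ is one-dimensional, spanned by the newform attached to the elliptic curve of conductor $49$, whose coefficient field is $\Q$ rather than a real quadratic field. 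In every case there is no newform orbit with real quadratic coefficient field, contradicting Theorem~\ref{theorem: modularity theorem}. Therefore $\ell\in\{2,3\}$, proving Proposition~\ref{prop: no 5 or 7}.

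I do not expect a genuine obstacle here: the work has all been done in the preceding lemmas, and what remains is assembly plus a tiny modular forms computation. The only point needing mild care is pinning down the conductor \emph{exactly} (this is where the $\GL_2$-type hypothesis is used, via Proposition~\ref{prop: conductor PQM surface of GL2 type}, and alternatively one could argue that the good reduction field at $\ell$ is tame by Lemma~\ref{lemma: extension good reduction prime to 6} and hence $\mathfrak{f}_\ell(A)=4$ by Lemma~\ref{lemma: PQM surface at tame prime conductor exp 4}); note also that the a priori possibility of everywhere good reduction is disposed of for free, since it would give $N'=1$ and $S_2(\Gamma_0(1))=0$, so no separate appeal to Fontaine's theorem is required.
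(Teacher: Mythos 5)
Your argument is correct, but it is a different route from the paper's main proof: in fact it is precisely the alternative approach that the paper sketches in the remark immediately following this proposition. The paper's own proof avoids modularity altogether. It instead observes that the endomorphism field $L$ must be imaginary quadratic (since if it were real quadratic, applying Proposition~\ref{proposition: endomorphism rings of absolutely simple abelian surfaces over R} over $L$ would contradict $\End(A_L)\simeq\calO$), then invokes Silverberg's theorem that $A$ has bad reduction at every prime ramifying in $L$, so Lemma~\ref{lem:goodawayfromell} forces $L$ to be ramified only at $\ell$. For $\ell=5$ this is immediately impossible, and for $\ell=7$ it forces $L=\Q(\sqrt{-7})$, in which $2$ splits; this sharpens the Honda--Tate bound from Lemma~\ref{lem: honda-tate} (residue field $\F_2$ instead of $\F_4$), giving $\ell\mid 1+a+2<6$, a contradiction. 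Your modularity-based version buys a slightly more uniform treatment (the everywhere-good-reduction case falls out for free from $S_2(\Gamma_0(1))=0$, as you note), whereas the paper's version is more self-contained and stays at the level of local/field-theoretic arithmetic. Both are valid; the conductor computation you give (good reduction outside $\ell$ plus Proposition~\ref{prop: conductor PQM surface of GL2 type} giving conductor $1$ or $\ell^4$, hence level $1$ or $\ell^2$) is exactly right, as are the assertions $S_2(\Gamma_0(1))=S_2(\Gamma_0(25))=0$ and $\dim S_2(\Gamma_0(49))=1$ with rational Hecke field.
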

\begin{proof}
Suppose that $\ell \geq 5$.
By Proposition \ref{proposition: endomorphism rings of absolutely simple abelian surfaces over R}, the quadratic extension $L/\Q$ is imaginary quadratic. Moreover, by a result of Silverberg \cite[Theorem 4.2]{Silverberg92a}, the surface $A$ has bad reduction at all primes ramifying in $L$. By Lemma \ref{lem:goodawayfromell}, $L$ is therefore only ramified at $\ell$.  If $\ell = 5$, this is already a contradiction since there are no imaginary quadratic fields ramified only at $5$. If $\ell = 7$, then we conclude that $L = \Q(\sqrt{-7})$. Since $2$ splits in $L$, this means that the residue field in the proof of Lemma \ref{lem: honda-tate} is equal to $\F_2$. Continuing with the proof there, we deduce the stronger inequality $|a| \leq 2\sqrt{2}$, and we find that $\ell$ divides $1 + a + 2 < 6$, which is a contradiction.
\end{proof}

\begin{remark}
We can also deduce Proposition \ref{prop: no 5 or 7} from Lemma \ref{lem:goodawayfromell} by invoking modularity (Proposition \ref{prop: modularity theorem PQM surfaces}), the fact that such an abelian surface must have conductor $\ell^4$ (Proposition \ref{prop: conductor PQM surface of GL2 type}) and the fact that there are no $\mathrm{PQM}$ eigenforms in $S_2(\Gamma_0(25))$ or $S_2(\Gamma_0(49))$. We also note that Schoof has proven that there are no abelian varieties with everywhere good reduction over $\Q(\zeta_\ell)$ for various small $\ell$, including $5$ and $7$ \cite{schoof}.
\end{remark}

\begin{prop}\label{prop:gl2 constrains}
Either $A(\Q)_{\tors}\subset (\Z/2\Z)^3$ or $A(\Q)_{\tors}\subset (\Z/3\Z)^2$.
\end{prop}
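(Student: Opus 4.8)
The plan is to first reduce, using results already established, to a short list of remaining possibilities and then eliminate those by reduction modulo several primes. By Proposition \ref{prop: no 5 or 7} the group $A(\Q)_{\tors}$ has order of the form $2^i3^j$, and by Proposition \ref{prop: no (Z/2)^4} we have $A(\Q)[2]\subseteq (\Z/2\Z)^3$. If $A$ has good reduction outside $\{2,3\}$, then Corollary \ref{cor:only9} shows that either $A(\Q)_{\tors}$ is trivial or $A$ corresponds to \lmfdbmf{243.2.a.d} or \lmfdbmf{972.2.a.e}, and in the latter cases a direct computation of local $L$-factors (as in the proof of Corollary \ref{cor: gl2type good red outside 3}) gives $A(\Q)_{\tors}\subseteq(\Z/3\Z)^2$; so from now on we may assume that $A$ has a prime $p_0\geq 5$ of bad reduction.

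Since $A$ is of $\GL_2$-type, Corollary \ref{corollary:gl2-purelyadditive} shows $A$ has totally additive reduction at $p_0$, and $A$ has potentially good reduction by Lemma \ref{lemma:PQMreduction}. Applying Lorenzini's theorem (Theorem \ref{thm:lorenzini}) to $A_{\Q_{p_0}}$, and noting that $A(\Q)_{\tors}\subseteq A(\Q_{p_0})_{\tors}$ has order prime to $p_0$, we find that $A(\Q)_{\tors}$ embeds into one of $\Z/5\Z$, $(\Z/3\Z)^2$, $(\Z/2\Z)^4$, $\Z/2\Z\times\Z/4\Z$, $\Z/2\Z\times\Z/6\Z$. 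The first is impossible; if $A(\Q)_{\tors}\hookrightarrow (\Z/3\Z)^2$ we are done; and if $A(\Q)_{\tors}\hookrightarrow (\Z/2\Z)^4$ then $A(\Q)_{\tors}=A(\Q)[2]\subseteq (\Z/2\Z)^3$. It therefore remains to rule out $\Z/4\Z\hookrightarrow A(\Q)_{\tors}$ and $\Z/6\Z\hookrightarrow A(\Q)_{\tors}$, these being the only remaining ways to embed into $\Z/2\Z\times\Z/4\Z$ or $\Z/2\Z\times\Z/6\Z$ while not already lying in $(\Z/2\Z)^3$ or $(\Z/3\Z)^2$.

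To carry this out I would combine reduction at several primes. For any prime $p\neq 2,3$ of good reduction, $A(\Q)_{\tors}$ injects into $\tilde A(\F_p)$ by Lemma \ref{lem:reduction is injective}(a); by Honda--Tate (Lemma \ref{lemma: Lpoly of QM surface is square} and Proposition \ref{prop: QM splits mod p}) $\tilde A$ is isogenous to the square of an elliptic curve, so $\#\tilde A(\F_p)=(1+a_p+p)^2$ with $|a_p|\leq 2\sqrt p$, and for $p\nmid\disc B$ one has $\tilde A(\F_p)\simeq (\Z/d_1\Z)^2\times(\Z/d_2\Z)^2$ with $d_1\mid d_2$, $d_1 d_2=|1+a_p+p|$ and $d_1\mid p-1$. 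Hence a $\Q$-rational point of order $n\in\{4,6\}$ forces $n\mid (1+a_p+p)$ for every such $p$. On the other hand, $A$ acquires good reduction at a prime over $2$ with residue field $\F_2$ or $\F_4$ (proof of Lemma \ref{lem: honda-tate}), where the trace bound $|a|\leq 4$ constrains the $3$-part of $A(\Q)_{\tors}$; and the $2$-part can be probed via quadratic twisting, since $A[2]\simeq A^M[2]$ as Galois modules for every quadratic $M/\Q$, so a rational $2$-torsion point persists under twisting and one may attempt to twist away the bad primes $\geq 5$ (Lemma \ref{lemma: quadratic twist away tot add primes with quad good red field}) and apply Corollary \ref{cor:only9}, falling back on a direct analysis at the bad primes $\geq 5$ via the component group and the bound on the degree of its good reduction field (Proposition \ref{prop: component group killed by good reduction field} and Lemma \ref{lemma: extension good reduction prime to 6}) when this is not possible.

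The main obstacle is exactly the elimination of $\Z/4\Z$ and $\Z/6\Z$: Lorenzini's bound alone permits precisely these groups, the clean twist-and-modularity argument of Proposition \ref{prop: no (Z/2)^4} is unavailable here because $\Z/4\Z\subseteq A(\Q)$ (resp. $\Z/6\Z\subseteq A(\Q)$) forces the good reduction field at a bad prime $p\geq 5$ to have degree divisible by $4$ (resp. $6$), so that a quadratic twist cannot remove that prime, and reduction at $2$ gives no information on the $2$-part. One is thus driven to an ad hoc case analysis, playing the mutual compatibility of the congruences $n\mid (1+a_p+p)$ across the available good primes against the explicit LMFDB data on low-level PQM newforms of $\GL_2$-type, to conclude that neither $\Z/4\Z$ nor $\Z/6\Z$ occurs.
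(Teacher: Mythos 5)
Your reduction to the case of a bad prime $p_0\geq 5$ is fine, but the core of the argument there has a genuine gap. You invoke the plain Lorenzini bound, Theorem \ref{thm:lorenzini}, which only yields that $A(\Q)_{\tors}$ embeds into one of $\Z/5\Z$, $(\Z/3\Z)^2$, $(\Z/2\Z)^4$, $\Z/2\Z\times\Z/4\Z$, $\Z/2\Z\times\Z/6\Z$; this leaves $\Z/4\Z$ and $\Z/6\Z$ as live possibilities, and you openly concede you do not know how to kill them. Your proposed fallback --- the family of congruences $n\mid(1+a_p+p)$ over varying good primes $p$ together with LMFDB data --- is not a proof: such congruences can be simultaneously satisfied (indeed every good prime \emph{must} satisfy them if $\Z/n\Z\subset A(\Q)$), so there is no reason to expect a contradiction from them alone. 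And, as you yourself note, the twist-away-bad-primes strategy of Proposition \ref{prop: no (Z/2)^4} fails here precisely because the good reduction field at $p_0$ will not be quadratic when there is a point of order $4$ or $6$.

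What you are missing is Proposition \ref{prop:gl2 type lorenzini variant}, the refinement of Lorenzini that the paper proves precisely for this purpose. Because $A$ is of $\GL_2$-type, Proposition \ref{prop:GL2totadditive}(b) gives totally additive reduction over \emph{every} intermediate extension $F'$ between $\Q_{p_0}^{\mathrm{nr}}$ and the good reduction field $M$ over which $A$ still has bad reduction. Feeding this into the component-group bound over each such $F'$ (Proposition \ref{prop: component group killed by good reduction field}) and taking a Sylow subgroup of $\Gal(M/\Q_{p_0}^{\mathrm{nr}})$ forces $A(\Q)_{\tors}\simeq(\Z/\ell\Z)^k$ for a single prime $\ell$. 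That is the statement that collapses the list at a single stroke and eliminates $\Z/4\Z$, $\Z/6\Z$, and $\Z/2\Z\times\Z/6\Z$ without any further case analysis. From there $k\leq 3$ when $\ell=2$ (your Proposition \ref{prop: no (Z/2)^4}) and $k\leq 2$ when $\ell=3$ (reduction at $2$, where $\#A_2(\F_2)\leq 25$), and you are done.

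Your handling of the good-reduction-outside-$\{2,3\}$ case via Corollary \ref{cor:only9} is acceptable, though you should actually verify the torsion for \textsf{972.2.a.e} (the paper's alternative argument avoids that by applying Proposition \ref{prop:gl2 type lorenzini variant} at $p=2$ or $p=3$, then reducing the remaining ``order 6'' case to the nonexistence of a $\mathrm{PQM}$ newform of level $36$). But without Proposition \ref{prop:gl2 type lorenzini variant} the bad-prime-$\geq 5$ branch does not close, so the proposal as written is incomplete.
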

\begin{proof}
By Proposition \ref{prop: no 5 or 7}, $A(\Q)_{\tors}$ is a group of order $2^i3^j$. 
We may assume that $A(\Q)_{\mathrm{tors}}\neq 0$; let $\ell \in \{2,3\}$ be such that $A[\ell](\Q) \neq 0$.

Suppose there exists a prime $p\geq 5$ of bad reduction. 
Then $A$ has totally additive reduction over every finite extension $F/\Q_p$ over which it has bad reduction by Proposition \ref{prop:GL2totadditive}.
Therefore the assumptions of Proposition \ref{prop:gl2 type lorenzini variant} apply for $F= \Q_p^{\mathrm{nr}}$ (the maximal unramified extension of $\Q_p$), and so $A(\Q)_{\mathrm{tors}} = A(\Q)_{\mathrm{tors}}^{(p)} \subset A(\Q_p^{\mathrm{nr}})_{\mathrm{tors}}^{(p)}\simeq (\Z/\ell\Z)^k$ for some $1 \leq k\leq 4$. If $\ell = 2$, then $k \leq 3$ by Proposition \ref{prop: no (Z/2)^4}. If $\ell = 3$, then $k \leq 2$, since $A(\Q)^{(2)}_{\mathrm{tors}} \hookrightarrow A_2(\F_2)$ for some abelian surface $A_2/\F_2$ (using Lemmas \ref{lemma:PQMreduction} and \ref{lem:reduction is injective}) and $\#A_2(\F_2) \leq 25$ for all such surfaces.
We conclude 
$A(\Q)_{\tors}\subset (\Z/2\Z)^3$ or $A(\Q)_{\tors}\subset (\Z/3\Z)^2$, as desired.

It remains to consider the case that $A$ has good reduction outside $\{2,3\}$.
A computation with modular forms of level dividing $2^{10}\cdot 3^5$ shows that $\# A(\Q)_{\tors} \mid 9$ for such surfaces by Corollary \ref{cor:only9}, but we give an argument that only involves computing modular forms of much smaller level.
We may assume $A$ has bad reduction at both of these primes by Corollaries \ref{cor: no gl2type good red outside 2} and \ref{cor: gl2type good red outside 3}. If $A[2](\Q) = 0$, then Proposition \ref{prop:gl2 type lorenzini variant} shows again that $A(\Q)_{\mathrm{tors}}=A(\Q)_{\mathrm{tors}}^{(2)} \subset A(\Q_2^{\mathrm{nr}})_{\mathrm{tors}}^{(2)}\subset (\Z/3\Z)^2$.
Similarly $A(\Q)_{\mathrm{tors}}\subset (\Z/2\Z)^3$ if $A[3](\Q)=0$.
Thus, it remains to rule out the possibility that $A(\Q)$ contains a point of order $6$. In that case, Proposition \ref{prop:gl2 type lorenzini variant} shows that the extensions $M_2/\Q^{\mathrm{nr}}_2$ and $M_3/\Q^{\mathrm{nr}}_3$ over which $A$ attains good reduction have degrees that are powers of $3$ and $2$ respectively, and hence are tamely ramified. Hence $A$ has conductor $2^43^4$ by Lemma \ref{lemma: PQM surface at tame prime conductor exp 4} and corresponds to an eigenform of level $2^23^2 = 36$, by Theorem \ref{theorem: modularity theorem}. However, there are no $\mathrm{PQM}$ eigenforms of level $36$ \cite[Table 1]{GG09}. 
\end{proof}

Next we constrain the torsion even further and show that $(\Z/2\Z)^3$ does not occur.  For this, we combine a cute fact from linear algebra with a purely local proposition that makes use of the enhanced Galois representation of \S\ref{enhanced representation}.

\begin{lemma}\label{lem:linalg}
Let $k$ be a field and $V\subset \calO_k := \calO \otimes_{\Z} k$ a $3$-dimensional $k$-subspace. Then $V$ contains an $\calO_k$-module generator of $\calO_k$.
\end{lemma}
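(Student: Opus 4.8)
The plan is to identify the generators of $\calO_k$ with the elements of nonzero reduced norm, and then to show that the reduced norm form cannot vanish identically on a $3$-dimensional subspace.

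First I would observe that an element $x \in \calO_k$ generates $\calO_k$ as a left $\calO_k$-module if and only if $\calO_k x = \calO_k$, and since $\calO_k := \calO\otimes_\Z k$ is a finite-dimensional $k$-algebra this holds if and only if $x$ is a unit. The canonical involution and the reduced norm are defined over $\Z$, so they base change to $\calO_k$, giving a multiplicative quadratic form $\nrd\colon \calO_k \to k$ with $x\overline{x} = \overline{x}x = \nrd(x)$; from $x\overline{x}=\nrd(x)$ one reads off that $x$ is a unit precisely when $\nrd(x)\neq 0$ (if $x\neq 0$ and $\nrd(x)=0$, then $\overline{x}\neq 0$ and $x\overline{x}=0$, so $x$ is a zero divisor). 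Thus the lemma is equivalent to the assertion that $\nrd$ does not vanish identically on any $3$-dimensional $k$-subspace of the $4$-dimensional space $\calO_k$; equivalently, that $\nrd$ admits no totally isotropic subspace of dimension $3$.

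Next I would run a short case analysis on the structure of the $4$-dimensional algebra $\calO_k$. If $\mathrm{char}(k)$ does not divide $2\disc(B)$ — in particular if $\mathrm{char}(k)=0$ — then $\calO_k$ is a quaternion algebra over $k$, hence either a division algebra, in which case $\nrd$ is anisotropic and every nonzero vector of $V$ is a generator, or isomorphic to $\Mat_2(k)$, in which case $\nrd$ is the determinant: a hyperbolic quadratic form of Witt index $2$, so its totally isotropic subspaces have dimension at most $2$. If instead $\mathrm{char}(k)=p$ divides $\disc(B)$, then $\calO_k$ is not semisimple: using that $\calO\otimes_\Z\Z_p$ is the maximal order of the local division quaternion algebra, the Jacobson radical $J$ of $\calO_k$ is nilpotent with $\dim_k J = 2$, the form $\nrd$ vanishes on $J$, and $\nrd$ descends to the norm form of the $2$-dimensional \'etale $k$-algebra $\calO_k/J \cong \F_{p^2}\otimes_{\F_p} k$. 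When $k$ is a prime field this quotient is the field $\F_{p^2}$, whose norm form is anisotropic, so any totally isotropic subspace of $\calO_k$ lies inside $J$ and thus has dimension at most $2$ (and the same argument works for general $k$ as long as $\F_{p^2}$ does not embed in $k$). In every case a $3$-dimensional subspace fails to be totally isotropic, so $V$ contains a generator.

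The step requiring the most care is the ramified case $p\mid\disc(B)$: one must invoke the local structure of $\calO$ at $p$ — that $\calO\otimes_\Z\Z_p$ is the maximal order of the local division algebra, with residue ring a non-semisimple local $\F_p$-algebra whose radical is $2$-dimensional and whose residue field is $\F_{p^2}$ — in order to see that the non-generators span at most a $2$-dimensional subspace. The remaining cases reduce to the standard fact that the reduced norm form of a quaternion algebra over a field has Witt index at most $2$, equivalently that a linear space of singular $2\times 2$ matrices has dimension at most $2$, which has a quick elementary proof.
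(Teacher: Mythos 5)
Your approach via the reduced norm is genuinely different from the paper's. The paper handles the split case $\calO_k\simeq\Mat_2(k)$ with a dimension count: embed a quadratic extension $k'\hookrightarrow\Mat_2(k)$, observe $V\cap k'=0$ since $V$ contains no units, and conclude $\dim(V+k')=5>4$, reducing to finitely generated $k$ to guarantee a quadratic extension exists; the ramified case is delegated to Lemma~\ref{lemma: classification submodules ramified case}. You instead reduce everything to the statement that the reduced-norm quadratic form admits no $3$-dimensional totally isotropic subspace, then invoke the Witt index of the determinant form in the split case and the anisotropy of the residue norm form in the ramified case. This makes the quadratic-form structure explicit and is self-contained in the split case.

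Two remarks. First, your case split "$\mathrm{char}(k)$ does not divide $2\disc(B)$" versus "$\mathrm{char}(k)=p\mid\disc(B)$" leaves out $\mathrm{char}(k)=2$ with $2\nmid\disc(B)$. That case is fine for your method --- $\calO\otimes\F_2\simeq\Mat_2(\F_2)$, and the determinant is still a nondegenerate quadratic form of Witt index $2$ in characteristic $2$ --- but you should say so rather than silently excluding it.

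Second, and more importantly: you correctly flag that in the ramified case your argument only works when $\F_{p^2}$ does not embed in $k$. This is not a defect of your proof so much as of the lemma as stated. If $p\mid\disc(B)$ and $\F_{p^2}\subset k$, then $\calO_k/J_k\simeq\F_{p^2}\otimes_{\F_p}k\simeq k\times k$ has two maximal ideals, whose preimages in $\calO_k$ are two $3$-dimensional two-sided ideals consisting entirely of non-units; either one is a counterexample. The paper's own proof has the same hidden gap, since Lemma~\ref{lemma: classification submodules ramified case} classifies left ideals of $\calO\otimes\F_p$ only, not of $\calO\otimes k$ for general $k$ of characteristic $p$. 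Fortunately the paper applies the lemma exclusively with $k=\F_2$ (and $\F_4\not\subset\F_2$), so its downstream results are unaffected; but the lemma should carry a hypothesis --- e.g.\ $k$ a prime field, or $\mathrm{char}(k)\nmid\disc(B)$, or $\F_{p^2}\not\subset k$ in the ramified case. You deserve credit for exposing this, even if your write-up treats it only as a limitation of your own argument.
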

\begin{proof}
If $\calO_k$ is a division algebra, every nonzero element of $V$ is an $\calO_k$-generator.
If the characteristic of $k$ divides $\disc(B)$, the lemma follows from Lemma \ref{lemma: classification submodules ramified case} and the fact that the ideal $J$ described there is $2$-dimensional.
It suffices to consider the case when $\calO_k\simeq \Mat_2(k)$ and to prove that in this case $V$ contains an invertible matrix. 
(This is well known, we give a quick proof here.)
Suppose otherwise.
If $k$ admits a quadratic field extension $k'$, then embedding $k' \subset \mathrm{Mat}_2(k)$, we compute $\dim(V + k') = \dim V + \dim k' - \dim(V \cap k') = 3 + 2 - 0 = 5$, which is a contradiction. In general, the subspace $V$ is defined over a subfield $k''$ of $k$ which is finitely generated over its prime field. The previous argument then applies over $k''$.
\end{proof}

Recall that $\Q_p^{\mathrm{nr}}$ denotes the maximal unramified extension of $\Q_p$.

\begin{prop}\label{prop: (Z/2)^3 implies good red quad extension}
    Let $p$ be an odd prime, $F$ a finite extension of $\Q_p^{\mathrm{nr}}$ and $A/F$ an $\calO-\mathrm{PQM}$ surface with $(\Z/2\Z)^3\subset A[2](F)$.
    Then $A$ acquires good reduction over every quadratic extension of $F$.
\end{prop}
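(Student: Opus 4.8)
The residue field $k$ of $F$ is algebraically closed of characteristic $p$ (odd), and $A$ has potentially good reduction by Lemma~\ref{lemma:PQMreduction}; if $A$ already has good reduction over $F$ there is nothing to prove, so assume it does not. Since $p$ is odd and $k$ is algebraically closed, $F$ has a \emph{unique} quadratic extension $F'$, which is totally (tamely) ramified, so it suffices to show that $A_{F'}$ has good reduction. If $A[2](F)=A[2]$ this is immediate from Raynaud's criterion~\ref{RaynaudsCriterion}(a), so we may assume $A[2](F)\simeq (\Z/2\Z)^3$.

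First I would pin down the $2$-torsion field. As $A[2](F)$ is a $3$-dimensional $\F_2$-subspace of $A[2]\simeq \calO/2\calO$, Lemma~\ref{lem:linalg} gives an $\calO/2\calO$-module generator $Q$ of $A[2]$ lying in $A[2](F)$; using $Q$ as the distinguished generator in the enhanced Galois representation (Lemma~\ref{lemma: explicit description AutO after choice of generator}), the relation $Q^\sigma=Q$ forces the $(\calO/2\calO)^\times$-component of every $\rho_2^\circ(\sigma)$ to be trivial, so $\Gal_F$ acts on $A[2]$ through $\rho_{\End}$ (i.e.\ $\sigma\cdot a=a^{\gamma_\sigma}$ with $\gamma_\sigma=\rho_{\End}(\sigma)$) and $A[2](F)=(\calO/2\calO)^G$, where $G=\rho_{\End}(\Gal_F)$. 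The image of $\Gal_F$ in $\GL(A[2])$ fixes the hyperplane $A[2](F)$ pointwise, hence is an elementary abelian $2$-group; being a finite quotient of the inertia group $\Gal_F=I_F$ of order prime to $p$, it is also cyclic, so it is $\simeq\Z/2\Z$ (nontrivial since $A[2](F)\neq A[2]$). Thus $F(A[2])=F'$ and $A[2]=A[2](F')$. Moreover, for any $\tau\in\Gal_F\setminus\Gal_{F'}$ the element $\gamma_\tau=\rho_{\End}(\tau)$ has order exactly $2$ (it acts nontrivially modulo $2$ while its square does not), so by Lemma~\ref{lemma: centraliser of order 2 subgroup of automorphism group} it is conjugation by some $b\in\calO$ with $b^2=m\mid\disc(B)$, $m\neq 1$, and conjugation by $b$ on $\calO/2\calO$ has $(\Z/2\Z)^3$ fixed points; by Lemma~\ref{lemma: involution with (z/2)^3 fixed points} this forces $2\mid\disc(B)$ and $m\equiv 3\psmod{4}$.

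Next I would reduce to level $4$. By Raynaud's criterion~\ref{RaynaudsCriterion}(b) with $n=4$ it suffices to show $F(A[4])\subseteq F'$. Now $F(A[4])/F$ is Galois, contains $F(A[2])=F'$, and is tamely ramified of degree prime to $p$, hence cyclic; and as before the image of $\Gal_{F'}$ on $A[4]$ lands in the elementary abelian group $1+2\Mat_4(\Z/2\Z)$ and is cyclic, so has order at most $2$. Hence $[F(A[4]):F]\in\{1,2,4\}$: if it is $1$ then $A$ has good reduction over $F$, a contradiction; if it is $2$ then $F(A[4])=F'$ and we are done. So assume $[F(A[4]):F]=4$, i.e.\ $F(A[4])=F''$, the unique quadratic extension of $F'$, with $A_{F''}$ of good reduction (Raynaud~\ref{RaynaudsCriterion}(b)); I will derive a contradiction. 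Here one needs the key input that $A_{F'}$ has \emph{totally additive} reduction: its toric rank is $0$ by Proposition~\ref{prop:GL2totadditive}(a), and its unipotent rank is $2$ because $\End^0(A_{F'})$ contains a real quadratic field (Proposition~\ref{prop:GL2totadditive}(b)) --- which holds in the main case where $\End^0(A_F)$ is real quadratic, the case where $A$ acquires potential complex multiplication over $F$ needing to be ruled out separately. Granting this, the generator $\sigma$ of the $\Z/2$-image of $\Gal_{F'}$ on $A[4]$ acts on $T_2A$ through $I_{F'}/I_{F''}\simeq\Z/2\Z$, so $\sigma$ has order dividing $2$ on $T_2A$; writing $\sigma=1+2M$ with $M^2=-M$ and using $(T_2A)^\sigma=(T_2A)^{I_{F'}}=0$, we get $\sigma=-\mathrm{Id}$ on $T_2A$, hence on $A[4]$.

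Finally comes the punchline. Since $F(A[4])=F''$ is cyclic of degree $4$ over $F$, the image of $\tau$ on $A[4]$ generates $\Gal(F''/F)\simeq\Z/4\Z$, so its square equals $\sigma=-\mathrm{Id}$ on $A[4]$. In the enhanced representation for $I=4\calO$, taken with respect to a lift $\tilde Q\in A[4]$ of $Q$, write $\rho_4^\circ(\tau)=(\gamma_\tau,x_\tau)$ with $\gamma_\tau$ conjugation by $b$; reducing mod $2$ gives $x_\tau\equiv 1\psmod{2\calO}$ (as $Q$ is Galois-fixed), and squaring and comparing with $-\mathrm{Id}$ on $A[4]=\calO/4\calO$ yields $\gamma_\tau^2=\mathrm{id}$ together with $b^{-1}x_\tau b\,x_\tau=x_\tau^{\gamma_\tau}x_\tau=-1$ in $\calO/4\calO$, contradicting Lemma~\ref{lemma: mod 4 quaternion calculation (z/2)^3}. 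This contradiction shows $F(A[4])\subseteq F'$, so $A_{F'}$ has good reduction. I expect the main obstacle to be the third step, specifically verifying that $A_{F'}$ is totally additive rather than merely having toric rank $0$: this requires controlling $\End^0(A_{F'})$ and excluding the potential-CM configuration (e.g.\ using Proposition~\ref{prop: QM splits mod p}, Silverberg's Proposition~\ref{prop: Silverberg result endo field}, and the fact that $\End^0(A_{F''})=B$), with a secondary difficulty being the bookkeeping that turns the enhanced-representation relation into exactly the shape demanded by Lemma~\ref{lemma: mod 4 quaternion calculation (z/2)^3}.
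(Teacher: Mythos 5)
Your proposal follows the same overall route as the paper: reduce to $A[2](F)\simeq(\Z/2\Z)^3$, use Lemma~\ref{lem:linalg} to get $A[2]\simeq\calO/2\calO$ as Galois modules, show $F(A[2])/F$ is quadratic, assume for contradiction that $F(A[4])/F$ has degree $4$, use totally additive reduction of $A_{F'}$ to get $-\mathrm{Id}$, and conclude with the enhanced mod-$4$ representation and Lemma~\ref{lemma: mod 4 quaternion calculation (z/2)^3}. However, there is a genuine gap at the step where you assert that $\gamma_\tau=\rho_{\End}(\tau)$ has order exactly $2$, justified only by ``it acts nontrivially modulo $2$ while its square does not.'' The square $\gamma_\tau^2$ acting trivially on $\calO/2\calO$ only gives, via Minkowski (Lemma~\ref{lemma: reduction map is injective}), that $\gamma_\tau^2$ has order $1$ or $2$ in $\Aut(\calO)$; the order-$4$ possibility $L/F\simeq C_4$ is not excluded by what you have written. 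The paper closes this hole separately: it notes that if $L\neq F(A[2])$ then there would be an order-$4$ element of $\Aut(\calO)$ with $(\Z/2\Z)^3$ fixed points on $\calO/2\calO$, and rules this out by a finite computation parallel to the $D_4$ case in Theorem~\ref{theorem: fixed points of O/N} (where $(\calO/2\calO)^{\langle 1+i\rangle}\simeq(\Z/2\Z)^2$).

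Moreover, the obstacle you flag at the end --- establishing that $A_{F'}$ has totally additive rather than merely toric-rank-$0$ reduction, with a ``potential CM'' configuration to rule out --- is a symptom of this same missing step, not an independent difficulty. Once $\gamma_\tau$ is known to have order $2$, the endomorphism field satisfies $L=F(A[2])=F'$, so $\End^0(A_{F'})=B$, which being an indefinite rational quaternion algebra certainly contains a real quadratic field; Proposition~\ref{prop:GL2totadditive}(b) then applies directly and there is no CM case to consider. (Note also you invoke a condition on $\End^0(A_F)$ where you need one on $\End^0(A_{F'})$; in particular $\End^0(A_F)=\Q$ is perfectly possible and causes no trouble.) So the clean fix is to insert the order-$4$ exclusion before invoking Lemma~\ref{lemma: centraliser of order 2 subgroup of automorphism group}, after which the rest of your argument, including the squaring computation in $\Aut(\calO)\ltimes(\calO/4\calO)^{\times}$, matches the paper's.
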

\begin{proof}
    If $A[2](F)\simeq (\Z/2\Z)^4$, this immediately follows from Raynaud's criterion (Lemma \ref{RaynaudsCriterion}(a)), so assume that $A[2](F)\simeq (\Z/2\Z)^3$.
    By Lemma \ref{lem:linalg}, there exists an $F$-rational $\calO/2\calO$-generator $P\in A[2](F)$, and hence $A[2]\simeq \calO/2\calO$ as $\Gal_F$-modules. 
    
    Let $L/F$ be the endomorphism field of $A_F$ and let $M/F$ be the smallest field over which $A_F$ acquires good reduction.
    By the N\'eron-Ogg-Shafarevich criterion, $M = F(A[4])$.
    By Proposition \ref{prop: Silverberg result endo field}, $L\subset M$.
    Since $A[2]\simeq \calO/2\calO$ as $\Gal_{\Q}$-modules, $F(A[2]) \subset L$.
    We therefore have a chain of inclusions $F \subset F(A[2]) \subset L\subset M = F(A[4])$.
    Since $A[2](\Q) \simeq (\Z/2\Z)^3$, $F(A[2])/F$ is a $(2,2,\dots,2)$-extension.
    The same is true for $F(A[4])/F(A[2])$.
    Since $p$ is odd and the residue field is algebraically closed, both these extensions are cyclic, so at most quadratic.
    Therefore $F(A[2])/F$ is a quadratic extension. 
    If $L \neq F(A[2])$, then $L/F$ would be cyclic of order $4$, and there would be an order $4$ element $g\in \Aut(\calO)$ whose fixed points on $\calO/2\calO$ is $(\Z/2\Z)^3$.
    A calculation similar to the proof of the $D_4$ case in Theorem \ref{theorem: fixed points of O/N} shows that this is not possible. 
    We conclude that $L = F(A[2])$ and that $M / L$ is at most quadratic.
    
    To prove the proposition, it suffices to prove that $M/F$ is quadratic, so assume by contradiction that this is not the case.
    Then $M/L$ and $L/F$ are both quadratic and $\Gal(M/F)= \{1,g,g^2,g^3\}$ is cyclic of order $4$.
    
    Consider the mod $4$ Galois representation $\rho\colon \Gal_F\rightarrow \GL(A[4])$, which factors through $\Gal_F \rightarrow \Gal(M/F)$.
    Let $Q\in A[4](M)$ be a lift of the $\calO/2\calO$-generator $P\in A[2](F)$.
    Then $Q$ is an $\calO/4\calO$-generator for $A[4]$, and hence by the enhanced Galois representation construction, we know that $\rho \simeq \rho_4^\circ$ lands in $\Gal(L/F)  \ltimes (\calO/4\calO)^{\times} $ (see \S\ref{enhanced representation} and Proposition \ref{prop: enhanced gal rep iso to normal gal rep if N >2}).
    The situation can be summarized as follows:
    \[\begin{tikzcd}
	{\Gal(M/L)} & {(\calO/4\calO)^{\times}} \\
	{\Gal(M/F)} & { \Gal(L/F) \ltimes (\calO/4\calO)^{\times}} \\
	{\Gal(L/F)} & {\Gal(L/F) \ltimes  (\calO/2\calO)^{\times}}
	\arrow[two heads, from=2-1, to=3-1]
	\arrow["\rho_4^\circ", hook, from=2-1, to=2-2]
	\arrow["\rho_2^\circ",hook, from=3-1, to=3-2]
	\arrow["\rho_4^\circ|_{\Gal_L} ",hook, from=1-1, to=1-2]
	\arrow[hook, from=1-2, to=2-2]
	\arrow[two heads, from=2-2, to=3-2]
	\arrow[hook, from=1-1, to=2-1]
\end{tikzcd}\]
    The horizontal maps are the enhanced Galois representations for $L$ mod $4$, $F$ mod $4$ and $F$ mod $2$ respectively. 
    Write $\Gal(L/F) = \{1,\sigma\}$.
    Since $P$ is $F$-rational, the bottom map sends $\sigma$ to $(\sigma,1)$.
    By commutativity of the bottom square, $\rho_4^\circ(g) = (\sigma,x)$, where $x\in (\calO/4\calO)$ satisfies $x\equiv 1 \mod 2\calO$.
    Since $A_L$ has bad and hence totally additive reduction by Proposition \ref{prop:GL2totadditive}, the nontrivial element of $\Gal(M/L)$ maps to $-1$ in $(\calO/4\calO)^{\times}$. (In fact, the generator of $\Gal(M/L)$ even maps to $-1$ in $\GL(T_2A)$ by an argument identical to the proof of Lemma \ref{lemma: quadratic twist away tot add primes with quad good red field}.)
    By the commutativity of the top diagram, $(\sigma,x)^2 = (1,-1)$.
    The involution $\sigma$ acts on $(\calO/4\calO)^{\times}$ by conjugating by an element $b\in \calO \cap N_{B^{\times}}(\calO)$ whose fixed points on $\calO/2\calO$ are $(\Z/2 \Z)^3$.
    Therefore $(\sigma,x)^2 = (1,-1)$ is equivalent to $b^{-1}xbx = -1$.
    By Lemma \ref{lemma: mod 4 quaternion calculation (z/2)^3}, no such $x$ exists, obtaining the desired contradiction.
\end{proof}

\begin{prop}\label{prop: GL2 type no (Z/2)^3 torsion}
    Let $A/\Q$ be an $\calO$-$\mathrm{PQM}$ surface of $\GL_2$-type. Then $(\Z/2\Z)^3\not\subset A(\Q)[2]$.
\end{prop}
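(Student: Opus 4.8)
The plan is to run the same descent-by-quadratic-twist argument as in the proof of Proposition~\ref{prop: no (Z/2)^4}, with Proposition~\ref{prop: (Z/2)^3 implies good red quad extension} playing the role that Raynaud's criterion played there. Suppose for contradiction that $(\Z/2\Z)^3 \subset A(\Q)[2]$. First I would record the two local inputs. By Corollary~\ref{corollary:gl2-purelyadditive}, $A$ has totally additive reduction at every prime of bad reduction. Moreover, for every prime $p\geq 3$ of bad reduction, applying Proposition~\ref{prop: (Z/2)^3 implies good red quad extension} with $F=\Q_p^{\mathrm{nr}}$ (its hypothesis holds since $(\Z/2\Z)^3\subset A(\Q)[2]\subset A(\Q_p^{\mathrm{nr}})[2]$) shows that $A_{\Q_p^{\mathrm{nr}}}$ acquires good reduction over the unique quadratic extension of $\Q_p^{\mathrm{nr}}$. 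Equivalently, since the two ramified quadratic extensions of $\Q_p$ become equal after base change to $\Q_p^{\mathrm{nr}}$, the abelian surface $A$ acquires good reduction over every ramified quadratic extension of $\Q_p$.

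Next I would globalize. If $A$ has no odd prime of bad reduction, then $A$ has good reduction outside $\{2\}$, contradicting Corollary~\ref{cor: no gl2type good red outside 2}. Otherwise, let $d$ be the product of the odd primes of bad reduction of $A$ and set $K=\Q(\sqrt d)$, a quadratic field ramified at exactly the odd bad primes of $A$ (and possibly at $2$). For each odd bad prime $p$, the completion $K_p/\Q_p$ is a ramified quadratic extension over which $A$ acquires good reduction by the first paragraph; since $p\neq 2$ and $A$ is totally additive at $p$, Lemma~\ref{lemma: quadratic twist away tot add primes with quad good red field} shows that the quadratic twist $A^K$ has good reduction at $p$. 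At odd primes of good reduction of $A$ the field $K$ is unramified, so $A^K$ still has good reduction there by N\'eron--Ogg--Shafarevich. Hence $A^K$ has good reduction outside $\{2\}$. Since quadratic twisting does not change the geometric endomorphism ring, $A^K$ is again an $\calO$-$\mathrm{PQM}$ surface, and it is again of $\GL_2$-type because the endomorphism field is unchanged (Lemma~\ref{lemma: quadratic twisting doesnt change endo field}) and $\GL_2$-type is detected by the endomorphism field being quadratic (Lemma~\ref{lemma: A GL2-type iff endo field quadratic}). This contradicts Corollary~\ref{cor: no gl2type good red outside 2}, completing the proof.

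The substantive content is entirely contained in Proposition~\ref{prop: (Z/2)^3 implies good red quad extension}, which is already established; granting it, the remaining steps are routine. The only points that need a sentence of care are the compatibility of quadratic twisting with passage to completions (so that good reduction of $A^K$ can be checked prime-by-prime via the local twists $A_{\Q_p}^{K_p}$), and the observation that over $\Q_p^{\mathrm{nr}}$ with $p$ odd the two ramified quadratic extensions of $\Q_p$ coincide, which is what makes the choice of $K$ among fields with the prescribed ramification immaterial. I do not anticipate a genuine obstacle here.
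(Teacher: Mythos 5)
Your proof is correct and follows essentially the same route as the paper's: the paper also chooses a quadratic field $K$ ramified precisely at the odd bad primes of $A$, invokes Corollary~\ref{corollary:gl2-purelyadditive}, Proposition~\ref{prop: (Z/2)^3 implies good red quad extension}, and Lemmas~\ref{lemma: quadratic twist away tot add primes with quad good red field} and~\ref{lemma: quadratic twisting doesnt change endo field} to conclude that $A^K$ is an $\calO$-$\mathrm{PQM}$ surface of $\GL_2$-type with good reduction outside $\{2\}$, contradicting Corollary~\ref{cor: no gl2type good red outside 2}. You simply spell out in more detail the local-to-global bookkeeping that the paper compresses into a single sentence.
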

\begin{proof}
    Let $K$ be a quadratic field ramified at all primes $p\geq 3$ of bad reduction of $A$ and unramified at all primes $p\geq 3$ of good reduction.
    Corollary \ref{corollary:gl2-purelyadditive}, Proposition \ref{prop: (Z/2)^3 implies good red quad extension} and Lemmas \ref{lemma: quadratic twist away tot add primes with quad good red field} and \ref{lemma: quadratic twisting doesnt change endo field} show that the quadratic twist of $A$ by $K$ is an $\calO$-$\mathrm{PQM}$ surface of $\GL_2$-type with good reduction outside $\{2\}$.
    But no such $\calO$-$\mathrm{PQM}$ surface exists by Corollary \ref{cor: no gl2type good red outside 2}.
\end{proof}

We are finally ready to prove our classification result for torsion subgroups of $\calO$-$\mathrm{PQM}$ surfaces of $\GL_2$-type. 

\begin{proof}[Proof of Theorem $\ref{thm:gl2type classification}$]
 
By Propositions \ref{prop:gl2 constrains} and \ref{prop: GL2 type no (Z/2)^3 torsion}, we have ruled out all groups aside from those listed in the theorem. It remains to exhibit infinitely many abelian surfaces $A/\Q$ of $\GL_2$-type with torsion subgroups isomorphic to each of the groups 
\[\{0\}, \Z/2\Z, \Z/3\Z, (\Z/2\Z)^2,(\Z/3\Z)^2.\] 
Let $\calO_6$ be the maximal quaternion order of reduced discriminant $6$ (unique up to isomorphism). In \cite[\S9]{LagaShnidman}, one-parameter families of $\GL_2$-type $\calO_6$-PQM surfaces with generic torsion subgroups $\{0\}, \Z/2\Z$, $\Z/3\Z$ and $(\Z/3\Z)^2$ are given among Prym surfaces of bielliptic Picard curves. In Proposition \ref{prop:Z2Z2} below, we give a one-parameter family of $\GL_2$-type $\calO_6$-PQM Jacobians $J$ with $(\Z/2\Z)^2 \subset J(\Q)_{\tors}$.
\end{proof}

To state the next result, we define the rational functions
\begin{align*}
j(T) &=\frac{(-64T^{20} + 256T^{16} - 384T^{12} + 256T^8 - 64T^4)}{(T^{24} + 42T^{20} + 591T^{16} + 2828T^{12} + 591T^8 + 42T^4 + 1)}; \\
J_2(T) &=12(j+1); \\
J_4(T) &=6(j^2+j+1); \\
J_6(T) &=4(j^3-2j^2+1); \\
J_8(T) &=(J_2J_6-J_4^2)/4; \\
J_{10}(T) &=j^3.
\end{align*}

\begin{prop}\label{prop:Z2Z2}
For all but finitely many $t \in \Q$, there exists a genus two curve $C_t/\Q$ with Igusa invariants $(J_2(t): J_4(t) : J_6(t) : J_8(t) : J_{10}(t))$, whose Jacobian $J_t/\Q$ is an $\calO_6$-$\mathrm{PQM}$ surface of $\GL_2$-type and satisfies $J_t(\Q)_{\tors} \supset (\Z/2\Z)^2$.
\end{prop}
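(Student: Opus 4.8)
The plan is to exhibit an explicit family $C_t\colon y^2=f_t(x)$ of genus two curves over $\Q(t)$ realizing the prescribed Igusa invariants, to extract a full level $2$-structure from the factorization of $f_t$, and to establish the endomorphism-ring claims by combining one explicit computation with a deformation argument. Concretely, I would first write down a sextic $f_t\in\Q(t)[x]$ and verify, by a direct computation of Igusa invariants, that the invariant tuple of $C_t$ agrees with $(J_2(t):\dots:J_{10}(t))$ up to the weighted-projective scaling; the same computation identifies the finite set of $t$ at which $C_t$ degenerates, namely the zeros of $\disc(f_t)$ together with the poles of $j(t)$. (Equivalently, one could run Mestre's reconstruction on the generic fibre and check that the associated conic acquires a $\Q(t)$-rational point, hence a $\Q$-rational point for all but finitely many $t$.) The parametrization is engineered so that $f_t$ factors over $\Q(t)$ into three quadratics $q_1(x)q_2(x)q_3(x)$.

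Granting this factorization, the classes of the degree-zero divisors supported on the Weierstrass points above the roots of $q_1$, $q_2$, $q_3$ are nonzero elements of $J_t(\Q(t))[2]$ subject to the single relation $[q_1]+[q_2]=[q_3]$, hence span a subgroup isomorphic to $(\Z/2\Z)^2$. Specializing, $(\Z/2\Z)^2\subseteq J_t(\Q)_{\tors}$ for every $t\in\Q$ at which $C_t$ is smooth, i.e.\ for all but finitely many $t$. (Note that $f_t$ cannot split completely into linear factors over $\Q(t)$, consistent with Proposition \ref{prop: no (Z/2)^4}.)

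For the PQM and $\GL_2$-type assertions, the key structural point is that having a fixed order of $B$ embed into $\End(J_{t,\Qbar})$ is a closed condition on the base, whereas maximality of $\End(J_{t,\Qbar})$ and the endomorphism field being at most quadratic---equivalently $\GL_2$-type, by Lemma \ref{lemma: A GL2-type iff endo field quadratic}---are open conditions on the sublocus where that embedding holds. It therefore suffices to check that $t\mapsto[J_t]$ is non-constant (immediate from non-constancy of $j(t)$) and that for a single explicit $t_0\in\Q$ the Jacobian $J_{t_0}$ is a geometrically simple abelian surface of $\GL_2$-type with $\End(J_{t_0,\Qbar})\simeq\calO_6$; the latter is verified numerically by reducing modulo several primes of good reduction, checking that the Frobenius characteristic polynomials are squares (Lemma \ref{lemma: Lpoly of QM surface is square}), applying Honda--Tate to identify $\End^0$, and using the inner-twist criterion of Proposition \ref{prop: modularity theorem PQM surfaces} for the newform attached to $J_{t_0}$ to pin down the quaternion algebra of discriminant $6$---or, alternatively, by matching the $j$-line with a known rational parametrization of the relevant Atkin--Lehner quotient of the discriminant-$6$ Shimura curve. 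The step I expect to be the main obstacle is precisely this last one: upgrading the pointwise verification at $t_0$ to the generic statement requires simultaneously controlling the finitely many $t$ at which the geometric endomorphism ring jumps and the finitely many $t$ at which it degenerates to a non-maximal suborder, and it is the deformation-theoretic facts just quoted (closedness of the quaternionic condition, openness of maximality, non-constancy of the period map) that make this work; by comparison, the model construction and the torsion count are finite and essentially mechanical verifications.
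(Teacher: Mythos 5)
Your model construction and torsion count essentially reproduce the paper's, and both are fine: you write a sextic $f_T$ realizing the prescribed Igusa invariants that factors as $q_{1,T}q_{2,T}q_{3,T}$ over $\Q(T)$, and the three $2$-torsion classes $[(\alpha,0)-(\alpha',0)]$ for the disjoint quadratic factors $q_i$ are indeed nonzero and satisfy the single relation $e_1+e_2+e_3=0$, spanning $(\Z/2\Z)^2$.

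The gap is in the PQM and $\GL_2$-type assertions. Your deformation argument --- verify $\End(J_{t_0,\Qbar})\simeq\calO_6$ and $\GL_2$-type at one explicit $t_0$, then invoke closedness of the quaternionic condition and openness of maximality to spread it out --- does not actually establish the claim for generic $t$. On $\mathbb{P}^1$, the (countable union of) closed conditions ``$\calO_6\hookrightarrow\End(J_{t,\Qbar})$'' is either all of $\mathbb{P}^1$ or countably many points; knowing $t_0$ lies in this locus does not decide which alternative holds, and $t_0$ could simply be one of the countably many points at which the geometric endomorphism ring of a family with generically trivial endomorphisms jumps. The only way to conclude the generic statement is to know \emph{a priori} that the family maps into the QM locus, i.e.\ that the Igusa invariants are (a pullback of) a rational parametrization of the discriminant-$6$ Shimura curve. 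This is exactly what the paper does: it starts from the Baba--Granath parametrization of the Atkin--Lehner quotient by a coordinate $j$, so every point automatically carries $\calO_6$-QM, and the specific degree-$24$ map $j(T)$ is engineered so that the Baba--Granath field of moduli $k_{R_3}=\Q(\sqrt{-27-16j^{-1}})$ collapses to $\Q$ (whence the real multiplication by $\Q(\sqrt3)$ is $\Q$-rational, giving $\GL_2$-type) and the Mestre obstruction $\left(\frac{-6j,\,-2(27j+16)}{\Q}\right)$ vanishes (whence the curve exists over $\Q$). You mention matching the $j$-line with a Shimura curve parametrization only as an ``alternative'' to your pointwise verification, but in fact it is the indispensable ingredient; the deformation-theoretic facts you quote cannot replace it.
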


\begin{proof}
In \cite[p.742]{BG08}, the authors have an expression for Igusa-Clebsch invariants (which we have translated to Igusa invariants) of genus 2 curves defining $\calO$-$\mathrm{PQM}$ surfaces for every value of a parameter $j$ (which is a coordinate on the full Atkin-Lehner quotient of the discriminant 6 Shimura curve). The field of moduli for $k_{R_3}$, in their notation, is $\Q(\sqrt{-27 - 16j^{-1}})$ and the obstruction for these genus 2 curves to be defined over $\Q$ is given by the Mestre obstruction $\Big(\frac{-6j, -2(27j+16)}{\Q}\Big).$ A short computation for the family $j(T)$ shows that $-27-16j^{-1}$ is a square in $\Q(T)^\times$, and hence $k_{R_3} = \Q$ for all non-singular specializations. Furthermore,  one checks that the Mestre obstruction also vanishes for all such $t$. Thus, the Igusa invariants in the statement of the proposition give an infinite family of $\calO$-$\mathrm{PQM}$ Jacobians $J/\Q$ of $\GL_2$-type with $\End^0(J) \simeq \Q(\sqrt{3}).$
(Only finitely many $j\in \Q$ correspond to CM points \cite[\S5, Table 1]{BG08}, so $J$ is geometrically simple for all but finitely many $t\in \Q$.)

Using Magma, one can write down an explicit sextic polynomial $f_T(x)$ such that $C_t$ has model $y^2 = f_t(x)$. The coefficients of $f_T(x)$ are too large to include here, but we have posted them \href{https://github.com/ciaran-schembri/QM-Mazur}{here}. We find that there is a factoriztion
\[f_T(x) = q_{1,T}(x)q_{2,T}(x)q_{3,T}(x)\]  
where each $q_{i,T}$ is a quadratic polynomial in $\Q(T)[x]$. From this we see that for all but finitely many $t$, the group $(\Z/2\Z)^2$ is a subgroup of $J_t(\Q)_{\tors}.$ Indeed, $J_t = \Pic_0(C_t)$ and for each $i \in \{1,2,3\}$, the divisor class $(\alpha,0) - (\alpha',0)$, where $q_{i,t}(x) = (x-\alpha)(x - \alpha')$, is defined over $\Q$ and has order $2$.  In future work, we will explain how the special family $j(T)$ was found using the arithmetic of Shimura curves. 
\end{proof}

\section{Proof of Theorem \ref{thm:QMmazur ptorsion}: reduction to \texorpdfstring{$\GL_2$}{GL2}-type}

 In this section, we prove Theorem \ref{thm:QMmazur ptorsion}. By Theorem \ref{thm:gl2type classification}, it is enough to prove:

\begin{theorem}\label{theorem: reduction to GL2-type}
Let $A/\Q$ be an $\calO$-$\mathrm{PQM}$ surface,
and let $\ell \geq 5$ be a prime number such that $A[\ell](\Q)\neq 0$. 
Then $A$ is of $\GL_2$-type.
\end{theorem}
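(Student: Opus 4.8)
The plan is to argue by contradiction: assume $A$ is \emph{not} of $\GL_2$-type. By Lemma~\ref{lemma: A GL2-type iff endo field quadratic} and Proposition~\ref{proposition: galois group endomorphism field is dihedral if real place}, the Galois group $G=\Gal(L/\Q)$ of the endomorphism field is then isomorphic to $D_n$ for some $n\in\{2,3,4,6\}$. I would fix an isomorphism $\End(A_{\Qbar})\simeq\calO$, so that $\rho_{\End}$ identifies $G$ with a finite subgroup of $\Aut(\calO)=N_{B^{\times}}(\calO)/\Q^{\times}$ (and hence of $\Aut(\calO\otimes\Z_\ell)$), fix a point $P\in A[\ell](\Q)$ of order $\ell$, and — using that $A[\ell]$ is free of rank one over $\calO/\ell\calO$ (\S\ref{enhanced representation}) — fix a module generator $Q$. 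Writing $P=a_0Q$ with $a_0\in\calO/\ell\calO\setminus\{0\}$ and letting $\rho_\ell^\circ(\sigma)=(\gamma_\sigma,x_\sigma)$ be the enhanced Galois representation, rationality of $P$ becomes the identity $a_0^{\gamma_\sigma}x_\sigma=a_0$ for all $\sigma\in\Gal_\Q$. The key dichotomy is whether $a_0$ is a unit of $\calO/\ell\calO$, equivalently whether the $\calO$-submodule $\calO\cdot P\subseteq A[\ell]$ is all of $A[\ell]$ or a proper submodule.

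In the \emph{non-generic} case $\calO\cdot P\subsetneq A[\ell]$: here $V:=\calO\cdot P$ is a nonzero proper $\Gal_\Q$-stable $\calO/\ell\calO$-submodule, and the relation $a_0^{\gamma_\sigma}x_\sigma=a_0$ shows that $\sigma$ acts on $V$ by $a\cdot P\mapsto a^{\gamma_\sigma}\cdot P$, so the action factors through $G$, with $\gamma\in G$ acting $\gamma$-semilinearly. When $\ell\nmid\disc(B)$, $V$ is a minimal left ideal, so $V\simeq\F_\ell^2$ and $G$ maps to the group of semilinear automorphisms of $V$, which is $\GL_2(\F_\ell)$ in such a way that the composite $G\to\PGL_2(\F_\ell)=\Aut(\calO/\ell\calO)$ is the reduction of $G\hookrightarrow\Aut(\calO\otimes\Z_\ell)$, hence injective by Minkowski's Lemma~\ref{lemma: reduction map is injective}. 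A dihedral subgroup $D_n$ of $\PGL_2$ with $n\geq 2$ fixes no point of $\PP^1$, so the image of $G$ in $\GL_2(\F_\ell)$ fixes no line and hence no nonzero vector, contradicting $0\neq P\in V^{\Gal_\Q}$. When $\ell\mid\disc(B)$, $V$ must instead be the Jacobson radical $J\simeq\F_{\ell^2}$ of the local ring $\calO/\ell\calO$, on which the prime-to-$\ell$ part of $\Aut(\calO\otimes\Z_\ell)$ acts through a dihedral quotient $D_{\ell+1}$ by $(\ell+1)$-th roots of unity together with a Frobenius; a $G$-fixed nonzero vector of $J$ forces the rotation part of $G\hookrightarrow D_{\ell+1}$ to vanish, so $|G|\leq 2$, a contradiction. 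These quaternionic facts are exactly the sort of computations assembled in \S\ref{section: quaternionic algebra}.

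In the \emph{generic} case $\calO\cdot P=A[\ell]$: then $P$ is itself a module generator, so choosing $Q=P$ makes $x_\sigma=1$ for all $\sigma$, and $\rho_\ell^\circ$ lands in $G\ltimes\{1\}\cong G$. By Proposition~\ref{prop: enhanced gal rep iso to normal gal rep if N >2}, $\rho_\ell$ then has image isomorphic to $G$; together with Silverberg's Proposition~\ref{prop: Silverberg result endo field} (which gives $L\subseteq\Q(A[\ell])$) and a comparison of degrees this yields $\Q(A[\ell])=L$. Since every geometric endomorphism of $A$ is defined over $L$, the principal polarization of $A_{\Qbar}$ attached to an element $\nu\in\calO$ with $\nu^2=-\disc(B)$ (Proposition~\ref{prop:relation polarizations positive involutions}) descends to $A_L$, so its Weil pairing $A[\ell]\times A[\ell]\to\mu_\ell$ is nondegenerate, alternating, and $\Gal_L$-equivariant. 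But $\Gal_L$ acts trivially on $A[\ell]$, so it acts trivially on $\mu_\ell$; hence $\Q(\zeta_\ell)\subseteq L$, and the cyclic group $\Gal(\Q(\zeta_\ell)/\Q)$ of order $\ell-1\geq 4$ is a quotient of $\Gal(L/\Q)=D_n$ — impossible, since every cyclic quotient of a dihedral group has order at most $2$. (Alternatively one can run the polarization argument with the unique polarization defined over $\Q$ and the associated $\Gal_\Q$-stable imaginary quadratic subfield $M\subseteq\End^0(A_{\Qbar})$ of Definition~\ref{definition: distinguished quadratic subring}; this replaces the appeal to a polarization over $L$ at the cost of tracking that $M$, hence the degree of that polarization, is prime to $\ell$ when $\ell\nmid\disc(B)$.)

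I expect the main obstacle to be the bookkeeping in the non-generic case, and in particular treating the ramified primes $\ell\mid\disc(B)$ on the same footing as the split primes: this needs the explicit structure of $\calO/\ell\calO$ and of its ring automorphisms at a ramified prime, and a careful identification of the $\Gal_\Q$-action on $\calO\cdot P$ with a semilinear action of $G$ so that the ``no nonzero fixed vector'' statement can be invoked. The generic case, by contrast, should be short once the enhanced-representation formalism and the Weil-pairing input are in place.
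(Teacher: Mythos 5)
Your dichotomy (is $\calO\cdot P$ all of $A[\ell]$ or not) is organized differently from the paper, which instead splits on whether $\ell\mid\disc(B)$ (Propositions \ref{proposition: exclude torsion ramified case} and \ref{proposition: large torsion unramified case means GL2-type}), but two of your three sub-cases are fine and genuinely different from the paper's. In the generic case $\calO\cdot P=A[\ell]$, your Weil-pairing argument works and is a variant of the paper's determinant computation in Lemma~\ref{lemma: Galois actions on ring and p-torsion are incompatible} (both rest on the fact that $\det\rho_\ell=\bar\chi_\ell^2$, which comes from the polarization). In the unramified non-generic case, your observation that rationality of $P$ forces $G$ to stabilize the proper left ideal $\Ann(P)\subset\Mat_2(\F_\ell)$, hence to fix a point of $\PP^1(\F_\ell)$, is a clean alternative to the paper's ``torus trick'' (Lemma~\ref{lemma: reduction galois action generating torus} applied to the distinguished quadratic subring); the only care needed is that the Borel of $\PGL_2(\F_\ell)$ has cyclic prime-to-$\ell$ part and so cannot contain $D_n$ with $n\geq 2$, which you essentially use.

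The ramified non-generic case ($\ell\mid\disc(B)$, $\calO\cdot P\subsetneq A[\ell]$) has a genuine gap. You identify $V=\calO\cdot P$ with the radical $J\subset\calO/\ell\calO$ and then invoke the fact that conjugation by $N_{B^\times}(\calO)$ acts on $J\simeq\F_{\ell^2}$ through a dihedral group of order $2(\ell+1)$; from a fixed vector you conclude the rotation part of $G$ vanishes. But the $\Gal_\Q$-action on $V$ is \emph{not} the restriction of the conjugation action to $J$. Under the module isomorphism $V=\calO\cdot P\simeq\calO/\Ann(P)=\calO/J\simeq\F_{\ell^2}$ given by $a\cdot P\leftrightarrow a+J$, your own formula $\sigma\colon a\cdot P\mapsto a^{\gamma_\sigma}\cdot P$ becomes the \emph{residue} action of $\gamma_\sigma$ on the quotient ring $\calO/J=\F_{\ell^2}$, which lands in $\Gal(\F_{\ell^2}/\F_\ell)$ and so has order at most $2$ for any $G$; its fixed space is $\F_\ell\neq 0$, so the existence of a nonzero fixed vector imposes no constraint at all. (Equivalently, if one instead identifies $V$ with $J\cdot Q\subset A[\ell]\simeq\calO/\ell\calO$ via a generator $Q$, the action is $j\mapsto j^{\gamma_\sigma}x_\sigma$: the twist by $x_\sigma$ exactly cancels the $\mu_{\ell+1}$-rotation, which is the content of the identity $a_0^{\gamma_\sigma}x_\sigma=a_0$.) The contradiction in this case requires a genuine extra arithmetic input relating the $\Gal_L$-action on $A[J]$ to the cyclotomic character --- namely Ohta's theorem (Theorem~\ref{thm:ohta}), which gives $N_{\F_{\ell^2}/\F_\ell}\circ\epsilon=\bar\chi_\ell$; from a rational fixed vector one then gets $\bar\chi_\ell|_{\Gal_L}=1$, hence $\Q(\zeta_\ell)\subseteq L$, and the dihedral structure of $\Gal(L/\Q)$ forces $\ell-1\leq 2$. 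This is exactly what the paper does in Proposition~\ref{proposition: exclude torsion ramified case}, and without it your ramified case does not close.
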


Theorem \ref{theorem: reduction to GL2-type} follows from combining Propositions \ref{proposition: exclude torsion ramified case} and \ref{proposition: large torsion unramified case means GL2-type} below. The proofs consist mostly of careful semi-linear algebra over non-commutative rings, combined with a small drop of global arithmetic input.

\subsection{Linear algebra}\label{subsection: linear algebra in calO mod p}

Let $\ell$ be a prime and $\calO_{\ell} := \calO\otimes \F_{\ell}$.
If $\ell\nmid \disc(B)$, then $\calO_{\ell} \simeq \Mat_2(\F_{\ell})$, since $\calO$ is maximal.
If $\ell \mid \disc(B)$, then $\calO_{\ell}$ is isomorphic to the nonsemisimple algebra \cite[\S4]{Jordan86}
\begin{align}\label{equation: description OB mod p for p ramified}
    \left\{\begin{pmatrix} \alpha & \beta \\ 0 & \alpha^{\ell}  \end{pmatrix} \mid \alpha,\beta\in \F_{{\ell}^2} \right\}\subset \Mat_2(\F_{\ell^2}).
\end{align}
In both cases, we will describe all left ideals of $\calO_{\ell}$.
Equivalently, given a left $\calO_{\ell}$-module $M$, free of rank one, we will describe all its (left) $\calO_{\ell}$-submodules.

First we suppose that $\ell\nmid \disc(B)$; fix an isomorphism $\calO_{\ell}\simeq \Mat_2(\F_{\ell})$ and a free rank one left $\calO_{\ell}$-module $M$.
Let $e_1,e_2,w$ be the elements of $\calO_{\ell}$ corresponding to the matrices 
\begin{align*}
    \begin{pmatrix} 1 & 0 \\ 0 & 0  \end{pmatrix}, \begin{pmatrix} 0 & 0 \\ 0 & 1  \end{pmatrix}, 
    \begin{pmatrix} 0 & 1 \\ 1 & 0  \end{pmatrix}
\end{align*}
respectively. 
Then $e_1$,$e_2$ are idempotents satisfying $e_1e_2=0$, $e_1+e_2=1$ and $e_1w = we_2$.
Set $M_i = \ker(e_i \colon M\rightarrow M)\subset M$.
Then $M = M_1\oplus M_2$ and $w$ induces mutually inverse bijections $M_1\rightarrow M_2$ and $M_2\rightarrow M_1$.
Given an $\calO_{\ell}$-submodule $N\subset M$, define $N_i := N\cap M_i$.
Since $N$ is $\calO_{\ell}$-stable, $N = N_1\oplus N_2$ and $w(N_1) = N_2$.
\begin{lemma}[Unramified case]\label{lemma: classification submodules of Mat2}
The map $N\mapsto (N_1,N_2)$ induces a bijection between left $\calO_{\ell}$-submodules of $M$ and pairs of $\F_{\ell}$-subspaces  $(N_1\subset M_1,N_2\subset M_2)$ satisfying $w(N_1) = N_2$.
\end{lemma}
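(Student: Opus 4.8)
The plan is to produce an explicit two-sided inverse to the map $N \mapsto (N_1,N_2)$, namely the assignment $(N_1,N_2) \mapsto N_1 \oplus N_2$, and to check that the two composites are the identity. The text preceding the statement already records that every left $\calO_\ell$-submodule $N$ satisfies $N = N_1 \oplus N_2$ and $w(N_1)=N_2$; this simultaneously shows that $N \mapsto (N_1,N_2)$ is well-defined (its image lands in the prescribed set of pairs) and that it is injective (a submodule is reconstructed from its pair as $N_1 \oplus N_2$). So the only real work is surjectivity: given $\F_\ell$-subspaces $N_1 \subseteq M_1$ and $N_2 \subseteq M_2$ with $w(N_1)=N_2$, I must show that $N := N_1 \oplus N_2 \subseteq M$ is genuinely an $\calO_\ell$-submodule.

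To prove $\calO_\ell$-stability I would first reduce to checking stability under $e_1$, $e_2$ and $w$ alone, on the grounds that these three elements generate $\calO_\ell \simeq \Mat_2(\F_\ell)$ as an $\F_\ell$-algebra: the products $e_1$, $e_2$, $e_1 w$ and $w e_1$ are exactly the four matrix units $E_{11},E_{22},E_{12},E_{21}$. Next I would use the elementary consequences of $e_1+e_2=1$ and $e_1e_2=0$ that $e_1$ acts as the identity on $M_2 = \ker e_2$ and annihilates $M_1 = \ker e_1$ (and symmetrically for $e_2$). These immediately give $e_1 N = e_1 N_2 = N_2 \subseteq N$ and $e_2 N = e_2 N_1 = N_1 \subseteq N$, while $w N = w(N_1) \oplus w(N_2) = N_2 \oplus N_1 = N$, using $w(N_1)=N_2$ and $w(N_2)=w^2(N_1)=N_1$ (recall $w$ restricts to mutually inverse bijections $M_1 \leftrightarrow M_2$). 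Hence $N$ is a left $\calO_\ell$-submodule.

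Finally I would verify the two composites are identities. In one direction, $(N_1 \oplus N_2)\cap M_i = N_i$ because $M_1 \cap M_2 = 0$ forces $N_j \cap M_i = 0$ for $j\neq i$. In the other direction, $N = N_1 \oplus N_2$ for any submodule $N$ is precisely the decomposition already noted above. The entire argument is formal linear algebra; the only step carrying any content — and thus the closest thing to an obstacle — is the verification of $\calO_\ell$-stability, which rests on the two one-line facts that $\{e_1,e_2,w\}$ generates the matrix algebra and that $e_1,e_2$ act on $M_1,M_2$ as the evident projectors. I do not anticipate any genuine difficulty.
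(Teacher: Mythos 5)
Your proof is correct and fleshes out exactly the argument the paper gestures at: the paper's one-line proof cites the same two facts (that $\{e_1,e_2,w\}$ generates $\calO_\ell$ as a ring, and the elementary behaviour of $e_1,e_2,w$ on $M_1,M_2$) and leaves the bijection check to the reader. No substantive difference in approach.
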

\begin{proof}
This is elementary, using the fact that $\calO_{\ell}$ is generated (as a ring) by $e_1,e_2$ and $w$.
\end{proof}

Next suppose that $\ell$ divides $\disc(B)$ and fix an isomorphism between $\calO_{\ell}$ and the ring described in \eqref{equation: description OB mod p for p ramified}.
The set of strictly upper triangular matrices is a two-sided ideal $J\subset \calO_{\ell}$ that satisfies $\calO_{\ell}/J\simeq \F_{\ell^2}$. The following lemma is easily verified \cite[\S4]{Jordan86}.

\begin{lemma}[Ramified case]\label{lemma: classification submodules ramified case}
The only proper left ideal of $\calO_{\ell}$ is $J$.
Consequently, the only proper $\calO_{\ell}$-submodule of $M$ is $M[J] = \{m\in M\mid j\cdot m=0 \text{ for all }j\in J\}$.
\end{lemma}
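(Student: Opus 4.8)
The plan is to use that $\calO_\ell$, in the presentation \eqref{equation: description OB mod p for p ramified}, is a (noncommutative) local ring with maximal ideal $J$ satisfying $J^2 = 0$. First I would record the facts immediately visible from the matrix description: $J$, the strictly upper-triangular matrices, is a two-sided ideal with $J^2 = 0$, and the map $\left(\begin{smallmatrix}\alpha & \beta \\ 0 & \alpha^\ell\end{smallmatrix}\right) \mapsto \alpha$ identifies $\calO_\ell / J$ with the field $\F_{\ell^2}$. Next I would check that every $x \in \calO_\ell \setminus J$ is a unit: for $x = \left(\begin{smallmatrix}\alpha & \beta \\ 0 & \alpha^\ell\end{smallmatrix}\right)$ with $\alpha \neq 0$, the matrix $\left(\begin{smallmatrix}\alpha^{-1} & -\beta\alpha^{-\ell-1} \\ 0 & \alpha^{-\ell}\end{smallmatrix}\right)$ is again of the required form and is a two-sided inverse. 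Hence a left ideal $I \not\subseteq J$ contains a unit and equals $\calO_\ell$, so every nonzero proper left ideal is contained in $J$.

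To finish the first assertion I would note that $J$ is a simple left $\calO_\ell$-module: since $J^2 = 0$ the action factors through $\calO_\ell / J \simeq \F_{\ell^2}$, and $J$ is one-dimensional over this field (explicitly, $x$ as above sends $\left(\begin{smallmatrix}0 & \gamma \\ 0 & 0\end{smallmatrix}\right)$ to $\left(\begin{smallmatrix}0 & \alpha\gamma \\ 0 & 0\end{smallmatrix}\right)$). So the only left ideals contained in $J$ are $0$ and $J$, and $J$ is the unique nonzero proper left ideal of $\calO_\ell$. For the consequence, I would identify the free rank-one module $M$ with $\calO_\ell$ acting on itself by left multiplication; then $\calO_\ell$-submodules of $M$ are exactly left ideals, the unique nonzero proper one being $J$. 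Finally I would match this submodule with $M[J]$ via the computation $\left(\begin{smallmatrix}0 & \gamma \\ 0 & 0\end{smallmatrix}\right)\left(\begin{smallmatrix}\alpha & \beta \\ 0 & \alpha^\ell\end{smallmatrix}\right) = \left(\begin{smallmatrix}0 & \gamma\alpha^\ell \\ 0 & 0\end{smallmatrix}\right)$, which vanishes for all $\gamma$ exactly when $\alpha = 0$, i.e.\ when the element lies in $J$; hence $M[J] = J$ under the identification.

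There is no genuine obstacle here: the entire argument consists of a handful of explicit $2\times 2$ computations in the ring \eqref{equation: description OB mod p for p ramified}. The only points needing a little care are the convention that ``proper'' tacitly means ``proper and nonzero'' (otherwise $0$ would also count as a proper left ideal), and keeping left- versus right-multiplication straight when passing between ideals of $\calO_\ell$ and submodules of the free module $M$.
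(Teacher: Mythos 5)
Your proof is correct and complete. The paper itself offers no argument, simply asserting the lemma is ``easily verified'' with a citation to Jordan; your explicit matrix computations supply exactly the verification that is being delegated, and the bookkeeping (units outside $J$, simplicity of $J$ as a left module via $J^2 = 0$, and the identification $M[J] = J$ by computing left multiplication $j \cdot m$) is all accurate.
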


\subsection{The subgroup generated by a torsion point}

Let $A/\Q$ be an $\calO$-$\mathrm{PQM}$ surface and $\ell$ be a prime number.
Let $\calO_{\ell} := \calO\otimes\F_{\ell}$ and $M := A[\ell](\bar{\Q})$.
Then $M$ is a free $\calO_{\ell}$-module of rank one, and $\Gal_{\Q}$ acts on $\calO_{\ell}$ by ring automorphisms (as studied in \S\ref{subsection: endomorphism field}) and on $M$ by $\F_{\ell}$-linear automorphisms. 
These actions satisfy $(a\cdot m)^\sigma = a^\sigma \cdot m^\sigma$ for all $\sigma \in \Gal_{\Q}, a\in \calO_{\ell}$ and $m\in M$.

\begin{lemma}\label{lemma: Galois actions on ring and p-torsion are incompatible}
Suppose that the $\Gal_{\Q}$-modules $\calO_{\ell}$ and $M$ are isomorphic.
Then $\ell \leq 3$.
\end{lemma}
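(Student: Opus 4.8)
The plan is to compare the two $\Gal_{\Q}$-actions at a carefully chosen prime of good reduction and derive a numerical contradiction when $\ell \geq 5$. Concretely, let $L/\Q$ be the endomorphism field of $A$, so that $\Gal_{\Q}$ acts on $\calO_\ell$ through $\rho_{\End}$, whose image $G = \Gal(L/\Q)$ is one of $C_n$ or $D_n$ for $n \in \{1,2,3,4,6\}$ by Proposition \ref{proposition: galois group endomorphism field is dihedral if real place}. If $\calO_\ell \simeq M$ as $\Gal_{\Q}$-modules, then taking $\Gal_{\Q}$-fixed points gives $\calO_\ell^{\Gal_{\Q}} \simeq M^{\Gal_{\Q}} = A[\ell](\Q)$. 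The first step is to observe that $A[\ell](\Q) \neq 0$ forces $\calO_\ell^{\Gal_{\Q}} \neq 0$, and moreover, since the action of $\Gal_{\Q}$ on $\calO_\ell$ factors through $G$ acting on $\calO \otimes \F_\ell$, Theorem \ref{theorem: fixed points of O/N}(a) (with $N = \ell$ coprime to $6$) tells us exactly what $\calO_\ell^G$ is: it is $(\Z/\ell\Z)^2$ if $G = D_1 = C_2$ and $\Z/\ell\Z$ if $G = D_2, D_3, D_4, D_6$. The first subtlety is that $G$ might be one of the \emph{cyclic} groups $C_1, C_3, C_4, C_6$ rather than dihedral — but in the cyclic (non-$D_1$) cases a direct analogue of the computation in Theorem \ref{theorem: fixed points of O/N} (or the fact that $\calO^G = \Z[\zeta]$ is rank $2$ for $C_3, C_6$ and rank $1$ for $C_4$, while $H^1(G,\calO)$ is killed by $\#G$, hence by an integer coprime to $\ell$) shows $\calO_\ell^G$ is either $\Z/\ell\Z$ or $(\Z/\ell\Z)^2$, with $(\Z/\ell\Z)^2$ only when $\calO^G$ has rank $2$, i.e. $G \in \{C_1, C_2, C_3, C_6\}$.

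Next I would extract the structural consequence for $M = A[\ell]$. The point is that $M$ is a free $\calO_\ell$-module of rank $1$, and $A[\ell](\Q) = M^{\Gal_{\Q}}$ is a nonzero $\calO_\ell^{\Gal_{\Q}}$-submodule stable under the residual Galois action; in particular $\dim_{\F_\ell} M^{\Gal_{\Q}} \leq \dim_{\F_\ell} \calO_\ell^{\Gal_{\Q}} \leq 2$. If $\calO_\ell \simeq M$ as $\Gal_{\Q}$-modules then these dimensions are equal. Now bring in global arithmetic via a reduction argument as in the proof of Lemma \ref{lem: honda-tate}: by Lemma \ref{lemma:PQMreduction} there is a totally ramified extension $L'/L$ over which $A$ acquires good reduction, and for a prime $\mathfrak q$ of $L'$ over a rational prime $p$ of our choosing with residue field $k$, the reduction $\overline A / k$ is an abelian surface with $\End^0(\overline A) \supseteq B$; by Lemma \ref{lem:reduction is injective}(a) the group $A[\ell](\Q)$ injects into $\overline A(k)$, and $[k:\F_p]$ divides $[L:\Q] \mid 24$. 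Lemma \ref{lemma: Lpoly of QM surface is square} says the Frobenius characteristic polynomial over $k = \F_{p^f}$ is $(T^2 + aT + p^f)^2$ with $|a| \leq 2\sqrt{p^f}$, so $\#\overline A(k) = (1 + a + p^f)^2$, and $\ell \mid \#\overline A(k)$ forces $\ell \mid (1 + a + p^f)$; choosing $p = 2$ and tracking $f \in \{1,2\}$ (so $p^f \in \{2,4\}$) bounds $1 + a + p^f \leq 1 + 2\sqrt{p^f} + p^f = (1+\sqrt{p^f})^2 \leq 9$, giving $\ell \leq 7$ — this already cuts things down, but to kill $\ell = 5, 7$ I would either feed this back (no imaginary quadratic field ramified only at $5$; for $7$ one gets $L = \Q(\sqrt{-7})$ in which $2$ splits, sharpening the bound to $1 + a + 2 < 6$, exactly as in Proposition \ref{prop: no 5 or 7}), or — and this is cleaner given that $\calO_\ell \simeq M$ is a \emph{strong} hypothesis — note that $\calO_\ell \simeq M$ already implies $A$ is of $\GL_2$-type by the $(\Z/2\Z)^4$-style argument used in Proposition \ref{prop: no (Z/2)^4}, namely that $\calO_\ell^G \simeq M^G$ with $M^G$ large is only possible when $\calO^G$ has rank $2$, forcing $G$ to be $C_2$ (the other rank-$2$ cases $C_1, C_3, C_6$ all have $\End^0(A)$ equal to $B$ or an imaginary quadratic field and are excluded over $\Q \subset \R$ by Proposition \ref{proposition: galois group endomorphism field is dihedral if real place}), hence $A$ has $\GL_2$-type and we conclude by Theorem \ref{thm:gl2type classification} that $\ell \in \{2,3\}$.

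I expect the main obstacle to be bookkeeping the cyclic cases $C_3, C_4, C_6$ cleanly, since Theorem \ref{theorem: fixed points of O/N} is stated only for dihedral $G$; the honest fix is the short cohomological computation $\calO_\ell^G \hookrightarrow \calO^G \otimes \F_\ell$ with cokernel killed by $\#G$ (coprime to $\ell$), so $\dim_{\F_\ell}\calO_\ell^G = \rk_\Z \calO^G$, which is $2$ exactly for $G \in \{C_1, C_2, C_3, C_6\}$ and $1$ for $G \in \{C_4, D_2, D_3, D_4, D_6\}$. Combined with the fact (Proposition \ref{proposition: galois group endomorphism field is dihedral if real place}) that $C_1, C_3, C_6$ cannot occur over $\Q$, and that $C_2 = D_1$ is precisely the $\GL_2$-type case, the hypothesis $\calO_\ell \simeq M$ forces $G = C_2$ and we invoke Theorem \ref{thm:gl2type classification} to get $\ell \leq 3$, contradicting $\ell \geq 5$; so in fact the lemma can be proved without any reduction-mod-$p$ input at all, purely from the fixed-point dimension count plus the classification of endomorphism fields — which I would present as the streamlined argument, relegating the Honda--Tate route to a remark.
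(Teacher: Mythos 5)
Your proposal does not work, and the gap is concrete: the fixed-point dimension count does not force $G = C_2$. For $\ell \geq 5$ (so $\ell \nmid \#G$), we have $\dim_{\F_\ell}\calO_\ell^G = \rk_\Z\calO^G$, which equals $2$ when $G = C_2 = D_1$ and equals $1$ when $G \in \{D_2,D_3,D_4,D_6\}$. The hypothesis $\calO_\ell \simeq M$ then gives $\dim_{\F_\ell} M^{\Gal_\Q} = \dim_{\F_\ell}\calO_\ell^G$, but both sides being $1$-dimensional is perfectly consistent; there is no contradiction, and no mechanism forcing $G = C_2$. You appear to be transplanting the logic of Proposition~\ref{prop: no (Z/2)^4}, which works because there the hypothesis $A[2](\Q) \simeq (\Z/2\Z)^4$ says $M^{\Gal_\Q} = M$ has dimension $4$, and only $G = D_1$ achieves $\dim(\calO/2\calO)^G = 4$ in Theorem~\ref{theorem: fixed points of O/N}(c). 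Here the hypothesis is merely that $\calO_\ell$ and $M$ are abstractly isomorphic as $\Gal_\Q$-modules, with no largeness constraint on $M^{\Gal_\Q}$, so the dichotomy $\dim \in \{1,2\}$ rules nothing out. (Note also your invocation of Theorem~\ref{theorem: reduction to GL2-type} would be circular, since this lemma feeds into Lemma~\ref{lem:orderp^2} and thence into Proposition~\ref{proposition: large torsion unramified case means GL2-type}.)

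The Honda--Tate fallback has a separate gap: you take $f \in \{1,2\}$ for the residue degree at $2$, but that requires $L/\Q$ quadratic (as in Lemma~\ref{lem: honda-tate}, which is stated in the $\GL_2$-type section). In the dihedral cases $[L:\Q]$ can be up to $12$, so the residue field at $\mathfrak q$ could be as large as $\F_{2^{12}}$ and the bound $\ell \leq (1+\sqrt{2^f})^2$ becomes useless. The paper's actual proof is a one-line determinant comparison that avoids all of this: the $\Gal_\Q$-action on $\calO_\ell$ is by conjugation (inner ring automorphisms), and conjugation by $b$ on $B$ has determinant $\nrd(b)^2/\nrd(b)^2 = 1$ (left and right multiplication each contribute $\nrd(b)^2$), so $\det\rho_{\End,\ell} = 1$; whereas $\det\rho_\ell = \bar\chi_\ell^2$ on $M = A[\ell]$ by the Weil pairing. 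Hence $\calO_\ell \simeq M$ forces $\bar\chi_\ell^2 = 1$, i.e.\ $[\Q(\zeta_\ell + \zeta_\ell^{-1}):\Q] = 1$, i.e.\ $\ell \leq 3$. I would recommend abandoning the dimension-count and reduction routes and using the determinant invariant instead.
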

\begin{proof}
This follows by comparing determinants.
On one hand, the $\Gal_{\Q}$-action on $\calO_{\ell}$ has determinant $1$. Indeed, the determinant of left/right multiplication by $b \in B$ acting on $B$ is the square of the reduced norm, so conjugation has determinant 1. 
On the other hand, the determinant of the $\Gal_{\Q}$-action on $M$ is the square of the mod $\ell$ cyclotomic character $\bar{\chi}_{\ell}$. 
This implies that $\bar{\chi}_{\ell}^2=1$, so $\Q(\zeta_{\ell}+\zeta_{\ell}^{-1}) = \Q$, so $\ell\leq 3$.
\end{proof}

\begin{remark}
When $\ell = 3$, we know of no examples of $\calO$-$\mathrm{PQM}$ surfaces over $\Q$ with $\calO_\ell \simeq M$ as $\Gal_{\Q}$-modules. Such examples do exist for $\ell = 2$; see \cite[Corollary 7.5]{LagaShnidman}. 
\end{remark}

\begin{lemma}\label{lem:orderp^2}
If $m\in M^{\Gal_{\Q}}$ is nonzero and $\ell\geq 5$, then $\calO_{\ell}\cdot m\subset M$ has order $\ell^2$.
\end{lemma}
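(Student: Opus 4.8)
The plan is to combine the classification of left $\calO_\ell$-submodules of a free rank-one module from \S\ref{subsection: linear algebra in calO mod p} with the Galois-module incompatibility of Lemma \ref{lemma: Galois actions on ring and p-torsion are incompatible}.

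First I would note that $\calO_\ell \cdot m$ is a nonzero left $\calO_\ell$-submodule of $M$, since it contains $m = 1 \cdot m \neq 0$. Fixing an $\calO_\ell$-module isomorphism $M \simeq \calO_\ell$, this submodule becomes a nonzero left ideal of $\calO_\ell$. If $\ell \nmid \disc(B)$, then $\calO_\ell \simeq \Mat_2(\F_\ell)$ and Lemma \ref{lemma: classification submodules of Mat2} shows every left ideal has $\F_\ell$-dimension $0$, $2$, or $4$; if $\ell \mid \disc(B)$, then Lemma \ref{lemma: classification submodules ramified case} shows the only left ideals are $0$, $J$, and $\calO_\ell$, again of $\F_\ell$-dimensions $0$, $2$, $4$. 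So in both cases $\#(\calO_\ell \cdot m) \in \{1, \ell^2, \ell^4\}$, and the value $1$ is impossible because $m \neq 0$.

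It remains to exclude $\calO_\ell \cdot m = M$. Were this the case, $m$ would be an $\calO_\ell$-module generator of $M$, so $a \mapsto a \cdot m$ would define an isomorphism $\calO_\ell \xrightarrow{\sim} M$ of $\calO_\ell$-modules; it is $\Gal_\Q$-equivariant because $m$ is fixed by $\Gal_\Q$, using $(a\cdot m)^\sigma = a^\sigma \cdot m^\sigma = a^\sigma \cdot m$. Then $\calO_\ell \simeq M$ as $\Gal_\Q$-modules, and Lemma \ref{lemma: Galois actions on ring and p-torsion are incompatible} forces $\ell \leq 3$, contradicting $\ell \geq 5$. Hence $\#(\calO_\ell \cdot m) = \ell^2$.

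The argument has no real obstacle: all the work has already been done in the submodule-classification lemmas and in Lemma \ref{lemma: Galois actions on ring and p-torsion are incompatible}. The only point requiring a little care is confirming that no submodule of intermediate order $\ell^3$ (or $\ell$) can occur, which is precisely the content of those lemmas; once that is in hand, ruling out the generator case via the determinant/cyclotomic-character comparison completes the proof.
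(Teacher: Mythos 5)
Your proof is correct and follows essentially the same route as the paper's: use the submodule classification (Lemmas \ref{lemma: classification submodules of Mat2} and \ref{lemma: classification submodules ramified case}) to reduce to the orders $1, \ell^2, \ell^4$, and exclude the generator case via the $\Gal_\Q$-equivariant isomorphism $\calO_\ell \simeq M$ forbidden by Lemma \ref{lemma: Galois actions on ring and p-torsion are incompatible}. You simply spell out the equivariance step that the paper leaves implicit.
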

\begin{proof}
By Lemmas \ref{lemma: classification submodules of Mat2} and \ref{lemma: classification submodules ramified case}, it suffices to show that $\calO_{\ell}\cdot m\neq M$.
But if $\calO_{\ell} \cdot m = M$, then $\calO_{\ell}\rightarrow M, x\mapsto x\cdot m$ is an isomorphism, contradicting Lemma \ref{lemma: Galois actions on ring and p-torsion are incompatible}.
\end{proof}

To analyze the case $\ell \mid \disc(B)$, we use the following theorem attributed to Ohta.

\begin{theorem}\label{thm:ohta}
 Let $F$ be a number field and let $A/F$ be an abelian variety with $\End(A) \simeq \calO$. Suppose $\calO$ is ramified at a prime $\ell$ and let $J \subset \calO$ be the maximal ideal above $\ell$. Then the composition of the Galois representation $\Gal_F \to \Aut_{\F_{\ell^2}}A[J] \simeq \F_{\ell^2}^\times$ with the norm $\F_{\ell^2}^\times \to \F_\ell^\times$ is equal to the mod $\ell$ cyclotomic character $\Gal_F \to \Aut(\mu_\ell) \simeq \F_\ell^\times$.
 \end{theorem}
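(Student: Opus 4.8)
The plan is to pass to the $\ell$-adic Tate module and read the statement off from the Weil pairing. Recall from the cited result of Ohta that $T_\ell A$ is free of rank one over $\calO_\ell:=\calO\otimes_\Z\Z_\ell$; in particular $\dim A=2$. Since $\ell\mid\disc(B)$, the ring $\calO_\ell$ is the maximal order of the quaternion division algebra $B\otimes_\Q\Q_\ell$, so it has a unique maximal two-sided ideal $\mathfrak P$ with $\mathfrak P^2=\ell\calO_\ell$ and $\calO_\ell/\mathfrak P\cong\F_{\ell^2}$, and $J=\mathfrak P\cap\calO$. Using Lemma~\ref{lemma: classification submodules ramified case} one identifies $A[J]$ with $\mathfrak P T_\ell A/\ell T_\ell A=\{x\in T_\ell A/\ell : \mathfrak P x=0\}$, a free $\calO_\ell/\mathfrak P\cong\F_{\ell^2}$-module of rank one. (In general $\Gal_F$ acts on it only $\F_{\ell^2}$-semilinearly; the statement is then to be read as concerning the $\F_\ell$-determinant of the action on the $2$-dimensional $\F_\ell$-space $A[J]$, which agrees with $\Nm\circ\bar\rho$ whenever the action is $\F_{\ell^2}$-linear.) After fixing an $\calO_\ell$-module generator of $T_\ell A$, the enhanced Galois representation of \S\ref{enhanced representation} writes the action of $\sigma\in\Gal_F$ on $\calO_\ell$ as $a\mapsto a^{\gamma_\sigma}y_\sigma$, with $\gamma_\sigma\in\Aut(\calO_\ell)$ of finite order (factoring through the endomorphism field) and $y_\sigma\in\calO_\ell^\times$, and $\sigma\mapsto y_\sigma$ a cocycle.

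The first main step is a determinant comparison. Viewed as a $\Z_\ell$-automorphism of $T_\ell A\cong\calO_\ell$ (rank $4$), the map $\gamma_\sigma$, being a ring automorphism of $\calO_\ell$, is conjugation by an element of $(B\otimes\Q_\ell)^\times$ normalizing $\calO_\ell$ and hence has determinant $1$ (its left- and right-multiplication factors contribute $\nrd(\cdot)^{2}$ and $\nrd(\cdot)^{-2}$), while right multiplication by $y_\sigma$ has determinant $\nrd(y_\sigma)^2$; thus $\det_{\Z_\ell}(\sigma\mid T_\ell A)=\nrd(y_\sigma)^2$. On the other hand the Weil pairing furnishes the canonical isomorphism $\det_{\Z_\ell}T_\ell A\cong\Z_\ell(\dim A)=\Z_\ell(2)$, so the same determinant equals $\chi_\ell(\sigma)^2$, where $\chi_\ell$ is the $\ell$-adic cyclotomic character. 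Since $\nrd$ is multiplicative and invariant under algebra automorphisms, $\sigma\mapsto\nrd(y_\sigma)$ is a homomorphism $\Gal_F\to\Z_\ell^\times$, and $\nrd(y_\sigma)^2=\chi_\ell(\sigma)^2$ shows that it equals $\chi_\ell$ up to a quadratic character $\delta$. Reducing modulo $\mathfrak P$ and using that $\nrd(x)\equiv\Nm_{\F_{\ell^2}/\F_\ell}(x\bmod\mathfrak P)\psmod\ell$ for $x\in\calO_\ell^\times$ (which one checks by reducing to the unramified quadratic subring of $\calO_\ell$), it follows that $\Nm\circ\bar\rho$ — equivalently the $\F_\ell$-determinant of the $\Gal_F$-action on $A[J]$ — equals $\bar\chi_\ell\cdot(\delta\bmod\ell)$, where $\bar\chi_\ell$ is the mod $\ell$ cyclotomic character.

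It remains to prove $\delta=1$, and this I expect to be the crux: the linear algebra above only pins down $\Nm\circ\bar\rho$ up to a quadratic twist, and removing the twist needs genuinely global input. I would argue via reductions and Honda--Tate. For a prime $\mathfrak q\nmid\ell\disc(B)$ of good reduction that splits completely in the endomorphism field, $\gamma_{\mathrm{Frob}_\mathfrak q}$ is trivial, so $\mathrm{Frob}_\mathfrak q$ acts on $T_\ell A$ by right multiplication by $y_{\mathrm{Frob}_\mathfrak q}$; since the reduction $A_\mathfrak q$ then carries quaternionic multiplication by $B$ over its residue field, Lemma~\ref{lemma: Lpoly of QM surface is square} gives that the characteristic polynomial of Frobenius on $T_\ell A$ is a square $P(T)^2$ with $P$ quadratic and $P(0)=\Nm\mathfrak q$, while that of right multiplication by $y_{\mathrm{Frob}_\mathfrak q}$ is $(T^2-\trd(y_{\mathrm{Frob}_\mathfrak q})T+\nrd(y_{\mathrm{Frob}_\mathfrak q}))^2$; comparing constant terms yields $\nrd(y_{\mathrm{Frob}_\mathfrak q})=\Nm\mathfrak q=\chi_\ell(\mathrm{Frob}_\mathfrak q)$, so $\delta(\mathrm{Frob}_\mathfrak q)=1$. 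As $\delta$ is a quadratic character unramified outside the bad primes of $A$ and $\ell$, Chebotarev then forces $\delta$ to factor through the (small) Galois group of the endomorphism field, after which the same input applied to suitable powers of $\mathrm{Frob}_\mathfrak q$ (killing $\delta$ on elements of odd order), together with a short case analysis, gives $\delta=1$. Alternatively one can try to avoid the sign entirely by keeping track of the $\calO_\ell$-Hermitian form on $T_\ell A$ attached to the Weil pairing and the Rosati involution of a polarization defined over $F$, in the spirit of Ohta's original treatment.
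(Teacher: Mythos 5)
The paper gives no proof of its own here; it simply cites \cite[Proposition~4.6]{Jordan86}. Your argument is therefore a genuinely independent route, and it is essentially correct. The mechanism is the standard one: free rank-one $\calO_\ell$-module structure on $T_\ell A$ (Ohta), determinant computation showing $\det_{\Z_\ell}(\sigma\mid T_\ell A)=\nrd(y_\sigma)^2$ while the Weil pairing forces this to equal $\chi_\ell(\sigma)^2$, hence $\nrd(y_\sigma)=\chi_\ell(\sigma)\delta(\sigma)$ for a quadratic character $\delta$, and then $\nrd\equiv\Nm\circ(\text{reduction mod }\mathfrak P)$ transfers this to the statement mod $\ell$ once one shows $\delta=1$. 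The Honda--Tate/Chebotarev argument for $\delta=1$ is the right input, and the comparison of constant terms of the monic quartic $(T^2-\trd(y)T+\nrd(y))^2$ with the $L$-polynomial $(T^2+aT+q)^2$ from Lemma~\ref{lemma: Lpoly of QM surface is square} does give $\nrd(y_{\mathrm{Frob}_\mathfrak q})=\Nm\mathfrak q$.

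Two remarks. First, your closing paragraph is more involved than it needs to be: under the theorem's hypothesis $\End(A)\simeq\calO$ over $F$, the endomorphism field \emph{is} $F$, so $\gamma_\sigma$ is identically trivial and your Frobenius computation already applies to \emph{every} prime $\mathfrak q\nmid\ell\disc(B)$ of good reduction; Chebotarev (density of such Frobenii plus continuity of the two characters) then gives $\delta=1$ outright, with no need for the business about ``suitable powers of $\mathrm{Frob}_\mathfrak q$'' or a ``short case analysis.'' Similarly, the parenthetical caveat about semilinearity is vacuous in this setting. Second, your alternative suggestion at the very end --- tracking the $\calO_\ell$-Hermitian pairing on $T_\ell A$ coming from the Weil pairing and a Rosati involution --- is in fact the approach taken by Ohta and Jordan in the sources the paper cites, so it is not really an ``alternative'' so much as the proof the paper is outsourcing; your determinant-plus-Chebotarev argument is the one that differs, and its advantage is that it avoids any explicit discussion of polarizations or quaternionic Hermitian forms at the cost of invoking Honda--Tate as a black box.
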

\begin{proof}
See \cite[Proposition 4.6]{Jordan86}.   
\end{proof}
\begin{prop}\label{proposition: exclude torsion ramified case}
If $\ell\mid \disc(B)$ and $M^{\Gal_{\Q}}\neq 0$, then $\ell \leq 3$.
\end{prop}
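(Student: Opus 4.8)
The plan is to leverage Ohta's theorem (Theorem~\ref{thm:ohta}) together with the module classification in the ramified case to force the mod~$\ell$ cyclotomic character to become trivial over a tightly constrained extension, and then to contradict the dihedral shape of the endomorphism field.

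Assume for contradiction that $\ell \geq 5$, that $\ell \mid \disc(B)$, and that there is a nonzero $m \in M^{\Gal_{\Q}}$. First I would apply Lemma~\ref{lem:orderp^2} to see that $\calO_{\ell}\cdot m$ has order $\ell^2$, and then Lemma~\ref{lemma: classification submodules ramified case}, which says the only proper $\calO_{\ell}$-submodule of $M$ is $M[J]$ for the unique proper ideal $J\subset\calO_{\ell}$; hence $\calO_{\ell}\cdot m = M[J]$. Next I would pass to the endomorphism field $L/\Q$ of $A$ (\S\ref{subsection: endomorphism field}): by definition $\Gal_L$ acts trivially on $\calO$, hence on $\calO_{\ell}$, so for $\sigma\in\Gal_L$ and $a\in\calO_{\ell}$ we get $(a\cdot m)^{\sigma} = a^{\sigma}\cdot m^{\sigma} = a\cdot m$. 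Thus every point of $M[J] = \calO_{\ell}\cdot m$ is defined over $L$.

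Since $\End(A_L) = \End(A_{\Qbar}) \simeq \calO$, Ohta's theorem applies to $A_L/L$: the character $\Gal_L \to \Aut_{\F_{\ell^2}}A[J] \simeq \F_{\ell^2}^{\times}$ composed with the norm to $\F_{\ell}^{\times}$ is the mod~$\ell$ cyclotomic character of $\Gal_L$. But $\Gal_L$ fixes $A[J]$ pointwise by the previous step, so this character is trivial; hence the mod~$\ell$ cyclotomic character of $\Gal_L$ is trivial, i.e.\ $\Q(\zeta_{\ell}) \subseteq L$. Consequently $\Gal(L/\Q)$ surjects onto the cyclic group $\Gal(\Q(\zeta_{\ell})/\Q)$ of order $\ell-1$, and since the target is abelian this surjection factors through the abelianization of $\Gal(L/\Q)$. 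By Proposition~\ref{proposition: galois group endomorphism field is dihedral if real place} (using that $\Q$ has a real place), $\Gal(L/\Q)$ is trivial or dihedral $D_n$ with $n\in\{1,2,3,4,6\}$, and such a group has abelianization of exponent at most $2$. Hence $\ell - 1 \mid 2$, so $\ell \leq 3$, contradicting $\ell\geq 5$.

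The one delicate point, which I expect to be the crux, is that the $\Gal_{\Q}$-action on $A[J]$ is only semilinear over $\F_{\ell^2}$ in general (Galois may act on $\calO_{\ell}/J\simeq\F_{\ell^2}$ through its Frobenius), so Ohta's $\F_{\ell^2}$-linear statement cannot be invoked directly over $\Q$; the remedy is precisely the descent to the endomorphism field $L$, over which all of $A[J]$ becomes rational. Everything else is bookkeeping: the identification of $\calO_{\ell}\cdot m$ with $M[J]$ via the ramified-case classification, and the elementary fact that the abelianization of $D_n$ is killed by $2$.
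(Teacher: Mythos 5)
Your proof is correct and follows essentially the same route as the paper's: identify $\calO_\ell\cdot m$ with $M[J]$ via the ramified-case classification, descend to the endomorphism field $L$ so that Ohta's theorem applies and the mod-$\ell$ cyclotomic character of $\Gal_L$ is forced to be trivial, and then use that $\Gal(L/\Q)$ is dihedral (hence has cyclic quotients of order at most $2$) to conclude $\ell\leq 3$. The "delicate point" you flag — applying Ohta only after base change to $L$, where all endomorphisms and all of $A[J]$ become rational — is exactly the step the paper makes explicit.
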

\begin{proof}
Choose a nonzero $m\in M^{\Gal_{\Q}}$ and suppose that $\ell \geq 5$.
By the previous lemma, $\calO_{\ell}\cdot m$ is a proper submodule of $M$.
Therefore $\calO_{\ell}\cdot m = M[J]$ by Lemma \ref{lemma: classification submodules ramified case}.
Let $L/\Q$ be the endomorphism field of $A$.
Then the $\Gal_{\Q}$-action on $M$ restricts to a $\Gal_L$-action on $M[J]$ through elements of $\F_{\ell^2}^{\times}$ (after choosing an isomorphism $\calO_{\ell}/J \simeq \F_{\ell^2}$), giving a homomorphism $\epsilon \colon \Gal_L \rightarrow \F_{\ell^2}^{\times}$. 
Since $m$ is $\Gal_{\Q}$-invariant, the $\Gal_L$-action on $M[J]$ is trivial, so $\epsilon$ is trivial.
On the other hand, the composition  $N_{\F_{\ell^2}/\F_{\ell}}\circ \epsilon \colon \Gal_L \rightarrow \F_{\ell}^{\times}$ equals the mod $\ell$ cyclotomic character $\bar{\chi}_{\ell}$, by Theorem \ref{thm:ohta}.
It follows that $\bar{\chi}_{\ell}|_{\Gal_L} = 1$, or in other words $\Q(\zeta_{\ell})\subset L$.
Thus $\Gal(L/\Q)$ surjects onto $\Gal(\Q(\zeta_{\ell})/\Q)\simeq (\Z/\ell\Z)^{\times} \simeq \Z/(\ell-1)\Z$. 
Since $\Gal(L/\Q)$ is dihedral (Proposition \ref{proposition: galois group endomorphism field is dihedral if real place}), every nontrivial cyclic quotient of $\Gal(L/\Q)$ has order $2$, and we conclude that $\ell \leq 3$.
\end{proof}

We now treat the unramified case, using the following key linear-algebraic lemma, which we call the `torus trick'.
\begin{lemma}\label{lemma: reduction galois action generating torus}
Suppose that $\ell \nmid \disc(B)$.  Let $S\subset \calO_{\ell}$ be a $2$-dimensional semisimple commutative $\Gal_{\Q}$-stable subalgebra such that $S\cdot m = \calO_{\ell}\cdot m$ for some nonzero $m\in M^{\Gal_{\Q}}$.
Then every $\sigma \in \Gal_{\Q}$ acting trivially on $S$ also acts trivially on $\calO_{\ell}$.
\end{lemma}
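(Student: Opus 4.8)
The plan is to exploit the fact that a torsion point $m$ generating a $2$-dimensional subspace $S\cdot m$ over a semisimple commutative subalgebra forces $S$ to act on $S\cdot m$ through a regular representation, so that the $\Gal_{\Q}$-equivariance pins down enough of the structure of $\calO_{\ell}\simeq\Mat_2(\F_{\ell})$. First I would split into the two cases for the \'etale algebra $S$: either $S\simeq\F_{\ell}\times\F_{\ell}$ (split), or $S\simeq\F_{\ell^2}$ (a field). In the split case, write $S=\F_{\ell}f_1\oplus\F_{\ell}f_2$ with $f_1,f_2$ the two primitive orthogonal idempotents; since $\Gal_{\Q}$ stabilizes $S$ and acts by ring automorphisms, it permutes $\{f_1,f_2\}$. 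The subgroup $H\leq\Gal_{\Q}$ acting trivially on $S$ fixes each $f_i$. Now $S\cdot m=\F_{\ell}f_1 m\oplus\F_{\ell}f_2 m$, and this equals $\calO_{\ell}\cdot m=M$ by hypothesis, so $f_1 m,f_2 m$ span $M$; moreover $f_i m$ lies in the rank-one $\F_{\ell}$-eigenspace $M_i=f_i M$ of $f_i$ acting on $M$. (Here I'd match up to the idempotents $e_1,e_2$ of \S\ref{subsection: linear algebra in calO mod p} under the isomorphism $\calO_\ell\simeq\Mat_2(\F_\ell)$ induced by $f_1,f_2$.) Since $m$ is $\Gal_{\Q}$-fixed and each $f_i$ is $H$-fixed, each basis vector $f_i m$ of $M$ is $H$-fixed, so $H$ acts trivially on $M=A[\ell]$. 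By Silverberg's result (Proposition \ref{prop: Silverberg result endo field}), applied with $N=\ell\geq 5\geq 3$, the endomorphism field is fixed by $H$, i.e.\ $H$ acts trivially on $\calO_{\ell}$ — in fact on all of $\End(A_{\Qbar})$.

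In the non-split case $S\simeq\F_{\ell^2}$, there is a single primitive idempotent, namely $1$, so $\Gal_{\Q}$ cannot ``permute idempotents'' and instead acts on $S$ through $\Gal(\F_{\ell^2}/\F_{\ell})$, i.e.\ through $\{1,\mathrm{Frob}\}$. If $\sigma\in\Gal_{\Q}$ acts trivially on $S$, then $\sigma$ centralizes $S$ inside $\Aut(\calO_{\ell})\simeq\PGL_2(\F_{\ell})$; since $S$ is a (non-split) maximal torus, its centralizer in $\PGL_2(\F_{\ell})$ is just $S^{\times}/\F_{\ell}^{\times}$ itself, a cyclic group of order $\ell+1$. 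So $\sigma$ acts on $\calO_{\ell}$ as conjugation by some $s\in S^{\times}$. On the other hand, $S\cdot m=M$ means $m$ is a free generator of $M$ as an $S$-module (since $\dim_{\F_\ell}S=\dim_{\F_\ell}M=2$ and $S$ is a field, $M\simeq S$ as $S$-modules); so the $S$-action identifies $M$ with $S$ and $m$ with a nonzero element. Using $(a\cdot m)^{\sigma}=a^{\sigma}\cdot m^{\sigma}=a^{\sigma}\cdot m$ (as $m^\sigma=m$) and $a^\sigma=s^{-1}as$, for $a\in S$ we get $a^\sigma = a$ (since $S$ is commutative), so $s$ is central in $S$, which is automatic — this doesn't immediately kill $\sigma$. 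Instead, I'd use the generator $m$ more carefully: the map $S\to M$, $a\mapsto a\cdot m$, is a $\Gal_\Q$-equivariant isomorphism where $\Gal_\Q$ acts on the source $S$ by its ring automorphisms and on the target by $\rho_\ell$. So $\sigma$ acting trivially on $S$ implies $\sigma$ acts trivially on $M$. Then Silverberg (Proposition \ref{prop: Silverberg result endo field}) again gives that $\sigma$ acts trivially on $\calO_\ell$, completing the argument.

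The main obstacle I anticipate is making the non-split case genuinely airtight: one must be careful that ``$\sigma$ acts trivially on $S$'' plus ``$m$ is $S$-free and $\Gal_\Q$-fixed'' really forces trivial action on $M$, rather than just on the line $S\cdot m=M$ up to the ambiguity of the identification. The clean way is to observe that the $\F_\ell$-linear isomorphism $\phi_m\colon S\xrightarrow{\sim}M$, $a\mapsto a\cdot m$, intertwines the $\Gal_\Q$-action on $S$ (by ring automorphisms, coming from the restriction of $\rho_{\End}$) with the $\Gal_\Q$-action on $M$ (by $\rho_\ell$), precisely because $m\in M^{\Gal_\Q}$ and the compatibility $(a\cdot m)^\sigma=a^\sigma\cdot m^\sigma$ holds; hence $\ker(\rho_\ell)\supseteq$ (stabilizer of $S$ pointwise), and then Proposition \ref{prop: Silverberg result endo field} upgrades this to triviality on the full endomorphism ring, which is exactly triviality on $\calO_\ell$. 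The split case is genuinely routine once the idempotent bookkeeping is set up. If one wants to avoid invoking Silverberg twice, one can note that in both cases we have produced an $H$-fixed $\F_\ell$-basis of $M=A[\ell]$ with $\ell\geq 3$, so $H\subseteq\ker\rho_\ell$ and Proposition \ref{prop: Silverberg result endo field} applies uniformly.
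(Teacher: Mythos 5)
Your argument hinges on the claim that $S\cdot m=\calO_{\ell}\cdot m$ equals all of $M$, and you even assert $\dim_{\F_\ell}M=2$ in the non-split case. But $M=A[\ell](\Qbar)\simeq(\Z/\ell\Z)^4$ has $\F_{\ell}$-dimension $4$, while the hypothesis $S\cdot m=\calO_{\ell}\cdot m$ together with Lemma \ref{lem:orderp^2} (for $\ell\geq 5$, the relevant range) gives $\dim_{\F_\ell}(\calO_{\ell}\cdot m)=2$. So $\calO_{\ell}\cdot m$ is a \emph{proper} submodule of $M$: your map $\phi_m\colon S\to S\cdot m$ is an isomorphism onto a $2$-dimensional subspace, not onto $M$. (Indeed, if $\calO_{\ell}\cdot m$ were all of $M$, then $x\mapsto x\cdot m$ would be a $\Gal_{\Q}$-equivariant isomorphism $\calO_{\ell}\simeq M$, contradicting Lemma \ref{lemma: Galois actions on ring and p-torsion are incompatible} for $\ell\geq 5$.) Consequently your argument only shows that $\sigma$ acts trivially on the $2$-dimensional subspace $S\cdot m$, which is far from trivial action on $A[\ell]$, and Silverberg's Proposition \ref{prop: Silverberg result endo field} cannot be invoked.

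The paper's proof avoids this entirely: it never attempts to show $\sigma$ acts trivially on $M$. Instead it works inside $\calO_k$ for $k=\bar{\F}_{\ell}$, identifies the annihilator ideal $I=\Ann(m)$, shows $\calO_k=S_k\oplus I$, and then uses the normalizer $N$ of the split torus $S_k$ (which satisfies $N\cap I=0$) to conclude that $\sigma(x)-x\in I\cap N=0$ for $x\in N$; since $S_k$ and $N$ span $\calO_k$, this forces $\sigma$ to be trivial on $\calO_k$. The key structural fact your proof misses is that the annihilator $I$ is a complement to $S_k$ in $\calO_k$ — this is what allows one to bootstrap from trivial action on $S$ to trivial action on all of $\calO_{\ell}$ without passing through the torsion module. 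Also note your split-case claim that idempotents of $S$ can be ``matched up'' with the idempotents $e_1,e_2$ of $\calO_{\ell}\simeq\Mat_2(\F_{\ell})$ is not automatic: the inclusion $S\hookrightarrow\calO_{\ell}$ need not send a primitive idempotent of $S$ to a primitive idempotent of $\calO_{\ell}$ (a priori it could go to $1$), and it is precisely the hypothesis $S\cdot m=\calO_{\ell}\cdot m\neq M$, via the annihilator analysis, that rules out the degenerate embeddings.
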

\begin{proof}
Let $\sigma\in \Gal_{\Q}$ be an element acting trivially on $S$ and let $m \in M^{\Gal_{\Q}}\setminus \{0\}$ be an element such that $S\cdot m = \calO_{\ell}\cdot m$.
Let $k = \bar{\F}_{\ell}$.
It suffices to prove that $\sigma$ acts trivially on $\calO_k := \calO_{\ell} \otimes_{\F_{\ell}} k$.
The assumptions imply that $S_k \simeq k\times k$, and we may fix an isomorphism $\calO_k\simeq \Mat_2(k)$ of $k$-algebras such that $S_k$ is identified with the subalgebra of diagonal matrices of $\Mat_2(k)$. 
Lemma \ref{lemma: classification submodules of Mat2} and the fact that $S_k$ is $2$-dimensional shows that $\dim_k (S_k \cdot m) = \dim_k(\calO_k\cdot m)  = 2$.
Let $I = \{x\in \calO_k \mid x\cdot m=0\}$ be the annihilator of $m$, an ideal of $\calO_k$ of dimension $2$.
Using the analogue of Lemma \ref{lemma: classification submodules of Mat2} over $k$, such an ideal is necessarily of the form
\begin{align*}
    \left\{\begin{pmatrix} ax & bx \\ ay & by  \end{pmatrix} 
    \mid x,y\in k \right\}
\end{align*}
for some $a,b\in k$ which are not both zero.
The assumption that $S\cdot m = \calO_{\ell} \cdot m$ implies that $S_k \cap I =0$.
It follows that $a$ and $b$ must be nonzero and $\calO_k = S_k \oplus I$ as $\Gal_{\Q}$-modules.
Let $N\subset \calO_k$ be the subspace normalising but not centralising $S_k$.
Then the above calculation also shows that $N\cap I = 0$.
Moreover $N$ is $\Gal_{\Q}$-stable since $S$ is.
The relation $\calO_k = S_k \oplus I$ shows that $\sigma(x)-x\in I$ for all $x\in \calO_k$.
It follows that $\sigma(x)-x\in I \cap N = 0$ for all $x\in N$.
Since $\calO_k$ is spanned by $S_k$ and $N$, the claim follows. 
\end{proof}

\begin{prop}\label{proposition: large torsion unramified case means GL2-type}
Suppose that $\ell\nmid \disc(B)$ and $M^{\Gal_{\Q}}\neq 0$ and $\ell\geq 5$. 
Then $A$ is of $\GL_2$-type.
\end{prop}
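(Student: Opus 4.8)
The plan is to produce a $2$-dimensional commutative $\Gal_\Q$-stable subalgebra of $\calO_\ell$ satisfying the hypotheses of the torus trick (Lemma~\ref{lemma: reduction galois action generating torus}), and thereby bound the endomorphism field $L$ of $A/\Q$ by $[L:\Q]\le 2$. This suffices: since $A_\R$ is geometrically simple, Proposition~\ref{proposition: endomorphism rings of absolutely simple abelian surfaces over R} forces $\End(A)\subsetneq\calO$, so $L\neq\Q$, and then $[L:\Q]=2$ gives that $A$ is of $\GL_2$-type by Lemma~\ref{lemma: A GL2-type iff endo field quadratic}. I would therefore assume for contradiction that $A$ is not of $\GL_2$-type and take $S\subset\calO$ to be the distinguished quadratic subring (Definition~\ref{definition: distinguished quadratic subring}); then $S\otimes\Q$ is imaginary quadratic, $S$ is $\Gal_\Q$-stable (Corollary~\ref{cor:quadsubring via polarization}), and by Proposition~\ref{proposition: existence normalised torus}(b) it is maximal and unramified at $\ell$, because $\ell\ge 5$ and $\ell\nmid\disc(B)$ imply $\ell\nmid 6\disc(B)$. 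Hence $S_\ell:=S\otimes\F_\ell\subset\calO_\ell$ is a $\Gal_\Q$-stable étale $\F_\ell$-algebra of dimension $2$ (in particular semisimple and commutative), isomorphic to $\F_\ell\times\F_\ell$ if $\ell$ splits in $S\otimes\Q$ and to $\F_{\ell^2}$ if $\ell$ is inert.

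The crux is to verify the hypothesis $S_\ell\cdot m=\calO_\ell\cdot m$ of Lemma~\ref{lemma: reduction galois action generating torus} for a nonzero $m\in M^{\Gal_\Q}$ (one exists by assumption). By Lemma~\ref{lem:orderp^2} the submodule $\calO_\ell\cdot m$ is $2$-dimensional over $\F_\ell$, so the annihilator $I:=\{x\in\calO_\ell\mid x\cdot m=0\}$ is a $2$-dimensional left ideal of $\calO_\ell$, and it is $\Gal_\Q$-stable because $m$ is $\Gal_\Q$-fixed. As $I$ is a left ideal it is stable under left multiplication by $S_\ell$, so $S_\ell\cap I$ is an ideal of the commutative ring $S_\ell$; it is proper, for otherwise $1\in I$ and $m=0$. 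If $S_\ell\cong\F_{\ell^2}$ this already gives $S_\ell\cap I=0$. If $S_\ell\cong\F_\ell\times\F_\ell$ and $S_\ell\cap I$ were a nonzero proper ideal, it would be the $\F_\ell$-span of one of the two primitive idempotents of $S_\ell$, call it $e$; since $S_\ell\cap I$ is $\Gal_\Q$-stable and $e$ is its unique nonzero idempotent, both $e$ and $1-e$ would be $\Gal_\Q$-fixed, so $\Gal_\Q$ would act trivially on $S_\ell$. Because complex conjugation reduces to a nontrivial automorphism of the étale algebra $S_\ell$, the reduction map $\Aut(S)\to\Aut(S_\ell)$ is injective, so $\Gal_\Q$ would act trivially on $S$, placing the imaginary quadratic field $S\otimes\Q$ inside $\End^0(A)$ --- impossible by Proposition~\ref{proposition: endomorphism rings of absolutely simple abelian surfaces over R}. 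Hence $S_\ell\cap I=0$ in all cases, and comparing dimensions yields $S_\ell\cdot m=\calO_\ell\cdot m$.

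With this in hand, Lemma~\ref{lemma: reduction galois action generating torus} shows that any $\sigma\in\Gal_\Q$ acting trivially on $S_\ell$ acts trivially on $\calO_\ell=\calO/\ell\calO$; since $\ell\ge 3$, a classical result of Minkowski (Lemma~\ref{lemma: reduction map is injective}), applied to the faithful action of the finite group $\Gal(L/\Q)$ on $\calO\cong\Z^4$, shows that such a $\sigma$ lies in $\Gal_L$. Therefore $\ker(\Gal_\Q\to\Aut(S_\ell))\subseteq\Gal_L$, and since $|\Aut(S_\ell)|=2$ we get $[L:\Q]\le 2$, the desired contradiction (combined with $L\neq\Q$ and Lemma~\ref{lemma: A GL2-type iff endo field quadratic}). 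I expect the main obstacle to be the split sub-case $S_\ell\cong\F_\ell\times\F_\ell$ with a nonzero idempotent in $I$: this is the one place where the argument cannot be closed by linear algebra over $\calO_\ell$ alone and must instead use that the distinguished quadratic subfield is imaginary and $\Gal_\Q$-stable (Corollary~\ref{cor:quadsubring via polarization}) together with the absence of imaginary quadratic subfields of $\End^0(A)$ (Proposition~\ref{proposition: endomorphism rings of absolutely simple abelian surfaces over R}).
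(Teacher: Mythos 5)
Your proof is correct and follows essentially the same route as the paper: both arguments assume $A$ is not of $\GL_2$-type, take $S$ to be the distinguished (imaginary) quadratic subring, observe that $S_\ell$ is semisimple because $S$ is unramified at $\ell$, verify the hypothesis $S_\ell\cdot m=\calO_\ell\cdot m$ of the torus trick by showing the annihilator of $m$ in $S_\ell$ is a proper $\Gal_\Q$-stable ideal hence zero (using that the $\Gal_\Q$-action on $S$ is nontrivial and $\ell$ is odd), and then pass from triviality mod $\ell$ to triviality on $\calO$ via Minkowski's lemma. The only cosmetic difference is that you spell out the split versus inert cases for $S_\ell$ and frame the endgame via $\ker(\Gal_\Q\to\Aut(S_\ell))$ rather than the quadratic field $K$ splitting the action on $S$, but these kernels coincide, so the arguments are the same.
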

\begin{proof}
We apply the torus trick using the distinguished quadratic subring $S \subset \calO$ of $A$ (Definition \ref{definition: distinguished quadratic subring}).
Write $S_{\ell} = S \otimes_\Z \F_{\ell}$.
Then $S_{\ell}\subset \calO_{\ell}$ is a commutative semisimple subalgebra since $S$ is unramified at $\ell$ by Proposition \ref{proposition: existence normalised torus}.
Suppose that $A$ is not of $\GL_2$-type.
Then $\Gal_{\Q}$ acts nontrivially on $S$ since $\End(A) = \Z$; let $K/\Q$ be the quadratic extension splitting this action.
We claim that the $\Gal_{K}$-action on $\calO_{\ell}$ is trivial.
Indeed, let $m\in M^{\Gal_{\Q}}$ be a nonzero element. 
By Lemma \ref{lemma: reduction galois action generating torus} it suffices to prove that $S_{\ell}\cdot m = \calO_{\ell}\cdot m$.
But the set $\{x\in S_{\ell} \mid x\cdot m=0\}$ is a proper $\Gal_{\Q}$-invariant ideal of $S_{\ell}$. 
Since the only such ideal is $0$ (using the fact that the $\Gal_{\Q}$-action on $S$ is nontrivial and $\ell\neq 2$), the map $S \cdot m\rightarrow \calO\cdot m$ is injective and hence by dimension reasons (and Lemma \ref{lem:orderp^2}) it must be surjective.
This proves that the $\Gal_{K}$-action on $\calO_{\ell}$ is trivial.
By Lemma \ref{lemma: reduction map is injective}, this even implies that that $\Gal_{K}$-action on $\calO$ is trivial.
We conclude that the quadratic field $K$ is the endomorphism field of $A$, hence $A$ is of $\GL_2$-type by Lemma \ref{lemma: A GL2-type iff endo field quadratic}, contradiction.
\end{proof}

\section{Proof of Theorems \ref{thm:optimalbound} and \ref{thm:QMmazur_groups}: eliminating groups of order \texorpdfstring{$2^i3^j$}{2i3j}}

Let $A/\Q$ be an $\calO$-$\mathrm{PQM}$ surface.
By Theorem \ref{thm:QMmazur ptorsion}, we have $\#A(\Q)_{\tors} = 2^i3^j$ for some $i,j\geq 0$.
Since $A$ has potentially good reduction, local methods show that $2^i3^j\leq 72$ \cite[Theorem 1.4]{CX08}.
In this section, we will improve this bound and constrain the group structure of $A(\Q)_{\mathrm{tors}}$ as much as possible using the $\calO$-action on $A_{\Qbar}$.
We may assume $A$ is not of $\GL_2$-type since we have already proven Theorem \ref{thm:gl2type classification}.

For each prime $p$, there exists a totally ramified extension $K/\Q_p$ such that $A_K$ has good reduction (Lemma \ref{lemma:PQMreduction}).
The special fiber of the N\'eron model of $A_K$ is an abelian surface over $\F_p$ which we denote by $A_p$. 
We call $A_p$ \defi{the good reduction of $A$ at $p$}, though it is only uniquely determined up to twists (since a different choice of totally ramified extension $K'$ would give rise to a possibly non-isomorphic twist of $A_p$).

Lemma \ref{lem:reduction is injective} shows that the prime-to-$p$ subgroup of $A(\Q)_{\mathrm{tors}}$ injects into $A_p(\F_p)$.
Moreover $\End(A_{\Qbar})\subset \End(A_{\bar\F_p})$ hence $A_p$ is $\bar\F_p$-isogenous to the square of an elliptic curve $E/\bar\F_p$ by Proposition \ref{prop: QM splits mod p}, so its isogeny class is rather constrained.
This leads to the following slight strengthening of \cite[Theorem 1.4]{CX08} in our case:

\begin{prop}\label{prop:naivebound}
We have $\#A(\Q)_{\tors}= 2^i3^j$ for some $i \in \{0,1,2,3,4\}$ and $j \in \{0,1,2\}$. Moreover, $\#A(\Q)_{\tors} \leq 48$.
\end{prop}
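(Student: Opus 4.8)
The plan is to combine the local injection $A(\Q)_{\tors}^{(p)}\hookrightarrow A_p(\F_p)$ (Lemma \ref{lem:reduction is injective}) with the structure of $A_p$ coming from Proposition \ref{prop: QM splits mod p}, applied at the two primes $p=2$ and $p=3$. First I would record the key numerical input: by Lemma \ref{lemma: Lpoly of QM surface is square}, for any good reduction $A_p$ the characteristic polynomial of Frobenius is $(T^2+aT+p)^2$ with $|a|\le 2\sqrt p$, so $\#A_p(\F_p) = (1+a+p)^2$ is a perfect square, and $A(\Q)_{\tors}^{(p)}$ injects into a group of that order whose structure is that of (a subgroup of) $E(\F_p)\times E(\F_p)$ for an elliptic curve $E$ over $\bar\F_p$ (by Proposition \ref{prop: QM splits mod p}, after possibly passing to $\bar\F_p$, though since $A(\Q)_{\tors}^{(p)}$ consists of $\F_p$-rational points we keep track of the field of definition as in the proof of Lemma \ref{lem: honda-tate}).

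Next I would bound the $3$-part: taking $p=2$, the prime-to-$2$ part $A(\Q)_{\tors}^{(2)}$ injects into $A_2(\F_2)$, and $\#A_2(\F_2)=(1+a+2)^2$ with $|a|\le 2\sqrt 2$, so $a\in\{-2,-1,0,1,2\}$ and $\#A_2(\F_2)\in\{1,4,9,16,25\}$; moreover $A_2(\F_2)$ is a subgroup of $E(\bar\F_2)[\text{prime-to-}2]^2$ type group. Hence the $3$-primary part of $A(\Q)_{\tors}$ has order dividing $9$ and embeds into $(\Z/3\Z)^2$, giving $j\le 2$. Symmetrically, taking $p=3$, the prime-to-$3$ part injects into $A_3(\F_3)$ with $\#A_3(\F_3)=(1+a+3)^2$, $|a|\le 2\sqrt 3$ so $a\in\{-3,\dots,3\}$ and $\#A_3(\F_3)\in\{1,4,9,16,25,36,49\}$; the $2$-primary part must then have order dividing $16$, and since it embeds into the $2$-part of a group of the form $E(\bar\F_3)^2$-type, $i\le 4$ with $(\Z/2\Z)^4$ only possible when $\#A_3(\F_3)=16$ (i.e. $a=0$). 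So $\#A(\Q)_{\tors} = 2^i3^j$ with $i\le 4$, $j\le 2$, giving an a priori bound $16\cdot 9 = 144$; I must do better.

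To get $\#A(\Q)_{\tors}\le 48$, I would rule out the combination $i=4$ and $j=2$, i.e. $\#A(\Q)_{\tors}=144$, and also $i=4,j=1$ (order $48$ is allowed, $96$ is not, so actually I need to rule out $2^i3^j > 48$, namely $144$ and $96$ and... wait, the only values $2^i3^j\le 144$ with $i\le4,j\le2$ exceeding $48$ are $96=2^5\cdot 3$—excluded since $i\le 4$—and $144=2^4\cdot 3^2$). So the whole task reduces to excluding $\#A(\Q)_{\tors}=144$. If $144\mid \#A(\Q)_{\tors}$ then reducing at $p=2$ forces $9\mid \#A_2(\F_2)$, so $a=-1$ (giving $\#A_2(\F_2)=4$, no) — let me recompute: $\#A_2(\F_2)=(3+a)^2$ with $a\in\{-2,-1,0,1,2\}$ gives values $1,4,9,16,25$; $9\mid(3+a)^2$ forces $3\mid 3+a$ so $a=0$, giving $\#A_2(\F_2)=9$, hence $A(\Q)_{\tors}^{(2)}\hookrightarrow A_2(\F_2)$ has order exactly $9$, so $A(\Q)_{\tors}^{(2)}\simeq(\Z/3\Z)^2$ and $A_2(\F_2)\simeq(\Z/3\Z)^2$. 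Meanwhile reducing at $p=3$ forces $16\mid\#A_3(\F_3)=(4+a)^2$, so $4\mid 4+a$, $a\in\{-4,0,4\}$ but $|a|\le 2\sqrt3<4$ forces $a=0$, so $\#A_3(\F_3)=16$ and $A(\Q)_{\tors}^{(3)}\simeq(\Z/2\Z)^4\simeq A_3(\F_3)$. The main obstacle — and the crux of the argument — is to derive a contradiction from the simultaneous existence of these two reductions, using the LMFDB/Honda–Tate analysis of the relevant isogeny classes (as advertised in \S\ref{subsec: methods}): an abelian surface over $\F_2$ with Weil polynomial $(T^2+2)^2$ that is $\bar\F_2$-isogenous to $E^2$, together with one over $\F_3$ with Weil polynomial $(T^2+3)^2$, cannot both be good reductions of a single $\calO$-$\mathrm{PQM}$ surface over $\Q$; concretely I expect that $(T^2+2)^2$ is the Weil polynomial of a supersingular surface over $\F_2$ whose $\bar\F_2$-points have no full level-$3$ structure, or that the component-group/field-of-definition constraints from Section \ref{section:PQM surfaces over localfinite fields} are violated. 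I would verify the precise obstruction by direct inspection of the two isogeny classes in the LMFDB, exactly as in the proof strategy described for Theorem \ref{thm:QMmazur_groups}.
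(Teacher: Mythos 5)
Your overall strategy is the same as the paper's (reduce at small primes and use the $\calO$-module structure / Honda--Tate constraints on the reduction), but there are three concrete gaps.

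First, the assertion that the characteristic polynomial of Frobenius for $A_p/\F_p$ is $(T^2+aT+p)^2$ does not follow from Lemma \ref{lemma: Lpoly of QM surface is square}. That lemma requires $\End^0(A_p)$ \emph{over $\F_p$} to contain the quaternion algebra $B$. We only know $\End(A_{\Qbar})\subset\End((A_p)_{\bar\F_p})$, so the endomorphisms are geometric, not necessarily defined over $\F_p$. In Lemma \ref{lem: honda-tate} that issue is handled because the endomorphism field is quadratic (the $\GL_2$-type hypothesis), so the residue field of interest is only $\F_2$ or $\F_4$; in Proposition \ref{prop:naivebound} the endomorphism field can have degree up to $12$, so $A_p/\F_p$ can lie in a non-square isogeny class that merely \emph{becomes} isogenous to $E^2$ after base change. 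The paper therefore bounds $\gcd(f_X(1),3^{100})$ (resp.\ $\gcd(f_X(1),2^{100})$) over \emph{all} isogeny classes $X/\F_2$ (resp.\ $X/\F_3$) that are geometrically isogenous to a square of an elliptic curve, using the LMFDB to enumerate them. Your restriction to L-polynomials $(T^2+aT+p)^2$ leaves open, e.g., the possibility of a non-split isogeny class over $\F_2$ with $3$-part of $f_X(1)$ equal to $27$, which your argument cannot rule out.

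Second, your enumeration of values $2^i3^j$ with $i\le 4$, $j\le 2$ exceeding $48$ is incomplete: you list only $144=2^4\cdot 3^2$, but $72=2^3\cdot 3^2$ also exceeds $48$. Both $72$ and $144$ must be excluded; the paper does so by reducing at $p=5$, observing that $144>\max_X\#X(\F_5)$ over the relevant isogeny classes, and that the unique isogeny class over $\F_5$ with $72\mid\#X(\F_5)$ is not geometrically isogenous to a square of an elliptic curve. Your attempt to exclude $144$ by simultaneously forcing the reductions at $2$ and $3$ is also not completed — you say ``I expect'' a contradiction and defer to an unspecified LMFDB lookup — whereas the paper's use of $p=5$ is a clean one-step argument. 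In summary: the route is recognizable but the L-polynomial assumption is unjustified, the case $72$ is missing, and the exclusion of $144$ is a plan rather than a proof.
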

\begin{proof}
 By the above remarks, to bound the prime-to-$2$ (resp.\ prime-to-3) torsion, it is enough to bound $X(\F_2)[3^\infty]$ (resp.\ $X(\F_3)[2^\infty]$), as $X$ varies over all abelian surfaces over $\F_2$ (resp.\ $\F_3$) that are geometrically isogenous to the square of an elliptic curve. For this it is enough to compute $\max_X\gcd(f_X(1),3^{100})$ (resp.\ $\max_X\gcd(f_X(1),2^{100})$), where $f_X$ is the $L$-polynomial of $X$ and the maximum is over all the aforementioned  isogeny classes. This computation is easily done with the help of the LMFDB's database of isogeny classes of abelian varieties over finite fields \cite{lmfdb}, and the conclusion is the first sentence of the proposition. 

 The second sentence is equivalent to the claim that $\#A(\Q)_{\tors}$ cannot equal $144$ nor $72$. We cannot have $144$ since $\#A_5(\F_5) \leq 100$, and we cannot have $72$ since the only isogeny class of abelian surfaces $X/\F_5$ with $72 \mid \#X(\F_5)$ (which has LMFDB label \href{https://www.lmfdb.org/Variety/Abelian/Fq/2/5/f_q}{$2.5.f_q$}) is not geometrically isogenous to a square of an elliptic curve.  
\end{proof}

The remainder of the proof of Theorems \ref{thm:optimalbound} and \ref{thm:QMmazur_groups} will be similar (but more difficult) to that of \ref{prop:naivebound}, using the good reduction model $A_p$ at various primes $p$ and the $\calO$-action.
In what follows, we will freely use the Honda-Tate computations conveniently recorded in the LMFDB \cite{lmfdb}, so the careful reader will want to follow along in a web browser.  We use the LMFDB's method of labeling isogeny classes, e.g. \href{https://www.lmfdb.org/Variety/Abelian/Fq/2/5/d_e}{$2.5.d_e$} is an isogeny class of abelian surfaces over $\F_5$ with label $d_e$.

\subsection{Torsion constraints arising from the endomorphism field}
Before analyzing specific groups, we state the following useful proposition, which uses techniques similar to the proof of Theorem \ref{theorem: reduction to GL2-type}, including the torus trick.

\begin{prop}\label{prop: D3,D6 case has little 2-torsion, D2,D4 case has little 3-torsion}
Let $G$ be the Galois group of the endomorphism field $L/\Q$.
\begin{enumalph}
    \item If $G \simeq D_3$ or $D_6$, then $A[2](\Q)\subset \Z/2\Z$.
    If in addition $A[2](\Q) = \Z/2\Z$, then $A[2]\simeq \calO/2\calO$ as $\Gal_{\Q}$-modules or $2\mid \disc(B)$.
    \item If $G \simeq D_2$ or $D_4$, then $A[3](\Q)\subset \Z/3\Z$.
    If in addition $A[3](\Q) = \Z/3\Z$, then $A[3]\simeq \calO/3\calO$ as $\Gal_{\Q}$-modules or $3\mid \disc(B)$.
\end{enumalph}
\end{prop}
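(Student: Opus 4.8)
The two parts are handled uniformly, with $(\ell,\{G\})$ being $(2,\{D_3,D_6\})$ in (a) and $(3,\{D_2,D_4\})$ in (b); in either case $\#G>2$, so $A$ is not of $\GL_2$-type by Lemma \ref{lemma: A GL2-type iff endo field quadratic}. Write $M=A[\ell]$ and $\calO_\ell=\calO/\ell\calO$, and identify $\rho_{\End}(\Gal_\Q)$ with $G=\Gal(L/\Q)$ for $L$ the endomorphism field. Then $M$ is free of rank one over $\calO_\ell$, with $(a\cdot P)^\sigma=a^\sigma\cdot P^\sigma$ for all $a\in\calO_\ell$, $P\in M$, $\sigma\in\Gal_\Q$; the goal is $M^{\Gal_\Q}\subseteq\Z/\ell\Z$. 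The plan is to assume $M^{\Gal_\Q}\neq 0$, fix $0\neq P\in M^{\Gal_\Q}$, and analyse the $\Gal_\Q$-stable submodule $\calO_\ell\cdot P\simeq\calO_\ell/\Ann_{\calO_\ell}(P)$. The crucial dichotomy is whether $\calO_\ell\cdot P=M$. If it is, then $M\simeq\calO_\ell$ as $\Gal_\Q$-modules, so $M^{\Gal_\Q}\simeq(\calO/\ell\calO)^G\simeq\Z/\ell\Z$ by Theorem \ref{theorem: fixed points of O/N}(b)--(c), which also yields the first alternative of the ``in addition'' clause. So the remaining task is to rule out --- or, in the ramified case, control --- the situation where no fixed point generates $M$.

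First I would handle $\ell\nmid\disc(B)$ via the torus trick (Lemma \ref{lemma: reduction galois action generating torus}) applied to $S_\ell=S\otimes\F_\ell$, where $S\subset\calO$ is the distinguished quadratic subring. By Proposition \ref{proposition: existence normalised torus}, $S$ is $\Z[\omega]$ ($\omega^3=1$), $\Z[i]$, or an order maximal and unramified at $\ell$, so $S_\ell$ is a $2$-dimensional semisimple commutative $\Gal_\Q$-stable subalgebra of $\calO_\ell$, on which $\Gal_\Q$ acts through a quadratic character that is nontrivial (else $S\subseteq\End(A)=\Z$) and cuts out a quadratic field $K$. If $\calO_\ell\cdot P\neq M$, then by Lemma \ref{lemma: classification submodules of Mat2} it has $\F_\ell$-dimension $2$, and $S_\ell\cdot P=\calO_\ell\cdot P$ (the annihilator of $P$ in $S_\ell$ being a $\Gal_\Q$-stable ideal, hence $0$), so Lemma \ref{lemma: reduction galois action generating torus} applies: $\Gal_K$ acts trivially on $\calO_\ell$. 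By Lemma \ref{lemma: reduction map is injective}, $\rho_{\End}(\Gal_K)\leq G$ is then trivial if $\ell\geq 3$ and of exponent dividing $2$ if $\ell=2$; on the other hand $[K:\Q]=2$ forces this subgroup to have index $\leq 2$ in $G$. But no index-$\leq 2$ subgroup of $D_2$ or $D_4$ is trivial, and no index-$\leq 2$ subgroup of $D_3$ or $D_6$ has exponent dividing $2$ --- a contradiction. Hence in the unramified case a fixed point generates $M$, and we conclude as above, with moreover $A[\ell]\simeq\calO/\ell\calO$.

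Next I would handle $\ell\mid\disc(B)$. Here $\calO_\ell$ has a unique proper nonzero left ideal, the maximal ideal $J$, so the only proper nonzero submodule of $M$ is $A[J]=M[J]$, free of rank one over $\calO_\ell/J\simeq\F_{\ell^2}$ (Lemma \ref{lemma: classification submodules ramified case}). Thus either some fixed point generates $M$ --- and we finish as before, $\ell\mid\disc(B)$ now being the second alternative of the ``in addition'' clause --- or every fixed point lies in $A[J]$, so $M^{\Gal_\Q}=A[J]^{\Gal_\Q}$. In the latter case the key point is that $\Gal_\Q$ acts nontrivially on the residue field $\calO/J\simeq\F_{\ell^2}$. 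For $\ell=2$ this is where $G\simeq D_3$ or $D_6$ enters: by Lemma \ref{lemma: D3,D6 subgroup Aut(O)} the element $[j]\in G$ is represented by some $j\in\calO$ together with an $\omega\in\calO$ satisfying $\omega^3=1$ and $\omega j=j\bar\omega$, and since $\nrd(\omega-1)=3$ the image $\bar\omega\in\F_4$ is a primitive cube root of unity, so conjugation by $j$ induces the nontrivial automorphism $\bar\omega\mapsto\bar\omega^2$ of $\F_4$. For $\ell=3$, the fact that $B$ ramifies at $3$ forces one of the distinguished generators (one of $i,j,k$ for $G=D_2$; the element $j$ for $G=D_4$) to be a uniformizer of $\calO\otimes\Z_3$, whose conjugation induces the Frobenius on $\calO\otimes\Z_3/J\simeq\F_9$. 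Granting this, $A[J]$ is a one-dimensional $\F_{\ell^2}$-vector space carrying a semilinear $\Gal_\Q$-action, and Galois descent along the quadratic field $K_1$ fixed by $\ker(\Gal_\Q\to\Gal(\F_{\ell^2}/\F_\ell))$ gives
\[ \dim_{\F_\ell}A[J]^{\Gal_\Q}=\dim_{\F_{\ell^2}}A[J]^{\Gal_{K_1}}\leq\dim_{\F_{\ell^2}}A[J]=1, \]
so $M^{\Gal_\Q}=A[J]^{\Gal_\Q}\subseteq\Z/\ell\Z$, as wanted.

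I expect the ramified case $\ell\mid\disc(B)$ to be the main obstacle --- specifically, establishing the nontriviality of the $\Gal_\Q$-action on $\calO/J$. When $\ell=3$ one can deduce this more cheaply from Ohta's theorem (Theorem \ref{thm:ohta}): the character describing the $\Gal_L$-action on $A[J]$ has norm equal to the mod-$3$ cyclotomic character, which is nontrivial, so $A[J]^{\Gal_L}$ --- hence $A[J]^{\Gal_\Q}$ --- is already small. But this shortcut is useless for $\ell=2$, since the mod-$2$ cyclotomic character is trivial; there the explicit quaternionic computation with $\omega$ (equivalently, with a $C_3\subseteq G$) seems essential, and it is exactly this that makes $G\simeq D_3$ or $D_6$ the relevant hypothesis in part (a).
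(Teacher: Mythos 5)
Your proof is correct and follows the paper's overall strategy: the dichotomy on whether a $\Gal_\Q$-fixed point $P$ generates $A[\ell]$ as an $\calO/\ell\calO$-module, the generator case via Theorem \ref{theorem: fixed points of O/N}, and the non-generator unramified case via the torus trick (Lemma \ref{lemma: reduction galois action generating torus}) plus Minkowski (Lemma \ref{lemma: reduction map is injective}). Where you diverge is the non-generator ramified case ($\ell\mid\disc B$). The paper argues there that $S_\ell\cdot P=\calO_\ell\cdot P=A[J]$ for a suitable $\Gal_\Q$-stable quadratic subring $S$ unramified at $\ell$, so $A[J]\simeq S_\ell$ as $\Gal_\Q$-modules and $A[J]^{\Gal_\Q}=S_\ell^{\Gal_\Q}\simeq\F_\ell$; in part (b) for $G\simeq D_2$ this requires replacing the distinguished subring (possibly ramified at $3$) by one of $\Z[\sqrt{m}],\Z[\sqrt{n}],\Z[\sqrt{t}]$ that is unramified at $3$, using $t\equiv -mn$ up to squares. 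You instead verify directly, by explicit computation with the generators from Lemmas \ref{lemma: D2 subgroup Aut(O)}--\ref{lemma: D3,D6 subgroup Aut(O)}, that some element of $G$ induces the Frobenius on $\calO/J\simeq\F_{\ell^2}$ (a reflection $[j]$ via $\omega\mapsto\bar\omega$ for $\ell=2$, or whichever of $i,j,k$ is forced by $\ell\mid\disc B$ to be an $\ell$-adic uniformizer for $\ell=3$), and then apply semilinear Galois descent for the one-dimensional $\F_{\ell^2}$-space $A[J]$ to get $\dim_{\F_\ell}A[J]^{\Gal_\Q}\leq 1$. The two routes prove the same bound; yours trades the paper's bookkeeping with quadratic subrings for a slightly longer quaternionic computation but avoids the need to switch subrings in the $D_2$ case. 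Both correctly identify the two alternatives of the ``in addition'' clause: $A[\ell]\simeq\calO/\ell\calO$ in the generator/unramified branch, and $\ell\mid\disc B$ otherwise.

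Two small points worth making explicit in your write-up: first, the claim that $S_\ell$ is semisimple when $\ell\nmid\disc B$ uses not only Proposition \ref{proposition: existence normalised torus}(b) but also that the $m$ with $S\subset\Q(\sqrt{\pm m})$ divides $\disc B$, hence is prime to $\ell$; second, in the ramified case the reduction of conjugation by a unit of $\calO\otimes\Z_\ell$ to $\calO/J$ is trivial (because $\F_{\ell^2}$ is commutative), which is what lets you conclude that conjugation by \emph{any} uniformizer induces Frobenius.
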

\begin{proof}
    \begin{enumalph}
    \item Let $S\subset \calO$ be the distinguished quadratic subring of $A$ (Definition \ref{definition: distinguished quadratic subring}).
    By Proposition \ref{proposition: existence normalised torus}, $S \simeq \Z[\omega]$ where $\omega^2+ \omega+1=0$.
    Let $K/\Q$ be the quadratic field trivializing the Galois action on $S$, so $\End(A_K) = S$.
    Let $S_2 := S\otimes \F_2$ and $\calO_2 := \calO\otimes \F_2$.
    If $A[2]\simeq \calO_2$ as $\Gal_{\Q}$-modules, then $A[2](\Q) \simeq (\calO/2\calO)^{\Gal_{\Q}}$ is isomorphic to $\Z/2\Z$ by Theorem \ref{theorem: fixed points of O/N}, so indeed $A[2](\Q) \subset \Z/2\Z$ in this case.
    We may therefore assume that $A[2](\Q) \not\simeq \calO_2$ in what follows.
    
    It suffices to show that if there exists a nonzero $m \in A[2](\Q)$, then $A[2](\Q)$ has order $2$.
    By the classification of $\calO_2$-submodules of $A[2]$ of \S\ref{subsection: linear algebra in calO mod p} and the fact that $\calO_2$ is not isomorphic to $A[2]$, the submodule $\calO_2 \cdot m \subset A[2]$ has order $4$.
    Since $S_2 \simeq \F_4$ has no $\Gal_{\Q}$-stable nonzero proper ideals, the map $S_2\rightarrow A[2], x\mapsto x\cdot m$ is injective, hence $S_2\cdot m \subset \calO_2\cdot m$ has order $4$ too.
    Therefore $S_2\cdot m =\calO_2 \cdot m$.
    Suppose first that $2 \nmid \disc(B)$.
    We can then apply Lemma \ref{lemma: reduction galois action generating torus} to conclude that $\Gal_K$ acts trivially on $\calO_2$. 
    Since $\Gal_K$ acts on $\calO_2$ through $\Gal(L/K)\simeq C_3$ or $C_6$, this contradicts Lemma \ref{lemma: reduction map is injective} and proves the second claim of (a).
    It remains to consider the case $2\mid \disc(B)$.
    In that case there exists a unique proper nonzero left ideal $J$ of $\calO_2$, and $A[J]$ is the unique nonzero proper $\calO_2$-submodule of $A[2]$ (Lemma \ref{lemma: classification submodules ramified case}).
    It follows that $S_2\cdot m = \calO_2\cdot m = A[J]$.
    Since $A[2]\not\simeq \calO_2$ as $\Gal_{\Q}$-modules, no element of $A[2](\Q)$ is an $\calO_2$-generator, so $A[2](\Q)= A[J](\Q)$.
    On the other hand, the equality $S_2\cdot m = A[J]$ shows that $S_2\simeq A[J]$ as $\Gal_{\Q}$-modules. 
    Since $\Gal_{\Q}$ acts nontrivially on $S_2 = \F_4$, $A[J](\Q) = A[2](\Q)$ has order $2$.

    \item The argument is very similar to the proof of (a), using that in the $D_4$ case, the distinguished quadratic subring $\Z[i]$ is unramified at $3$.
    In the $D_2$ case, the distinguished quadratic subring might be ramified at $3$, but by Lemma \ref{lemma: D2 subgroup Aut(O)} there exist three squarefree integers $m,n,t$ and embeddings of $\Z[\sqrt{m}]$, $\Z[\sqrt{n}]$ and $\Z[\sqrt{t}]$ into $\Gal_{\Q}$ whose image is $\Gal_{\Q}$-stable.
    Since $t = -mn$ up to squares, at least one of these three subrings is unramified at $3$, and the argument of (a) can be carried out using this subring.
    \end{enumalph}
\end{proof}
\subsection{Groups of order \texorpdfstring{$48$}{48}}

\begin{lemma}\label{lem:direct sum}
Let $E$ be an elliptic curve over the finite field $\F_{p^n}$, and assume either that $E$ is ordinary or that $n = 1$. Then any abelian surface $X/\F$ isogenous to $E^2$ is isomorphic to a product of elliptic curves over $\F$.   
\end{lemma}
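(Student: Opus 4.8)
The plan is to exploit the structure of the endomorphism ring $R = \End_{\F}(E)$ together with the classification of ideal classes of orders in quadratic fields (or quaternion orders, in the supersingular case over $\F_p$). First I would recall the standard dictionary: if $E/\F$ is an elliptic curve and $R = \End_\F(E)$, then abelian varieties $\F$-isogenous to $E^2$ with all isogenies compatible with the $R$-action correspond to rank-$2$ projective $R$-modules, and the isomorphism classes of such $X$ correspond to isomorphism classes of rank-$2$ projective $R$-modules. The key input is that every such $X$ arises as $\Hom_R(M, E)$ (or $M \otimes_R E$ in the appropriate sense) for a rank-$2$ projective module $M$, a theorem going back to Deligne (in the ordinary case) and valid more generally by work of Waterhouse; I would cite the relevant statement. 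So the problem reduces to showing that every rank-$2$ projective $R$-module is free, i.e.\ that $\Pic(R)$ acts in a way that makes $M \simeq R \oplus \frak{a}$ with $\frak{a}$ principal, equivalently that $M \simeq R^2$.

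The mechanism for this is the cancellation property. When $E$ is ordinary, $R$ is an order in an imaginary quadratic field $K$; then any rank-$2$ projective $R$-module $M$ satisfies $M \simeq R \oplus \frak{a}$ for some invertible ideal $\frak{a}$ (the Steinitz class decomposition, valid over any Dedekind-like one-dimensional domain, in particular over orders where projective modules of rank $\geq 2$ decompose this way). Then I would invoke the fact that over such an order one has $R \oplus \frak{a} \simeq R \oplus \frak{b}$ whenever $[\frak{a}] = [\frak{b}]$ in the Picard group, so in particular $R \oplus \frak{a} \simeq R \oplus R = R^2$ would require $[\frak{a}]$ trivial. The cleaner route: it suffices that $M$ itself be free, and for this I would use Bass's theorem that over a one-dimensional Noetherian ring, stably free modules of rank $\geq 2$ are free, combined with the observation that $R \oplus \frak{a}$ is not literally free unless $[\frak{a}] = 0$. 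So actually the honest statement is that $X$ need not be $E \times E$ but is a product $E_1 \times E_2$ of two elliptic curves, where $E_1, E_2$ are the elliptic curves corresponding to the ideals in the Steinitz decomposition $M \simeq \frak{a} \oplus \frak{b}$ — and each $\frak{a}, \frak{b}$ being a rank-$1$ projective gives an elliptic curve isogenous to $E$. This is exactly what the Lemma claims.

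When $n = 1$, i.e.\ $E$ is an elliptic curve over the prime field $\F_p$, the same argument applies in the ordinary case; in the supersingular case $\End_{\F_p}(E)$ is an order in an imaginary quadratic field (not the full quaternion order, since we are over $\F_p$ and Frobenius is $\pm\sqrt{-p}$ or similar — Frobenius generates a quadratic subring), so again $R$ is a quadratic order and the Steinitz decomposition applies verbatim. The one case requiring the restriction to $n = 1$ or $E$ ordinary is precisely the supersingular $E$ over $\F_{p^n}$ with $n \geq 2$, where $\End(E)$ becomes a maximal order in the quaternion algebra ramified at $p$ and $\infty$; there rank-$2$ projective modules over a noncommutative maximal order can still be handled (they satisfy $M \simeq R \oplus \frak{a}$ by Eichler's theorem, and $\frak{a}$ can be taken to have any prescribed reduced norm, hence principal, by strong approximation) — but this requires the quaternion algebra to satisfy the Eichler condition, which fails for the definite quaternion algebra ramified at $p, \infty$. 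That is the genuine obstruction, and it is exactly why the hypothesis is imposed.

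The main obstacle I anticipate is not the algebra itself but pinning down the precise form of the Deligne/Waterhouse equivalence that is being invoked and checking it applies over the possibly-non-maximal order $\End_\F(E)$ and is an equivalence of categories compatible with taking products; once that equivalence is in hand, the statement is a formal consequence of the Steinitz decomposition of projective modules of rank $\geq 2$ over one-dimensional orders, which holds unconditionally in the commutative (quadratic-order) case that the hypotheses guarantee.
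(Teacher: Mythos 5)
Your proposal runs into a genuine, if subtle, gap: you frame the Deligne/Waterhouse-type correspondence as pairing abelian varieties isogenous to $E^2$ with \emph{projective} rank-2 $R$-modules, and you then deploy the Steinitz decomposition $M \simeq R \oplus \mathfrak a$, which is a statement about projective modules. But the modules that arise from the correspondence are only \emph{torsion-free} over $R=\End_\F(E)$, and when $R$ is a non-maximal quadratic order (as it often is) there are plenty of finitely generated torsion-free modules that are not projective --- for instance $\Hom(E',E)$ for $E'$ isogenous to $E$ with $\End(E')$ strictly larger than $\End(E)$. Over such an $R$, a rank-2 torsion-free module need not split as $R \oplus \mathfrak a$ with $\mathfrak a$ invertible, and the Steinitz theory does not apply. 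The correct tool is the stronger decomposition theorem (Bass) for finitely generated torsion-free modules over a one-dimensional Noetherian domain with finite normalization: every such module is a direct sum of rank-1 torsion-free modules, with no projectivity hypothesis. This is exactly what the paper invokes via \cite[Theorem 3.2]{JKPRST}. Your conclusion is right but your cited decomposition theorem is not strong enough.

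A second omission: the equivalence of \cite[Theorem 1.1]{JKPRST} (the version the paper uses, which is the precise form of what you call ``Deligne/Waterhouse'') is stated under the normalization $\End_\F(E) = \Z[\pi]$. The paper makes this reduction explicitly at the start (replacing $E$ by an isogenous curve, which is harmless since the claim is about the isogeny class). Your proposal instead gestures at needing to ``check it applies over the possibly-non-maximal order $\End_\F(E)$''; the cleanest route is just to normalize first so that the hypotheses of \cite{JKPRST} are satisfied verbatim, rather than trying to extend the equivalence to an arbitrary over-order. Your discussion of why the $n \geq 2$ supersingular case must be excluded (definite quaternion algebra, failure of Eichler's condition) is accurate and good context, though of course it is not part of the proof proper.

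Once both corrections are made --- use the torsion-free module category and the Bass decomposition rather than projective modules and Steinitz, and begin by normalizing $\End(E) = \Z[\pi]$ --- your argument coincides with the paper's.
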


\begin{proof}
Let $\pi \in \End(E)$ be the Frobenius. Replacing $E$ by an isogenous elliptic curve, we may assume that $\End(E) = \Z[\pi]$ \cite[\S7.2-7.3]{JKPRST}.
By \cite[Theorem 1.1]{JKPRST}, the functor $X \mapsto \Hom(X,E)$ is an equivalence between the category of abelian varieties isogenous to a power of $E$ and isomorphism classes of finitely generated torsion-free $\End(E)$-modules. Since $\End(E)$ is an order in a quadratic field, any finitely generated torsion-free $\End(E)$-module is a direct sum of rank $1$ modules \cite[Theorem 3.2]{JKPRST}, so the lemma follows. 
\end{proof}

\begin{lemma}\label{lemma: torsion order 16}
If $G\subset A(\Q)_{\tors}$ is a subgroup of order $16$, then $G$ is isomorphic to $(\Z/4\Z)^2$ or $(\Z/2\Z)^2 \times \Z/4\Z$.
\end{lemma}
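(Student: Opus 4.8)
The plan is to reduce modulo $3$ and exploit potential good reduction. First I would list the abelian groups of order $16$, namely $\Z/16\Z$, $\Z/2\Z\times\Z/8\Z$, $(\Z/4\Z)^2$, $(\Z/2\Z)^2\times\Z/4\Z$ and $(\Z/2\Z)^4$, and observe that we must exclude the first, second and last. The group $(\Z/2\Z)^4$ is immediately impossible, since it would force $A(\Q)[2]\simeq(\Z/2\Z)^4$, contradicting Proposition \ref{prop: no (Z/2)^4}. So it suffices to show that no subgroup of order $16$ of $A(\Q)_{\tors}$ contains an element of order $8$.

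Next I would pass to the reduction at $3$. By Lemma \ref{lemma:PQMreduction} there is a totally ramified extension $K/\Q_3$ over which $A$ acquires good reduction; write $A_3/\F_3$ for the special fibre of the N\'eron model of $A_K$. Since $A(\Q)_{\tors}$ has order $2^i3^j$ by Theorem \ref{thm:QMmazur ptorsion}, its $2$-part is prime to $3$, so by Lemma \ref{lem:reduction is injective}(a) it injects into $A_3(\F_3)$. If $G\subseteq A(\Q)_{\tors}$ has order $16$, then by Proposition \ref{prop:naivebound} the $2$-part of $A(\Q)_{\tors}$ has order exactly $16$ and equals $G$, so $G$ injects into $A_3(\F_3)[2^\infty]$. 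Moreover, reduction of endomorphisms gives $\End(A_{3,\overline{\F}_3})\supseteq\calO$, so $A_{3,\overline{\F}_3}$ is isogenous to the square of an elliptic curve, whence $A_3$ is one of the abelian surfaces over $\F_3$ considered in the proof of Proposition \ref{prop:naivebound}, and $\#A_3(\F_3)[2^\infty]\le 16$. Therefore $G\simeq A_3(\F_3)[2^\infty]$, a group of order exactly $16$, and it remains to determine its structure.

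Now the main point: I would analyse $A_3(\F_3)$ using the fact that $A_3$ is geometrically isogenous to $E^2$ for an elliptic curve $E/\overline{\F}_3$. When $A_3$ is isogenous to $E^2$ already over $\F_3$ — which holds for instance whenever $\End^0(A_3)$ contains $B$, by Proposition \ref{prop: QM splits mod p} — Lemma \ref{lem:direct sum} (with $n=1$) gives $A_3\simeq E_1\times E_2$ over $\F_3$ with $E_1,E_2$ elliptic curves; then $A_3(\F_3)=E_1(\F_3)\times E_2(\F_3)$, each factor has order at most $7$ by the Weil bound, and the $2$-part of an elliptic curve's group over $\F_3$ is cyclic of order dividing $4$ or else $(\Z/2\Z)^2$, so $A_3(\F_3)[2^\infty]$ of order $16$ is $(\Z/4\Z)^2$, $\Z/4\Z\times(\Z/2\Z)^2$, or $(\Z/2\Z)^4$. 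The remaining case is that $A_3$ is simple over $\F_3$ (a fake elliptic curve with quaternionic $\End^0$ over an imaginary quadratic field, or a surface with quartic complex multiplication); here I would argue that $16\mid\#A_3(\F_3)=f_{A_3}(1)$ forces the trace of Frobenius to vanish, pinning down the Weil polynomial as $(T^2+3)^2$, after which a short computation of $T_2A_3/(\pi-1)T_2A_3$ (or a look at the short list of such Honda--Tate classes over $\F_3$ in the LMFDB) again yields $(\Z/4\Z)^2$. In every case $A_3(\F_3)[2^\infty]$ is one of $(\Z/4\Z)^2$, $\Z/4\Z\times(\Z/2\Z)^2$, $(\Z/2\Z)^4$, and the last is excluded by Proposition \ref{prop: no (Z/2)^4}, which proves the lemma.

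The hardest step is the $\F_3$-simple case, where $A_3$ need not split as a product of elliptic curves and Lemma \ref{lem:direct sum} does not directly apply; handling it cleanly requires either the explicit Tate-module computation sketched above or a careful enumeration of the relevant Weil polynomials of abelian surfaces over $\F_3$ that are geometrically isogenous to a square and have $16\mid\#A_3(\F_3)$. This is a finite check, but it is the part that needs care.
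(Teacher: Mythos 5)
Your overall strategy is the same as the paper's: reduce modulo $3$ via potential good reduction, identify the group $A_3(\F_3)[2^\infty]$ of order $16$, split $A_3$ as a product of elliptic curves using Lemma \ref{lem:direct sum}, and exclude $(\Z/2\Z)^4$ via Proposition \ref{prop: no (Z/2)^4}. However, your case analysis of $A_3/\F_3$ has a genuine gap, and the ``$\F_3$-simple'' branch is internally inconsistent.

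The dichotomy you set up --- ``$A_3$ isogenous to $E^2$ over $\F_3$'' versus ``$A_3$ is $\F_3$-simple'' --- is not exhaustive. There is a third a~priori possibility: $A_3 \sim E_1 \times E_2$ over $\F_3$ with $E_1$ and $E_2$ geometrically isogenous but \emph{not} $\F_3$-isogenous (e.g.\ quadratic twists of each other), in which case the hypothesis of Lemma \ref{lem:direct sum} fails. You would need to rule this out by observing that $16 \mid (4-a_1)(4-a_2)$ with $|a_i| \le 2\sqrt{3}$ forces $a_1 = a_2 = 0$, but this is not in your write-up. Moreover, in the branch you label ``$A_3$ is simple over $\F_3$,'' you conclude that the Weil polynomial is $(T^2+3)^2$ --- but that polynomial is reducible, and by Honda--Tate an abelian surface with Weil polynomial $h(T)^2$, where $h$ is the Weil polynomial of an elliptic curve over $\F_3$, is isogenous to $E^2$, hence \emph{not} simple. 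So this branch is vacuous rather than yielding ``$(\Z/4\Z)^2$ again,'' and your argument for ``$16 \mid \#A_3(\F_3)$ forces vanishing trace'' is not spelled out (the naive count gives $\#A_3(\F_3) = 10 - 4a_1 + a_2$, which does not obviously force $a_1 = 0$ without further input).

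The paper cuts through all of this with a single Honda--Tate fact, checked against the LMFDB: among abelian surfaces over $\F_3$ geometrically isogenous to the square of an elliptic curve, the only isogeny class with $16 \mid \#X(\F_3)$ is the square of the supersingular elliptic curve $E/\F_3$ with $\#E(\F_3) = 4$ and $\End_{\F_3}(E) \simeq \Z[\sqrt{-3}]$. This eliminates both the ``mismatched factors'' possibility and any would-be simple class in one stroke, after which Lemma \ref{lem:direct sum} (applicable because $n=1$) gives $A_3 \simeq E_1 \times E_2$ with $\#E_i(\F_3) = 4$, and the group-structure analysis you carry out in your first case then finishes the proof. I would suggest replacing your case split with that single isogeny-class verification, which is what makes the argument airtight.
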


\begin{proof}
There is a unique isogeny class of abelian surfaces $X$ over $\F_3$ with $16\mid \#X(\F_3)$, namely the square of the elliptic curve $E/\F_3$ with $\End_{\F_p}(E) \simeq \Z[\sqrt{-3}]$ and $\#E(\F_3) = 4$. By Lemma \ref{lem:direct sum},  $A_p$ is isomorphic to a product of two elliptic curves both of which have four $\F_3$-rational points. 
Since such an elliptic curve has its group of $\F_3$-points isomorphic to either $\Z/4\Z$ or $(\Z/2\Z)^2$, $A_p(\F_3)$ is isomorphic to $(\Z/4\Z)^2$ or $(\Z/4\Z)\times(\Z/2\Z)^2$ or $(\Z/2\Z)^4$. 
By Proposition \ref{prop: no (Z/2)^4}, the latter cannot happen. The lemma now follows since $A(\Q)[16] \hookrightarrow A_p(\F_3)$.
\end{proof}

\begin{prop}\label{prop:no48}
$\#A(\Q)_{\tors} < 48$.  
\end{prop}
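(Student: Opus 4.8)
Suppose for contradiction that $\#A(\Q)_{\tors}=48$. By Proposition \ref{prop:naivebound} the Sylow subgroups of $A(\Q)_{\tors}$ have orders $16$ and $3$, and by Lemma \ref{lemma: torsion order 16} the Sylow $2$-subgroup is $(\Z/4\Z)^2$ or $(\Z/2\Z)^2\times\Z/4\Z$; hence $A(\Q)_{\tors}\simeq (\Z/4\Z)^2\times\Z/3\Z$ or $(\Z/2\Z)^2\times\Z/4\Z\times\Z/3\Z$. Since $A[2](\Q)$ is not contained in $\Z/2\Z$, Proposition \ref{prop: D3,D6 case has little 2-torsion, D2,D4 case has little 3-torsion}(a) rules out $G\simeq D_3,D_6$ for the Galois group $G$ of the endomorphism field, and as $A$ is not of $\GL_2$-type this forces $G\simeq D_2$ or $D_4$. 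The plan is to derive a contradiction by reducing $A$ modulo small primes, refining the proof of Proposition \ref{prop:naivebound}. For any prime $p\geq 5$, Lemma \ref{lemma:PQMreduction} produces a good reduction $A_p/\F_p$ of $A$, and since $\gcd(48,p)=1$ Lemma \ref{lem:reduction is injective}(a) gives an injection $A(\Q)_{\tors}\hookrightarrow A_p(\F_p)$; by Proposition \ref{prop: QM splits mod p}, $A_p$ is geometrically isogenous to the square of an elliptic curve over $\bar\F_p$.

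Applying this with $p=5$: a short Weil-bound computation shows that the only abelian surfaces $X/\F_5$ with $48\mid\#X(\F_5)$ have $\#X(\F_5)=48$, and the LMFDB \cite{lmfdb} shows there are exactly two such isogeny classes. One is a product $E_1\times E_2$ of non-isogenous elliptic curves, hence not geometrically a square and so not a possibility for $A_5$; the other is simple over $\F_5$ but becomes geometrically isogenous to $E^2$. For the latter, the Frobenius characteristic polynomial modulo $2$ forces $\#X(\F_5)[2]\leq 4$, which is too small to contain $(\Z/2\Z)^3$. This rules out $A(\Q)_{\tors}\simeq(\Z/2\Z)^2\times\Z/4\Z\times\Z/3\Z$ and reduces us to the case $A(\Q)_{\tors}\simeq(\Z/4\Z)^2\times\Z/3\Z$ with $A_5(\F_5)\simeq(\Z/4\Z)^2\times\Z/3\Z$.

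To eliminate this last possibility I would reduce at a second prime $p=7$ (and, if needed, at $p=13$), again enumerate via the LMFDB the isogeny classes over $\F_p$ that are geometrically a square and have enough rational points, and use Lemma \ref{lem:direct sum} (passing to $\F_{p^2}$ or $\F_{p^3}$, where the surface genuinely splits into a product of elliptic curves) to pin down the group $A_p(\F_p)$; one then plays the resulting constraints off against the restriction $G\simeq D_2,D_4$ on the endomorphism field, the dichotomy of Proposition \ref{prop: D3,D6 case has little 2-torsion, D2,D4 case has little 3-torsion}(b) (so $A[3]\simeq\calO/3\calO$ as $\Gal_\Q$-modules or $3\mid\disc B$), and the enhanced Galois representation of \S\ref{enhanced representation} modulo $4$ and $12$, to conclude that no single $\calO$-$\mathrm{PQM}$ surface over $\Q$ can simultaneously realize all of these local pictures. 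The main obstacle is precisely this step: over each individual finite field there genuinely exist abelian surfaces that are geometrically a square of an elliptic curve and whose group of rational points is $(\Z/4\Z)^2\times\Z/3\Z$, so the contradiction cannot come from a single Honda--Tate computation and must instead exploit the interplay between the reductions at two or more primes together with the global endomorphism structure of $A$.
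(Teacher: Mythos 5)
You have the right ingredients but stop one observation short of closing the argument, which is why you felt forced into a multi-prime strategy. Your setup matches the paper's: reduce to ruling out $\#A(\Q)_{\tors}=48$, use Lemma \ref{lemma: torsion order 16} to see $A[2](\Q)$ has order at least $4$, and from there (via Proposition \ref{prop: D3,D6 case has little 2-torsion, D2,D4 case has little 3-torsion}(a) and the non-$\GL_2$-type hypothesis) conclude that the Galois group $G$ of the endomorphism field is $D_2$ or $D_4$. You then correctly identify the unique isogeny class over $\F_5$ which is geometrically a square and has $48$ rational points — the class \textup{$2.5.d_e$} — and correctly note that it is simple over $\F_5$ and only becomes isogenous to $E^2$ after extending the base field. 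Your mod-$2$ Frobenius computation eliminating $(\Z/2\Z)^2\times\Z/4\Z\times\Z/3\Z$ is fine but ultimately unnecessary.

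The genuine gap is that after reducing at $p=5$ you look at the wrong invariant. You examine the group structure of $A_5(\F_5)$, notice that a favorable group can be realized over $\F_5$, and conclude (wrongly) that a single Honda--Tate computation cannot give a contradiction. In fact it can. What you should extract from the isogeny class \textup{$2.5.d_e$} is not the group of rational points but the field of definition of the endomorphisms: $\End^0((A_5)_{\F_{5^n}})$ contains a quaternion algebra if and only if $3\mid n$. Since the good reduction of $A$ at $5$ is taken over a \emph{totally ramified} extension of $\Q_5$ (Lemma \ref{lemma:PQMreduction}), the unramified tower controlling when the QM appears on $A_5$ is governed by the endomorphism field $L$, so $3$ must divide $[L:\Q]$. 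This forces $G\simeq D_3$ or $D_6$ — directly contradicting the $G\simeq D_2$ or $D_4$ you already established from the $2$-torsion. (Equivalently, as the paper phrases it, $G\simeq D_3$ or $D_6$ implies $A[2](\Q)\subset\Z/2\Z$ via Proposition \ref{prop: D3,D6 case has little 2-torsion, D2,D4 case has little 3-torsion}(a), contradicting Lemma \ref{lemma: torsion order 16}.) No second prime, no enhanced Galois representation, and no Lemma \ref{lem:direct sum} are needed.
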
 

\begin{proof} 
By Proposition \ref{prop:naivebound} it is enough to show that $A(\Q)_{\tors} \neq 48$. Assume for the sake of contradiction that $\#A(\Q)_{\mathrm{tors}} = 48$. The reduction $A_5/\F_5$ must then be in the isogeny class \href{https://www.lmfdb.org/Variety/Abelian/Fq/2/5/d_e}{$2.5.d_e$}.
We see that $\End^0((A_p)_{\F_{5^n}})$ contains a quaternion algebra if and only if $3$ divides $n$.
Therefore the Galois group of the endomorphism field of $A$ has order divisible by $3$, so by Proposition \ref{proposition: galois group endomorphism field is dihedral if real place} must be $D_3$ or $D_6$.
Proposition \ref{prop: D3,D6 case has little 2-torsion, D2,D4 case has little 3-torsion} then implies $A[2](\Q)\subset \Z/2\Z$, contradicting the fact that $A[2](\Q)$ has size $\geq 4$ (Lemma \ref{lemma: torsion order 16}).
\end{proof}

\subsection{Groups of order \texorpdfstring{$36$}{36}}
\begin{lemma}\label{lemma: torsion order 36}
If $36 \mid \#A(\Q)_\mathrm{tors}$, then $A(\Q)_\mathrm{tors} \simeq (\Z/6\Z)^2$. 
\end{lemma}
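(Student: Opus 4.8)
The plan is to mimic the argument used in Lemmas \ref{lemma: torsion order 16} and \ref{prop:no48}, namely to reduce modulo a small well-chosen prime and exploit the fact that the reduction is geometrically a square of an elliptic curve. Suppose $36 \mid \#A(\Q)_{\mathrm{tors}}$. By Proposition \ref{prop:naivebound} we know $\#A(\Q)_{\tors}$ is of the form $2^i 3^j$ with $i \leq 4$, $j \leq 2$; since $36 \mid \#A(\Q)_{\tors}$ we must have $j = 2$ and $i \geq 2$, so $A(\Q)_{\tors}$ contains $(\Z/3\Z)^2$ (as the $3$-part must be $(\Z/3\Z)^2$, being annihilated by... actually $j=2$ only says the $3$-part has order $9$; one first checks the $3$-part is $(\Z/3\Z)^2$ and not $\Z/9\Z$). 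Here I would reduce modulo $2$: the prime-to-$2$ part of $A(\Q)_{\tors}$, which has order divisible by $9$, injects into $A_2(\F_2)$ by Lemma \ref{lem:reduction is injective}, and $A_2/\F_2$ is geometrically isogenous to $E^2$ for some elliptic curve $E/\bar{\F}_2$ by Proposition \ref{prop: QM splits mod p}. A quick scan (via the LMFDB, as in Proposition \ref{prop:naivebound}) of isogeny classes of abelian surfaces over $\F_2$ that are geometrically isogenous to a square and have $9 \mid \#X(\F_2)$ pins down the possible $L$-polynomials; this forces the $3$-part of $A(\Q)_{\tors}$ to be exactly $(\Z/3\Z)^2$ and also constrains the reduction type at $2$.

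The crux is then to rule out $i \geq 2$ except in the configuration that produces exactly $(\Z/6\Z)^2$. For this I would reduce modulo $5$ (or another small prime of good-reduction behaviour convenient for Honda--Tate): the full torsion $A(\Q)_{\tors}$, having order divisible by $36$, injects into $A_5(\F_5)$, and $A_5$ is geometrically $E^2$. Running through the isogeny classes of abelian surfaces over $\F_5$ with $36 \mid \#X(\F_5)$ that are geometrically isogenous to a square of an elliptic curve — as was already done for the order-$48$ case in the proof of Proposition \ref{prop:no48} — one identifies the candidate isogeny class(es). For each such class one reads off $\End^0((A_5)_{\F_{5^n}})$ and determines for which $n$ it contains the quaternion algebra $B$; this tells us the order of the Galois group $G$ of the endomorphism field of $A$, and by Proposition \ref{proposition: galois group endomorphism field is dihedral if real place} forces $G$ into a short list of dihedral groups. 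If $3 \mid \#G$, i.e. $G \simeq D_3$ or $D_6$, then Proposition \ref{prop: D3,D6 case has little 2-torsion, D2,D4 case has little 3-torsion}(a) gives $A[2](\Q) \subset \Z/2\Z$, so the $2$-part of $A(\Q)_{\tors}$ is cyclic; combined with the reduction-mod-$2$ analysis (which bounds $\#A_2(\F_2)$, hence the odd part, and separately via reduction at an odd prime bounds the $2$-part when we cannot use $(\Z/2\Z)^4$) this should force $i \leq 1$, giving $A(\Q)_{\tors} \subseteq \Z/2\Z \times (\Z/3\Z)^2$, i.e. contained in $(\Z/6\Z)^2$ — and since we assumed $36 \mid \#A(\Q)_{\tors}$, equality holds. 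If instead $3 \nmid \#G$, so $G \simeq D_1, D_2$ or $D_4$, I would use Proposition \ref{prop: D3,D6 case has little 2-torsion, D2,D4 case has little 3-torsion}(b): $G \simeq D_2$ or $D_4$ forces $A[3](\Q) \subset \Z/3\Z$, contradicting $(\Z/3\Z)^2 \subseteq A(\Q)_{\tors}$ unless $3 \mid \disc(B)$ or $A[3] \simeq \calO/3\calO$ as Galois modules; these exceptional cases are then eliminated either by noting $(\calO/3\calO)^G$ has order $3$ for $G \simeq D_2, D_4$ (Theorem \ref{theorem: fixed points of O/N}(b), so $A[3](\Q)$ would still be too small) or by a further reduction argument at a prime where $3 \mid \disc(B)$ constrains the Honda--Tate data. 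The case $G = D_1 = C_2$ is $\GL_2$-type and already excluded by our standing assumption (or by Theorem \ref{thm:gl2type classification}, which has no group of order divisible by $9$ and $4$... indeed the $\GL_2$-type list caps at $(\Z/3\Z)^2$).

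The main obstacle I expect is the bookkeeping in the case $G \simeq D_3$ or $D_6$: Proposition \ref{prop: D3,D6 case has little 2-torsion, D2,D4 case has little 3-torsion}(a) leaves open the possibility $A[2] \simeq \calO/2\calO$ or $2 \mid \disc(B)$, in which case $A[2](\Q)$ could still be $\Z/2\Z$ but one needs to ensure no point of order $4$ appears and that the $4$-torsion doesn't sneak in as $\Z/4\Z$ (cyclic). Handling the possible $\Z/4\Z$ in the $2$-part requires either a reduction-mod-$p$ argument at an odd prime showing $A_p(\F_p)$ has no point of order $4$ in the relevant isogeny class, or an appeal to the structure of $A[4]$ as an $\calO/4\calO$-module together with the enhanced Galois representation of \S\ref{enhanced representation}, analogous to the proof of Proposition \ref{prop: (Z/2)^3 implies good red quad extension}. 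I would first try the purely Honda--Tate route at $p = 5$ and $p = 2$, only falling back on the module-theoretic argument if the finite-field data is insufficient. In all cases the conclusion is that $A(\Q)_{\tors}$ cannot exceed $(\Z/6\Z)^2$, so equality holds whenever $36 \mid \#A(\Q)_{\tors}$.
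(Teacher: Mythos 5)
Your starting move is correct and matches the paper's: reduce modulo $5$ and note that the entire torsion subgroup injects into $A_5(\F_5)$, so one looks for isogeny classes of abelian surfaces over $\F_5$ with $36\mid\#X(\F_5)$ and geometric endomorphism algebra containing a quaternion algebra. There is a unique one (\href{https://www.lmfdb.org/Variety/Abelian/Fq/2/5/a_k}{$2.5.a\_k$}, with $L$-polynomial $(1+5T^2)^2$), and you correctly note it is geometrically $E^2$. But from there you detour into determining the Galois group $G$ of the endomorphism field and branching on $D_2,D_3,D_4,D_6$, which is both unnecessary and, as you yourself flag, leaves a real gap: in the $D_3,D_6$ branch, Proposition \ref{prop: D3,D6 case has little 2-torsion, D2,D4 case has little 3-torsion}(a) only bounds $A[2](\Q)$, and ruling out a cyclic $\Z/4\Z$ factor by this route is not addressed. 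Moreover, your mechanism for inferring $G$ from the reduction would not give you $3\mid\#G$ here the way it does in Proposition \ref{prop:no48}: for $2.5.a\_k$ the QM becomes visible already over $\F_{25}$, so the finite-field endomorphism data does not put you in the $D_3,D_6$ case at all.

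The step you are missing is the one the paper actually uses: Lemma \ref{lem:direct sum}. Once $A_5$ is known to be $\F_5$-isogenous to $E^2$ (with $E/\F_5$ over the prime field, so the hypothesis of Lemma \ref{lem:direct sum} holds), $A_5$ is \emph{isomorphic} to a product $E_1\times E_2$ of elliptic curves in that isogeny class. Each such $E_i$ has $\#E_i(\F_5)=6$, hence $E_i(\F_5)\simeq\Z/6\Z$, so $A_5(\F_5)\simeq(\Z/6\Z)^2$ exactly. Since $A(\Q)_{\tors}$ injects into this group of order $36$ and has order divisible by $36$, the injection is an isomorphism. This computes the group structure of $A_5(\F_5)$ directly, so there is no room for a $\Z/4\Z$ (or $\Z/9\Z$) to appear and your ``main obstacle'' dissolves. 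You gestured at ``the purely Honda--Tate route at $p=5$'' as your first attempt; this is the route, but its force comes from Lemma \ref{lem:direct sum} upgrading an isogeny to a genuine product decomposition, not from bookkeeping with the endomorphism field.
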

\begin{proof}
Over $\F_5$ there is exactly one isogeny class of abelian surface $X$ with $36 \mid \#X(\F_5)$ and whose geometric endomorphism algebra contains a quaternion algebra, namely \href{https://www.lmfdb.org/Variety/Abelian/Fq/2/5/a_k}{$2.5.a_k$}, which is isogenous to the square of an elliptic curve.
Thus the reduction $A_5$ is isomorphic to a product of two elliptic curves (Lemma \ref{lem:direct sum}).  Every elliptic curve in this isogeny class has $E(\F_5) \simeq \Z/6\Z$, hence $A_5(\F_5) \simeq (\Z/6\Z)^2$. 
\end{proof}

\begin{prop}\label{prop:no36}
$\#A(\Q)_{\tors} < 36$.
\end{prop}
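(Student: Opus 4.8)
The plan is to combine Lemma~\ref{lemma: torsion order 36} with the endomorphism-field constraints of Proposition~\ref{prop: D3,D6 case has little 2-torsion, D2,D4 case has little 3-torsion}, in exactly the same spirit as the proof of Proposition~\ref{prop:no48}. Suppose for contradiction that $\#A(\Q)_{\tors}\geq 36$. By Proposition~\ref{prop:naivebound} and Proposition~\ref{prop:no48} we have $36\leq \#A(\Q)_{\tors}<48$, and since $\#A(\Q)_{\tors}=2^i3^j$ with $i\leq 4$, $j\leq 2$, the only possibility is $\#A(\Q)_{\tors}=36$. Then Lemma~\ref{lemma: torsion order 36} forces $A(\Q)_{\tors}\simeq (\Z/6\Z)^2$; in particular $A[2](\Q)\simeq (\Z/2\Z)^2$ and $A[3](\Q)\simeq (\Z/3\Z)^2$.

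Next I would pin down the Galois group $G=\Gal(L/\Q)$ of the endomorphism field. Because $A$ is assumed not to be of $\GL_2$-type, Lemma~\ref{lemma: A GL2-type iff endo field quadratic} and Proposition~\ref{proposition: galois group endomorphism field is dihedral if real place} give $G\simeq D_2$, $D_3$, $D_4$ or $D_6$. Now I play the two halves of Proposition~\ref{prop: D3,D6 case has little 2-torsion, D2,D4 case has little 3-torsion} against each other: if $G\simeq D_3$ or $D_6$, then $A[2](\Q)\subset \Z/2\Z$, contradicting $A[2](\Q)\simeq(\Z/2\Z)^2$; if $G\simeq D_2$ or $D_4$, then $A[3](\Q)\subset\Z/3\Z$, contradicting $A[3](\Q)\simeq(\Z/3\Z)^2$. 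Either way we reach a contradiction, so $\#A(\Q)_{\tors}<36$, as desired.

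The only real content is checking that the hypotheses of Proposition~\ref{prop: D3,D6 case has little 2-torsion, D2,D4 case has little 3-torsion} are genuinely available — namely that $L/\Q$ is one of the four dihedral cases — which is immediate from the non-$\GL_2$-type assumption in force throughout this section together with Proposition~\ref{proposition: galois group endomorphism field is dihedral if real place}. There is no serious obstacle; the argument is a short citation chase. (One could alternatively observe, as in Proposition~\ref{prop:no48}, that any $\F_5$-isogeny class with $36\mid\#X(\F_5)$ and geometric quaternionic multiplication has its endomorphism algebra becoming a quaternion algebra only after a cubic extension of $\F_5$, which would pin $G$ down to $D_3$ or $D_6$ directly and then kill the case via the $2$-torsion bound; but the clean dichotomy above avoids even that reduction-theoretic input.)

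\begin{proof}
By Propositions~\ref{prop:naivebound} and \ref{prop:no48}, if $\#A(\Q)_{\tors}\geq 36$ then $\#A(\Q)_{\tors}=36$, and then Lemma~\ref{lemma: torsion order 36} gives $A(\Q)_{\tors}\simeq (\Z/6\Z)^2$; in particular $A[2](\Q)\simeq(\Z/2\Z)^2$ and $A[3](\Q)\simeq(\Z/3\Z)^2$. Let $G=\Gal(L/\Q)$ be the Galois group of the endomorphism field of $A$. Since $A$ is not of $\GL_2$-type, Lemma~\ref{lemma: A GL2-type iff endo field quadratic} shows $L/\Q$ is not quadratic, so by Proposition~\ref{proposition: galois group endomorphism field is dihedral if real place} we have $G\simeq D_2$, $D_3$, $D_4$ or $D_6$. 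If $G\simeq D_3$ or $D_6$, then Proposition~\ref{prop: D3,D6 case has little 2-torsion, D2,D4 case has little 3-torsion}(a) gives $A[2](\Q)\subset\Z/2\Z$, contradicting $A[2](\Q)\simeq(\Z/2\Z)^2$. If $G\simeq D_2$ or $D_4$, then Proposition~\ref{prop: D3,D6 case has little 2-torsion, D2,D4 case has little 3-torsion}(b) gives $A[3](\Q)\subset\Z/3\Z$, contradicting $A[3](\Q)\simeq(\Z/3\Z)^2$. In all cases we obtain a contradiction, so $\#A(\Q)_{\tors}<36$.
\end{proof}
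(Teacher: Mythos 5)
Your proof is correct and takes essentially the same approach as the paper: reduce to order exactly $36$ via Propositions~\ref{prop:naivebound} and \ref{prop:no48}, invoke Lemma~\ref{lemma: torsion order 36} to pin down the group as $(\Z/6\Z)^2$, and then use the two halves of Proposition~\ref{prop: D3,D6 case has little 2-torsion, D2,D4 case has little 3-torsion} to rule out every dihedral endomorphism field, leaving only the already-excluded $\GL_2$-type case. The paper phrases this last step more tersely ("$A$ cannot have endomorphism field $D_n$ for any $n\in\{2,3,4,6\}$ so $A$ has $\GL_2$-type"), but the underlying argument is identical to yours.
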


\begin{proof}
By Proposition \ref{prop:no48} and Proposition \ref{prop:naivebound}, it is enough to show that $A(\Q)_{\tors}$ does not have order $36$. By Lemma \ref{lemma: torsion order 36} such an $A$ would have $A(\Q)_{\tors} \simeq (\Z/6\Z)^2$. 
By Proposition \ref{prop: D3,D6 case has little 2-torsion, D2,D4 case has little 3-torsion}, $A$ cannot have endomorphism field $D_n$ for every $n\in \{2,3,4,6\}$ so $A$ has $\GL_2$-type, which we have also already ruled out. 
\end{proof}

It follows that $\#A(\Q)_{\tors} \leq 24$. Before we show that this inequality is strict, we rule out the existence of rational points of order $9$ and $8$.

\subsection{Rational points of order \texorpdfstring{$9$}{9}}

\begin{prop}\label{prop:no Z/9}
$A(\Q)_{\tors}$ contains no elements of order $9$.
\end{prop}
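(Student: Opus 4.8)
The plan is to derive a contradiction from a hypothetical rational point $P \in A(\Q)$ of order $9$ by reducing modulo $2$. Let $A_2/\F_2$ be the good reduction of $A$ at $2$. As recalled at the start of this section, the prime-to-$2$ part of $A(\Q)_{\tors}$ injects into $A_2(\F_2)$ (Lemmas \ref{lemma:PQMreduction} and \ref{lem:reduction is injective}), so $A_2(\F_2)$ contains an element of order $9$, and in particular $9 \mid \#A_2(\F_2)$. Moreover $\calO \simeq \End(A_{\Qbar})$ embeds into $\End((A_2)_{\bar\F_2})$, so $\End^0((A_2)_{\bar\F_2})$ contains the division quaternion algebra $B$, and hence $A_2$ is $\bar\F_2$-isogenous to the square of an elliptic curve by Proposition \ref{prop: QM splits mod p}.

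It therefore suffices to show that no abelian surface $X/\F_2$ that is geometrically isogenous to a square of an elliptic curve carries a rational point of order $9$. Since $\#X(\F_2) = P_X(1) \le (1+\sqrt 2)^4 < 34$, where $P_X$ denotes the characteristic polynomial of Frobenius, only the cases $\#X(\F_2)\in\{9,18,27\}$ need to be excluded. I would run through the relevant isogeny classes of abelian surfaces over $\F_2$ recorded in the LMFDB \cite{lmfdb}: for the $\F_2$-simple classes one only has to compute $P_X(1)$ and check it is not divisible by $9$, while for the non-simple ones every elliptic factor over $\F_2$ has at most $5$ rational points, which already forces any $X$ with $9\mid\#X(\F_2)$ to have $\#X(\F_2)=9$ with both factors of order $3$. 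The conclusion of the search is that the only such $X$ with $9 \mid \#X(\F_2)$ is the one with $P_X(T)=(T^2+2)^2$, which is $\F_2$-isogenous to $E^2$ for the supersingular elliptic curve $E/\F_2$ with $\#E(\F_2)=3$.

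Applying this with $X=A_2$ and invoking Lemma \ref{lem:direct sum} (with $n=1$) gives $A_2 \simeq E_1\times E_2$ over $\F_2$, where each $E_i$ is $\F_2$-isogenous to $E$ and hence has $\#E_i(\F_2)=3$ and $E_i(\F_2)\simeq\Z/3\Z$. Thus $A_2(\F_2)\simeq(\Z/3\Z)^2$ has no point of order $9$, contradicting the first paragraph. (The statement for $\GL_2$-type surfaces is in any case already contained in Theorem \ref{thm:gl2type classification}.) The step requiring the most care is the finite-field enumeration: one must be sure the LMFDB search over $\F_2$ is exhaustive among isogeny classes that are only \emph{geometrically} isogenous to a square of an elliptic curve --- including the $\F_2$-simple ones whose elliptic factor is defined only over an extension of $\F_2$ --- and verify that none of them admits a rational point of order $9$; everything else is routine.
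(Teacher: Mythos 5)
Your proof is correct and takes essentially the same route as the paper: reduce mod~$2$, use the Honda--Tate/LMFDB enumeration over $\F_2$, and then apply Lemma \ref{lem:direct sum} to conclude $A_2(\F_2)\simeq(\Z/3\Z)^2$. The only (cosmetic) difference is the order of the filters: the paper first lists the two isogeny classes over $\F_2$ with $9\mid\#X(\F_2)$ (namely $2.2.a\_e$ and $2.2.b\_b$) and then discards $2.2.b\_b$ because its geometric endomorphism algebra is commutative, whereas you restrict to geometrically $E^2$ classes first and then look for $9\mid\#X(\F_2)$. One small simplification worth noting: since $r=1$ is odd, Proposition \ref{prop: QM splits mod p} already gives that $A_2$ is $\F_2$-isogenous (not merely $\bar\F_2$-isogenous) to the square of an elliptic curve over $\F_2$, which removes the concern raised in your last paragraph about $\F_2$-simple classes whose elliptic factor lives only over an extension.
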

\begin{proof}
Suppose $A(\Q)$ has a point of order $9$. Then the reduction $A_2/\F_2$ must live in the isogeny class
\href{https://www.lmfdb.org/Variety/Abelian/Fq/2/2/a_e}{$2.2.a_e$} or \href{https://www.lmfdb.org/Variety/Abelian/Fq/2/2/b_b}{$2.2.b_b$}. The latter has commutative geometric endomorphism algebra, so cannot be the reduction of a $\calO$-$\mathrm{PQM}$ surface by Proposition \ref{prop: QM splits mod p}.
The former is the isogeny class of the square of an elliptic curve $E$ over $\F_2$ with $\#E(\F_2) = 3$, so by Lemma \ref{lem:direct sum} we have $A_2(\F_2) \simeq (\Z/3\Z)^2$. 
\end{proof}

\subsection{Rational points of order \texorpdfstring{$8$}{8}}

\begin{prop}\label{prop:no8}
$A(\Q)_{\tors}$ contains no elements of order $8$.
\end{prop}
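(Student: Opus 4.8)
The plan is to reduce modulo $3$ and argue by Honda--Tate theory, in the spirit of the proofs of Propositions \ref{prop:no48}, \ref{prop:no36} and \ref{prop:no Z/9}; we may assume $A$ is not of $\GL_2$-type, since Theorem \ref{thm:gl2type classification} is already proved. Suppose for contradiction that $A(\Q)$ has a point of order $8$. By Lemma \ref{lem:reduction is injective} the reduction map is injective on the prime-to-$3$ torsion, so the good reduction $A_3$ of $A$ at $3$ satisfies $\Z/8\Z \hookrightarrow A_3(\F_3)$; in particular $A_3(\F_3)$ contains a point of order $8$ and $8 \mid \#A_3(\F_3)$. Moreover $(A_3)_{\overline{\F}_3}$ is isogenous to the square of an elliptic curve (Proposition \ref{prop: QM splits mod p}). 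Since the Weil bound gives $\#A_3(\F_3) \leq 55$, a short enumeration of quartic Weil polynomials for $q=3$ (or a search in the LMFDB) shows that the only isogeny classes of abelian surfaces over $\F_3$ that are geometrically isogenous to the square of an elliptic curve and satisfy $8 \mid \#X(\F_3)$ are: the class with $\#X(\F_3)=16$, geometrically isogenous to a supersingular $E^2$; and the class with characteristic polynomial $T^4 - 2T^2 + 9$, for which $\#X(\F_3) = 8$.

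In the first case $A_3 \sim E^2$ over $\F_3$ with $\#E(\F_3)=4$, so by Lemma \ref{lem:direct sum} the surface $A_3$ is isomorphic over $\F_3$ to a product of two elliptic curves each with four rational points; hence $A_3(\F_3)$ has exponent at most $4$, contradicting the existence of a point of order $8$. (This case is essentially Lemma \ref{lemma: torsion order 16}.)

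In the second case $A_3$ is simple over $\F_3$ with $\End^0(A_3) \cong \Q(\pi)$, where $\pi^4 - 2\pi^2 + 9 = 0$; since $\pi^2 = 1 \pm 2\sqrt{-2}$ and $\pi + 3\pi^{-1} = \pm 2\sqrt{2}$, this field equals $\Q(\sqrt 2, i) = \Q(\zeta_8)$, whose only imaginary quadratic subfields are $\Q(i)$ and $\Q(\sqrt{-2})$. The arithmetic Frobenius at $3$ acts on $\calO \cong \End((A_3)_{\overline{\F}_3})$ by a ring automorphism $\gamma$ of finite order, and $\gamma \neq 1$ because $\End^0(A_3) = \Q(\zeta_8)$ does not contain $B$. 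By the description of finite-order automorphisms of $\calO$ in \S\ref{subsection: dihedral actions on calO} (together with the fact that $\disc(B)$ is squarefree), the fixed subring $\calO^{\langle\gamma\rangle}$ is an order in an imaginary quadratic subfield of $B$ that contains the maximal order of that subfield; since $\calO^{\langle\gamma\rangle} \subseteq \End(A_3)$, this subfield lies in $\End^0(A_3) = \Q(\zeta_8)$, hence equals $\Q(i)$ or $\Q(\sqrt{-2})$, and $\calO^{\langle\gamma\rangle}$ contains $\Z[i]$ or $\Z[\sqrt{-2}]$ accordingly. Therefore the abelian group $A_3(\F_3)$ of order $8$ is a module over $\Z[i]/8$ or $\Z[\sqrt{-2}]/8$. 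But $\Z/8\Z$ is not a module over either ring, because neither $-1$ nor $-2$ is a square modulo $8$; hence $A_3(\F_3) \not\simeq \Z/8\Z$, contradicting the fact that it has order $8$ and contains a point of order $8$.

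The main obstacle is the simple case: one must pin down $\End^0(A_3)$ precisely in order to know which imaginary quadratic field occurs, and verify that the corresponding Frobenius-fixed subring of $\calO$ contains the full quadratic maximal order — for which the explicit models of dihedral actions on $\calO$ from Section \ref{section: quaternionic algebra} and the squarefreeness of $\disc(B)$ are essential. Reducing to the two isogeny classes above is a routine finite computation, and the non-simple case is immediate from Lemma \ref{lem:direct sum}.
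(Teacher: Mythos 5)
Your proof takes a genuinely different route from the paper for the key case of the simple isogeny class, and the idea is a nice one, but it contains a small gap in the justification.

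Both proofs begin by reducing modulo $3$ and isolating the same two candidate isogeny classes, and both dispatch the non-simple (16-point) class with Lemma \ref{lem:direct sum}. For the simple class with Weil polynomial $T^4-2T^2+9$, the paper goes up to $\F_9$, computes that $\dim_{\F_2}A_3[2](\F_9)\le 3$, and then carries out a lengthy case analysis split by whether $2\mid\disc(B)$ and by the Galois group $\Gal(L/\Q)\in\{D_1,D_2,D_3,D_4,D_6\}$. You instead stay over $\F_3$ and argue that the Frobenius-fixed subring $\calO^{\langle\gamma\rangle}\subset\End(A_3)$ is an order containing $\Z[\sqrt m]$ for a suitable $m$, and that no such ring can map to $\End(\Z/8\Z)\simeq\Z/8\Z$. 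This is more direct and conceptually cleaner, and it avoids the $\F_9$ computation entirely.

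However, the assertion that $\calO^{\langle\gamma\rangle}\otimes\Q$ must be \emph{imaginary} quadratic is not justified and is, in fact, too strong. Nothing in Section \ref{subsection: dihedral actions on calO} forces the fixed subfield of an order-$2$ automorphism to be imaginary: Lemma \ref{lemma: centraliser of order 2 subgroup of automorphism group} only says $\calO^{\langle\gamma\rangle}\supset\Z[\sqrt m]$ with $m\mid\disc(B)$, $m\ne 1$, and $m$ may well be positive. Since $\End^0(A_3)\simeq\Q(\zeta_8)$ contains the real quadratic subfield $\Q(\sqrt 2)$, you must also rule out the possibility $\calO^{\langle\gamma\rangle}\otimes\Q\simeq\Q(\sqrt 2)$ (which would force $2\mid\disc(B)$, $m=2$). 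Fortunately the same arithmetic obstruction applies: $2$ is not a square in $\Z/8\Z$ (the squares are $\{0,1,4\}$), so there is no ring homomorphism $\Z[\sqrt 2]/(8)\to\Z/8\Z$ either, and the contradiction persists. You should state explicitly that the fixed subfield lies among $\Q(i),\Q(\sqrt{-2}),\Q(\sqrt 2)$ (and that $\Q(\sqrt{-3})$, which would arise if $\gamma$ had order $3$ or $6$, is impossible since it does not embed in $\Q(\zeta_8)$), and then observe that none of $-1,-2,2$ is a square modulo $8$. Two smaller points: the phrase ``$\calO\cong\End((A_3)_{\overline{\F}_3})$'' is not correct (the geometric endomorphism ring of the ordinary reduction is strictly larger; $\calO$ is merely a Frobenius-stable subring of it, which is all you need); and you should be slightly more explicit that the argument covers $\gamma$ of each possible order $2,3,4,6$, since the relevant containments $\Z[\sqrt m]$, $\Z[i]$, $\Z[\omega]\subset\calO^{\langle\gamma\rangle}$ come from different lemmas in Section \ref{section: quaternionic algebra}.
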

\begin{proof}
Suppose otherwise. The reduction $A_3/\F_3$ must be in the isogeny class \href{https://www.lmfdb.org/Variety/Abelian/Fq/2/3/a_ac}{$2.3.a_c$}, which is simple with endomorphism algebra $\Q(\zeta_8) = \Q(\sqrt{2}, \sqrt{-2})$. (It cannot be in the isogeny class  \href{https://beta.lmfdb.org/Variety/Abelian/Fq/2/3/a_g}{$2.3.a_g$} by the proof of Lemma \ref{lemma: torsion order 16}.)  Since $\#A_3(\F_3) = 8$, we must have $A_3(\F_3) = \Z/8\Z$. This eliminates the possibility that $A(\Q)$ contains a prime-to-3 subgroup any larger than $\Z/8\Z$. Note also that $\#A_3(\F_9) = 64$ and $A$ is isomorphic to a product of ordinary elliptic curves over $\F_9$ by Lemma \ref{lem:direct sum}, at least one of which has $E(\F_9) \simeq \Z/8\Z$.  It follows that the $\F_2$-dimension of $A_3[2](\F_9)$ is at most $3$, and in particular not all $2$-torsion points are defined over $\F_9$. On the other hand, all endomorphisms of $(A_3)_{\bar \F_3}$ are defined over $\F_9$, so we conclude by Lemmas \ref{lemma: classification submodules of Mat2} and \ref{lemma: classification submodules ramified case} that the $\calO/2\calO$-module generated by any $\F_9$-rational point of order $2$ has order $4$.

Suppose first that $2$ divides $\disc(B)$. Then the aforementioned $\calO$-module must be $A[J]$, where $J$ is the ideal in $\calO$  such that $J^2 = 2\calO$ (see \S\ref{subsection: linear algebra in calO mod p}).  Let $t \in J$ be any element not in $2\calO$.  Then over $\F_9$ we have an exact sequence 
\[0 \to A_3[J] \to A_3[2] \to A_3[J] \to 0\]
with the last map being multiplication by $t$. 
Let $P \in A_3[4](\F_9)$ be a point of order $4$. Without loss of generality we may assume $Q = tP$ has order $2$ (if not, just replace $P$ by $tP$) and $Q \notin A_3[J]$. Then we've seen that $\calO \cdot Q \neq A_3[2]$, so $\calO\cdot Q = A_3[J]$ but this contradicts $Q \notin A_3[J]$.

Now suppose that $2$ does not divide $\disc(B)$ so that $\calO \simeq \mathrm{Mat}_2(\F_2)$. Let $L/\Q$ be the endomorphism field. If $\Gal(L/\Q) \simeq D_2$ then at least one of the quadratic subfields of $L$ is not inert at $3$. So $\End_{\F_3}(A_3)$ must contain a quadratic order $S$ in $\Z[i]$ or $\Z[\sqrt{2}]$ or in $\Z[\sqrt{-2}]$. But we saw in Lemma \ref{lemma: centraliser of order 2 subgroup of automorphism group} that $S$ contains $\Z[\sqrt{m}]$ with $m$ squarefree. So $S$ {\it is} $\Z[i]$ or $\Z[\sqrt{2}]$ or $\Z[\sqrt{-2}]$. In all cases there exists $t \in S$ such that $t^2S = 2S$, and so we have an endomorphism (defined over $\F_3$) which behaves like $\sqrt2$ on $A_3[2]$.
But we also have a rational point $P$ of order 4. Without loss of generality the orders of $tP$ and $t^2P$ are both $2$. But $t^2P \neq tP$, so $\dim_{\F_2} A_3[2](\F_3) > 1$, which contradicts $A_3(\F_3) \simeq \Z/8\Z$.
The case $\Gal(L/\Q) = D_4$ does not happen when $\disc(B)$ is odd by Lemma \ref{lemma: D4 subgroup Aut(O)}, so we consider the case where $\Gal(L/\Q)$ is $D_3$ or $D_6$. 
By Proposition \ref{prop: D3,D6 case has little 2-torsion, D2,D4 case has little 3-torsion}(a), $A[2] \simeq \calO/2\calO$ as $\Gal_{\Q}$-modules.
But then $A_3[2] \simeq \calO/2\calO$ as $\Gal_{\F_3}$-modules, contradicting the fact that $A_3[2](\F_3)$ contains no $\calO/2\calO$-generator.

We are left to consider the case $\Gal(L/\Q) = D_1 = C_2$, i.e.\ the $\GL_2$-type case, which we have already treated in Proposition \ref{prop:gl2 constrains}. 
\end{proof}

\subsection{Groups of order \texorpdfstring{$24$}{24}}

If $A(\Q)_{\tors}$ has order $24$, then by Proposition \ref{prop:no8}, the group structure is either $(\Z/2\Z)^3 \times \Z/3\Z$ or $\Z/2\Z \times \Z/4\Z \times \Z/3\Z$. We show below that in fact neither can occur. First we gather some facts common to both cases. 

\begin{lemma}\label{lem:24 facts}
Suppose $\#A(\Q)_{\mathrm{tors}} = 24$, and let $L/\Q$ be the endomorphism field of $A$. Then
\begin{enumalph}
    \item $\Gal(L/\Q)$ is isomorphic to $D_2$ or $D_4$,
    \item $\Q(\zeta_3) \subset L$, and
    \item if $\Gal(L/\Q) \not\simeq D_4$ then $A$ has unipotent rank $1$ over $\Q_3$ $($in the terminology of \S\ref{subsec: neron models of PQM surfaces}$)$.
\end{enumalph}
\end{lemma}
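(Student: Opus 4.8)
The plan is as follows. We are in the situation $\#A(\Q)_{\tors}=24$ with $A$ not of $\GL_2$-type, so Propositions~\ref{prop:no48}, \ref{prop:no36}, \ref{prop:no Z/9} and \ref{prop:no8} already force the $3$-part of $A(\Q)_{\tors}$ to be $\Z/3\Z$ and the $2$-part to be $(\Z/2\Z)^3$ or $\Z/2\Z\times\Z/4\Z$; in particular $A[3](\Q)=\Z/3\Z$ exactly and $A[2](\Q)\supseteq(\Z/2\Z)^2$. For (a) I would invoke Proposition~\ref{proposition: galois group endomorphism field is dihedral if real place} to get $G\simeq D_n$ with $n\in\{1,2,3,4,6\}$, with $n\neq 1$ since $A$ is not of $\GL_2$-type (Lemma~\ref{lemma: A GL2-type iff endo field quadratic}), and then Proposition~\ref{prop: D3,D6 case has little 2-torsion, D2,D4 case has little 3-torsion}(a) rules out $D_3$ and $D_6$ because $A[2](\Q)$ has order at least $4$. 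Hence $G\simeq D_2$ or $D_4$.

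For (b), with $G\in\{D_2,D_4\}$ and $A[3](\Q)=\Z/3\Z$, Proposition~\ref{prop: D3,D6 case has little 2-torsion, D2,D4 case has little 3-torsion}(b) splits into two cases. If $A[3]\simeq\calO/3\calO$ as $\Gal_{\Q}$-modules, then the mod-$3$ representation factors through $\Gal(L/\Q)$, so $\Q(A[3])\subseteq L$; and $\Q(\zeta_3)\subseteq\Q(A[3])$ because the unique primitive $\Q$-rational polarization (Definition~\ref{definition: distinguished quadratic subring}) is not divisible by $3$, so the induced Weil pairing $A[3]\times A[3]\to\mu_3$ is nonzero hence surjective. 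If instead $3\mid\disc(B)$, then either $A[3]\simeq\calO/3\calO$ (handled as above), or by Lemma~\ref{lemma: classification submodules ramified case} the submodule generated by a $\Q$-rational generator $m$ of $A[3](\Q)$ equals $A[J]$, where $J$ is the maximal ideal of $\calO$ over $3$; then $m$ generates $A[J]\simeq\F_9$ over $\calO/J$ and is $\Gal_L$-fixed, so the character $\epsilon\colon\Gal_L\to\F_9^\times$ describing this action is trivial, and Ohta's theorem (Theorem~\ref{thm:ohta}) gives $\bar\chi_3|_{\Gal_L}=N_{\F_9/\F_3}\circ\epsilon=1$, i.e.\ $\Q(\zeta_3)\subseteq L$. (This is the argument of Proposition~\ref{proposition: exclude torsion ramified case} transplanted to $\ell=3$.)

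For (c), assume $G\simeq D_2$. The toric rank at $3$ is $0$ (Proposition~\ref{prop:GL2totadditive}(a)), so the unipotent rank $u\in\{0,1,2\}$, and since $\Q_3/\Q_3$ is unramified with $e=1<p-1=2$, reduction is injective on all of $A(\Q_3)_{\tors}$ (Lemma~\ref{lem:reduction is injective}(b)). If $u=0$, the order-$24$ group $A(\Q)_{\tors}$ embeds into $A_3(\F_3)$, which is isogenous to the square of an elliptic curve over $\F_3$ (Proposition~\ref{prop: QM splits mod p}), so $\#A_3(\F_3)=(4+a)^2$ with $|a|\le 2\sqrt3$ by Lemma~\ref{lemma: Lpoly of QM surface is square}; since $24\nmid(4+a)^2$ for every such $a$, this is impossible. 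It remains to exclude $u=2$. Here the idea is: if $A$ has totally additive reduction at $3$, then $A(\Q)_{\tors}$ of order $24$ embeds into $\calA_{\F_3}(\F_3)$, which sits in $0\to\calA^0_{\F_3}(\F_3)\to\calA_{\F_3}(\F_3)\to\Phi(\F_3)\to0$ with $\#\calA^0_{\F_3}(\F_3)=9$; the prime-to-$3$ part forces $8\mid\#\Phi$, and the surviving rational point of order $3$ lies either in $\calA^0_{\F_3}(\F_3)$ or maps nontrivially to $\Phi$. Exploiting the $\calO$-action on $\calA^0_{\F_3}(\F_3)$ shows that group must be $(\Z/3\Z)^2$, and in the second sub-case one gets $3\mid\#\Phi$, hence $24\mid\#\Phi$; either way one aims for a contradiction with Lorenzini's classification of component groups of abelian surfaces with totally additive and potentially good reduction (Theorem~\ref{thm:lorenzini} and \cite[Corollary~3.24]{Lorenzini-groupofcomponentsneronmodel}), if necessary together with the constraint $\Q_3(\zeta_3)\subseteq M$ on the good reduction field coming from part (b). This yields $u=1$.

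The main obstacle is the exclusion of totally additive reduction at $3$ in part (c): parts (a) and (b) amount to bookkeeping on top of Proposition~\ref{prop: D3,D6 case has little 2-torsion, D2,D4 case has little 3-torsion}, Proposition~\ref{prop:GL2totadditive} and Ohta's theorem, and the "no good reduction at $3$" half of (c) is a one-line point count, but ruling out the totally additive case requires carefully playing off the $\calO$-module structure and the order of the component group against Lorenzini's results (and the good reduction field).
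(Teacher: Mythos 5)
Your part (a) follows the paper's proof. Your part (b) takes a genuinely different route from the paper and appears valid: the paper rules out the case $A[3]\simeq\calO/3\calO$ entirely by a Honda--Tate computation (the reduction $A_5$ must lie in the class \texttt{2.5.a\_ac} by part (a), and $\#A_5(\F_{25})[3^\infty]=9<81$ shows the rational point of order $3$ is not an $\calO$-generator), concluding $3\mid\disc(B)$ and then invoking Theorem~\ref{thm:ohta}. You instead note that if $A[3]\simeq\calO/3\calO$ as $\Gal_\Q$-modules then $\Q(A[3])\subseteq L$ (using Lemma~\ref{lemma: reduction map is injective} to see that $\ker\rho_3=\Gal_L$), and that $\mu_3\subset\Q(A[3])$ because the primitive polarization $\lambda$ is, by primitivity, not $3\mu$ for any $\mu\in\NS(A)$, so $A[3]\not\subset\ker\lambda$ and the Weil pairing $e_\lambda$ on $A[3]$ is nonzero. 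This sidesteps the LMFDB lookup and is a cleaner alternative.

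Part (c) has genuine gaps. First, in excluding $u=0$: Lemma~\ref{lemma: Lpoly of QM surface is square} requires $\End^0(A_3)$ \emph{over $\F_3$} to contain $B$, which you have not established (you only know $\End^0((A_3)_{\bar\F_3})\supset B$, and $A_3/\F_3$ could a priori be a nontrivial form such as a Weil restriction with a non-square $L$-polynomial). The paper instead deduces bad reduction at $3$ directly from $\Q(\zeta_3)\subset L$ via Silverberg's criterion (if $A$ had good reduction at $3$, inertia would act trivially on $A[N]$ for $N\geq 3$ prime to $3$, forcing $3$ unramified in $L$ by Proposition~\ref{prop: Silverberg result endo field}). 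Second and more seriously, your exclusion of $u=2$ is only gestured at, as you acknowledge. The claimed structure $\calA^0_{\F_3}(\F_3)\simeq(\Z/3\Z)^2$ is not justified: there is no canonical $\calO$-action on the N\'eron special fiber over $\Q_3$ (the endomorphisms are only defined over $L$, and N\'eron models do not commute with ramified base change), and a two-dimensional commutative unipotent group over $\F_3$ can perfectly well have $\F_3$-points $\Z/9\Z$. And merely producing $8\mid\#\Phi$ or $24\mid\#\Phi$ does not yet contradict Theorem~\ref{thm:lorenzini} (which bounds torsion, not $\#\Phi$) or Proposition~\ref{prop: component group killed by good reduction field} (which bounds the exponent of $\Phi$, not its order). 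The paper's proof of (c) splits on $A[2](\Q)$: in the $(\Z/2\Z)^3$ subcase it uses Proposition~\ref{prop: (Z/2)^3 implies good red quad extension} to obtain good reduction after a quadratic extension, and then derives a contradiction from twisting (Lemmas~\ref{lemma: quadratic twist away tot add primes with quad good red field} and~\ref{lemma: quadratic twisting doesnt change endo field}) since $\Q(\zeta_3)\subset L$ forces any twist to have bad reduction at $3$; in the $\Z/2\Z\times\Z/4\Z$ subcase it applies Proposition~\ref{prop:gl2 type lorenzini variant} over $\Q_3^{\mathrm{nr}}(\zeta_3)$ (where the full QM is defined and any remaining bad reduction is totally additive by Proposition~\ref{prop:GL2totadditive}) and contradicts the existence of a point of order $4$, then again twists. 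You would need one of these (or a comparable) closing arguments to finish.
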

\begin{proof}
Since $A$ is not of $\GL_2$-type,  Proposition \ref{prop: D3,D6 case has little 2-torsion, D2,D4 case has little 3-torsion} implies that $\Gal(L/\Q)$ is isomorphic to $D_2$ or $D_4$, proving $(a)$.

Checking isogeny classes over $\F_5$, we see that the reduction $A_5$ is in the isogeny class \href{https://www.lmfdb.org/Variety/Abelian/Fq/2/5/a_ac}{$2.5a_{ac}$}; the isogeny class \href{https://www.lmfdb.org/Variety/Abelian/Fq/2/5/d_e}{$2.5d_e$} is ruled out since it only acquires QM over $\F_{5^3}$, which is not compatible with $(a)$. The fact that $\#A_5(\F_{25})[3^\infty] = 9$ shows that the point of order $3$ in $A(\Q)$ is not an $\calO$-module generator of $A[3]$ (since the $\calO$-action on $A_5$ is defined over $\F_{25}$). By Proposition \ref{prop: D3,D6 case has little 2-torsion, D2,D4 case has little 3-torsion}, we deduce that the quaternion algebra $B$ is ramified at $3$. Since $A[3](\Q)$ has a rational point, it follows from Theorem \ref{thm:ohta} that $\Q(\sqrt{-3}) = \Q(\zeta_3) \subset L$, proving $(b)$.

Since $3$ ramifies in $L$, $A$ has bad reduction over $\Q_3$ by Proposition \ref{prop: Silverberg result endo field}. If $A[2](\Q) \simeq (\Z/2\Z)^3$ then $A$ achieves good reduction over every ramified quadratic extension of $\Q_3$ by Proposition \ref{prop: (Z/2)^3 implies good red quad extension}. 
If $A/\Q_3$ has totally additive reduction, then the quadratic twist of $A$ by $\Q(\sqrt{3})$, say,  will have good reduction at $3$ by Lemma \ref{lemma: quadratic twist away tot add primes with quad good red field}. But quadratic twisting does not change the endomorphism field by Lemma \ref{lemma: quadratic twisting doesnt change endo field}, so any quadratic twist of $A$ must have endomorphism field which contains $\Q(\sqrt{-3})$ and hence must have bad reduction at $3$.  We conclude that $A$ must have unipotent rank $1$ over $\Q_3$ by Proposition \ref{prop:GL2totadditive}.

If $A[2](\Q) \simeq (\Z/2\Z)^2$ and $\Gal(L/\Q) \not\simeq D_4$, then $\Gal(L/\Q) \simeq D_2$ and so $L/\Q$ is a biquadratic field containing $\Q(\zeta_3)$. It follows that $A$ has all of its endomorphisms defined over  $\Q_3^{\mathrm{nr}}(\zeta_3)$. If $A$ still has bad reduction over $\Q_3(\zeta_3)$, then it must have totally additive bad reduction (since it has QM after enlarging the residue field) by Proposition \ref{prop:GL2totadditive}, and we obtain a contradiction with Proposition \ref{prop:gl2 type lorenzini variant} and the fact that $A$ has a point of order $4$.
Thus, $A$ attains good reduction over $\Q_3(\zeta_3)$, and arguing as above, we conclude that $A$ has unipotent rank $1$ over $\Q_3$. 
\end{proof}

\begin{prop}\label{prop: no 24a}
$A(\Q)_{\tors} \not\simeq (\Z/2\Z)^3 \times \Z/3\Z$.
\end{prop}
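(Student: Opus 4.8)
The plan is to suppose $A(\Q)_{\tors} \simeq (\Z/2\Z)^3 \times \Z/3\Z$ and derive a contradiction. Since this group has order $24$, Lemma~\ref{lem:24 facts} applies: writing $L/\Q$ for the endomorphism field and $G=\Gal(L/\Q)$, we have $G\simeq D_2$ or $D_4$ and $\Q(\zeta_3)\subset L$. First I would pin down $G$ and $\disc(B)$. Because $A[2](\Q)\simeq(\Z/2\Z)^3$ and $A[2]$ is free of rank one over $\calO/2\calO$, Lemma~\ref{lem:linalg} furnishes a $\Q$-rational $\calO/2\calO$-module generator of $A[2]$, whence $A[2]\simeq\calO/2\calO$ as $\Gal_{\Q}$-modules and $A[2](\Q)\simeq(\calO/2\calO)^G$. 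By Theorem~\ref{theorem: fixed points of O/N}(c) one has $(\calO/2\calO)^{D_4}\simeq(\Z/2\Z)^2$, which is too small, so $G\simeq D_2$; then Lemma~\ref{lemma: C2xC2 with (Z/2)^3 fixed points} forces $2\mid\disc(B)$ (and, in the notation there, $m\equiv n\equiv 3\pmod 4$), and since $\Q(\zeta_3)\subset L$ forces $3\mid\disc(B)$ via Theorem~\ref{thm:ohta} exactly as in the proof of Lemma~\ref{lem:24 facts}(b), we get $6\mid\disc(B)$. As $G\simeq D_2\not\simeq D_4$, Lemma~\ref{lem:24 facts}(c) shows $A$ has unipotent rank $1$ over $\Q_3$; in particular $A$ has bad, but not totally additive, reduction at $3$.

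Next I would extract the reduction data at $5$. By Lemma~\ref{lem:24 facts}, $A_5$ lies in the isogeny class $2.5.a_{ac}$, whose Frobenius characteristic polynomial is $T^4-2T^2+25$ and which has $\#A_5(\F_5)=24$; since $24$ is prime to $5$, $A(\Q)_{\tors}$ injects into $A_5(\F_5)$ by Lemma~\ref{lem:reduction is injective}(a), so $A_5(\F_5)\simeq(\Z/2\Z)^3\times\Z/3\Z$. This class is not supersingular (the Weil number $1+2\sqrt{-6}$ over $5$ is not a root of unity, as $(1+2\sqrt{-6})/5$ is not an algebraic integer), so $5\nmid\disc(B)$ by Proposition~\ref{prop: QM splits mod p}. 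Over $\F_{25}$, where $\calO$ acts, $A_5$ is therefore isogenous and hence, the elliptic factor being ordinary (it has trace $2$, coprime to $5$), isomorphic by Lemma~\ref{lem:direct sum} to a product $E_1\times E_2$ with $\#E_i(\F_{25})=24$; and since $\mathrm{Frob}_{25}$ centralizes $\calO$ and $A_5[2]\simeq\calO/2\calO$, the full $2$-torsion of $A_5$ is $\F_{25}$-rational, forcing $E_i(\F_{25})\simeq\Z/2\Z\times\Z/12\Z$ and $A_5(\F_{25})\simeq(\Z/2\Z)^2\times(\Z/12\Z)^2$.

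Finally I would play this off against the reduction at $3$. Over $\Q_3$ the ramification index is $1<3-1$, so reduction is injective on all of $A(\Q_3)_{\tors}$ (Lemma~\ref{lem:reduction is injective}(b)) and $(\Z/2\Z)^3\times\Z/3\Z$ embeds into $\calA(\F_3)$, where $\calA$ is the N\'eron model over $\Z_3$. By Proposition~\ref{prop: (Z/2)^3 implies good red quad extension} the good reduction field over $\Q_3^{\mathrm{nr}}$ is a ramified quadratic extension, so the component group $\Phi$ is killed by $2$ (Proposition~\ref{prop: component group killed by good reduction field}) and, because $A$ has unipotent rank $1$, it is small; meanwhile $\#\calA^0(\F_3)=3\cdot\#B(\F_3)$, where $B$ is the one-dimensional abelian part of $\calA^0_{\F_3}$, which inherits an action of (a quadratic subring containing) the distinguished quadratic subring of $A$, an order unramified away from $6\disc(B)$ by Proposition~\ref{proposition: existence normalised torus}. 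The strategy is then to show that the placement of $\mathrm{Frob}_3$ in $G\simeq D_2$ (note $3$ ramifies in $L$) together with this extra endomorphism constrains $\#B(\F_3)$ and $\Phi(\F_3)$ so tightly that $\calA(\F_3)$ cannot contain $(\Z/2\Z)^3\times\Z/3\Z$, using also the rigid structure of $A_5$ over $\F_5$ and $\F_{25}$ obtained above (for instance that $A$ cannot simultaneously have a rational point of order $3$ and full rational $2$-torsion of rank $3$ given the constraints $G\simeq D_2$, $6\mid\disc(B)$, $m\equiv n\equiv 3\pmod 4$).

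The hard part will be precisely this last reconciliation: each individual constraint ($G\simeq D_2$; $6\mid\disc(B)$; the congruences of Lemma~\ref{lemma: C2xC2 with (Z/2)^3 fixed points} on the generators of $G$; $\Q(\zeta_3)\subset L$; the reduction data at $3$ and $5$) is on its own satisfiable, and the contradiction emerges only from their interaction. Making it explicit requires careful bookkeeping with the $\calO/2\calO$- and $\calO/3\calO$-module structures on $A[2]$ and $A[3]$, with Ohta's theorem at $3$, and with the decomposition/inertia behavior of $2$, $3$, and $5$ in $L$; I expect a short finite list of ramification configurations for $B$ to survive all earlier steps, each of which must then be eliminated by one further reduction argument.
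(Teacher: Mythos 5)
Your setup is sound and mirrors the paper's preliminaries: you correctly deduce $G\simeq D_2$ from Lemma~\ref{lem:linalg} and Theorem~\ref{theorem: fixed points of O/N}(c), obtain $6\mid\disc(B)$ together with the mod~$4$ congruences of Lemma~\ref{lemma: C2xC2 with (Z/2)^3 fixed points}, invoke Lemma~\ref{lem:24 facts}(c) for unipotent rank~$1$ at $3$, and recover the isogeny class $2.5.a_{ac}$ at $5$. However, you never actually derive the contradiction. Your third paragraph says ``the strategy is then to show\ldots'' and the fourth concedes that ``the hard part will be precisely this last reconciliation'' and that you ``expect a short finite list of ramification configurations\ldots each of which must then be eliminated by one further reduction argument.'' That is an announcement of a plan, not a proof, and the plan itself is not obviously viable: with unipotent rank~$1$ you cannot use Lemma~\ref{lem:prime-to-p-additive}, so you must juggle the $\G_a$-part, the abelian part $B/\F_3$, and the component group $\Phi$ simultaneously, and it is not clear a priori that these constraints rule out a subgroup $(\Z/2\Z)^3\times\Z/3\Z$ of $\calA(\F_3)$ without additional input.

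The paper finishes by a different, essentially global, route, and you would do well to study it: label the three quadratic subfields $L_1,L_2,L_3$ of the biquadratic $L$ so that $B^{\Gal_{L_1}}$ is imaginary quadratic. Since $A$ has unipotent rank~$1$ at every bad prime, Proposition~\ref{prop:GL2totadditive}(c) forces the unique quadratic subfield of $L$ unramified at such a prime to be $L_1$ (any other would yield real quadratic endomorphisms over an unramified extension, hence good reduction, hence good reduction over $\Q_p$). If some $p>3$ is bad, then $\Q(\sqrt{-3})\subset L$ is also unramified there, so $L_1=\Q(\sqrt{-3})$; applying the same observation at $p=3$ says $L_1$ is unramified at $3$, a contradiction. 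If instead $A$ has good reduction outside $\{2,3\}$, then $L\in\{\Q(\sqrt{-3},i),\Q(\sqrt{-3},\sqrt{-2})\}$ with $L_1\in\{\Q(i),\Q(\sqrt{-2})\}$, and a Honda--Tate comparison at $p=7$ (not $5$) shows $A_7$ lies in $2.7.a_{ac}$ with $\End^0(A_7)\simeq\Q(\sqrt{-3})^2$ because $7$ is inert in $L_1$; but $7$ splits in $\Q(\sqrt{-3})$, which forces $L_1=\Q(\sqrt{-3})$, again a contradiction. Some version of this ramification bookkeeping in $L$ is the missing ingredient in your sketch.
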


\begin{proof}
Assume otherwise. 
Theorem \ref{theorem: fixed points of O/N} and Lemma \ref{lem:linalg} show that the endomorphism field $L/\Q$ has Galois group $\Gal(L/\Q) \simeq D_2$. 

First assume there exists a prime $p > 3$ of bad reduction for $A$. By Theorem \ref{thm:lorenzini}, $A$ must have unipotent rank 1 over $\Q_p$, and hence $p$ must ramify in $L$ by Proposition \ref{prop:GL2totadditive}.  Next, recall  that there are three $\Gal_\Q$-stable quadratic subfields of $B$, one of which is imaginary. Let $L_1$, $L_2$, and $L_3$ be the corresponding quadratic subfields of $L$, labeled so that $B^{\Gal_{L_1}}$ is imaginary quadratic. Since $L$ is biquadratic, exactly one of the $L_i$ must be unramified over $\Q_p$. Since $A$ has unipotent rank 1, it must be $L_1$ (by Proposition \ref{prop:GL2totadditive}). But by Lemma \ref{lem:24 facts}(b) we have $\Q(\zeta_3) \subset L$ and $\Q(\zeta_3)$ is also unramified at $p$, so 
$L_1 = \Q(\sqrt{-3})$.  Now, $A/\Q_3$ has unipotent rank 1 by Lemma \ref{lem:24 facts}(c). As above, Proposition \ref{prop:GL2totadditive} implies that the unique sub-extension $L_i$ unramified at $3$ must be $L_1$. This contradicts $L_1 = \Q(\sqrt{-3})$. 

Thus, it remains to consider the possibility that $A$ has good reduction outside $\{2,3\}$. This forces the endomorphism field to be unramified outside $\{2,3\}$.
Moreover, $A$ has unipotent rank $1$ reduction over $\Q_3$, so $L$ must contain an imaginary quadratic subfield that is unramified at $3$.  Hence $L$ is isomorphic to $\Q(\sqrt{-3},i)$ or $\Q(\sqrt{-3}, \sqrt{-2})$.  We also know that $B^{\Gal_{\Q(\sqrt{-3})}}$ is a real quadratic field, and $L_1$ is either $\Q(i)$ or $\Q(\sqrt{-2})$. 

Over $\F_7$, there are two possible isogeny classes: \href{https://www.lmfdb.org/Variety/Abelian/Fq/2/7/a_ac}{$2.7a_{ac}$} and \href{https://www.lmfdb.org/Variety/Abelian/Fq/2/7/i_be}{$2.7i_{be}$}. Since $7$ is inert in $L_1$, $L$ does not split completely at $7$. The isogeny class is therefore not $2.7i_{be}$, since all its endomorphisms are defined over $\F_7$, hence the isogeny class is $2.7a_{ac}$. Thus $\End^0(A_7) \simeq \Q(\sqrt{-3}) \times \Q(\sqrt{-3})$. Since $7$ splits in $\Q(\sqrt{-3})$, we see that $B^{\Gal_{\Q(\sqrt{-3})}} = \Q(\sqrt{-3})$, which shows that $L_1 = \Q(\sqrt{-3})$, contradicting what was said above.
\end{proof}

\begin{prop}\label{prop: no 24b}
If $A(\Q)_{\tors} \not\simeq (\Z/2\Z) \times (\Z/4\Z) \times (\Z/3\Z)$. 
\end{prop}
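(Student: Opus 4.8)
The plan is to assume $A(\Q)_{\tors} \simeq \Z/2\Z \times \Z/4\Z \times \Z/3\Z$ and derive a contradiction. In particular $A$ then has a rational point of order $4$; by Lemma \ref{lem:24 facts}(a) the Galois group $G := \Gal(L/\Q)$ of the endomorphism field is $D_2$ or $D_4$; by Lemma \ref{lem:24 facts}(b) we have $\Q(\zeta_3) \subset L$; and, as in the proof of Lemma \ref{lem:24 facts}, the reduction $A_5$ lies in the isogeny class $2.5.a_{ac}$ (the class $2.5.d_e$ being excluded because $G$ is a $2$-group), which has exactly $24$ points over $\F_5$. Thus $A(\Q)_{\tors} \hookrightarrow A_5(\F_5)$ is an isomorphism, and $(A_5)_{\F_{25}}$ is a product of two ordinary elliptic curves each with $24$ points over $\F_{25}$ (Lemma \ref{lem:direct sum}), all of whose endomorphisms are already defined over $\F_{25}$.

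First I would dispatch the case $G \simeq D_2$: here the argument of Proposition \ref{prop: no 24a}, starting from the sentence ``First assume there exists a prime $p > 3$ of bad reduction for $A$'', applies verbatim. That argument uses only that $\#A(\Q)_{\tors} = 24$ --- so that $A(\Q)_{\tors}$ is too large to embed into any of the groups of Theorem \ref{thm:lorenzini}, forcing unipotent rank $1$ at every bad prime $p > 3$ --- that $\Q(\zeta_3) \subset L$ with $L/\Q$ biquadratic, that $A$ has unipotent rank $1$ over $\Q_3$ (Lemma \ref{lem:24 facts}(c), which holds since $G \not\simeq D_4$), and the reductions at $5$ and $7$; it never used the specific group $(\Z/2\Z)^3 \times \Z/3\Z$ beyond the initial deduction that $G \simeq D_2$, which we are now assuming.

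The case $G \simeq D_4$ is the heart of the matter. By Lemma \ref{lemma: D4 subgroup Aut(O)} we have $2 \mid \disc(B)$, and running the argument in the proof of Lemma \ref{lem:24 facts} --- the rational point of order $3$ is not an $\calO$-module generator of $A[3]$, since $\#A_5(\F_{25})[3^\infty] = 9$ --- together with Proposition \ref{prop: D3,D6 case has little 2-torsion, D2,D4 case has little 3-torsion}(b) shows $3 \mid \disc(B)$; so $\calO$ is ramified at both $2$ and $3$, and the only $\Gal_\Q$-stable quadratic subalgebra of $B$ is the imaginary field $\Q(i)$ (in the notation of Lemma \ref{lemma: D4 subgroup Aut(O)}). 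I would then analyze the rational point $P$ of order $4$ via the module structures of $A[4]$ (free of rank one over $\calO/4\calO$) and $A[2]$, using that --- $\calO$ being ramified at $2$ --- the only proper $\calO/2\calO$-submodule of $A[2]$ is the one killed by the maximal ideal $J$ above $2$ (Lemma \ref{lemma: classification submodules ramified case}). Since $2 \notin \Ann_{\calO/4\calO}(P)$ while $2 \in 2\calO$, and the $\calO/4\calO$-submodule lattice is the chain $\calO/4\calO \supset J/4\calO \supset 2\calO/4\calO \supset 2J/4\calO \supset 0$, one gets $\Ann_{\calO/4\calO}(P) \in \{0,\ 2J/4\calO\}$. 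If $\Ann(P) = 0$ then $A[4] \simeq \calO/4\calO$ as $\Gal_\Q$-modules, hence $A(\Q)[4] = (\calO/4\calO)^{D_4}$, and a finite quaternion-order computation (using the explicit $D_4$-action of Lemma \ref{lemma: D4 subgroup Aut(O)} and the cohomology computation in the proof of Theorem \ref{theorem: fixed points of O/N}) determines this group: if it has order larger than $8$, or is cyclic of order $8$, we contradict respectively the assumed torsion group or Proposition \ref{prop:no8}; in the remaining case one passes to $\F_5$ and $\F_{25}$ and argues exactly as in the proof of Proposition \ref{prop:no8} that an $\F_5$-rational point of order $4$ cannot exist. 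The case $\Ann(P) = 2J/4\calO$ is handled analogously. An alternative is to run the mod-$4$ enhanced Galois representation argument of Proposition \ref{prop: (Z/2)^3 implies good red quad extension} with the order-$8$ group $D_4$ in place of the quadratic endomorphism field, reducing to a congruence computation in $\calO/4\calO$ in the spirit of Lemma \ref{lemma: mod 4 quaternion calculation (z/2)^3}.

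The main obstacle is precisely the $D_4$ case. Because $A(\Q)[2] \simeq (\Z/2\Z)^2$ rather than $(\Z/2\Z)^3$, the clean good-reduction criterion of Proposition \ref{prop: (Z/2)^3 implies good red quad extension} is not available, so the rational $4$-torsion point has to be excluded by a hands-on argument; the delicate ingredients are the bookkeeping of the $\calO/4\calO$-module structure when $\calO$ is ramified at $2$ and, above all, the precise determination of $(\calO/4\calO)^{D_4}$ --- in particular distinguishing $\Z/8\Z$ from $\Z/4\Z \times \Z/2\Z$ --- which, though finite, is a genuine computation in the quaternion order.
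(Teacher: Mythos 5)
Your route is genuinely different from the paper's, and in the crucial $D_4$ case it is not complete. The paper's $D_4$ argument is entirely global: it uses that the distinguished quadratic subring is $\Z[i]$ (Proposition \ref{proposition: existence normalised torus}), deduces $\Q(i), \Q(\zeta_3)\subset L$ from Theorem \ref{thm:ohta}, observes that the quadratic subfield of $L$ trivializing the $\Gal_{\Q}$-action on $\Z[i]$ must be imaginary (Proposition \ref{proposition: endomorphism rings of absolutely simple abelian surfaces over R}), and then rules out $\Q(i)$ and $\Q(\zeta_3)$ as that subfield by looking at the reductions $A_5$ and $A_7$ --- whose endomorphism rings contain $\Z[\sqrt{3}]$ and $\Z[\sqrt{-3}]$ respectively, not $\Z[i]$. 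There is no $\calO/4\calO$-module bookkeeping, no enhanced Galois representation, and no congruence computation. Your local approach via $\Ann_{\calO/4\calO}(P)$ and $(\calO/4\calO)^{D_4}$ is a reasonable idea, but you leave the key computation undone, and it is not clear it would close. For instance, if $\Ann(P)=0$ you reduce to computing $(\calO/4\calO)^{D_4}$: from the exact sequence $0\to\calO^{D_4}/4\to(\calO/4\calO)^{D_4}\to H^1(D_4,\calO)[4]\to 0$ and the fact that $\calO^{D_4}=\Z$ and $H^1(D_4,\calO)[2]\simeq\Z/2\Z$, one gets that $(\calO/4\calO)^{D_4}$ has order $8$ or $16$ and could perfectly well be $\Z/4\Z\times\Z/2\Z$ --- which is exactly the hypothesis you are trying to contradict. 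Your fallback (``pass to $\F_5$ and argue as in Proposition \ref{prop:no8}'') is not spelled out, and that proposition's argument is tied to $\F_3$/$\F_9$ in a way that would need genuine re-derivation over $\F_5$/$\F_{25}$; you acknowledge this obstacle yourself in your closing paragraph, which in effect concedes the gap.

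For the $D_2$ case the paper also gives a shorter argument than the one you propose: it shows directly (using unipotent rank $1$ at all bad primes $p\geq 3$ and Proposition \ref{prop:GL2totadditive}) that the imaginary field $L_1$ trivializing the distinguished subring is unramified outside $\{2\}$, then observes $L_1\neq\Q(i)$ by the $\F_5$/$\F_7$ argument and $L_1\neq\Q(\sqrt{-2})$ because $\Q(\sqrt{-2})$ does not embed in $B$ (which is ramified at $3$). Your plan to reuse the final paragraphs of Proposition \ref{prop: no 24a} verbatim is plausible, but you should at least note that the $\F_7$ isogeny-class enumeration there was derived under the torsion hypothesis $(\Z/2\Z)^3\times\Z/3\Z$ and needs to be re-examined (the numerical constraint $24\mid\#A_7(\F_7)$ is the same, so the list is likely unchanged, but this is a claim to check rather than a tautology). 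In summary: the $D_2$ half of your plan is probably salvageable but longer than necessary, and the $D_4$ half is a sketch with a real gap --- the paper avoids it entirely by the cleaner global argument via the distinguished subring $\Z[i]$.
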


\begin{proof}
First suppose $G \simeq D_4$, so that the distinguished subring $S$ of Definition \S\ref{definition: distinguished quadratic subring} is isomorphic to $\Z[i]$.
Then $2\mid \disc(B)$ by Lemma \ref{lemma: D4 subgroup Aut(O)}.
Since $B$ is ramified at $2$ and $3$ and $A(\Q)$ contains points of order $4$ and $3$, we see that $L$ contains both $\Q(i)$ and $\Q(\zeta_3)$, by Theorem \ref{thm:ohta}. Over one of these two quadratic subfields, the $\Gal_\Q$-action on $S = \Z[i]$ trivializes. Indeed, the $\Gal_\Q$-action on $\Z[i]$ cannot be trivialized by the third quadratic subfield $\Q(\sqrt{3})$ of $L$, by Proposition \ref{proposition: endomorphism rings of absolutely simple abelian surfaces over R}.  
Looking over $\F_5$ we see that $\Q(i)$ could only trivialize a ring isomorphic to $\Z[\sqrt{3}]$.  
Looking over $\F_7$ we see that $\Q(\zeta_3)$ could only trivialize a ring isomorphic to $\Z[\sqrt{-3}]$.
So neither trivialize $\Z[i]$, and we have a contradiction. 

So we may now assume that $G \simeq D_2$. 
Arguing as above, we may also assume that $L$ does not contain $\Q(i)$. 
We know $A$ has unipotent rank 1 reduction over $\Q_3$ by Lemma \ref{lem:24 facts}(c).  It also has unipotent rank 1 reduction at all bad primes $p > 3$, by Theorem \ref{thm:lorenzini}. By Proposition \ref{prop:GL2totadditive},  the imaginary quadratic subfield $L_1 \subset L$ that trivializes the distinguished imaginary quadratic subring of $\calO$ is unramified outside $\{2\}$. Since $L_1 \neq \Q(i)$, we must have $L_1 = \Q(\sqrt{-2})$, but this field does not embed in $B$ (which is ramified at $3$), giving a contradiction.
\end{proof}

As a corollary, we are now able to finish the proofs of  Theorems \ref{thm:optimalbound} and \ref{thm:QMmazur_groups}.
\begin{proof}[Proof of Theorem $\ref{thm:optimalbound}$]
Propositions \ref{prop:no8}, \ref{prop: no 24a}, and \ref{prop: no 24b} show that $\#A(\Q)_{\tors} < 24$. Hence $\#A(\Q)_{\tors} \leq 18$.
\end{proof}

By the results of this section and the previous one, the group $A(\Q)_{\tors}$ has order $2^i3^j \leq 18$ and does not contain any subgroup of the form $\Z/8\Z$, $\Z/9\Z$, or $(\Z/2\Z)^4$.
We deduce the following result, which is equivalent to Theorem \ref{thm:QMmazur_groups}.

\begin{theorem}\label{thm:halftime}
Let $A/\Q$ be an abelian surface such that $\End(A_{\Qbar})$ is a maximal order in a non-split quaternion algebra. Then $A(\Q)_{\tors} = A[12](\Q)$ and $\#A(\Q)_{\tors}\leq 18$. Moreover, $A(\Q)_{\tors}$ does not contain a subgroup isomorphic to $(\Z/2\Z)^4$. In other words, $A(\Q)_{\tors}$ is isomorphic to one of the groups
\begin{align*}
    \{1\},\Z/2,\Z/3,\Z/4, (\Z/2\Z)^2, \Z/6, (\Z/2\Z)^3, \Z/2\Z \times \Z/4\Z,(\Z/3\Z)^2,\\ \Z/12,   \Z/2\Z\times \Z/6\Z, (\Z/2\Z)^2 \times \Z/4\Z, (\Z/4\Z)^2,\Z/3\Z\times \Z/6\Z.
\end{align*}    
\end{theorem}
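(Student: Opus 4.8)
The plan is to assemble Theorem \ref{thm:halftime} from the results already proved, since it is essentially a repackaging of the preceding propositions together with an elementary enumeration. First I would record that by Theorem \ref{thm:QMmazur ptorsion} the only primes dividing $\#A(\Q)_{\tors}$ are $2$ and $3$, so $\#A(\Q)_{\tors} = 2^i3^j$, and by Theorem \ref{thm:optimalbound} this order is at most $18$, hence lies in $\{1,2,3,4,6,8,9,12,16,18\}$.

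Next I would bring in the structural constraints. Proposition \ref{prop:no8} shows $A(\Q)_{\tors}$ has no element of order $8$, so its $2$-primary part has exponent dividing $4$; Proposition \ref{prop:no Z/9} shows it has no element of order $9$, so its $3$-primary part has exponent dividing $3$. Hence the exponent of $A(\Q)_{\tors}$ divides $\mathrm{lcm}(4,3)=12$, giving $A(\Q)_{\tors}\subseteq A[12](\Q)$; the reverse inclusion is trivial, so $A(\Q)_{\tors} = A[12](\Q)$. The assertion that $A(\Q)_{\tors}$ contains no subgroup isomorphic to $(\Z/2\Z)^4$ is exactly Proposition \ref{prop: no (Z/2)^4}, since $A[2]$ has $\F_2$-dimension $4$, so $(\Z/2\Z)^4\subseteq A(\Q)_{\tors}$ is equivalent to $A[2](\Q)\simeq(\Z/2\Z)^4$.

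It then remains to carry out a purely group-theoretic enumeration: list all finite abelian groups of order $2^i3^j\le 18$ whose exponent divides $12$ and which do not contain $(\Z/2\Z)^4$. Decomposing into $2$- and $3$-primary parts and running through the possible orders yields exactly the fourteen groups displayed in the statement. I do not expect a genuine obstacle here; the only points requiring a moment of care are the order-$16$ case, where $\Z/16$ and $\Z/2\Z\times\Z/8\Z$ are excluded by Proposition \ref{prop:no8} and $(\Z/2\Z)^4$ by Proposition \ref{prop: no (Z/2)^4}, leaving only $(\Z/4\Z)^2$ and $(\Z/2\Z)^2\times\Z/4\Z$, and the order-$18$ case, where $\Z/18\Z$ is excluded by Proposition \ref{prop:no Z/9}, leaving only $\Z/3\Z\times\Z/6\Z$. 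Finally, that Theorem \ref{thm:halftime} is equivalent to Theorem \ref{thm:QMmazur_groups} is immediate: removing from the list the seven groups already realized in \eqref{eq:LS examples} leaves precisely the seven groups of \eqref{eq:unknowns}.
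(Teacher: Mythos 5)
Your argument is correct and coincides with the paper's: both assemble the statement from Theorems \ref{thm:QMmazur ptorsion} and \ref{thm:optimalbound} (primes dividing the order, bound of $18$), Propositions \ref{prop:no8} and \ref{prop:no Z/9} (no order-$8$ or order-$9$ element, hence exponent dividing $12$), Proposition \ref{prop: no (Z/2)^4} (no full level-$2$ structure), and a short enumeration of abelian groups of order $2^i3^j\le 18$. Your enumeration and your identification of the resulting list with $\eqref{eq:LS examples}\cup\eqref{eq:unknowns}$ are both accurate.
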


Not all of the groups above are known to be realized as $A(\Q)_{\tors}$ for some $\calO$-$\mathrm{PQM}$ surface $A/\Q$. However, all groups that have been realized (including the largest one of order 18) have been realized in the family of bielliptic Picard Prym surfaces \cite{LagaShnidman}. It would be interesting to systematically analyze rational points on Shimura curves of small discriminant and with small level structure, to try to find more examples. It would also be interesting to see which groups can be realized by Jacobians, which is the topic we turn to next.

\section{Proof of Theorem \ref{thm:PQM Jacobians}: PQM Jacobians}
In this section, we consider $\calO$-$\mathrm{PQM}$ surfaces $A/\Q$ equipped with a principal polarization. Since $A$ is geometrically simple, there exists an isomorphism of polarized surfaces $A \simeq \Jac(C)$, where $C$ is a smooth projective genus two curve over $\Q$ \cite[Theorem 3.1]{Sekiguchi-onfieldsofrationality}. To emphasize this, we use the letter $J$ instead of $A$. The goal of this section to prove some additional constraints on the torsion group $J(\Q)_{\tors}$, i.e.\ we prove Theorem \ref{thm:PQM Jacobians}.

\begin{lemma}\label{lem:principal polarization quadratic field}
Let $M$ be the imaginary quadratic subfield of $\End^0(A_{\bar{\Q}})$ corresponding to a principal polarization on $J$ under Corollary \ref{cor:quadsubring via polarization}.
Then $M\simeq \Q(\sqrt{-D})$, where $D = \disc(B)$.   
\end{lemma}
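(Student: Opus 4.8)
The plan is to read $M$ off from the degree formula \eqref{equation: degree polarization vs norm positive involution}. Fix a principal polarization $L$ on $J$ (the class of a theta divisor) and pass to $J_{\Qbar}$; since $J$ is $\calO$-$\mathrm{PQM}$ we have $\End(J_{\Qbar}) \simeq \calO$, so Proposition \ref{prop:relation polarizations positive involutions} applies to $J_{\Qbar}$. Write the positive involution $\iota_{L}$ of $B$ in the form $b \mapsto \mu^{-1}\bar b \mu$ with $\mu \in B^{\times}$ and $\mu^2 \in \Q_{<0}$, as in \S\ref{positive involutions}; by construction (Corollary \ref{cor:quadsubring via polarization}) the imaginary quadratic field attached to $L$ is $M = \Q(\mu)$.

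The first step is to note that $\mu$ is a pure quaternion: if $\trd(\mu) \neq 0$ then $\mu^2 \in \Q$ already forces $\mu \in \Q$, contradicting $\mu^2 < 0$, so $\trd(\mu) = 0$, hence $\bar\mu = -\mu$ and $\mu^2 = -\nrd(\mu)$. Consequently $M = \Q(\mu) = \Q(\sqrt{-\nrd(\mu)})$, and it remains only to pin down $\nrd(\mu)$ modulo squares. For this I would invoke \eqref{equation: degree polarization vs norm positive involution}: as $L$ is principal, $1 = \deg(L) \equiv \disc(B)\cdot\nrd(\mu) \pmod{\Q^{\times 2}}$, so $\nrd(\mu) \equiv \disc(B) \pmod{\Q^{\times 2}}$. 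Therefore $M = \Q(\sqrt{-\nrd(\mu)}) = \Q(\sqrt{-\disc(B)}) = \Q(\sqrt{-D})$, which is exactly the assertion with $D = \disc(B)$.

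There is essentially no obstacle here: the statement is a formal consequence of the dictionary set up in \S\ref{positive involutions} and Proposition \ref{prop:relation polarizations positive involutions}. The only points needing a little care are the base change to $\Qbar$ (so that the hypothesis $\End(A) = \End(A_{\bar F}) \simeq \calO$ of Proposition \ref{prop:relation polarizations positive involutions} is met), using that base change preserves the degree of a polarization, and the elementary fact that a trace-zero quaternion $\mu$ makes $\Q(\mu)$ depend only on $\nrd(\mu)$. One may also observe as a byproduct that, since the right-hand side of \eqref{equation: degree polarization vs norm positive involution} is independent of the chosen principal polarization, all principal polarizations on $J$ are $\Q^{\times}$-equivalent and correspond to the single imaginary quadratic subfield $\Q(\sqrt{-D}) \subset \End^0(A_{\Qbar})$.
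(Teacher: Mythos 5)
Your proof is correct and is exactly the argument the paper has in mind: it simply unpacks the one-line reference to relation \eqref{equation: degree polarization vs norm positive involution} by setting $\deg(L)=1$, concluding $\nrd(\mu)\equiv\disc(B)\pmod{\Q^{\times 2}}$, and using that $\mu$ is trace-zero so $\Q(\mu)=\Q(\sqrt{-\nrd(\mu)})$. The details you add (base change to $\Qbar$, $\trd(\mu)=0$) are the right ones to make that citation fully rigorous.
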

\begin{proof}
This is a direct consequence of the relation \eqref{equation: degree polarization vs norm positive involution} of Proposition \ref{prop:relation polarizations positive involutions}.
\end{proof}

\begin{lemma}\label{lemma:endofield constraint jacobian}
The endomorphism field $L/\Q$ has Galois group $D_1 = C_2$ or $D_2 = C_2 \times C_2$.
\end{lemma}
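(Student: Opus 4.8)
The plan is to combine the classification of the endomorphism field with the fact that a principal polarization pins down the distinguished quadratic subfield of $\End^0(J_{\Qbar})$. Write $G = \Gal(L/\Q)$. By Proposition \ref{proposition: galois group endomorphism field is dihedral if real place} (applied with $F = \Q$, which has a real place), $G$ is isomorphic to $D_n$ for some $n \in \{1,2,3,4,6\}$, so it suffices to exclude $n \in \{3,4,6\}$. If $J$ is of $\GL_2$-type there is nothing to do: then $L/\Q$ is quadratic by Lemma \ref{lemma: A GL2-type iff endo field quadratic}, so $G \simeq C_2 = D_1$. Hence assume from now on that $J$ is not of $\GL_2$-type.

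Next I would identify the distinguished quadratic subfield $M \subset \End^0(J_{\Qbar})$ of Definition \ref{definition: distinguished quadratic subring}. Since $J$ is not of $\GL_2$-type, $M$ is by definition the imaginary quadratic field attached via Corollary \ref{cor:quadsubring via polarization} to the unique primitive ($\Q$-rational) polarization of $J$. A principal polarization has type $(1,1)$, hence is primitive, and each $\Q^{\times}$-polarization class contains a unique polarization of type $(1,d)$; so the principal polarization on $J$ is precisely this distinguished primitive polarization, and Lemma \ref{lem:principal polarization quadratic field} gives $M \simeq \Q(\sqrt{-D})$ with $D = \disc(B)$. Recall that $D$ is squarefree and, since $B$ is indefinite with $\disc(B) \neq 1$, the ramification set of $B$ has even and positive cardinality, so $D \geq 6$; in particular $D$ is not a perfect square and $D \neq 3$.

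Finally I would invoke Proposition \ref{proposition: existence normalised torus}(a), which describes the distinguished quadratic subring $S = M \cap \calO$ in terms of $G$. If $G \simeq D_4$ then $S \simeq \Z[i]$, so $M \simeq \Q(\sqrt{-1})$, forcing $D$ to be a square---contradiction. If $G \simeq D_3$ or $D_6$ then $S \simeq \Z[\omega]$ with $\omega^2+\omega+1 = 0$, so $M \simeq \Q(\sqrt{-3})$, forcing $D \equiv 3 \pmod{\Q^{\times 2}}$, i.e.\ $D = 3$---again a contradiction. Therefore $G \simeq D_1$ or $D_2$, as claimed. (Alternatively, this is exactly the consequence of \cite[Theorem 3.4]{DR04} recorded in the Remark following Proposition \ref{prop:relation polarizations positive involutions}; the argument above just re-derives it from the machinery set up in this paper.) There is no serious obstacle here---the only point requiring care is checking that the distinguished quadratic subfield really is the one attached to the principal polarization, which reduces to the primitivity of principal polarizations together with the uniqueness of the $\Q$-rational polarization in the non-$\GL_2$-type case.
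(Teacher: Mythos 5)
Your proof is correct, but it takes a genuinely different route from the paper's, which simply cites \cite[Theorem 3.4 A(1)]{DR04} as a black box. You instead re-derive the constraint from the machinery already set up in the paper: since $J$ is a Jacobian, its principal polarization is primitive and is therefore the unique primitive $\Q$-rational polarization, so the distinguished quadratic subfield $M$ of Definition \ref{definition: distinguished quadratic subring} must coincide with the field $\Q(\sqrt{-\disc(B)})$ given by Lemma \ref{lem:principal polarization quadratic field}. Comparing with Proposition \ref{proposition: existence normalised torus}(a) then excludes $G \simeq D_3, D_4, D_6$, because those cases would force $M \simeq \Q(\sqrt{-3})$ or $\Q(\sqrt{-1})$, which is impossible as $\disc(B)$ is squarefree with at least two prime factors. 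This buys a self-contained argument that makes visible exactly which structural facts are doing the work (the degree formula \eqref{equation: degree polarization vs norm positive involution} relating the degree of the polarization to the reduced norm of the positive involution, plus the explicit description of the distinguished subring in each dihedral case), at the cost of a few extra lines over a bare citation. One trivial slip: the remark recording the consequence of \cite[Theorem 3.4]{DR04} occurs at the end of \S\ref{subsection: endomorphism field}, not following Proposition \ref{prop:relation polarizations positive involutions}.
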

\begin{proof}
See \cite[Theorem 3.4 A(1)]{DR04}.
\end{proof}

\begin{prop}\label{prop:no18}
$\#J(\Q)_{\tors} < 18$.
\end{prop}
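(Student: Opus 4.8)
The strategy is to leverage the additional structure coming from the principal polarization, namely the two constraints just established: by Lemma~\ref{lemma:endofield constraint jacobian} the endomorphism field $L/\Q$ has Galois group $D_1$ or $D_2$, and by Lemma~\ref{lem:principal polarization quadratic field} the distinguished imaginary quadratic subfield $M$ corresponding to the principal polarization is $\Q(\sqrt{-D})$ with $D = \disc(B)$. By Theorem~\ref{thm:halftime} we already know $\#J(\Q)_{\tors} \le 18$ and that the only groups of order exactly $18$ on the list are $\Z/3\Z \times \Z/6\Z \simeq (\Z/3\Z)^2 \times \Z/2\Z$; so it suffices to rule out $J(\Q)_{\tors} \simeq (\Z/3\Z)^2 \times \Z/2\Z$.

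The plan is to argue as in the order-$24$ and order-$36$ cases earlier in the paper, reducing modulo a well-chosen small prime. First, assume $J(\Q)_{\tors} \simeq (\Z/3\Z)^2 \times \Z/2\Z$. Since $J$ has a rational point of order $3$ and, in fact, a rank-$2$ piece of $3$-torsion, I would reduce modulo a prime of good reduction (or use the good-reduction model $J_p$ at a bad prime via Lemma~\ref{lemma:PQMreduction}) and invoke Lemma~\ref{lem:reduction is injective}: the prime-to-$p$ part of $J(\Q)_{\tors}$ injects into $J_p(\F_p)$, and $J_p$ is geometrically isogenous to the square of an elliptic curve by Proposition~\ref{prop: QM splits mod p}. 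Taking $p = 5$: one searches the LMFDB isogeny classes of abelian surfaces over $\F_5$ whose geometric endomorphism algebra contains a quaternion algebra and with $9 \mid \#X(\F_5)$ — as in Lemma~\ref{lemma: torsion order 36} the relevant class is essentially $2.5.a\_k$ (which forces $(\Z/6\Z)^2$ on reduction, already too big unless one is careful) — but here the key extra leverage is the polarization. I would instead play the endomorphism-field constraint against the Honda--Tate data: a point of order $3$ that is part of a $(\Z/3\Z)^2$ but is \emph{not} an $\calO/3\calO$-module generator (which is forced since $\calO/3\calO$ has no Galois-fixed rank-$2$ subspace unless $L/\Q$ is trivial on the relevant subquotient) means, by Proposition~\ref{prop: D3,D6 case has little 2-torsion, D2,D4 case has little 3-torsion} applied in the $D_2$ case, that either $3 \mid \disc(B)$ or $A[3] \simeq \calO/3\calO$ as $\Gal_\Q$-modules. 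One then uses Lemma~\ref{lem:principal polarization quadratic field}: if $3 \mid \disc(B)$ then $\Q(\sqrt{-3}) = \Q(\zeta_3) \subset M \subset \End^0(J_{\Qbar})$ is Galois-stable, hence contained in $L$, and by Theorem~\ref{thm:ohta} the mod-$3$ cyclotomic character is controlled accordingly; combining this with $\Gal(L/\Q) \in \{D_1, D_2\}$ and a reduction at a prime like $p=7$ or $p=13$ (where $\Q(\zeta_3)$ splits) forces the distinguished subfield in the geometric endomorphism algebra of $J_p$ to be $\Q(\sqrt{-3})$ rather than real quadratic, contradicting Proposition~\ref{prop:relation polarizations positive involutions}/\ref{prop:GL2totadditive} which requires the complementary subfield $B^{\Gal_{\Q(\sqrt{-3})}}$ to be real quadratic. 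In the remaining case $A[3] \simeq \calO/3\calO$ as $\Gal_\Q$-modules, one has $A[3](\Q) \simeq (\calO/3\calO)^{\Gal_\Q}$, which by Theorem~\ref{theorem: fixed points of O/N} is at most $\Z/3\Z$ when $\Gal(L/\Q) = D_2$ — directly contradicting the assumed rank-$2$ of $3$-torsion — and when $\Gal(L/\Q) = D_1$ this is the $\GL_2$-type case, already settled by Theorem~\ref{thm:gl2type classification} where $(\Z/3\Z)^2$ occurs but not $(\Z/3\Z)^2 \times \Z/2\Z$.

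The main obstacle I anticipate is organizing the case split cleanly: one must simultaneously track (i) whether $\Gal(L/\Q)$ is $D_1$ or $D_2$, (ii) whether $2$ and/or $3$ divide $\disc(B)$, and (iii) which quadratic subfield of $L$ (when $D_2$) is the polarization subfield $M = \Q(\sqrt{-D})$ versus which is unramified at the relevant reduction prime, and then find for each branch a single prime $p \in \{5, 7, 13\}$ whose LMFDB isogeny-class data delivers the contradiction. The $\GL_2$-type branch is immediate from Theorem~\ref{thm:gl2type classification}; the genuinely PQM branch with $D_2$ is where the polarization constraint Lemma~\ref{lem:principal polarization quadratic field} does the real work, pinning $M$ down to $\Q(\sqrt{-D})$ and thereby forbidding the "wrong" imaginary quadratic subfield from being Galois-stable. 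Once the contradiction is reached in every branch, we conclude $\#J(\Q)_{\tors} \ne 18$, hence $\#J(\Q)_{\tors} < 18$, which combined with Theorem~\ref{thm:halftime} gives the stated list and the bound $\#J(\Q)_{\tors} \le 16$.
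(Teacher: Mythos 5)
Your reduction to ruling out $(\Z/3\Z)^2 \times \Z/2\Z$ and your identification of Lemma~\ref{lemma:endofield constraint jacobian} (Galois group $D_1$ or $D_2$) and Theorem~\ref{thm:gl2type classification} as the main ingredients are exactly right, and your dispatch of the $D_1$ case matches the paper. But in the $D_2$ case you misapply Proposition~\ref{prop: D3,D6 case has little 2-torsion, D2,D4 case has little 3-torsion}(b). Its \emph{first} sentence already says that when $G \simeq D_2$ (or $D_4$) one has $A[3](\Q) \subset \Z/3\Z$; since you have assumed $(\Z/3\Z)^2 \subset J(\Q)_{\tors}$, this alone eliminates $G \simeq D_2$ immediately. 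You instead invoked only the \emph{second} sentence of (b), whose hypotheses ($A[3](\Q) = \Z/3\Z$) don't hold here, and this sent you into an unnecessary dichotomy between $3 \mid \disc(B)$ and $A[3] \simeq \calO/3\calO$. Within that detour, the claim ``$\Q(\sqrt{-3}) = \Q(\zeta_3) \subset M$'' cannot hold: $M \simeq \Q(\sqrt{-\disc(B)})$ is itself a quadratic field, so containment would force $\disc(B) = 3$, which is impossible for an indefinite quaternion algebra (the discriminant has an even number of prime factors). The Honda--Tate/LMFDB reductions at $p = 5, 7, 13$ and the appeal to Lemma~\ref{lem:principal polarization quadratic field} are all superfluous once the first sentence of (b) is used: the paper's proof is just these three steps — $G \in \{D_1, D_2\}$ by Lemma~\ref{lemma:endofield constraint jacobian}, $G \neq D_2$ by Proposition~\ref{prop: D3,D6 case has little 2-torsion, D2,D4 case has little 3-torsion}(b), hence $G = D_1$ (GL$_2$-type), contradicting Theorem~\ref{thm:gl2type classification}.
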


\begin{proof}
By Theorem \ref{thm:QMmazur_groups}, we need only exclude $(\Z/2\Z) \times (\Z/3\Z)^2$. 
By Proposition \ref{prop: D3,D6 case has little 2-torsion, D2,D4 case has little 3-torsion}(b) and Lemma \ref{lemma:endofield constraint jacobian}, the endomorphism field of $A$ would be a $C_2$-extension.
In other words, $A$ is of $\GL_2$-type, but this contradicts Theorem \ref{thm:gl2type classification}.
\end{proof}

Finally, we rule out the group $(\Z/2\Z)^3$ from appearing in $J[2](\Q)$. We have already proven this when $J$ is of $\GL_2$-type (Proposition \ref{prop: GL2 type no (Z/2)^3 torsion}), so it remains to consider the case $\Gal(L/\Q) \simeq C_2 \times C_2$. We deduce this from the following more general result.

\begin{prop}\label{prop:(Z/2)^3 constraints}
Suppose that $A/\Q$ is $\calO$-$\mathrm{PQM}$, has $C_2\times C_2$ endomorphism field and has $A[2](\Q)\simeq (\Z/2\Z)^3$.
Let $d$ be the degree of the unique primitive polarization of $A$.
Then $2\mid \disc(B)$ and there exists an integer $m \equiv 1 \mod 4$ such that $\disc(B)$ and $d m$ agree up to squares.
In particular, $d$ is even and $A$ is not a Jacobian.
\end{prop}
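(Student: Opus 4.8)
The plan is to combine the linear-algebra facts about $(\Z/2\Z)^3$ fixed points in $\calO/2\calO$ from Section \ref{section: quaternionic algebra} with the description of polarizations via positive involutions from \S\ref{positive involutions}. First I would fix an isomorphism $\End(A_{\Qbar})\simeq \calO$ and let $G\subset \Aut(\calO)\simeq N_{B^{\times}}(\calO)/\Q^{\times}$ be the image of the endomorphism field representation $\rho_{\End}$, which by hypothesis is isomorphic to $C_2\times C_2 = D_2$. Since $A[2]\simeq \calO/2\calO$ as $\Gal_{\Q}$-modules is \emph{not} forced here, I would instead argue as follows: $A[2](\Q)$ contains $(\Z/2\Z)^3$, so by Lemma \ref{lem:linalg} there is a $\Q$-rational $\calO/2\calO$-module generator, whence $A[2]\simeq \calO/2\calO$ as $\Gal_{\Q}$-modules and $A[2](\Q) = (\calO/2\calO)^G$. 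Therefore $(\calO/2\calO)^G\simeq (\Z/2\Z)^3$, and Lemma \ref{lemma: C2xC2 with (Z/2)^3 fixed points} applies: writing $G=\{1,[i],[j],[k]\}$ as in Lemma \ref{lemma: D2 subgroup Aut(O)} with $i^2=m$, $j^2=n$, $ij=-ji$, $m,n\mid\disc(B)$, we conclude that $2\mid\disc(B)$ and $m\equiv n\equiv 3\pmod 4$.

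Next I would bring in the polarization. By Corollary \ref{cor:quadsubring via polarization} and Definition \ref{definition: distinguished quadratic subring}, the unique primitive polarization of $A$ (which is defined over $\Q$ precisely because its associated imaginary quadratic subfield is $\Gal_{\Q}$-normalized) corresponds to the distinguished imaginary quadratic subfield $M = \Q(\mu)\subset\End^0(A_{\Qbar})$, where the Rosati involution is $b\mapsto\mu^{-1}\bar b\mu$. In the $D_2$ case, Proposition \ref{proposition: existence normalised torus} identifies $M$ with $\Q(\sqrt{-m'})$ for one of the relevant squarefree integers; more precisely, the distinguished subring $S = M\cap\calO$ is the $\Gal_{\Q}$-stable optimally embedded imaginary quadratic subring, so $\mu$ may be taken to be the negative-square generator among $\{i,j,k\}$ — say $\mu$ lies in $\Q^{\times}\cdot k$ with $k^2 = t$, where by Lemma \ref{lemma: D2 subgroup Aut(O)} $t$ equals $-mn$ up to squares, so $t<0$ and $M=\Q(\sqrt{-mn})=\Q(\sqrt t)$. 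Then by the degree formula \eqref{equation: degree polarization vs norm positive involution} of Proposition \ref{prop:relation polarizations positive involutions}, the degree $d$ of this primitive polarization satisfies $d\equiv \disc(B)\cdot\nrd(\mu)\pmod{\Q^{\times2}}$. Since $\mu\in\Q^{\times}k$ and $\nrd(k)=-k^2=-t=mn$ up to squares, we get $d\equiv \disc(B)\cdot mn\pmod{\Q^{\times2}}$, i.e. $\disc(B)$ and $d\cdot mn$ agree up to squares. Setting the integer of the statement to be $mn$ (or a squarefree representative of it), we have $mn\equiv 3\cdot 3\equiv 1\pmod 4$ since $m\equiv n\equiv 3\pmod 4$, which gives the desired $m\equiv 1\pmod4$ with $\disc(B)\equiv d m\pmod{\Q^{\times 2}}$.

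Finally I would extract the numerical consequences. Since $2\mid\disc(B)$, the $2$-adic valuation of $\disc(B)$ is odd (it is exactly $1$, as $\disc(B)$ is squarefree). The relation $\disc(B)\equiv d\cdot(mn)\pmod{\Q^{\times2}}$ with $mn$ odd then forces $v_2(d)$ to be odd, in particular $d$ is even. Because a Jacobian of a genus two curve carries a principal polarization of degree $1$, and the primitive polarization of a geometrically simple surface is unique, $A$ being a Jacobian would force $d=1$, contradicting $d$ even. Hence $A$ is not a Jacobian. The one point requiring care — and the main obstacle — is pinning down exactly which of $i,j,k$ represents the distinguished (imaginary) subfield and checking that the parity bookkeeping in the Hilbert-symbol/up-to-squares relation is consistent; concretely, one must verify that $t=k^2<0$ (so that $\Q(k)$ is the imaginary one among the three quadratic subfields) and that $\nrd(k)=mn$ up to squares with $mn\equiv1\pmod 4$, which is where Lemma \ref{lemma: C2xC2 with (Z/2)^3 fixed points} feeds in decisively. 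Everything else is a direct substitution into Proposition \ref{prop:relation polarizations positive involutions} and a valuation count.
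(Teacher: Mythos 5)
Your proposal is correct and follows essentially the same route as the paper: reduce via Lemma \ref{lem:linalg} to $A[2]\simeq\calO/2\calO$, apply Lemma \ref{lemma: C2xC2 with (Z/2)^3 fixed points} to get $2\mid\disc(B)$ and $m,n\equiv 3\pmod 4$, identify the distinguished imaginary quadratic subfield via Corollary \ref{cor:quadsubring via polarization}, and feed its norm into the degree formula \eqref{equation: degree polarization vs norm positive involution}. The one point you flag as uncertain — that $t=k^2<0$ — is in fact a harmless relabeling rather than something Lemma \ref{lemma: C2xC2 with (Z/2)^3 fixed points} has to ``decisively'' supply: exactly one of the three $\Gal_\Q$-stable quadratic subfields is imaginary (this is what makes the distinguished polarization unique), so you may label that one $\Q(\sqrt t)$, and then $mn>0$ follows automatically from $t=-mn$ up to squares, while $mn\equiv 1\pmod 4$ holds for \emph{any} signs since $m,n\equiv 3\pmod 4$. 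With that observation the WLOG is airtight and the rest of your substitution into Proposition \ref{prop:relation polarizations positive involutions} and the $2$-adic valuation count match the paper's argument.
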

\begin{proof}
    Let $L/\Q$ be the endomorphism field of $A$ with Galois group $G$.
    By Lemma \ref{lem:linalg}, there exists an $\Q$-rational $\calO/2\calO$-generator $P\in A[2](\Q)$, hence $A[2]\simeq \calO/2\calO$ as $\Gal_{\Q}$-modules.
    Therfore the $G$-action on $\calO/2\calO$ has $(\Z/2\Z)^3$ fixed points.
    By Lemma \ref{lemma: C2xC2 with (Z/2)^3 fixed points}, $2\mid \disc(B)$ and there exist positive integers $m,n$ with $m\equiv 1\mod 4$ and $n \equiv 3 \mod 4$ such that the three $\Gal_{\Q}$-stable quadratic subfields of $B$ are $\Q(\sqrt{-m}), \Q(\sqrt{n})$ and $\Q(\sqrt{mn})$.
    Under Corollary \ref{cor:quadsubring via polarization}, the unique primitive polarization of $A$ corresponds to the subfield $\Q(\sqrt{-m})$, and the relation \eqref{equation: degree polarization vs norm positive involution} of Proposition \ref{prop:relation polarizations positive involutions} shows that $d \disc(B)$ and $m$ agree up to squares.
    In other words, $\disc(B)$ and $dm$ agree up to squares. 
    Since $\disc(B)$ is even and squarefree and $m$ is odd, $d$ must be even too. 
\end{proof}


\begin{proof}[Proof of Theorem $\ref{thm:PQM Jacobians}$]
Combine Theorem \ref{thm:QMmazur_groups} and Propositions \ref{prop:no18} and \ref{prop:(Z/2)^3 constraints}.   
\end{proof}

In Table \ref{PQMequationstable} we give some examples of Jacobians with non-trivial torsion subgroups  and $\calO_D$-PQM, where $\calO_D$ is a maximal quaternion order of discriminant $D$. These were found by computing the relevant covers of Shimura curves of level 1 and their full Atkin-Lehner quotients and then substituting into the Igusa-Clebsch invariants in \cite[Appendix B]{LY20}. The torsion and endomorphism data can be independently verified using MAGMA\footnote{\url{https://github.com/ciaran-schembri/QM-Mazur}}.

 \begin{center}
 \begin{longtable}{p{1.4cm}<{} | p{0.7cm}<{\centering} | p{1.8cm}<{\centering} | p{12cm}<{\centering}}
  \caption{$\calO$-$\mathrm{PQM}$ Jacobians $J/\Q$ with torsion}\\
  $J(\Q)_{\tors}$ & $D$ & $\End(A)_\Q$ & $J=\Jac(C:y^2 = f(x))$ \\
  \cline{1-4}
$(\Z/2\Z)$ & $10$ & $\Q$ & \footnotesize{$ y^2 = -145855x^6 - 729275x^5 + 2187825x^3 - 1312695x $} \\
$(\Z/2\Z)^2$ & $6$ & $\Q(\sqrt{3}) $ & \footnotesize{$ y^2 = -180x^6 - 159x^5 + 894x^4 + 1691x^3 +  246x^2 - 672x + 80 $} \\
$(\Z/3\Z)$ & $15$ & $\Q$ & \footnotesize{$  y^2 =17095x^6 + 345930x^5 + 602160x^4 + 234260x^3 - 43680x^2 - 540930x - 634465 $} \\
$(\Z/3\Z)^2$ & $6$ & $\Q(\sqrt{2})$ & \footnotesize{$ y^2 = -15x^6 - 270x^5 + 315x^4 - 270x^3 - 45x^2 + 270x + 105 $} \\
$(\Z/6\Z)$ & $6$ & $\Q$ & \footnotesize{$ y^2 = 5x^6 + 21x^5 - 63x^4 - 49x^3 + 294x^2 -343 $ }
 \label{PQMequationstable}
 \end{longtable}
 \end{center}

\newcommand{\etalchar}[1]{$^{#1}$}

\end{document}